\title{An abstract analysis of\\ optimal goal-oriented adaptivity}
\author{Michael Feischl}
\author{Dirk Praetorius}
\address{Vienna University of Technology, Institute for Analysis and Scientific Computing, Wiedner Hauptstr.\ 8--10, 1040 Vienna, Austria}
\email{\{Michael.Feischl\,,\,Dirk.Praetorius\}@tuwien.ac.at}
\author{Kristoffer George van der Zee}
\address{University of Nottingham, School of Mathematical Sciences, University Park, Nottingham NG7 2RD, UK}
\email{KG.vanderZee@nottingham.ac.uk}
\thanks{{\bf Acknowledgement.} The authors MF and DP acknowledge support through the Austrian Science Fund (FWF) under grant P27005 \emph{Optimal adaptivity for BEM and FEM-BEM coupling} as well as 
through the FWF doctoral school \emph{Dissipation and Dispersion in Nonlinear PDEs}, funded under grant W1245. KVDZ acknowledges support from the Engineering and Physical Sciences Research Council (EPSRC) grant EP/I036427/1.}
\def\A{\mathbb A}
\def\N{\mathbb N}
\def\R{\mathbb R}
\def\T{\mathbb T}
\def\OO{\mathcal O}
\def\MM{\mathcal M}
\def\SS{\mathcal S}
\def\TT{\mathcal T}
\def\PP{\mathcal P}
\def\RR{\mathcal R}
\def\XX{\mathcal X}
\def\slp{\mathcal V}
\def\dlp{\mathcal K}
\def\diam{{\rm diam}}
\def\refine{{\tt refine}}
\def\qed{\hfill$\blacksquare$}
\def\eps{\varepsilon}
\newcommand{\norm}[3][]{#1\|#2#1\|_{#3}}
\newcommand{\enorm}[2][]{#1|\!#1|\!#1|#2#1|\!#1|\!#1|}
\newcommand{\dual}[3][]{#1\langle#2\,,\,#3#1\rangle}
\newcommand{\adual}[3][]{a#1(#2\,,#3#1)}
\newcommand{\set}[3][]{#1\{#2\,:\,#3#1\}}
\newcommand{\dist}[4][]{{\rm d\!l}_{#2}#1(#3\,,#4#1)}
\newcounter{const}
\def\cname#1#2{%
\ifthenelse{\equal{#1}{rel}}{C_{\rm rel}}{%
\ifthenelse{\equal{#1}{min}}{C_{\rm mark}}{%
\ifthenelse{\equal{#1}{min2}}{C_{\rm mark}^\prime}{%
\ifthenelse{\equal{#1}{rlinear}}{C_{\rm lin}}{%
\ifthenelse{\equal{#1}{final1}}{C_{\rm opt}}{%
\ifthenelse{\equal{#1}{final2}}{C_{\rm opt2}}{%
\ifthenelse{\equal{#1}{orth}}{C_{\rm orth}}{%
\ifthenelse{\equal{#1}{reliable}}{C_{\rm rel}}{%
\ifthenelse{\equal{#1}{stable}}{C_{\rm stb}}{%
\ifthenelse{\equal{#1}{reduction}}{C_{\rm red}}{%
\ifthenelse{\equal{#1}{mon}}{C_{\rm mon}}{%
\ifthenelse{\equal{#1}{estred}}{C_{\rm est}}{%
\ifthenelse{\equal{#1}{linear}}{C_{\rm conv}}{%
\ifthenelse{\equal{#1}{nvb}}{C_{\rm mesh}}{%
\ifthenelse{\equal{#2}{_neW_}}{\refstepcounter{const}\label{const:#1}C_{\arabic{const}}}{C_{\ref{const:#1}}}%
}}}}}}}}}}}}}}}
\def\newc#1{\cname{#1}{_neW_}}
\def\c#1{\cname{#1}{_ref_}}
\newcounter{qconst}
\def\qname#1#2{%
\ifthenelse{\equal{#1}{rlinear}}{q_{\rm lin}}{%
\ifthenelse{\equal{#1}{reduction}}{q_{\rm red}}{%
\ifthenelse{\equal{#1}{estred}}{q_{\rm est}}{%
\ifthenelse{\equal{#1}{contraction}}{q_{\rm ctr}}{%
\ifthenelse{\equal{#1}{gamma}}{q_{\rm wht}}{%
\ifthenelse{\equal{#1}{linear}}{q_{\rm conv}}{%
\ifthenelse{\equal{#2}{_neW_}}{\refstepcounter{const}\label{qconst:#1}q_{\arabic{qconst}}}{q_{\ref{qconst:#1}}}%
}}}}}}}
\def\newq#1{\qname{#1}{_neW_}}
\def\q#1{\qname{#1}{_ref_}}
\newtheorem{theorem}{Theorem}
\newtheorem{algorithm}{Algorithm}
\newtheorem{remark}[theorem]{Remark}
\newtheorem{lemma}[theorem]{Lemma}
\newtheorem{proposition}[theorem]{Proposition}
\newtheorem{corollary}[theorem]{Corollary}
\renewcommand{\subsection}[1]{\refstepcounter{subsection}\medskip{\bf\thesubsection.~#1.}}
\date{\today}
\subjclass[2010]{65N30, 65N50, 65Y20, 41A25}
\keywords{adaptivity, goal-oriented algorithm, quantity of interest, convergence, optimal convergence rates, finite element method, boundary element method}
\begin{document}

\begin{abstract}
We provide an abstract framework for optimal goal-oriented adaptivity for finite element methods and boundary element methods in the spirit of~\cite{axioms}. We prove that this framework covers standard discretizations of general second-order 
linear elliptic PDEs and hence generalizes available results~\cite{bet,ms} beyond the 
Poisson equation.
\end{abstract}

\maketitle

\section{Introduction}
\subsection{State of the art \& contributions}
This work considers the simultaneous adaptive control of two error estimators $\eta_{u,\star}$ and $\eta_{z,\star}$ which satisfy certain
abstract axioms from Section~\ref{section:axioms}, below. 
The estimator product $\eta_{u,\star}\eta_{z,\star}$ is designed to control the error in goal-oriented adaptivity and  allows to prove optimal error decay for the goal functional. This is discussed in Section~\ref{section:motivation} and demonstrated in Section~\ref{section:example}--\ref{section:bem}
for various model problems.
We analyze three adaptive mesh-refinement algorithms (Algorithm~\ref{algorithm},~\ref{algorithm:mod},~\ref{algorithm:bet}) which allow optimal convergence rates for the estimator product in the sense
that each possible algebraic convergence $r>0$ will be achieved, i.e.,
\begin{align*}
\eta_{u,\ell}\eta_{z,\ell}\lesssim (\#\TT_\ell-\#\TT_0)^{-r}\quad\text{for all }\ell\in\N,
\end{align*}
without any {\sl a~priori} knowledge of the optimal rate $r_{\rm opt}$.
Here, the triangulations $(\TT_\ell)_{\ell\in\N_0}$ are generated by the respective adaptive algorithm starting from some given initial triangulation $\TT_0$.

While standard adaptivity aims to approximate some unknown exact solution $u$ at optimal rate in the energy norm (see, e.g., \cite{ckns,ffp,stevenson} for adaptive FEM, \cite{part1,part2,fkmp,gantumur} for adaptive BEM, and \cite{axioms} for a recent overview on available results), goal-oriented adaptivity aims to
approximate, at optimal rate, only the functional value $g(u)$ (also called \emph{quantity of interest} in the literature).
Goal-oriented adaptivity is usually more important in practice than standard adaptivity. It has therefore attracted much interest also in the mathematical literature; see, e.g.,~\cite{BanRanBOOK2003,MR1430239,BecRanAN2001,MR1352472,MR2009374,MR1322810,MR1665351} for some prominent contributions.
However, as far as convergence and quasi-optimality of goal-oriented adaptivity is concerned, earlier results are only the two works~\cite{bet,ms} which are concerned with the Poisson model problem and the work~\cite{hp14} which considers general second-order linear elliptic PDEs, but is concerned with convergence only. Moreover, the analytical arguments of~\cite{bet,ms} are tailored to the Poisson equation and do not directly transfer to the more general setting of~\cite{hp14}. The quasi-optimality analysis for goal-oriented adaptivity has also been named as an open problem in the recent work~\cite{bn15}.
In view of this, the contributions and advances of the present work can be summarized as follows:
\begin{itemize}
\item We give an abstract analysis for optimal goal-oriented adaptivity which applies to general (non-symmetric) second-order linear elliptic PDEs in the spirit of~\cite{ffp} which even extends the problem class of~\cite{hp14}.
\item The analysis avoids any (discrete) efficiency estimate and thus allows for simple newest vertex bisection, while~\cite{bet,ms} require local bisec5-refinement in the spirit of~\cite{stevenson}.
\item Unlike~\cite{bet}, our proofs avoid any assumption on the resolution of the given data as, e.g., a saturation assumption~\cite[eq.~(4.4)]{bet}.
\item Unlike~\cite{hp14}, our analysis does not enforce the condition that the initial triangulation $\TT_0$ is sufficiently small, since we do not exploit the regularity of the dual solution to prove some crucial quasi-Galerkin orthogonality.
\item Finally, our analysis does not only cover the finite element method (FEM), but also applies to the boundary element method (BEM).
\end{itemize}
Related recent work includes~\cite{pointabem}, where the goal are point errors in symmetric BEM computations.
Although we shall verify the mentioned estimator axioms only for standard FEM and BEM discretizations, we expect that they can also be verified for discretizations in the frame of isogeometric analysis; see, e.g., \cite{MR3154029} for some goal-oriented adaptive IGAFEM.

\subsection{Goal-oriented adaptivity in the frame of the Lax-Milgram lemma}\label{section:motivation}
The following introduction covers the main application of the abstract theory, we have in mind. Let $\XX$ be a Hilbert space with 
norm $\norm\cdot{\XX}$, and let
$\adual\cdot\cdot:\XX\times\XX\to\R$ be a continuous and elliptic bilinear form on $\XX$.
For a given linear and continuous functional $f\in\XX^*$, let $u\in\XX$ be the unique solution of
\begin{align}\label{eq:primal}
 \adual{u}{v} = f(v)\quad\text{for all }v\in\XX.
\end{align}
Let $g\in\XX^*$ be the so-called \emph{goal functional}, i.e., we aim to approximate $g(u)$ at optimal rate.
To this end,  suppose that associated with
each triangulation $\TT_\star$ of some problem related domain $\Omega\subset \R^d$, there is a finite dimensional subspace $\XX_\star\subseteq\XX$.
Let $U_\star\in\XX_\star$ be the unique Galerkin approximation of $u$ which solves
\begin{align}\label{eq:primal:discrete}
 \adual{U_\star}{V_\star} = f(V_\star)\quad\text{for all }V_\star\in\XX_\star.
\end{align}
Furthermore, let $z\in\XX$ be the unique solution to the so-called dual problem
\begin{align}\label{eq:dual}
 \adual{v}{z} = g(v)\quad\text{for all }v\in\XX.
\end{align}
Then, for any $Z_\star\in\XX_\star$, it follows
\begin{align}\label{eq:goal error}
 |g(u)-g(U_\star)| = |\adual{u-U_\star}{z}| = |\adual{u-U_\star}{z-Z_\star}|
 \lesssim \norm{u-U_\star}\XX\,\norm{z-Z_\star}\XX.
\end{align}
Here and throughout, $\lesssim$ abbreviates $\le$ up to some generic multiplicative factor $C>0$ which is clear from the context, e.g., the hidden constant in~\eqref{eq:goal error} is the continuity bound of $a(\cdot,\cdot)$.
Suppose that we compute the unique Galerkin approximation $Z_\star\in\XX_\star$ of the dual solution $z\in\XX$, i.e.,
\begin{align}\label{eq:dual:discrete}
 \adual{V_\star}{Z_\star} = g(V_\star)\quad\text{for all }V_\star\in\XX_\star,
\end{align}
and that the Galerkin errors on the right-hand side of~\eqref{eq:goal error} can be controlled by computable 
{\sl a~posteriori} error estimators
\begin{subequations}\label{eq:reliable:laxmilgram}
\begin{align}
 \norm{u-U_\star}\XX &\lesssim \eta_{u,\star}:=\Big(\sum_{T\in\TT_\star}\eta_{u,\star}(T)^2\Big)^{1/2},\\
 \norm{z-Z_\star}\XX &\lesssim \eta_{z,\star}:=\Big(\sum_{T\in\TT_\star}\eta_{z,\star}(T)^2\Big)^{1/2}.
\end{align}
\end{subequations}
Under these assumptions, we are altogether led to
\begin{align}\label{eq:ansatz}
 |g(u)-g(U_\star)| \lesssim \eta_{u,\star}\,\eta_{z,\star}.
\end{align}
Overall, we thus aim for some adaptive algorithm which drives the computable upper bound 
on the right-hand side of~\eqref{eq:ansatz} to zero with optimal rate. 

\begin{remark}
Using the residual $R_{u,\star}(v):=f(v) - \adual{U_\star}{v} \in \XX^*$, one can improve
the hidden constants in~\eqref{eq:goal error} by
\begin{align*}
 |g(u)-g(U_\star)| 
 = |\adual{u-U_\star}{z-Z_\star}|
 \le \!\sup_{v\in\XX\backslash\{0\}}\!\!\frac{R_{u,\star}(v)}{\norm{v}{\XX}}\,\norm{z-Z_\star}{\XX}
 =: \norm{R_{u,\star}}{\XX^*}\,\norm{z-Z_\star}{\XX}.
\end{align*}
Some {\sl a~posteriori} error estimators (such as, e.g., estimators based on flux equilibration~\cite{v96}) allow for the reliability
estimate of the residual $\norm{R_{u,\star}}{\XX^*}\le\c{reliable}\,\eta_{u,\star}$
even with known constant $\c{reliable}=1$. The same arguments for the residual $R_{z,\star}(v):=g(v)-a(v,Z_\star)\in\XX^*$ yield $|g(u)-g(U_\star)|\leq \norm{R_{z,\star}}{\XX^*}\norm{u-U_\star}{\XX}$. \qed
\end{remark}

\subsection{Outline}
In Section~\ref{section:abstract}, we propose  three algorithms which are analyzed in the sections below, define the used mesh-refinement strategy, and outline the main result. Moreover, we provide the abstract framework in terms of four axioms for the estimators.
Section~\ref{section:optimal} proves optimal convergence rates for each adaptive algorithm. In Section~\ref{section:example} we apply the abstract theory to conforming goal-oriented FEM for second-order elliptic PDEs. Section~\ref{section:flux} covers goal-oriented FEM for the evaluation of some weighted boundary flux, whereas Section~\ref{section:bem} applies the abstract theory to goal-oriented adaptivity for BEM. The final Section~\ref{section:conclusion} discusses our results and points at extensions and open questions.

\section{Adaptive Algorithms for the Estimator Product}\label{section:abstract}

\noindent
We consider an adaptive algorithm which allows to drive the estimator product
\begin{align}\label{eq:abstract:problem1}
 \eta_{u,\star}\eta_{z,\star}:=\Big(\sum_{T\in\TT_\star}\eta_{u,\star}(T)^2\Big)^{1/2}\Big(\sum_{T\in\TT_\star}\eta_{z,\star}(T)^2\Big)^{1/2}
\end{align}
to zero with optimal rate.
This includes, in particular, the problem class from Section~\ref{section:motivation},
but also covers adaptive BEM for the approximation of point errors; see the recent own work~\cite{pointabem}. We suppose that each admissible triangulation $\TT_\star$ (see Section~\ref{section:mesh} below) allows for the computation of the error estimators $\eta_{u,\star}$ and $\eta_{z,\star}$, where the local contributions are (at least heuristically) linked to the elements $T\in\TT_\star$, cf.~\eqref{eq:abstract:problem1}.
To abbreviate notation, we shall write 
\begin{align*}
\def\UU{\mathcal U}
\eta_{w,\star} := \eta_{w,\star}(\TT_\star),\quad
\eta_{w,\star}(\UU_\star) := \Big(\sum_{T\in\UU_\star}\eta_{w,\star}(T)^2\Big)^{1/2}
\quad\text{for $w\in\{u,z\}$ and all }\UU_\star\subseteq\TT_\star.
\end{align*}

\subsection{Adaptive algorithm}
We consider two adaptive algorithms, which have been
proposed and analyzed in~\cite{ms} (Algorithm~\ref{algorithm}) and~\cite{bet} (Algorithm~\ref{algorithm:bet}) for goal-oriented adaptive FEM
for the Poisson problem, and propose a slight modification of the algorithm from~\cite{ms}
(Algorihm~\ref{algorithm:mod}) which is also related to that of~\cite{hp14}.
Note that all algorithms  differ only in the marking strategy: Algorithms~\ref{algorithm}--\ref{algorithm:mod} employ a separate D\"orfler 
marking in step~(iii)--(iv), whereas Algorithm~\ref{algorithm:bet} employs a combined D\"orfler marking
in step~(iv).

\begin{algorithm}\label{algorithm}
\textsc{Input:}  Initial triangulation $\TT_0$, marking parameter $0<\theta\le1$, and $\newc{min}\ge1$.\\
\textsc{Loop:} For all $\ell = 0,1,2,3,\dots$ do {\rm(i)--(vi)}:
\begin{itemize}
\item[\rm(i)] Compute refinement indicators $\eta_{u,\ell}(T)$ for all $T\in\TT_\ell$.
\item[\rm(ii)] Compute refinement indicators $\eta_{z,\ell}(T)$ for all $T\in\TT_\ell$.
\item[\rm(iii)] Determine a set $\MM_{u,\ell}\subseteq\TT_\ell$ of up to 
the multiplicative factor $\c{min}$ minimal 
cardinality such that
\begin{align}\label{doerfler:u}
 \theta\,\eta_{u,\ell}^2
 \le \eta_{u,\ell}(\MM_{u,\ell})^2.
\end{align}
\item[\rm(iv)] Determine a set $\MM_{z,\ell}\subseteq\TT_\ell$ of up to 
the multiplicative factor $\c{min}$ minimal 
cardinality such that
\begin{align}\label{doerfler:z}
 \theta\,\eta_{z,\ell}^2
 \le \eta_{z,\ell}(\MM_{z,\ell})^2.
\end{align}
\item[\rm(v)] Choose $\MM_\ell\in\{\MM_{u,\ell},\MM_{z,\ell}\}$ to be the
set of minimal cardinality.
\item[\rm(vi)] Let $\TT_{\ell+1}:=\refine(\TT_\ell,\MM_\ell)$ be the coarsest refinement of $\TT_\ell$ such that all marked elements $T\in\MM_\ell$ have been
refined.
\end{itemize}
\textsc{Output:} Sequence of successively refined triangulations $\TT_\ell$ and corresponding error estimators $\eta_{u,\ell},\eta_{z,\ell}$ for all $\ell\in\N_0$.\qed
\end{algorithm}

\begin{remark}
In the frame of Section~\ref{section:motivation}, the computation of $\eta_{u,\ell}$ in~{\rm (i)} and $\eta_{z,\ell}$ in~{\rm (ii)} usually requires to solve the primal~\eqref{eq:primal:discrete} and the dual problem~\eqref{eq:dual:discrete} to obtain the approximations $U_\ell$ resp. $Z_\ell$.\qed
\end{remark}

\begin{remark}
To construct a set $\MM_{u,\ell}$ with minimal cardinality (i.e., $\c{min}=1$) which satisfies the D\"orfler marking~\eqref{doerfler:u}, one needs to sort the refinement indicators. This results in logarithmic-linear costs. For $\c{min}=2$, the algorithmic construction of some set $\MM_{u,\ell}$ which satisfies~\eqref{doerfler:u}, is possible in linear complexity~\cite{stevenson}.
\qed
\end{remark}

Next, we propose a modified version of Algorithm~\ref{algorithm} which allows for more aggressive marking in step~(v), i.e., less adaptive steps. A similar but non-optimal algorithm has been proposed and tested in~\cite{hp14}.

\begin{algorithm}\label{algorithm:mod}
\textsc{Input:}  Initial triangulation $\TT_0$, marking parameter $0<\theta\le1$, and $\c{min},\newc{min2}\ge1$.\\
\textsc{Loop:} For all $\ell = 0,1,2,3,\dots$ do {\rm(i)--(vi)}:
\begin{itemize}
\item[\rm(i)] Compute refinement indicators $\eta_{u,\ell}(T)$ for all $T\in\TT_\ell$.
\item[\rm(ii)] Compute refinement indicators $\eta_{z,\ell}(T)$ for all $T\in\TT_\ell$.
\item[\rm(iii)] Determine a set $\MM_{u,\ell}\subseteq\TT_\ell$ of up to 
the multiplicative factor $\c{min}$ minimal 
cardinality such that
\begin{align}\label{doerfler:u:mod}
 \theta\,\eta_{u,\ell}^2
 \le \eta_{u,\ell}(\MM_{u,\ell})^2.
\end{align}
\item[\rm(iv)] Determine a set $\MM_{z,\ell}\subseteq\TT_\ell$ of up to 
the multiplicative factor $\c{min}$ minimal 
cardinality such that
\begin{align}\label{doerfler:z:mod}
 \theta\,\eta_{z,\ell}^2
 \le \eta_{z,\ell}(\MM_{z,\ell})^2.
\end{align}
\item[\rm(v)] Choose $\widetilde\MM_\ell\in\{\MM_{u,\ell},\MM_{z,\ell}\}$ to be the
set of minimal cardinality and choose $\widetilde\MM_\ell\subseteq \MM_\ell\subseteq \MM_{u,\ell}\cup\MM_{z,\ell}$ such that
$\#\MM_\ell\leq \c{min2} \#\widetilde\MM_\ell$.
\item[\rm(vi)] Let $\TT_{\ell+1}:=\refine(\TT_\ell,\MM_\ell)$ be the coarsest refinement of $\TT_\ell$ such that all marked elements $T\in\MM_\ell$ have been
refined.
\end{itemize}
\textsc{Output:} Sequence of successively refined triangulations $\TT_\ell$ and corresponding error estimators $\eta_{u,\ell},\eta_{z,\ell}$ for all $\ell\in\N_0$.\qed
\end{algorithm}

\begin{remark}
In our numerical experiments below, we choose $\MM_\ell$ as follows: Having picked $\widetilde\MM_\ell$ to be the minimal set amongst $\MM_{u,\ell}$ and $\MM_{z,\ell}$, we enlarge $\widetilde\MM_\ell$ by adding the largest $\#\widetilde\MM_\ell$ elements of the other set, e.g., if $\#\MM_{u,\ell} \le \#\MM_{z,\ell}$, then $\MM_\ell$ consists of $\MM_{u,\ell}$ plus the $\#\MM_{u,\ell}$ largest contributions of $\MM_{z,\ell}$. This yields $\c{min2} = 2$.\qed
\end{remark}

\begin{remark}
In~\cite{hp14}, the authors consider Algorithm~\ref{algorithm:mod}, but define $\MM_\ell := \MM_{u,\ell}\cup\MM_{z,\ell}$ in step~{\rm(v)}. While this also leads to linear convergence in the sense of Theorem~\ref{theorem:linear}, \cite{hp14} only proves suboptimal convergence rates $\min\{s,t\}$ instead of the optimal rate $s+t$ in Theorem~\ref{theorem:optimal:mod}; see~\cite[Section~4]{hp14}. We note that the strategy of~\cite{hp14} leads to linear convergence $\eta_{u,\ell+n}\le Cq^n\eta_{u,\ell}$ 
and $\eta_{z,\ell+n}\le Cq^n\eta_{z,\ell}$ for either estimator and all $\ell,n\in\N_0$, where $C>0$ and $0<q<1$ are independent constants, while the optimal strategies considered in this work only enforce linear convergence $\eta_{u,\ell+n}\eta_{z,\ell+n}\le Cq^n\eta_{u,\ell}\eta_{z,\ell}$ for the estimator product.\qed
\end{remark}

Finally, the following algorithm has been proposed in~\cite{bet}. 

\begin{algorithm}\label{algorithm:bet}
\textsc{Input:}  Initial triangulation $\TT_0$, marking parameter $0<\theta\le1$, and $\newc{min}\ge1$.\\
\textsc{Loop:} For all $\ell = 0,1,2,3,\dots$ do {\rm(i)--(vi)}:
\begin{itemize}
\item[\rm(i)] Compute indicators $\eta_{u,\ell}(T)$ for all $T\in\TT_\ell$.
\item[\rm(ii)] Compute indicators $\eta_{z,\ell}(T)$ for all $T\in\TT_\ell$.
\item[\rm(iii)] Assemble refinement indicators $\rho_\ell(T)^2 := \eta_{u,\ell}(T)^2\eta_{z,\ell}^2 + \eta_{u,\ell}^2\eta_{z,\ell}(T)^2$ for all $T\in\TT_\ell$.
\item[\rm(iv)] Determine a set $\MM_\ell\subseteq\TT_\ell$ of up to 
the multiplicative factor $\c{min}$ minimal 
cardinality such that
\begin{align}\label{eq:doerfler:bet}
 \theta\,\rho_{\ell}^2
 \le \rho_{\ell}(\MM_\ell)^2.
\end{align}
\item[\rm(v)] Let $\TT_{\ell+1}:=\refine(\TT_\ell,\MM_\ell)$ be the coarsest refinement of $\TT_\ell$ such that all marked elements $T\in\MM_\ell$ have been
refined.
\end{itemize}
\textsc{Output:} Sequence of successively refined triangulations $\TT_\ell$ and corresponding error estimators $\eta_{u,\ell},\eta_{z,\ell}$ for all $\ell\in\N_0$.\qed
\end{algorithm}

\begin{remark}
Building on the results
of~\cite{ms}, it is claimed and empirically investigated in~\cite{bet} that the combined 
D\"orfler marking~\eqref{eq:doerfler:bet} requires less adaptive steps to reach a prescribed accuracy.
We note that this has only  been  proved rigorously in~\cite{bet} for the Poisson
problem with polynomial data, while general data have to satisfy a certain saturation assumption for
the related data oscillation terms; see~\cite[eq.~(4.4)]{bet} and~\cite[Theorem~4.1]{bet}.
We note that this assumption also restricts the quasi-optimality analysis of~\cite{bet} which, in the spirit of~\cite{ms}, relies on the (constrained) contraction of the total error.
\qed
\end{remark}

\subsection{Mesh-refinement}\label{section:mesh}
We suppose that the mesh-refinement is a deterministic and fixed strategy, e.g., newest vertex bisection~\cite{stevenson:nvb}. 
Unlike~\cite{bet,ms}, we do not require the interior node property guaranteed by bisec5-refinement of marked elements~\cite{mns}.
For each triangulation $\TT$ 
and marked elements $\MM\subseteq\TT$, we let $\TT':=\refine(\TT,\MM)$ be the coarsest triangulation, where all
elements $T\in\MM$ have been refined. This may, in particular, include the preservation of conformity 
or bounded shape regularity. We write $\TT'\in\refine(\TT)$, if
there exist finitely many triangulations $\TT^{(0)},\dots,\TT^{(n)}$ and sets $\MM^{(j)}\subseteq\TT^{(j)}$ such that $\TT = \TT^{(0)}$, $\TT'=\TT^{(n)}$ and $\TT^{(j)} = \refine(\TT^{(j-1)},\MM^{(j-1)})$ for all $j=1,\dots,n$, where we formally allow $n=0$, i.e., $\TT=\TT^{(0)}\in\refine(\TT)$.
To abbreviate notation, let $\T:=\refine(\TT_0)$, where $\TT_0$ is the given initial triangulation of Algorithms~\ref{algorithm}--\ref{algorithm:bet}.

\subsection{Main result}\label{section:main}
Our main result requires the following abstract approximation
class for the error estimator $\eta_{u,\ell}$ resp.\ $\eta_{z,\ell}$. 
Let $\T_N:=\set{\TT\in\T}{\#\TT-\#\TT_0\le N}$ denote the (finite) set of all refinements of $\TT_0$ which have at most $N$ elements more
than $\TT_0$. 
For $s>0$ and $w\in\{u,z\}$, 
we write $w\in\A_s$ if
\begin{align*}
 \norm{w}{\A_s} 
 := \sup_{N\in\N_0} \Big((N+1)^s\min_{\TT_\star\in\T_N}\eta_{w,\star}\Big) < \infty,
\end{align*}
where $\eta_{w,\star}$ is the error estimator associated with
the optimal triangulation $\TT_\star\in\T_N$. In explicit terms, $\norm{w}{\A_s}<\infty$ means that an algebraic
convergence rate $\OO(N^{-s})$ for the error estimator is possible, if the optimal
triangulations are chosen.

For either algorithm, our
abstract main result is twofold: First, we prove linear convergence (Section~\ref{section:linear}): For each $0<q<1$ there exists
some $n$ such that for all $\ell\in\N$, the reduction of $\eta_{u,\ell}\,\eta_{z,\ell}$ by the factor 
$q$ requires at most $n$ steps of the adaptive loop, i.e., 
$\eta_{u,\ell+n}\,\eta_{z,\ell+n} \le q\,\eta_{u,\ell}\,\eta_{z,\ell}$. Second, we prove optimal
convergence behavior (Section~\ref{section:optA}--\ref{section:optC}): With respect to the number of elements $N\simeq\#\TT_\ell-\#\TT_0$, the product 
$\eta_{u,\ell}\,\eta_{z,\ell}$ decays with order $\OO(N^{-(s+t)})$ for each possible algebraic rate $s+t>0$, 
i.e., $\norm{u}{\A_s}+\norm{z}{\A_t}<\infty$. This means that the adaptive algorithms will 
asymptotically realize each possible algebraic decay.

\begin{remark}
Since our analysis works with the estimator instead of the error, it avoids the use of any (discrete) efficiency bound. Compared to~\cite{bet,ms}, this allows to use simple newest vertex bisection instead of bisec5-refinement for marked elements. Lemma~\ref{lemma:approximationclass} below states that for standard FEM our approximation class coincides with that of~\cite{bet,ckns,ms} which is defined through the so-called total error (i.e., error plus data oscillations).\qed
\end{remark}

\subsection{Axioms of Adaptivity}
\label{section:axioms}
In the following, let $\dist{w}{\cdot}{\cdot}:\,\T\times\T\to\R_{\geq 0}$ denote a distance function on the set of admissible triangulations which satisfies
\begin{align*}
 C_{\rm dist}^{-1}\dist{w}{\TT}{\TT^{\prime\prime}}&\leq \dist{w}{\TT}{\TT^\prime}+\dist{w}{\TT^\prime}{\TT^{\prime\prime}}\quad\text{for all }\TT,\TT^\prime,\TT^{\prime\prime}\in\T,\\
 \dist{w}{\TT}{\TT^\prime}&\leq C_{\rm dist}\dist{w}{\TT^\prime}{\TT}\quad\text{for all }\TT,\TT^\prime\in\T,
\end{align*}
with some uniform constant $C_{\rm dist}>0$; see also Remark~\ref{remark:laxmilgram} below.

The convergence and optimality analysis of Algorithm~\ref{algorithm} is done in the frame 
of the following four \emph{axioms of adaptivity}~\cite{axioms}, where axiom~\eqref{ass:orthogonal}
is slightly relaxed when compared to~\cite{axioms}:

\begin{enumerate}
\renewcommand{\theenumi}{A\arabic{enumi}}
\item\label{ass:stable}
\emph{Stability on non-refined elements}: There exists a constant $\newc{stable}>0$ 
such that for each triangulation $\TT_\ell\in\T$ and all refinements 
$\TT_\star\in\refine(\TT_\ell)$ the corresponding error estimators satisfy
\begin{align*}
 |\eta_{w,\star}(\TT_\ell\cap\TT_\star)
 - \eta_{w,\ell}(\TT_\ell\cap\TT_\star)|
 \le \c{stable}\,\dist{w}{\TT_\ell}{\TT_\star}
\end{align*}
\item\label{ass:reduction}
\emph{Reduction on refined elements}: There exist constants $0<\newq{reduction}<1$ 
and $\newc{reduction}>0$ such that for each triangulation $\TT_\ell\in\T$ and all refinements 
$\TT_\star\in\refine(\TT_\ell)$ the corresponding error estimators satisfy
\begin{align*}
\eta_{w,\star}(\TT_\star\backslash\TT_\ell)^2
\le \q{reduction}\,\eta_{w,\ell}(\TT_\ell\backslash\TT_\star)^2
 + \c{reduction}\,\dist{w}{\TT_\ell}{\TT_\star}^2
\end{align*}
\item\label{ass:orthogonal}\emph{Quasi-orthogonality}: Let $\TT_{\ell_n}$ be the (possibly finite) subsequence of triangulations $\TT_\ell$ generated by Algorithm~\ref{algorithm},~\ref{algorithm:mod}, or~\ref{algorithm:bet} which satisfy the 
D\"orfler marking on the refined elements, i.e.,
\begin{align}\label{eq:doerfler:refined}
 \theta\,\eta_{w,\ell_n}^2 
 \le \eta_{w,\ell_n}(\TT_{\ell_n}\backslash\TT_{\ell_n+1})^2.
\end{align}
Then, for all $\eps>0$, there exists some constant $\newc{orth}(\eps)>0$ such that for all 
$n\le N$, for which $\TT_{\ell_n},\ldots,\TT_{\ell_N}$ are well-defined, it holds 
\begin{align*}
 \sum_{j=n}^N\big(\dist{w}{\TT_{\ell_{j+1}}}{\TT_{\ell_j}}^2 - \eps\,\eta_{w,\ell_j}^2\big) 
 \le \c{orth}(\eps)\,\eta_{w,\ell_n}^2. 
\end{align*}
\item\label{ass:reliable}\emph{Discrete reliability}:
There exists a constant $\newc{reliable}>0$ such that for each triangulation $\TT_\ell\in\T$ and all refinements $\TT_\star\in\refine(\TT_\ell)$, it holds
\begin{align*}
 \dist{w}{\TT_\star}{\TT_\ell}
 \le \c{reliable}\,
 \eta_{w,\ell}(\RR_w(\TT_\ell,\TT_\star)),
\end{align*}
where the set $\RR_w(\TT_\ell,\TT_\star)$ satisfies $\TT_\ell\backslash\TT_\star\subseteq \RR_w(\TT_\ell,\TT_\star)\subseteq\TT_\ell$ and $\#\RR_w(\TT_\ell,\TT_\star)\le\c{reliable}\,\#(\TT_\ell\backslash\TT_\star)$ and hence consists essentially of the refined
elements only.
\end{enumerate}

We recall some elementary observations of~\cite{axioms}.

\begin{lemma}[quasi-monotonicity of estimator~{\cite[Lemma~3.5]{axioms}}]
\label{lemma:monotone}
There exists a constant $\newc{mon}>0$ which depends only on 
stability~\eqref{ass:stable}, reduction~\eqref{ass:reduction}, and discrete reliability~\eqref{ass:reliable}, such that for all $\TT_\ell\in\T$ and all refinements
$\TT_\star\in\refine(\TT_\ell)$, it holds 
$\eta_{w,\star}^2 \le \c{mon}\,\eta_{w,\ell}^2$.\qed
\end{lemma}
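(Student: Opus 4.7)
The plan is to decompose the refined triangulation into the part that was not refined and the part that was refined relative to $\TT_\ell$, bound each piece by the corresponding axiom, and then absorb the distance term via discrete reliability.

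More precisely, I would first split
\begin{align*}
\eta_{w,\star}^2 = \eta_{w,\star}(\TT_\ell\cap\TT_\star)^2 + \eta_{w,\star}(\TT_\star\setminus\TT_\ell)^2.
\end{align*}
For the first summand, stability~\eqref{ass:stable} gives $\eta_{w,\star}(\TT_\ell\cap\TT_\star)\le \eta_{w,\ell}(\TT_\ell\cap\TT_\star)+\c{stable}\,\dist{w}{\TT_\ell}{\TT_\star}$, and Young's inequality yields
\begin{align*}
\eta_{w,\star}(\TT_\ell\cap\TT_\star)^2 \le 2\,\eta_{w,\ell}(\TT_\ell\cap\TT_\star)^2 + 2\,\c{stable}^2\,\dist{w}{\TT_\ell}{\TT_\star}^2.
\end{align*}
For the second summand, reduction~\eqref{ass:reduction} applied directly gives
\begin{align*}
\eta_{w,\star}(\TT_\star\setminus\TT_\ell)^2 \le \q{reduction}\,\eta_{w,\ell}(\TT_\ell\setminus\TT_\star)^2 + \c{reduction}\,\dist{w}{\TT_\ell}{\TT_\star}^2.
\end{align*}
Summing the two estimates and noting that $\eta_{w,\ell}(\TT_\ell\cap\TT_\star)^2+\eta_{w,\ell}(\TT_\ell\setminus\TT_\star)^2=\eta_{w,\ell}^2$, one obtains
\begin{align*}
\eta_{w,\star}^2 \le 2\,\eta_{w,\ell}^2 + \big(2\,\c{stable}^2+\c{reduction}\big)\dist{w}{\TT_\ell}{\TT_\star}^2.
\end{align*}

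The final step is to control the distance term on the right-hand side. Discrete reliability~\eqref{ass:reliable} bounds $\dist{w}{\TT_\star}{\TT_\ell}$ by the estimator on $\RR_w(\TT_\ell,\TT_\star)\subseteq\TT_\ell$, hence by $\c{reliable}\,\eta_{w,\ell}$. Since the distance is assumed quasi-symmetric with constant $C_{\rm dist}$, this also yields $\dist{w}{\TT_\ell}{\TT_\star}\le C_{\rm dist}\,\c{reliable}\,\eta_{w,\ell}$. Inserting this into the previous display produces the claim with
\begin{align*}
\c{mon} := 2 + \big(2\,\c{stable}^2+\c{reduction}\big)C_{\rm dist}^2\,\c{reliable}^2.
\end{align*}

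The argument is essentially bookkeeping: the only point requiring attention is the asymmetry between the statement of~\eqref{ass:reliable}, which bounds $\dist{w}{\TT_\star}{\TT_\ell}$, and the quantity $\dist{w}{\TT_\ell}{\TT_\star}$ produced by~\eqref{ass:stable} and~\eqref{ass:reduction}. This mismatch is resolved by invoking the quasi-symmetry of $\dist{w}{\cdot}{\cdot}$, which is why the final constant depends on $C_{\rm dist}$ as well. No use of quasi-orthogonality is needed, so the statement is purely a consequence of~\eqref{ass:stable}, \eqref{ass:reduction}, and~\eqref{ass:reliable}.
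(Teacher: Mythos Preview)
Your argument is correct and is exactly the standard proof from the cited reference~\cite[Lemma~3.5]{axioms}; the paper itself does not reproduce the proof but merely states the result with a citation. The decomposition into non-refined and refined elements, the use of~\eqref{ass:stable} and~\eqref{ass:reduction} on the respective parts, and the absorption of the distance term via~\eqref{ass:reliable} together with the quasi-symmetry of $\dist{w}{\cdot}{\cdot}$ is precisely the intended route, and your observation about the order of arguments in $\dist{w}{\cdot}{\cdot}$ is the only subtle point.
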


\begin{lemma}[optimality of D\"orfler marking~{\cite[Proposition~4.12]{axioms}}]
\label{lemma:doerfler}
Suppose stability~\eqref{ass:stable} and discrete reliability~\eqref{ass:reliable}.
For all $0<\theta<\theta_\star:=(1+\c{stable}^2\c{reliable}^2)^{-1}$, there
exists some $0<\kappa_\star<1$ such that for all $\TT_\ell\in\T$ and all refinements
$\TT_\star\in\refine(\TT_\ell)$, it holds
\begin{align}
 \eta_{w,\star}^2 \le \kappa_\star\,\eta_{w,\ell}^2
 \quad\Longrightarrow\quad
 \theta\,\eta_{w,\ell}^2
 \le \eta_{w,\ell}(\RR_w(\TT_\ell,\TT_\star))^2,
\end{align}
where $\RR_w(\TT_\ell,\TT_\star)$ is the set of refined elements from~\eqref{ass:reliable}.\qed
\end{lemma}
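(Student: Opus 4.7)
The plan is to combine stability~\eqref{ass:stable} and discrete reliability~\eqref{ass:reliable} to bound $\eta_{w,\ell}^2$ by a controlled combination of $\eta_{w,\star}^2$ and $\eta_{w,\ell}(\RR_w(\TT_\ell,\TT_\star))^2$, and then absorb the former via the hypothesis $\eta_{w,\star}^2\le\kappa_\star\eta_{w,\ell}^2$ for $\kappa_\star$ sufficiently small.

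First I would split the estimator according to which elements survive the refinement:
\begin{align*}
\eta_{w,\ell}^2 = \eta_{w,\ell}(\TT_\ell\cap\TT_\star)^2 + \eta_{w,\ell}(\TT_\ell\setminus\TT_\star)^2.
\end{align*}
The inclusion $\TT_\ell\setminus\TT_\star\subseteq\RR_w(\TT_\ell,\TT_\star)$ from~\eqref{ass:reliable} bounds the second summand by $\eta_{w,\ell}(\RR_w(\TT_\ell,\TT_\star))^2$, which is precisely the quantity we want on the right-hand side of the D\"orfler criterion. For the first summand, stability~\eqref{ass:stable} combined with the Young inequality $(a+b)^2\le(1+\delta)a^2+(1+\delta^{-1})b^2$, for a parameter $\delta>0$ to be fixed below, gives
\begin{align*}
\eta_{w,\ell}(\TT_\ell\cap\TT_\star)^2
\le (1+\delta)\,\eta_{w,\star}^2 + (1+\delta^{-1})\,\c{stable}^2\,\dist{w}{\TT_\ell}{\TT_\star}^2.
\end{align*}
Invoking the quasi-symmetry $\dist{w}{\TT_\ell}{\TT_\star}\le C_{\rm dist}\dist{w}{\TT_\star}{\TT_\ell}$ and then discrete reliability~\eqref{ass:reliable} bounds the last distance term by $C_{\rm dist}^2\,\c{stable}^2\c{reliable}^2\,\eta_{w,\ell}(\RR_w(\TT_\ell,\TT_\star))^2$. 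Plugging the hypothesis $\eta_{w,\star}^2\le\kappa_\star\eta_{w,\ell}^2$ into the combined estimate and rearranging yields
\begin{align*}
\bigl[1-(1+\delta)\kappa_\star\bigr]\,\eta_{w,\ell}^2 \le \bigl[1+(1+\delta^{-1})\,C_{\rm dist}^2\,\c{stable}^2\c{reliable}^2\bigr]\,\eta_{w,\ell}(\RR_w(\TT_\ell,\TT_\star))^2.
\end{align*}

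The main (and only) obstacle is to check that $\delta>0$ and $\kappa_\star\in(0,1)$ can be chosen so that the ratio of the two brackets is at least $\theta$. As $\delta\to\infty$, the right bracket tends to $1+C_{\rm dist}^2\,\c{stable}^2\c{reliable}^2$, which equals $1/\theta_\star$ in the idealized case $C_{\rm dist}=1$ stated in the lemma. The strict inequality $\theta<\theta_\star$ therefore leaves room to first fix $\delta$ large enough that $\theta\,\bigl[1+(1+\delta^{-1})\c{stable}^2\c{reliable}^2\bigr]<1$, and then to choose $\kappa_\star>0$ small enough that $(1+\delta)\kappa_\star$ is smaller than the remaining gap. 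With this choice, division yields the desired implication $\eta_{w,\star}^2\le\kappa_\star\eta_{w,\ell}^2\Longrightarrow\theta\,\eta_{w,\ell}^2\le\eta_{w,\ell}(\RR_w(\TT_\ell,\TT_\star))^2$.
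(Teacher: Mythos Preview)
The paper does not include its own proof of this lemma but merely cites~\cite[Proposition~4.12]{axioms}; your argument is precisely the standard proof found there (split off the non-refined elements, apply stability with Young's inequality, and control the distance via discrete reliability). Your observation about the factor $C_{\rm dist}$ is correct: in the abstract framework with a merely quasi-symmetric distance, the threshold $\theta_\star$ should carry an extra factor $C_{\rm dist}^2$, which disappears in all concrete applications of the paper where $\dist{w}{\cdot}{\cdot}$ is the (symmetric) energy norm of a difference of Galerkin solutions and hence $C_{\rm dist}=1$.
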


\begin{remark}[validity of quasi-orthogonality for $\eps=0$]\label{lem:qo0}
Suppose that~\eqref{ass:stable}--\eqref{ass:reliable} are valid. Arguing along the lines of~\cite[Proposition~4.11]{axioms}, one then sees that quasi-orthogonality~\eqref{ass:orthogonal} holds even with $\eps=0$ and
$\c{orth}$ depends only on $\c{linear}$, $\q{linear}$ from Proposition~\ref{prop:linear} and $\c{reliable}$.
\end{remark}

\begin{remark}\label{remark:laxmilgram}
{\rm(i)} In the setting of Section~\ref{section:motivation}, let $w\in\{u,z\}$ with $W_\star\in\{U_\star,Z_\star\}$ being the corresponding Galerkin solution for $\TT_\star\in\T$. The abstract distance is then defined by $\dist{w}{\TT_\ell}{\TT_\star}:=a(W_\star-W_\ell,W_\star-W_\ell)^{1/2}\simeq\norm{W_\star-W_\ell}{\XX}$; see Section~\ref{section:example}--\ref{section:bem} below. 

{\rm(ii)} Suppose that the bilinear form $a(\cdot,\cdot)$ is additionally symmetric, and let $\enorm{v}:=a(v,v)^{1/2}$ denote the induced energy norm which is an equivalent norm on $\XX$. Then, nestedness $\XX_n\subseteq\XX_m\subseteq\XX_k$ of the discrete spaces for all $k\ge m\ge n$ implies the Galerkin orthogonality
\begin{align*}
\enorm{W_k-W_m}^2+\enorm{W_m-W_n}^2=\enorm{W_k-W_n}^2\quad\text{for all }k\geq m\geq n.
\end{align*}
This and~\eqref{ass:reliable} imply
\begin{align*}
 \sum_{j=n}^N\dist{w}{\TT_{\ell_{j+1}}}{\TT_{\ell_j}}^2 
 &=  \sum_{j=n}^N \big(\enorm{W_{\ell_{j_{N+1}}}-W_{\ell_j}}^2-\enorm{W_{\ell_{j_{N+1}}}-W_{\ell_{j+1}}}^2\big)\\
 &\leq \enorm{W_{\ell_{j_{N+1}}}-W_{\ell_n}}^2\simeq \norm{W_{\ell_{j_{N+1}}}-W_{\ell_n}}{\XX}^2\lesssim \eta_{w,\ell_n}^2.
\end{align*}
and hence the quasi-orthogonality~\eqref{ass:orthogonal} with $\eps=0$.
\qed
\end{remark}

\subsection{Generalized linear convergence}\label{section:Rlin}
The following estimator reduction is first found in~\cite{ckns} and, e.g., proved along the lines of~\cite[Lemma~4.7]{axioms}. 
Since we need a slightly stronger result than that of~\cite{axioms}, 
which covers arbitrary refinements $\TT_\star\in\refine(\TT_{\ell+1})$ instead
of $\TT_{\ell+1}$ only,
we include the
proof for convenience of the reader.

\begin{lemma}[generalized estimator reduction]\label{lemma:estred}
Let $0<\theta\le1$. Let $\TT_\ell\in\T$ and $\TT_{\ell+1}\in\refine(\TT_\ell)$ be given 
triangulations and suppose that the refined elements satisfy the D\"orfler marking
\begin{align}\label{eq1:estred}
 \theta\,\eta_{w,\ell}^2 
 \le \eta_{w,\ell}(\TT_\ell\backslash\TT_{\ell+1})^2.
\end{align}
Then, there exist constants $0<\newq{estred}<1$ and $\newc{estred}>0$ which depend
only on assumption~\eqref{ass:stable}--\eqref{ass:reduction} and $\theta$ such that 
for all $\TT_\star \in \refine(\TT_{\ell+1})$, it holds 
\begin{align}\label{eq2:estred}
 \eta_{w,\star}^2 \le \q{estred}\,\eta_{w,\ell}^2 + \c{estred}\,\dist{w}{\TT_\star}{\TT_\ell}^2.
\end{align}
\end{lemma}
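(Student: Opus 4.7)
The plan is to mimic the classical estimator-reduction argument from \cite{axioms,ckns}, but to exploit the observation that refinement is monotone in the sense that an element refined between $\TT_\ell$ and $\TT_{\ell+1}$ remains refined in any further refinement $\TT_\star \in \refine(\TT_{\ell+1})$.

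First, I would split the estimator at level $\star$ along the elements that survive from $\TT_\ell$ and those that do not:
\begin{align*}
\eta_{w,\star}^2 = \eta_{w,\star}(\TT_\star\cap\TT_\ell)^2 + \eta_{w,\star}(\TT_\star\setminus\TT_\ell)^2.
\end{align*}
Since $\TT_\star\in\refine(\TT_{\ell+1})\subseteq\refine(\TT_\ell)$, both \eqref{ass:stable} and \eqref{ass:reduction} apply between $\TT_\ell$ and $\TT_\star$. Stability together with the Young inequality $(a+b)^2 \le (1+\delta)a^2 + (1+\delta^{-1})b^2$ with a free parameter $\delta > 0$ gives
\begin{align*}
\eta_{w,\star}(\TT_\star\cap\TT_\ell)^2 \le (1+\delta)\,\eta_{w,\ell}(\TT_\star\cap\TT_\ell)^2 + (1+\delta^{-1})\c{stable}^2\,\dist{w}{\TT_\ell}{\TT_\star}^2,
\end{align*}
while reduction \eqref{ass:reduction} gives
\begin{align*}
\eta_{w,\star}(\TT_\star\setminus\TT_\ell)^2 \le \q{reduction}\,\eta_{w,\ell}(\TT_\ell\setminus\TT_\star)^2 + \c{reduction}\,\dist{w}{\TT_\ell}{\TT_\star}^2.
\end{align*}

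Next, I would use the partition $\eta_{w,\ell}^2 = \eta_{w,\ell}(\TT_\star\cap\TT_\ell)^2 + \eta_{w,\ell}(\TT_\ell\setminus\TT_\star)^2$ to consolidate the two contributions:
\begin{align*}
\eta_{w,\star}^2 \le (1+\delta)\,\eta_{w,\ell}^2 - \big[(1+\delta)-\q{reduction}\big]\,\eta_{w,\ell}(\TT_\ell\setminus\TT_\star)^2 + C(\delta)\,\dist{w}{\TT_\ell}{\TT_\star}^2,
\end{align*}
with $C(\delta) := (1+\delta^{-1})\c{stable}^2+\c{reduction}$. The crucial step is then the monotonicity observation: since $\TT_\star\in\refine(\TT_{\ell+1})$, elements of $\TT_\ell\setminus\TT_{\ell+1}$ cannot reappear in $\TT_\star$, hence $\TT_\ell\setminus\TT_{\ell+1}\subseteq\TT_\ell\setminus\TT_\star$. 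Combined with the D\"orfler marking hypothesis \eqref{eq1:estred}, this yields
\begin{align*}
\theta\,\eta_{w,\ell}^2 \le \eta_{w,\ell}(\TT_\ell\setminus\TT_{\ell+1})^2 \le \eta_{w,\ell}(\TT_\ell\setminus\TT_\star)^2.
\end{align*}

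Plugging this into the previous bound and choosing $\delta > 0$ small enough so that $(1+\delta)-\q{reduction} > 0$ produces
\begin{align*}
\eta_{w,\star}^2 \le \big[(1+\delta)(1-\theta) + \theta\,\q{reduction}\big]\,\eta_{w,\ell}^2 + C(\delta)\,\dist{w}{\TT_\ell}{\TT_\star}^2.
\end{align*}
Because $\q{reduction} < 1$, the bracket equals $1 - \theta(1-\q{reduction}) + (1-\theta)\delta$, which is strictly less than $1$ for any sufficiently small $\delta$; fixing such a $\delta$ defines $\q{estred} \in (0,1)$. Finally, the quasi-symmetry of the distance from the preamble gives $\dist{w}{\TT_\ell}{\TT_\star}^2 \le C_{\rm dist}^2\,\dist{w}{\TT_\star}{\TT_\ell}^2$, so setting $\c{estred} := C_{\rm dist}^2\,C(\delta)$ yields \eqref{eq2:estred}.

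I do not anticipate any real obstacle; the only subtle point is the monotonicity argument $\TT_\ell\setminus\TT_{\ell+1}\subseteq\TT_\ell\setminus\TT_\star$, which is exactly what allows the D\"orfler property inherited from the step $\TT_\ell\to\TT_{\ell+1}$ to be transported across the arbitrary further refinement $\TT_\star\in\refine(\TT_{\ell+1})$ and thereby gives the strengthening over \cite[Lemma~4.7]{axioms}.
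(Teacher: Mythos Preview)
Your proposal is correct and follows essentially the same argument as the paper's proof: split $\eta_{w,\star}^2$ into the non-refined and refined parts, apply stability with Young's inequality and reduction respectively, then use $\TT_\ell\setminus\TT_{\ell+1}\subseteq\TT_\ell\setminus\TT_\star$ together with the D\"orfler hypothesis to absorb the refined contribution; your final constant $\q{estred}=(1+\delta)(1-\theta)+\theta\,\q{reduction}$ is algebraically identical to the paper's $(1+\delta)-(1+\delta-\q{reduction})\theta$. The only cosmetic difference is that you explicitly invoke the quasi-symmetry of $\dist{w}{\cdot}{\cdot}$ at the end (picking up a factor $C_{\rm dist}^2$ in $\c{estred}$), whereas the paper silently writes $\dist{w}{\TT_\star}{\TT_\ell}$ from the outset.
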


\begin{proof}
For each $\delta>0$, the Young inequality $(a+b)^2 \le (1+\delta)a^2 + (1+\delta^{-1})b^2$
and stability~\eqref{ass:stable} yield
\begin{align*}
 \eta_{w,\star}(\TT_\star\cap\TT_\ell)^2
 &\le (1+\delta)\,\eta_{w,\ell}(\TT_\star\cap\TT_\ell)^2
 + \c{stable}^2(1+\delta^{-1})\,\dist{w}{\TT_\star}{\TT_\ell}^2
 \\&
 = (1+\delta)\,\eta_{w,\ell}^2 - (1+\delta)\,\eta_{w,\ell}(\TT_\ell\backslash\TT_\star)^2
 + \c{stable}^2(1+\delta^{-1})\,\dist{w}{\TT_\star}{\TT_\ell}^2.
\end{align*}
Together with reduction~\eqref{ass:reduction}, we obtain
\begin{align*}
 \eta_{w,\star}^2 
 &= \eta_{w,\star}(\TT_\star\cap\TT_\ell)^2
 + \eta_{w,\star}(\TT_\star\backslash\TT_\ell)^2
 \\&
 \le (1+\delta)\,\eta_{w,\ell}^2 - (1+\delta-\q{reduction})\,
 \eta_{w,\ell}(\TT_\ell\backslash\TT_\star)^2
 + \big(\c{stable}^2(1+\delta^{-1})+\c{reduction}\big)\,\dist{w}{\TT_\star}{\TT_\ell}^2.
\end{align*}
With $\TT_\ell\backslash\TT_\star\supseteq\TT_\ell\backslash\TT_{\ell+1}$ and
the D\"orfler marking~\eqref{eq1:estred}, we see
\begin{align*}
 \eta_{w,\ell}(\TT_\ell\backslash\TT_\star)^2
 \ge \eta_{w,\ell}(\TT_\ell\backslash\TT_{\ell+1})^2
 \ge \theta\,\eta_{w,\ell}^2.
\end{align*}
Combining the last two estimates, we prove~\eqref{eq2:estred}
with $\c{estred} = \c{stable}^2(1+\delta^{-1})+\c{reduction}$ and
$\q{estred} = (1 + \delta) - (1+\delta-\q{reduction})\theta
\to 1-(1-\q{reduction})\theta<1$ as $\delta\to0$. For sufficiently small 
$\delta>0$, it thus holds $0<\q{estred}<1$.
\end{proof}

The following result generalizes~\cite[Proposition~4.10]{axioms} to the present setting. We note that~\eqref{ass:reliable} enters only through the quasi-monotonicity of the estimator (Lemma~\ref{lemma:monotone}).

\begin{proposition}[generalized linear convergence]\label{prop:linear}
Let $\TT_\ell$ be a sequence
of successively refined triangulations, i.e., $\TT_\ell \in \refine(\TT_{\ell-1})$
for all $\ell\in\N$. 
Let $0<\theta\le1$. Then, there are constants $0<\newq{linear}<1$ and $\newc{linear}>0$ 
which depend only on~\eqref{ass:stable}--\eqref{ass:reliable}
and $\theta$, such that the following holds: Let 
$\ell,n\in\N_0$ and suppose that there are at least $k\le n$ indices 
$\ell \le \ell_1<\ell_2<\dots<\ell_k<\ell+n$ such that the D\"orfer marking~\eqref{eq1:estred}
is satisfied on the refined elements, i.e.,
\begin{align}\label{eq1:linear}
 \theta\,\eta_{w,j_m}^2 
 \le \eta_{w,\ell_j}(\TT_{\ell_j}\backslash\TT_{\ell_j+1})^2
 \quad\text{for all }j=1,\dots k.
\end{align}
Then, the error estimator satisfies
\begin{align}\label{eq2:linear}
 \eta_{w,\ell+n}^2 \le \c{linear}\,\q{linear}^k\,\eta_{w,\ell}^2.
\end{align}
\end{proposition}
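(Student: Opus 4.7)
The plan is to combine the generalized estimator reduction (Lemma~\ref{lemma:estred}) at the Dörfler indices $\ell_j$ with the quasi-orthogonality axiom~\eqref{ass:orthogonal} to produce a per-step geometric contraction of an appropriately weighted quantity. Setting $\ell_{k+1}:=\ell+n$ and abbreviating $A_j:=\eta_{w,\ell_j}^2$ and $D_j^2:=\dist{w}{\TT_{\ell_{j+1}}}{\TT_{\ell_j}}^2$, the case $k=0$ reduces to quasi-monotonicity (Lemma~\ref{lemma:monotone}), so I assume $k\ge 1$. Since the sequence is successively refined we have $\TT_{\ell_{j+1}}\in\refine(\TT_{\ell_j+1})$, and Lemma~\ref{lemma:estred} applied at $\ell_j$ with $\TT_\star:=\TT_{\ell_{j+1}}$ yields the contraction-with-perturbation inequality
\begin{align*}
A_{j+1}\le \q{estred}\,A_j+\c{estred}\,D_j^2\quad\text{for }j=1,\dots,k,
\end{align*}
while quasi-monotonicity also gives $A_1\le\c{mon}\,\eta_{w,\ell}^2$.

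Next, I would sum this inequality for $j=n_0,\dots,k$ with an arbitrary starting index $1\le n_0\le k$ and apply~\eqref{ass:orthogonal} to control the distance terms. Choosing $\eps>0$ so small that $\gamma:=1-\q{estred}-\c{estred}\,\eps>0$ and absorbing the resulting $\c{estred}\,\eps\sum A_j$ contribution into the left-hand side yields a summability bound
\begin{align*}
\sum_{j=n_0}^kA_j\le C_{\rm sum}\,A_{n_0},
\end{align*}
with $C_{\rm sum}$ depending only on~\eqref{ass:stable}--\eqref{ass:reliable} and $\theta$. Feeding this back into~\eqref{ass:orthogonal} produces the sharpened estimate $\sum_{i=j}^k D_i^2\le C_0\,A_j$ with $C_0:=\eps\,C_{\rm sum}+\c{orth}(\eps)$, for every $1\le j\le k$. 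Thus the $\eps$-slack version of quasi-orthogonality bootstraps itself, through summability, into an effective $\eps=0$ form.

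The core step is to introduce the weighted quantity
\begin{align*}
\widehat A_j:=A_j+\c{estred}\sum_{i=j}^kD_i^2,\quad j=1,\dots,k+1.
\end{align*}
A direct telescoping using Step~1 gives $\widehat A_{j+1}-\widehat A_j=(A_{j+1}-A_j)-\c{estred}\,D_j^2\le -(1-\q{estred})A_j$, while the sharpened bound from Step~2 yields $\widehat A_j\le(1+\c{estred}\,C_0)A_j$, hence $A_j\ge\widehat A_j/(1+\c{estred}\,C_0)$. Combining these two inequalities produces the per-step contraction $\widehat A_{j+1}\le q_{\rm lin}\widehat A_j$ with $q_{\rm lin}:=1-(1-\q{estred})/(1+\c{estred}\,C_0)\in(0,1)$.

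Iterating the per-step contraction from $j=1$ up to $j=k$ and using $\widehat A_{k+1}=A_{k+1}=\eta_{w,\ell+n}^2$ together with $\widehat A_1\le(1+\c{estred}\,C_0)A_1\le(1+\c{estred}\,C_0)\c{mon}\,\eta_{w,\ell}^2$ yields the desired bound~\eqref{eq2:linear} with $\q{linear}:=q_{\rm lin}$ and $\c{linear}:=(1+\c{estred}\,C_0)\c{mon}$. I expect the hard part to be Step~3: unlike the $\eps=0$ quasi-orthogonality used in~\cite{axioms}, the $\eps$-slackening here does not immediately bound $\sum D_i^2$ by $A_j$, so the weighted-quantity trick does not close on a single step; only the detour through the summability bound of Step~2 bootstraps the weaker axiom into a form from which per-step geometric contraction can be extracted.
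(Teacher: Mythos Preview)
Your proof is correct and takes a route somewhat different from the paper's. Both arguments begin identically: sum the estimator reduction $A_{j+1}\le\q{estred}A_j+\c{estred}D_j^2$ over the D\"orfler indices and invoke axiom~\eqref{ass:orthogonal} with small $\eps>0$ to obtain the summability bound $\sum_{j\ge n_0}A_j\le C\,A_{n_0}$; this is your Step~2 and the paper's inequality~\eqref{eq:rlinhelp}. From there the arguments diverge. The paper extracts geometric decay directly via an inductive bound $A_k\le(1-C^{-1})^j\sum_{i=k-j}^kA_i$ on partial sums, i.e., the ``summability implies R-linear convergence'' mechanism of~\cite{axioms}. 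You instead feed summability back through~\eqref{ass:orthogonal} to upgrade it to the effective $\eps=0$ form $\sum_{i\ge j}D_i^2\le C_0A_j$ (cf.\ Remark~\ref{lem:qo0}), and then show that the weighted quasi-error $\widehat A_j=A_j+\c{estred}\sum_{i\ge j}D_i^2$ contracts per step. Your approach is closer in spirit to the classical contraction proofs of~\cite{ckns,ffp} and makes the per-step mechanism transparent; the paper's approach is a bit shorter and avoids the bootstrap detour. One shared technicality: with your convention $\ell_{k+1}:=\ell+n$ (and the paper's implicit use of $\ell_{k+1}$), the last distance $D_k$ is not literally between consecutive D\"orfler steps as in the formulation of~\eqref{ass:orthogonal}; this is harmless in all concrete settings and can be circumvented by stopping the telescoping sum at $k-1$ and absorbing one additional factor of~$\c{mon}$.
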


\begin{proof}
To abbreviate notation, set $j_0:=\ell$. Note that $\TT_{\ell_{k+1}}\in\refine(\TT_{\ell_k+1})$. Therefore, the estimator reduction~\eqref{eq2:estred} shows for all $\eps>0$ and all $0\leq j\leq k$
\begin{align*}
 \sum_{i=k-j}^{k}\eta_{w,\ell_{i+1}}^2&\leq \sum_{i=k-j}^{k}\Big(\q{estred}\eta_{w,\ell_i}^2 + \c{estred}\dist{w}{\TT_{\ell_{i+1}}}{\TT_{\ell_i}}^2\Big)\\
 &= \sum_{i=k-j}^{k}\Big((\q{estred}+\c{estred}\eps)\eta_{w,\ell_i}^2 + \c{estred}\big(\dist{w}{\TT_{\ell_{i+1}}}{\TT_{\ell_i}}^2-\eps\eta_{w,\ell_i}^2\big)\Big).
\end{align*}
Choose $\eps < (1-\q{estred})\c{estred}^{-1}$ so that $\kappa:= 1-(\q{estred}+\c{estred}\eps)>0$. Use~\eqref{ass:orthogonal} to obtain for all $0\leq j\leq k$
\begin{align}\label{eq:rlinhelp}
\begin{split}
 \kappa  \sum_{i=k-j}^{k}\eta_{w,\ell_{i+1}}^2&\leq \eta_{w,\ell_{k-j}}^2 + \c{estred}\sum_{i=k-j}^{k}\big(\dist{w}{\TT_{\ell_{i+1}}}{\TT_{\ell_i}}^2-\eps\eta_{w,\ell_i}^2\big)\\&\leq  (1+ \c{estred}\c{orth}(\eps))\eta_{w,\ell_{k-j}}^2.
 \end{split}
\end{align}
With $C:=(1+ \c{estred}\c{orth}(\eps))/\kappa>1$, mathematical induction below shows 
\begin{align}\label{eq:rlinind}
\eta_{w,\ell_k}^2\leq (1-C^{-1})^j \sum_{i=k-j}^{k}\eta_{w,\ell_i}^2\quad\text{for all }0\leq j\leq k.
\end{align}
To see~\eqref{eq:rlinind}, note that the case $j=0$ holds with equality.
Suppose that~\eqref{eq:rlinind} holds for $j<k$. This induction hypothesis and~\eqref{eq:rlinhelp} show
\begin{align*}
 \eta_{w,\ell_k}^2\leq (1-C^{-1})^j\sum_{i=k-j}^j\eta_{w,\ell_i}^2&= (1-C^{-1})^j\big((\sum_{i=k-(j+1)}^k\eta_{w,\ell_i}^2) - \eta_{w,\ell_{k-(j+1)}}^2\big)\\&\overset{\eqref{eq:rlinhelp}}{\leq} (1-C^{-1})^{j+1} \sum_{i=k-(j+1)}^k\eta_{w,\ell_i}^2,
\end{align*}
which proves validity of the induction step. Hence, the assertion~\eqref{eq:rlinind} holds for all $j\leq k$. By use of~\eqref{eq:rlinind} for $j=k-1$ and~\eqref{eq:rlinhelp} for $j=k$, we obtain
\begin{align*}
 \c{mon}^{-1}\eta_{w,\ell+n}^2\leq \eta_{w,\ell_k}^2
 &\overset{\eqref{eq:rlinind}}{\leq} (1-C^{-1})^{k-1} \sum_{i=1}^{k}\eta_{w,\ell_i}^2
 \le (1-C^{-1})^{k-1} \sum_{i=0}^{k}\eta_{w,\ell_{i+1}}^2
 \\&\overset{\eqref{eq:rlinhelp}}{\leq} 
 (1-C^{-1})^{k-1}C\,\eta_{w,\ell_0}^2 
 = (1-C^{-1})^{k}C/(1-C^{-1})\,\eta_{w,\ell}^2.
\end{align*}
This concludes the proof with $\c{linear}=C\c{mon}/(1-C^{-1})$ and $\q{linear}=(1-C^{-1})$.
\end{proof}

\section{Optimal Convergence of Adaptive Algorithms}\label{section:optimal}

\noindent
Throughout this section, we suppose that the error estimators $\eta_{u,\ell}$ 
and $\eta_{z,\ell}$ satisfy the respective assumptions~\eqref{ass:stable}--\eqref{ass:reliable} 
of Section~\ref{section:axioms}. Without loss of generality, we suppose 
that $\eta_{u,\ell}$ and $\eta_{z,\ell}$ satisfy the 
axioms~\eqref{ass:stable}--\eqref{ass:reliable} with the same constants.

\begin{remark}
The axioms~\eqref{ass:stable}--\eqref{ass:reliable} are designed to cover weighted-residual error estimators in the frame of FEM and BEM. However, as is shown in~\cite[Section~8]{axioms} for optimal adaptivity for the energy error, it is sufficient that  for $w\in\{u,z\}$ the error estimator $\eta_{w,\ell}$ used in the adaptive algorithm is locally equivalent to some error estimator $\widetilde\eta_{w,\ell}$ which satisfies~\eqref{ass:stable}--\eqref{ass:reliable}, i.e.,
\begin{align*}
 \eta_{\ell,w}(T) \lesssim \widetilde\eta_{\ell,w}(\omega_\ell(T))
 \quad\text{and}\quad
 \widetilde\eta_{\ell,w}(T) \lesssim \eta_{\ell,w}(\omega_\ell(T))
 \quad\text{for all }T\in\TT_\ell,
\end{align*}
where $\omega_\ell(T)$ denotes a patch of $T$. Then, the convergence (Theorem~\ref{theorem:linear}) as well as optimality results (Theorem~\ref{theorem:optimal}, \ref{theorem:optimal:mod}, and~\ref{theorem:optimal:bet}) remain valid. We leave the details to the reader, but note that such arguments cover averaging-based error estimators, hierarchical error estimators, as well as estimators based on equilibrated fluxes; see~\cite{axioms,MR2776915}.\qed
\end{remark}

\subsection{Linear convergence of Algorithms~\ref{algorithm},~\ref{algorithm:mod}, and~\ref{algorithm:bet}}
\label{section:linear}
The following result gives a criterion for linear convergence which is satisfied for either adaptive algorithm (Algorithm~\ref{algorithm},~\ref{algorithm:mod}, and~\ref{algorithm:bet}). As a consequence, linear convergence is independent of $\c{min}$, and we may formally also choose $\c{min}=\infty$.
Finally, linear convergence~\eqref{eq:prop:linear} does only rely on
discrete reliability~\eqref{ass:reliable} to ensure quasi-monotonicity of the estimator (Lemma~\ref{lemma:monotone}). In the frame of the Lax-Milgram lemma from Section~\ref{section:motivation}, the quasi-monotonicity already follows from classical reliability~\eqref{eq:reliable:laxmilgram}; see~\cite[Lemma~3.6]{axioms}.

\begin{proposition}\label{prop:linearconv}
Let the sequence of meshes $(\TT_\ell)_{\ell\in\N_0}\subset \T$ satisfy $\TT_{\ell+1}\in\refine(\TT_\ell)$ for all $\ell\in\N_0$.
 For all $\ell\in\N_0$, let the set $\MM_\ell:=\TT_\ell\setminus\TT_{\ell+1}$ satisfy either~\eqref{doerfler:u} or~\eqref{doerfler:z}. For all $0<\theta\le1$, there are constants $0<\newq{rlinear}<1$ and $\newc{rlinear}>0$ which depend only on~\eqref{ass:stable}--\eqref{ass:reliable} and $\theta$, such that there holds linear convergence in the sense of
 \begin{align}\label{eq:prop:linear}
  \eta_{u,\ell+n}\eta_{z,\ell+n} \le \c{rlinear}\q{rlinear}^n\eta_{u,\ell}\eta_{z,\ell}
 \quad\text{for all }\ell,n\in\N_0. 
 \end{align}
\end{proposition}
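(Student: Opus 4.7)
The plan is to reduce the product estimate to Proposition~\ref{prop:linear}, which already provides linear convergence for a single estimator in terms of the number of steps on which Dörfler marking on the refined elements is valid. Since by hypothesis every index $j$ contributes to \eqref{doerfler:u} or \eqref{doerfler:z}, any batch of $n$ consecutive steps splits into Dörfler-steps for $u$ and Dörfler-steps for $z$ whose counts sum to at least $n$; I would apply Proposition~\ref{prop:linear} separately to each estimator over the same window and then multiply.

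Concretely, fix $\ell,n\in\N_0$ and define
\begin{align*}
I_u := \set{j\in\N_0}{\ell\le j<\ell+n,\ \theta\,\eta_{u,j}^2\le\eta_{u,j}(\TT_j\setminus\TT_{j+1})^2},
\end{align*}
and $I_z$ analogously. By assumption $I_u\cup I_z=\{\ell,\dots,\ell+n-1\}$, hence $k_u:=\#I_u$ and $k_z:=\#I_z$ satisfy $k_u+k_z\ge n$. The indices of $I_u$ are precisely a choice of $k_u$ indices in $[\ell,\ell+n)$ on which the $u$-version of \eqref{eq1:linear} holds, so Proposition~\ref{prop:linear} (applied to $w=u$) yields $\eta_{u,\ell+n}^2\le \c{linear}\,\q{linear}^{k_u}\,\eta_{u,\ell}^2$. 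The same argument applied to $w=z$ gives $\eta_{z,\ell+n}^2\le \c{linear}\,\q{linear}^{k_z}\,\eta_{z,\ell}^2$.

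Multiplying the two inequalities and exploiting $k_u+k_z\ge n$ together with $0<\q{linear}<1$ gives
\begin{align*}
 \bigl(\eta_{u,\ell+n}\eta_{z,\ell+n}\bigr)^2
 \le \c{linear}^2\,\q{linear}^{k_u+k_z}\,\bigl(\eta_{u,\ell}\eta_{z,\ell}\bigr)^2
 \le \c{linear}^2\,\q{linear}^{n}\,\bigl(\eta_{u,\ell}\eta_{z,\ell}\bigr)^2,
\end{align*}
so the claim follows with $\newc{rlinear}:=\c{linear}$ and $\newq{rlinear}:=\q{linear}^{1/2}\in(0,1)$; the constants depend only on \eqref{ass:stable}--\eqref{ass:reliable} and $\theta$, as these are the only parameters entering $\c{linear}$ and $\q{linear}$ in Proposition~\ref{prop:linear}.

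The statement is short and I do not anticipate a genuine obstacle; the only subtlety is checking that the Dörfler condition \eqref{doerfler:u} on $\MM_\ell=\TT_\ell\setminus\TT_{\ell+1}$ translates verbatim into hypothesis \eqref{eq1:linear} of Proposition~\ref{prop:linear} for $w=u$, which is immediate because $\TT_j\setminus\TT_{j+1}$ is exactly the refined set. Note also that quasi-monotonicity (Lemma~\ref{lemma:monotone}) is already absorbed into Proposition~\ref{prop:linear}, so no additional appeal to \eqref{ass:reliable} is required here beyond what is inherited from that proposition.
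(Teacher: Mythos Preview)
Your proof is correct and follows essentially the same approach as the paper: split the $n$ steps according to which estimator satisfies D\"orfler marking on the refined elements, apply Proposition~\ref{prop:linear} separately to $\eta_u$ and $\eta_z$, and multiply. Your use of $k_u+k_z\ge n$ (allowing overlap) is a mild sharpening of the paper's partition $k+(n-k)=n$, but the argument and the resulting constants $\c{rlinear}=\c{linear}$, $\q{rlinear}=\q{linear}^{1/2}$ are identical.
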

\begin{proof}
The assumptions on $(\TT_\ell)_{\ell\in\N_0}$ imply that for $n$ successive meshes
$\TT_j,\,j=\ell,\dots,\ell+n$, 
$\TT_j\backslash\TT_{j+1}$ satisfies $k$-times the D\"orfler marking for
$\eta_{u,\ell}$ and $(n-k)$-times the D\"orfler marking for
$\eta_{z,\ell}$. According to Proposition~\ref{prop:linearconv}, this implies
\begin{align*}
 \eta_{u,\ell+n}^2 \le \c{linear}\,\q{linear}^k\,\eta_{u,\ell}^2
 \quad\text{as well as}\quad
 \eta_{z,\ell+n}^2 \le \c{linear}\,\q{linear}^{n-k}\,\eta_{z,\ell}^2.
\end{align*}
Altogether, this proves
\begin{align*}
 \eta_{u,\ell+n}^2 \, \eta_{z,\ell+n}^2
 \le \c{linear}^2\,\q{linear}^k\,\eta_{u,\ell}^2 \, \eta_{z,\ell}^2.
\end{align*}
This concludes~\eqref{eq:prop:linear} with $\q{rlinear} = \q{linear}^{1/2}$
and $\c{rlinear} = \c{linear}$.
\end{proof}

\begin{theorem}\label{theorem:linear}
For all $0<\theta\le1$, there are constants $0<\newq{rlinear}<1$ and $\newc{rlinear}>0$ which depend only on~\eqref{ass:stable}--\eqref{ass:reliable} and $\theta$ such that Algorithms~\ref{algorithm},~\ref{algorithm:mod}, and~\ref{algorithm:bet} are linear convergent in the sense of~\eqref{eq:prop:linear} 
\end{theorem}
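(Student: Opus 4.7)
The plan is to reduce Theorem~\ref{theorem:linear} to the already-established Proposition~\ref{prop:linearconv}. That proposition provides linear convergence of the estimator product $\eta_{u,\ell}\eta_{z,\ell}$ as soon as, in each step, the set of actually refined elements $\TT_\ell\setminus\TT_{\ell+1}$ satisfies the D\"orfler marking~\eqref{doerfler:u} or~\eqref{doerfler:z} for at least one of the two estimators. Since $\eta_{w,\ell}(\UU_\star)^2$ is monotone in $\UU_\star\subseteq\TT_\ell$, it suffices in each case to verify that the marked set $\MM_\ell\subseteq\TT_\ell\setminus\TT_{\ell+1}$ itself meets one of the two D\"orfler criteria. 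Once this is checked for all three algorithms, the assertion follows at once.

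For Algorithm~\ref{algorithm}, step~(v) defines $\MM_\ell\in\{\MM_{u,\ell},\MM_{z,\ell}\}$, and by construction one of~\eqref{doerfler:u} or~\eqref{doerfler:z} holds for $\MM_\ell$. Since $\MM_\ell\subseteq\TT_\ell\setminus\TT_{\ell+1}$, the same D\"orfler bound carries over to $\TT_\ell\setminus\TT_{\ell+1}$, and Proposition~\ref{prop:linearconv} applies. For Algorithm~\ref{algorithm:mod}, step~(v) chooses $\MM_\ell\supseteq\widetilde\MM_\ell\in\{\MM_{u,\ell},\MM_{z,\ell}\}$; again by monotonicity, whichever of~\eqref{doerfler:u:mod} or~\eqref{doerfler:z:mod} is satisfied by $\widetilde\MM_\ell$ is inherited by $\MM_\ell$, and therefore by $\TT_\ell\setminus\TT_{\ell+1}$.

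The mildly non-trivial case is Algorithm~\ref{algorithm:bet}. Here the combined indicators $\rho_\ell(T)^2=\eta_{u,\ell}(T)^2\eta_{z,\ell}^2+\eta_{u,\ell}^2\eta_{z,\ell}(T)^2$ satisfy $\rho_\ell^2=2\,\eta_{u,\ell}^2\eta_{z,\ell}^2$, so the joint D\"orfler step~\eqref{eq:doerfler:bet} reads
\begin{align*}
2\theta\,\eta_{u,\ell}^2\eta_{z,\ell}^2
\le \eta_{u,\ell}(\MM_\ell)^2\eta_{z,\ell}^2 + \eta_{u,\ell}^2\,\eta_{z,\ell}(\MM_\ell)^2.
\end{align*}
If $\eta_{u,\ell}\eta_{z,\ell}=0$, then $\eta_{u,\ell+n}\eta_{z,\ell+n}=0$ by quasi-monotonicity (Lemma~\ref{lemma:monotone}) and linear convergence is trivial. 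Otherwise, dividing by $\eta_{u,\ell}^2\eta_{z,\ell}^2$ yields $2\theta\le \eta_{u,\ell}(\MM_\ell)^2/\eta_{u,\ell}^2 + \eta_{z,\ell}(\MM_\ell)^2/\eta_{z,\ell}^2$, so at least one of the two summands is $\ge\theta$. Hence $\MM_\ell$ fulfils either~\eqref{doerfler:u} or~\eqref{doerfler:z}, and the same is true for $\TT_\ell\setminus\TT_{\ell+1}\supseteq\MM_\ell$.

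The main obstacle is conceptually mild: it is only the above pigeonhole extraction of a single-estimator D\"orfler property from the combined marking of Algorithm~\ref{algorithm:bet}; everything else is a direct appeal to Proposition~\ref{prop:linearconv}. The constants $\q{rlinear}$ and $\c{rlinear}$ are exactly those produced by Proposition~\ref{prop:linearconv}, and thus depend only on~\eqref{ass:stable}--\eqref{ass:reliable} and $\theta$, as claimed.
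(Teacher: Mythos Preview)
Your proof is correct and follows essentially the same approach as the paper: reduce each algorithm to Proposition~\ref{prop:linearconv} by verifying that $\MM_\ell\subseteq\TT_\ell\setminus\TT_{\ell+1}$ satisfies the D\"orfler marking for at least one estimator, with the pigeonhole argument for Algorithm~\ref{algorithm:bet} via $\rho_\ell^2=2\eta_{u,\ell}^2\eta_{z,\ell}^2$. The only cosmetic difference is that you explicitly treat the degenerate case $\eta_{u,\ell}\eta_{z,\ell}=0$, which the paper leaves implicit.
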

\begin{proof}[Proof for Algorithm~\ref{algorithm}]
In each step of Algorithm~\ref{algorithm}, the set $\MM_\ell$ satisfies either
the D\"orfler marking~\eqref{doerfler:u} for $\eta_{u,\ell}$ 
or~\eqref{doerfler:z} for $\eta_{z,\ell}$. Since $\MM_\ell\subseteq\TT_\ell\backslash\TT_{\ell+1}$, the assumptions of Proposition~\ref{prop:linearconv} are satisfied. This concludes the proof for Algorithm~\ref{algorithm}.
\end{proof}
\begin{proof}[Proof for Algorithm~\ref{algorithm:mod}]
Analogously, the set $\widetilde\MM_\ell$ satisfies in each step of Algorithm~\ref{algorithm:mod} either~\eqref{doerfler:u:mod} or~\eqref{doerfler:z:mod}. Since $\widetilde\MM_\ell\subseteq \MM_\ell\subseteq \TT_\ell\setminus\TT_{\ell+1}$,
Proposition~\ref{prop:linearconv} applies and concludes the proof for Algorithm~\ref{algorithm:mod}.
\end{proof}
\begin{proof}[Proof for Algorithm~\ref{algorithm:bet}]
 Note that $\rho_\ell^2 = 2\,\eta_{u,\ell}^2\eta_{z,\ell}^2$. Therefore,~\eqref{eq:doerfler:bet} 
becomes
\begin{align*}
 2\theta\,\eta_{u,\ell}^2\eta_{z,\ell}^2
 \le \eta_{u,\ell}(\MM_\ell)^2\,\eta_{z,\ell}^2
 + \eta_{u,\ell}^2\,\eta_{z,\ell}(\MM_\ell)^2.
\end{align*}
In particular, this shows that 
\begin{align*}
 \theta\,\eta_{u,\ell}^2 \le \eta_{u,\ell}(\MM_\ell)^2
 \quad\text{or}\quad
 \theta\,\eta_{z,\ell}^2 \le \eta_{z,\ell}(\MM_\ell)^2.
 \end{align*}
Since $\MM_\ell\subseteq \TT_\ell\setminus\TT_{\ell+1}$, Proposition~\ref{prop:linearconv} applies and concludes the proof.
\end{proof}

\subsection{Fine properties of mesh-refinement\label{section:finemesh}}
The following Theorem~\ref{theorem:optimal} states optimal convergence behavior. Unlike linear convergence, the 
proof of optimal convergence rates is more strongly tailored to the mesh-refinement used. First, we suppose that each refined element has at least two sons, i.e.,
\begin{align}\label{eq:mesh-sons}
\#(\TT\backslash\TT') + \#\TT \le \#\TT'
\quad\text{for all }\TT\in\T\text{ and }\TT'\in\refine(\TT).
\end{align}
Second, we require the mesh-closure estimate
\begin{align}\label{eq:mesh-closure}
 \#\TT_\ell-\#\TT_0 \le \c{nvb}\,\sum_{j=0}^{\ell-1}\#\MM_j
 \quad\text{for all }\ell\in\N,
\end{align}
where $\newc{nvb}>0$ depends only on $\TT_0$. This has first been proved for 2D newest
vertex bisection in~\cite{bdd} and has later been generalized to arbitrary dimension 
$d\ge2$ in~\cite{stevenson:nvb}. While both works require an additional admissibility assumption 
on $\TT_0$, this has at least been proved unnecessary for 2D in~\cite{nvb}. Finally,
it has been proved in~\cite{ckns,stevenson} that newest vertex bisection ensures the
overlay estimate, i.e., for all triangulations $\TT,\TT'\in\T$ there exists a common
refinement $\TT\oplus\TT'\in\refine(\TT)\cap\refine(\TT')$ which satisfies 
\begin{align}\label{eq:mesh-overlay}
 \#(\TT\oplus\TT') \le \#\TT + \#\TT' - \#\TT_0.
\end{align}
Although not used explicitly, we note that for newest vertex bisection, the triangulation $\TT\oplus\TT'$  is, in fact, the overlay of
$\TT$ and $\TT'$. For 1D bisection (e.g., for 2D BEM computations in Section~\ref{section:bem}), the algorithm from~\cite{eps65} satisfies~\eqref{eq:mesh-sons}--\eqref{eq:mesh-overlay} and guarantees that the local mesh-ratio is uniformly bounded. For meshes with first-order hanging nodes, \eqref{eq:mesh-sons}--\eqref{eq:mesh-overlay} are analyzed in~\cite{MR2670003}, while T-spline meshes for isogeometric analysis are considered in~\cite{peterseim}.

\subsection{Optimal convergence rates for Algorithm~\ref{algorithm}}\label{section:optA}
Our proofs of the following theorems (Theorem~\ref{theorem:optimal}, \ref{theorem:optimal:mod}, \ref{theorem:optimal:bet}) follow the ideas of~\cite{ms} as worked out in~\cite{pointabem}. We include it here for the sake of completeness and a self-contained presentation.

\begin{theorem}\label{theorem:optimal}
Suppose that the mesh-refinement satisfies \eqref{eq:mesh-sons} as well as the mesh-closure estimate~\eqref{eq:mesh-closure}
and the overlay estimate~\eqref{eq:mesh-overlay}.
Let $0<\theta< \theta_\star:=(1+\c{stable}^2\c{reliable}^2)^{-1}$ be sufficiently small.
Then, there exists a constant $\newc{final1}>0$ which depends only on $\theta$, $\c{nvb}$, and~\eqref{ass:stable}--\eqref{ass:reliable}, such that
for all $s,t>0$ the assumption
$(u,z)\in\A_s\times\A_t$ implies $\text{for all }\ell\in\N_0$
\begin{align}\label{eq:thm:optimal}
 C_{\rm goal}^{-1}|g-g_\ell|\leq\eta_{u,\ell}\eta_{z,\ell} \le \frac{\c{final1}^{1+s+t}}{(1-\q{rlinear}^{1/(s+t)})^{s+t}}\,\norm{u}{\A_s}\norm{z}{\A_t}\,(\#\TT_\ell-\#\TT_0)^{-(s+t)}
\end{align}
i.e., Algorithm~\ref{algorithm} guarantees that the estimator product decays asymptotically with any possible algebraic rate.
\end{theorem}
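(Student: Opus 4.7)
I would follow the Mommer--Stevenson template of~\cite{ms,axioms}, but phrased entirely in terms of the estimator so that no efficiency estimate is needed, in three steps.

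\emph{Step~1 (optimal comparison meshes give small marked sets).} Let $\kappa_\star<1$ be the constant of Lemma~\ref{lemma:doerfler}, which is available because $\theta<\theta_\star$. For either $w=u$ with exponent $s$, or $w=z$ with exponent $t$, I would set $\eps:=(\kappa_\star/\c{mon})^{1/2}\eta_{w,\ell}$ and use $u\in\A_s$ (resp.\ $z\in\A_t$) to find a comparison triangulation $\TT_w^\star\in\T$ with $\eta_{w,\star}\le\eps$ and
\begin{align*}
\#\TT_u^\star-\#\TT_0\lesssim \norm{u}{\A_s}^{1/s}\eta_{u,\ell}^{-1/s},\qquad \#\TT_z^\star-\#\TT_0\lesssim \norm{z}{\A_t}^{1/t}\eta_{z,\ell}^{-1/t}.
\end{align*}
Quasi-monotonicity (Lemma~\ref{lemma:monotone}) applied to the overlay $\widehat\TT_\ell^w:=\TT_\ell\oplus\TT_w^\star$ yields $\eta_{w,\widehat\ell}^2\le\c{mon}\eps^2=\kappa_\star\eta_{w,\ell}^2$, so Lemma~\ref{lemma:doerfler} shows that the refined set $\RR_w(\TT_\ell,\widehat\TT_\ell^w)$ satisfies the D\"orfler marking for $\eta_{w,\ell}$. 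Combining the quasi-minimality of $\MM_{w,\ell}$ in step~(iii)/(iv) with~\eqref{ass:reliable}, \eqref{eq:mesh-sons} and the overlay estimate~\eqref{eq:mesh-overlay} then gives $\#\MM_{u,\ell}\lesssim \norm{u}{\A_s}^{1/s}\eta_{u,\ell}^{-1/s}$ and $\#\MM_{z,\ell}\lesssim \norm{z}{\A_t}^{1/t}\eta_{z,\ell}^{-1/t}$.

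\emph{Step~2 (minimum selection---the main idea).} Here step~(v) of Algorithm~\ref{algorithm} enters: $\#\MM_\ell\le\min\{\#\MM_{u,\ell},\#\MM_{z,\ell}\}$. The elementary inequality $\min\{A,B\}\le A^{s/(s+t)}B^{t/(s+t)}$, raised to the $(s+t)$-th power, combines the two bounds of Step~1 into
\begin{align*}
(\#\MM_\ell)^{s+t}\le (\#\MM_{u,\ell})^s(\#\MM_{z,\ell})^t\lesssim \norm{u}{\A_s}\norm{z}{\A_t}\,(\eta_{u,\ell}\eta_{z,\ell})^{-1}.
\end{align*}
This is the only non-routine step of the proof; it is also the reason why Algorithm~\ref{algorithm:bet}, where only the combined estimator is D\"orfler-marked, requires a different argument (to be treated in Theorem~\ref{theorem:optimal:bet}).

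\emph{Step~3 (mesh-closure and geometric summation).} The mesh-closure estimate~\eqref{eq:mesh-closure} gives $\#\TT_\ell-\#\TT_0\le\c{nvb}\sum_{j=0}^{\ell-1}\#\MM_j$. Inserting Step~2 at each index $j$ and using linear convergence (Theorem~\ref{theorem:linear}) backwards, i.e.\ $(\eta_{u,j}\eta_{z,j})^{-1/(s+t)}\lesssim \q{rlinear}^{(\ell-j)/(s+t)}(\eta_{u,\ell}\eta_{z,\ell})^{-1/(s+t)}$ for $j\le\ell$, turns the sum into a geometric series of ratio $\q{rlinear}^{1/(s+t)}<1$ and produces
\begin{align*}
\#\TT_\ell-\#\TT_0\lesssim \frac{(\norm{u}{\A_s}\norm{z}{\A_t})^{1/(s+t)}}{1-\q{rlinear}^{1/(s+t)}}\,(\eta_{u,\ell}\eta_{z,\ell})^{-1/(s+t)}.
\end{align*}
Raising to the $(s+t)$-th power and solving for $\eta_{u,\ell}\eta_{z,\ell}$ yields the right-hand inequality of~\eqref{eq:thm:optimal}; the left-hand inequality is just the goal-oriented reliability~\eqref{eq:ansatz} from Section~\ref{section:motivation}.
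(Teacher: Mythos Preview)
Your proof is correct, and Step~3 is identical to the paper's, but Steps~1--2 take a genuinely different route from the paper's Lemma~\ref{step1:optimal}. You treat $u$ and $z$ separately---building two comparison meshes with individual cardinality bounds $\#\MM_{u,\ell}\lesssim\norm{u}{\A_s}^{1/s}\eta_{u,\ell}^{-1/s}$ and $\#\MM_{z,\ell}\lesssim\norm{z}{\A_t}^{1/t}\eta_{z,\ell}^{-1/t}$---and then fuse them via the elementary inequality $\min\{A,B\}^{s+t}\le A^sB^t$. The paper instead fixes a single budget $N$ determined by the \emph{product} $\eta_{u,\ell}\eta_{z,\ell}$, picks optimal meshes for $u$ and $z$ both in $\T_N$, overlays them with $\TT_\ell$, and observes that on this one mesh the product is reduced by $\kappa_\star$, whence at least one factor is reduced and Lemma~\ref{lemma:doerfler} applies to that factor. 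Your argument is more modular (it recycles the standard single-estimator comparison lemma twice and needs only the one-line geometric-mean trick), while the paper's joint construction has the advantage that Lemma~\ref{step1:optimal} is reused verbatim for Algorithms~\ref{algorithm:mod} and~\ref{algorithm:bet}: in particular, for Algorithm~\ref{algorithm:bet} one needs a \emph{single} set satisfying the combined D\"orfler marking~\eqref{eq:doerfler:bet}, which your two separate comparison meshes do not directly provide.
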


\color{black}
\begin{corollary}\label{corollary:optimal}
 Assume that the estimators both have finite optimal convergence rate, i.e.,
 \begin{align*}
  s_{\rm max}:=\sup\set{s>0}{\norm{u}{\A_s}<\infty}<\infty
 \quad\text{and}\quad
  t_{\rm max}:=\sup\set{t>0}{\norm{z}{\A_t}<\infty}<\infty.
 \end{align*}
Then,~\eqref{eq:thm:optimal} implies for all $0<s<s_{\rm max}$ and $0<t<t_{\rm max}$ and 
all $\ell\in\N$
\begin{align*}
 \eta_{u,\ell}&\lesssim (\#\TT_\ell-\#\TT_0)^{-s}
 \quad\text{as well as}\quad
  \eta_{z,\ell}\lesssim (\#\TT_\ell-\#\TT_0)^{-t},
\end{align*}
where the hidden constants additionally depend on $s_{\rm max}-s>0$
resp.\ $t_{\rm max}-t>0$. \qed
\end{corollary}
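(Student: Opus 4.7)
By the symmetry of the corollary, I focus on proving $\eta_{u,\ell} \lesssim N_\ell^{-s}$ with $N_\ell := \#\TT_\ell - \#\TT_0$; the bound on $\eta_{z,\ell}$ then follows by exchanging the roles of $u$ and $z$. The overall plan is to apply Theorem~\ref{theorem:optimal} with slightly enhanced parameters $(\bar s, \bar t)$ satisfying $s < \bar s < s_{\rm max}$ and $0 < \bar t < t_{\rm max}$, and then to extract the individual rate by a contradiction argument that invokes the finiteness of $t_{\rm max}$.

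Concretely, I would fix $s \in (0, s_{\rm max})$, choose $\bar s := (s+s_{\rm max})/2$ so that $\norm{u}{\A_{\bar s}} < \infty$, and pick $\bar t \in (0, t_{\rm max})$ close to $t_{\rm max}$ so that $\norm{z}{\A_{\bar t}} < \infty$. Theorem~\ref{theorem:optimal} then delivers
\[
\eta_{u,\ell}\, \eta_{z,\ell} \le C_1\, N_\ell^{-(\bar s + \bar t)},
\]
where the constant $C_1$ depends on $\norm{u}{\A_{\bar s}}$ and $\norm{z}{\A_{\bar t}}$, producing the announced dependence on $s_{\rm max} - s$ via the blow-up of $\norm{u}{\A_{\bar s}}$ as $\bar s \to s_{\rm max}$.

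To extract the individual bound, I would argue by contradiction: if no constant $C$ gives $\eta_{u,\ell} \le C\, N_\ell^{-s}$ uniformly, then along some subsequence $(\ell_k)$ one has $\eta_{u,\ell_k}\, N_{\ell_k}^s \to \infty$. Rearranging the product bound forces $\eta_{z,\ell_k} = o(N_{\ell_k}^{-(\bar s + \bar t - s)})$, and by selecting $\bar s - s$ small while taking $\bar t$ sufficiently close to $t_{\rm max}$, the exponent $\tau := \bar s + \bar t - s$ can be made strictly larger than $t_{\rm max}$. Using $\TT_{\ell_k} \in \T_{N_{\ell_k}} \subseteq \T_N$ for $N \ge N_{\ell_k}$ together with the mesh-growth property $N_{\ell+1} \le C_{\rm mesh}\, N_\ell$ (inherent in newest vertex bisection), this would promote the subsequence bound on $\eta_{z,\ell_k}$ to a uniform-in-$N$ bound on $\min_{\TT \in \T_N}\eta_z(\TT)$ at rate $\tau > t_{\rm max}$, contradicting the very definition $t_{\rm max} = \sup\set{t>0}{\norm{z}{\A_t}<\infty}$.

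The main obstacle I anticipate is precisely this promotion step: when the subsequence $(\ell_k)$ is sparse, covering all $N$ uniformly is delicate and requires combining the mesh-growth bound with the quasi-monotonicity from Lemma~\ref{lemma:monotone} to bridge gaps between iterations outside the subsequence. Once this is handled, the desired individual rate follows, and the constant depends on $s_{\rm max} - s$ (respectively $t_{\rm max} - t$) as claimed, through the choice of $\bar s$ (respectively $\bar t$).
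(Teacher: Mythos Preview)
The paper states this corollary with a bare \qed\ and gives no argument, so there is nothing to compare against directly; the question is whether your outline actually closes.

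Your contradiction strategy hinges on the ``promotion step'': from $\eta_{z,\ell_k}=o(N_{\ell_k}^{-\tau})$ along a subsequence with $\tau>t_{\max}$, you want to conclude $\norm{z}{\A_{t'}}<\infty$ for some $t'>t_{\max}$. The tools you propose---quasi-monotonicity (Lemma~\ref{lemma:monotone}) and a one-step mesh-growth bound $N_{\ell+1}\le C_{\rm mesh}\,N_\ell$---do not deliver this. Quasi-monotonicity yields $\eta_{z,\ell}\le C_{\rm mon}^{1/2}\eta_{z,\ell_k}$ for every $\ell\ge\ell_k$, hence
\[
 (N+1)^{t'}\min_{\TT\in\T_N}\eta_{z,\TT}
 \;\le\; (N+1)^{t'}\eta_{z,\ell_k}
 \;\lesssim\; \delta_k\,\Big(\tfrac{N}{N_{\ell_k}}\Big)^{t'}N_{\ell_k}^{t'-\tau},
\]
and for $N$ ranging up to $N_{\ell_{k+1}}$ the factor $(N/N_{\ell_k})^{t'}$ is only controlled by $C_{\rm mesh}^{t'(\ell_{k+1}-\ell_k)}$. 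Nothing in your setup bounds the subsequence gaps $\ell_{k+1}-\ell_k$, so this quantity can blow up faster than $\delta_k\to0$; the promotion fails. The underlying obstruction is real: $N\mapsto\min_{\TT\in\T_N}\eta_{z,\TT}$ is merely non-increasing, and a non-increasing sequence with maximal decay rate $t_{\max}$ can still satisfy $\liminf_N N^{t_{\max}}\min_{\TT\in\T_N}\eta_{z,\TT}=0$ (take plateaus on intervals $[2^{k^2},2^{(k+1)^2})$ with value $2^{-(k+1)^2}$).

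A workable route must use more than the product bound~\eqref{eq:thm:optimal} along the subsequence. One option is to exploit that both $a_\ell:=\eta_{u,\ell}N_\ell^{s}$ and $b_\ell:=\eta_{z,\ell}N_\ell^{\tau}$ can grow by at most a fixed factor per step (quasi-monotonicity plus mesh growth), so a single large value $a_{\ell_0}\ge M$ forces $b_\ell$ to be small on a \emph{contiguous block} of $\ell$'s of length $\gtrsim\log M$, covering an $N$-range of ratio $\gtrsim M^{c}$; iterating this, rather than working with an arbitrary sparse subsequence, is what is needed to fill all $N$. As written, your proposal stops short of this and the identified ``main obstacle'' is not merely delicate---it is the whole difficulty.
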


The heart of the proof of Theorem~\ref{theorem:optimal} is the following lemma.

\begin{lemma}\label{step1:optimal}
For any $0<\theta<\theta_\star:=(1+\c{stable}^2\c{reliable}^2)^{-1}$ and $\ell\in\N_0$,
there exist constants $\newc{stevenson3},\newc{stevenson2}>0$ and some refinement $\TT_\star\in\refine(\TT_\ell)$ such that the sets $\RR_u(\TT_\ell,\TT_\star)$ and
$\RR_z(\TT_\ell,\TT_\star)$ from the discrete reliability~\eqref{ass:reliable} satisfy
for all $s,t>0$ with $(u,z)\in\A_s\times\A_t$
\begin{align}\label{eq:new1}
 \max \{\#\RR_u(\TT_\ell,\TT_\star)\,,\,\#\RR_z(\TT_\ell,\TT_\star)\}
\le \c{stevenson3}\,(\c{stevenson2}\norm{u}{\A_s}\norm{z}{\A_t})^{1/(s+t)}\,
(\eta_{u,\ell}\eta_{z,\ell})^{-1/(s+t)}.
\end{align}
Moreover, either  $\RR_u(\TT_\ell,\TT_\star)$ or $\RR_z(\TT_\ell,\TT_\star)$ satisfies the D\"orfler marking, i.e., it holds
\begin{align}\label{eq:new2}
 \theta\eta_{u,\ell}^2 \le \eta_{u,\ell}\big(\RR_u(\TT_\ell,\TT_\star)\big)^2
 \quad\text{or}\quad
 \theta\eta_{z,\ell}^2 \le \eta_{z,\ell}\big(\RR_z(\TT_\ell,\TT_\star)\big)^2.
\end{align}
The constants $C_1,C_2$ depend only on $\theta$, \eqref{ass:stable},\eqref{ass:reduction}, and~\eqref{ass:reliable}.
\end{lemma}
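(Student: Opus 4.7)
The plan is to combine the approximation classes for $u$ and $z$ via an overlay construction, show that the resulting refinement $\TT_\star$ decreases the product $\eta_{u,\star}\eta_{z,\star}$ by a definite factor, and then invoke Lemma~\ref{lemma:doerfler} to derive the D\"orfler marking~\eqref{eq:new2} on at least one of the two estimators. The cardinality bound~\eqref{eq:new1} is then obtained by optimally balancing the two target accuracies.

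\textbf{Step 1 (Approximation classes).} For parameters $\delta_u,\delta_z>0$, to be chosen below, the definition of $\A_s$ and $\A_t$ yields $\TT_u,\TT_z\in\T$ with
$\eta_{u,u}\le\delta_u$, $\eta_{z,z}\le\delta_z$ and
$\#\TT_u-\#\TT_0\lesssim(\norm{u}{\A_s}/\delta_u)^{1/s}$, $\#\TT_z-\#\TT_0\lesssim(\norm{z}{\A_t}/\delta_z)^{1/t}$.
Define $\TT_\star:=\TT_\ell\oplus\TT_u\oplus\TT_z$. Iterating the overlay estimate~\eqref{eq:mesh-overlay} and using~\eqref{eq:mesh-sons} gives
$\#(\TT_\ell\setminus\TT_\star)\le\#\TT_\star-\#\TT_\ell\le(\#\TT_u-\#\TT_0)+(\#\TT_z-\#\TT_0)$.
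Since $\TT_\star\in\refine(\TT_u)\cap\refine(\TT_z)$, quasi-monotonicity (Lemma~\ref{lemma:monotone}) yields
$\eta_{u,\star}\le\sqrt{\c{mon}}\,\delta_u$ and $\eta_{z,\star}\le\sqrt{\c{mon}}\,\delta_z$.

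\textbf{Step 2 (Forcing D\"orfler on the product).} Fix any $0<\kappa'<\kappa_\star$, where $\kappa_\star$ comes from Lemma~\ref{lemma:doerfler} and depends only on $\theta$. I would then require $\delta_u\delta_z=(\kappa'/\c{mon})\,\eta_{u,\ell}\,\eta_{z,\ell}$, so that
$\eta_{u,\star}\,\eta_{z,\star}\le\kappa'\,\eta_{u,\ell}\,\eta_{z,\ell}$.
If $\eta_{u,\star}^2>\kappa_\star\eta_{u,\ell}^2$, then $\eta_{u,\star}>\sqrt{\kappa_\star}\,\eta_{u,\ell}$ forces $\eta_{z,\star}<(\kappa'/\sqrt{\kappa_\star})\,\eta_{z,\ell}$, and since $\kappa'\le\kappa_\star$ we obtain $\eta_{z,\star}^2\le\kappa_\star\eta_{z,\ell}^2$. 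Hence at least one of $\eta_{u,\star}^2\le\kappa_\star\eta_{u,\ell}^2$ or $\eta_{z,\star}^2\le\kappa_\star\eta_{z,\ell}^2$ holds, and Lemma~\ref{lemma:doerfler} yields~\eqref{eq:new2}.

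\textbf{Step 3 (Balancing the cardinalities).} The remaining degree of freedom in $(\delta_u,\delta_z)$ is used to minimize the maximum of $(\norm{u}{\A_s}/\delta_u)^{1/s}$ and $(\norm{z}{\A_t}/\delta_z)^{1/t}$ under the constraint from Step~2. A direct computation shows that the balanced choice
$\delta_u:=(\kappa'/\c{mon})^{s/(s+t)}\norm{u}{\A_s}^{t/(s+t)}\norm{z}{\A_t}^{-s/(s+t)}(\eta_{u,\ell}\eta_{z,\ell})^{s/(s+t)}$
and the analogous expression for $\delta_z$ makes both exponent sums collapse to $1/(s+t)$, giving
$\#\TT_u-\#\TT_0+\#\TT_z-\#\TT_0\lesssim\bigl((\c{mon}/\kappa')\norm{u}{\A_s}\norm{z}{\A_t}\bigr)^{1/(s+t)}(\eta_{u,\ell}\eta_{z,\ell})^{-1/(s+t)}$.
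Combining with $\#\RR_w(\TT_\ell,\TT_\star)\le\c{reliable}\,\#(\TT_\ell\setminus\TT_\star)$ from~\eqref{ass:reliable} yields~\eqref{eq:new1} with $\c{stevenson3}$ collecting $\c{reliable}$ and the implicit approximation-class constants, and $\c{stevenson2}=\c{mon}/\kappa'$.

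The main obstacle is the algebraic balancing in Step~3: one must observe that requiring the product $\delta_u\delta_z$ (not the sum) to match $\eta_{u,\ell}\eta_{z,\ell}$ is exactly what the estimator-product structure dictates, and that this produces the exponent $1/(s+t)$ rather than the suboptimal $1/\min(s,t)$ obtained by naively picking $\delta_u=\eta_{u,\ell}$ and $\delta_z=\eta_{z,\ell}$ separately. A minor technicality is the degenerate case $\eta_{u,\ell}\eta_{z,\ell}=0$, which forces either estimator to vanish and the statement reduces to taking $\TT_\star=\TT_\ell$.
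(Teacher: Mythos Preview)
Your proof is correct and follows essentially the same approach as the paper: overlay two near-optimal meshes with $\TT_\ell$, use quasi-monotonicity to get $\eta_{u,\star}\eta_{z,\star}\le\kappa_\star\,\eta_{u,\ell}\eta_{z,\ell}$, observe that this forces $\eta_{u,\star}^2\le\kappa_\star\eta_{u,\ell}^2$ or $\eta_{z,\star}^2\le\kappa_\star\eta_{z,\ell}^2$, and apply Lemma~\ref{lemma:doerfler}. The only organizational difference is that the paper parameterizes by a \emph{single} cardinality $N$ (choosing both $\TT_{\eps_1},\TT_{\eps_2}\in\T_N$), which automatically balances the two contributions and yields the exponent $1/(s+t)$ without the explicit optimization in your Step~3; your two-tolerance formulation reaches the same bound after balancing, and your $\c{stevenson2}=\c{mon}/\kappa'$ can in fact be taken with $\kappa'=\kappa_\star$ (your strict inequality $\kappa'<\kappa_\star$ is not needed, as the contrapositive in Step~2 works with equality).
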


\begin{proof}
Adopt the notation of Lemma~\ref{lemma:doerfler}.
For $\eps := \c{mon}^{-1}\kappa_\star\,\eta_{u,\ell}\eta_{z,\ell}$, the quasi-monotonicity of the estimators (Lemma~\ref{lemma:monotone}) yields
 $\eps \le \kappa_\star\,\eta_{u,0}\eta_{z,0} < \norm{u}{\A_s}\norm{z}{\A_t} < \infty$.
Choose the minimal $N\in\N_0$ such that $\norm{u}{\A_s}\norm{z}{\A_t}
\le\eps\,(N+1)^{s+t}$. 
Then, Lemma~\ref{lemma:monotone}, the definition of the 
approximation classes, and the choice of $N$ give
\begin{align*}
 \eta_{u,\star}\eta_{z,\star}
 \le \c{mon} \eta_{u,\eps_1}\eta_{z,\eps_2}
 \le \c{mon}(N+1)^{-(s+t)}\norm{u}{\A_s}\norm{z}{\A_t}
 \le \c{mon}\eps  = \kappa_\star\,\eta_{u,\ell}\eta_{z,\ell}.
\end{align*}
This implies $\eta_{u,\star}^2 \le \kappa_\star\,\eta_{u,\ell}^2$ or
$\eta_{z,\star}^2 \le \kappa_\star\,\eta_{z,\ell}^2$, and Lemma~\ref{lemma:doerfler}
hence proves~\eqref{eq:new2}. It remains to derive~\eqref{eq:new1}. First, note that 
\begin{align}\label{eq:new:step1}
\max\{\#\RR_u(\TT_\ell,\TT_\star)\,,\,\#\RR_z(\TT_\ell,\TT_\star)\}
\stackrel{\eqref{ass:reliable}}\le \c{reliable}\,\#(\TT_\ell\backslash\TT_\star)
\stackrel{\eqref{eq:mesh-sons}}\le \c{reliable}(\#\TT_\star-\#\TT_\ell),
\end{align}
since refined elements are refined into at least two sons~\eqref{eq:mesh-sons}.
Second, minimality of $N$ yields
\begin{align*}
 N < (\norm{u}{\A_s}\norm{z}{\A_t})^{1/(s+t)}\eps^{-1/(s+t)}
 = C \,(\eta_{u,\ell}\eta_{z,\ell})^{-1/(s+t)}
\end{align*}
with $C := (\norm{u}{\A_s}\norm{z}{\A_t})^{1/(s+t)}(\c{mon}^{-1}\kappa_\star)^{-1/(s+t)} = (\c{mon}\kappa_\star^{-1}\,\norm{u}{\A_s}\norm{z}{\A_t})^{1/(s+t)}$.
Choose $\TT_{\eps_1},\TT_{\eps_2}\in\T_N$ with 
$\eta_{u,\eps_1}=\min_{\TT_\star\in\T_N}\eta_{u,\star}$ and
$\eta_{z,\eps_2}=\min_{\TT_\star\in\T_N}\eta_{z,\star}$.
Define $\TT_\eps:=\TT_{\eps_1}\oplus\TT_{\eps_2}$
and $\TT_\star:=\TT_\eps\oplus\TT_\ell$. The overlay estimate~\eqref{eq:mesh-overlay} yields
\begin{align}\label{eq:new:step2}
\#\TT_\star - \#\TT_\ell
\stackrel{\eqref{eq:mesh-overlay}}\le\#\TT_\eps - \#\TT_0
\stackrel{\eqref{eq:mesh-overlay}}\le\#\TT_{\eps_1} + \#\TT_{\eps_2} - 2\,\#\TT_0
\le 2N < 2C\, (\eta_{u,\ell}\eta_{z,\ell})^{-1/(s+t)}.
\end{align}
Combining~\eqref{eq:new:step1}--\eqref{eq:new:step2}, we conclude~\eqref{eq:new1}
with $\c{stevenson3} = 2\c{rel}$ and $\c{stevenson2} = \c{mon}/\kappa_\star$.
\end{proof}

\begin{proof}[Proof of Theorem~\ref{theorem:optimal}]
According to~\eqref{eq:new2} of Lemma~\ref{step1:optimal} and the marking strategy in Algorithm~\ref{algorithm}, it holds for all $j\in\N_0$
\begin{align}\label{eq:minmark}
\begin{split}
\#\MM_j = \min\{\#\MM_{u,j}\,,\,\#\MM_{z,j}\}
\le \c{min}\,\max\{\#\RR_u(\TT_j,\TT_\star)\,,\,\#\RR_z(\TT_j,\TT_\star)\}.
\end{split}
\end{align}
With the mesh-closure estimate~\eqref{eq:mesh-closure} and estimate~\eqref{eq:new1} of Lemma~\ref{step1:optimal},
we obtain
\begin{align*}
\#\TT_\ell-\#\TT_0
\stackrel{\eqref{eq:mesh-closure}}\le\c{nvb} \sum_{j=0}^{\ell-1}\#\MM_j
\stackrel{\eqref{eq:new1}}\le \c{nvb}\c{min}\c{stevenson3}\,(\c{stevenson2}\norm{u}{\A_s}\norm{z}{\A_t})^{1/(s+t)} \sum_{j=0}^{\ell-1}(\eta_{u,j}\eta_{z,j})^{-1/(s+t)}.
\end{align*}
Linear convergence~\eqref{eq:prop:linear} implies
\begin{align*}
 \eta_{u,\ell}\eta_{z,\ell}
 \le\c{rlinear}\, \q{rlinear}^{\ell-j}\eta_{u,j}\eta_{z,j}
 \quad\text{for all }0\le j\le\ell
\end{align*}
and hence
\begin{align*}
 (\eta_{u,j}\eta_{z,j})^{-1/(s+t)}
 \le \c{rlinear}^{1/(s+t)} \q{rlinear}^{(\ell-j)/(s+t)}
 (\eta_{u,\ell}\eta_{z,\ell})^{-1/(s+t)}.
\end{align*}
With $0<q:=\q{rlinear}^{1/(s+t)}<1$, the geometric series applies
and yields
\begin{align*}
 \sum_{j=0}^{\ell-1}(\eta_{u,j}\eta_{z,j})^{-1/(s+t)}
 \le \c{rlinear}^{1/(s+t)}  (\eta_{u,\ell}\eta_{z,\ell})^{-1/(s+t)}\,
 \sum_{j=0}^{\ell-1}q^{\ell-j}
 \le \frac{\c{rlinear}^{1/(s+t)}}{1-\q{rlinear}^{1/(s+t)}}\, (\eta_{u,\ell}\eta_{z,\ell})^{-1/(s+t)}. 
\end{align*}
Combining this with the first estimate, we obtain
\begin{align*}
 \#\TT_\ell-\#\TT_0
 \le \frac{\c{nvb}\c{min}\c{stevenson3}}{1-\q{rlinear}^{1/(s+t)}}\,(\c{rlinear}\c{stevenson2}\,\norm{u}{\A_s}\norm{z}{\A_t})^{1/(s+t)}
 \,(\eta_{u,\ell}\eta_{z,\ell})^{-1/(s+t)}.
\end{align*}
Rearranging this estimate, we conclude~\eqref{eq:thm:optimal}
with $\c{final1} =\max\{ \c{rlinear}\c{stevenson2},\c{nvb}\c{min}\c{stevenson3}\}$.
\end{proof}
\color{black}

\subsection{Optimal convergence rates for Algorithm~\ref{algorithm:mod}}
\begin{theorem}\label{theorem:optimal:mod}
Let $\theta_\star:=(1+\c{stable}\c{reliable})^{-1}$. 
For any $0<\theta<\theta_\star$, Algorithm~\ref{algorithm:mod} guarantees 
optimal algebraic convergence rates in the sense of Theorem~\ref{theorem:optimal}
and Corollary~\ref{corollary:optimal}.
\end{theorem}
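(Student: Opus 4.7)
The plan is to follow the proof of Theorem~\ref{theorem:optimal} almost verbatim, since Algorithm~\ref{algorithm:mod} differs from Algorithm~\ref{algorithm} only in step~(v), where the marked set $\MM_\ell$ is enlarged beyond the minimizer $\widetilde\MM_\ell \in \{\MM_{u,\ell},\MM_{z,\ell}\}$ while still satisfying $\#\MM_\ell \leq \c{min2}\,\#\widetilde\MM_\ell$ and $\widetilde\MM_\ell \subseteq \MM_\ell \subseteq \MM_{u,\ell}\cup\MM_{z,\ell}$. Linear convergence~\eqref{eq:prop:linear} for Algorithm~\ref{algorithm:mod} is already available from Theorem~\ref{theorem:linear}.

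The key observation is that Lemma~\ref{step1:optimal} applies without modification: for the parameter regime $0<\theta<\theta_\star$ it produces, at each step $\ell$, a refinement $\TT_\star\in\refine(\TT_\ell)$ together with sets $\RR_u(\TT_\ell,\TT_\star)$ and $\RR_z(\TT_\ell,\TT_\star)$ that satisfy the cardinality bound~\eqref{eq:new1} and such that at least one of them, say $\RR_w(\TT_\ell,\TT_\star)$, verifies the D\"orfler marking for $\eta_{w,\ell}$. By quasi-minimality of $\MM_{w,\ell}$ in step~(iii)--(iv) of Algorithm~\ref{algorithm:mod}, this yields $\#\MM_{w,\ell} \leq \c{min}\,\#\RR_w(\TT_\ell,\TT_\star)$, hence
\begin{align*}
 \#\widetilde\MM_\ell \;=\; \min\{\#\MM_{u,\ell},\#\MM_{z,\ell}\} \;\leq\; \c{min}\,\max\{\#\RR_u(\TT_\ell,\TT_\star)\,,\,\#\RR_z(\TT_\ell,\TT_\star)\}.
\end{align*}
Combining with $\#\MM_\ell \leq \c{min2}\,\#\widetilde\MM_\ell$ and~\eqref{eq:new1}, we obtain the analogue of~\eqref{eq:minmark} with an additional factor $\c{min2}$:
\begin{align*}
 \#\MM_\ell \;\leq\; \c{min2}\c{min}\c{stevenson3}\,(\c{stevenson2}\norm{u}{\A_s}\norm{z}{\A_t})^{1/(s+t)}\,(\eta_{u,\ell}\eta_{z,\ell})^{-1/(s+t)}.
\end{align*}

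From here the argument is identical to the proof of Theorem~\ref{theorem:optimal}: one sums over $j=0,\dots,\ell-1$, applies the mesh-closure estimate~\eqref{eq:mesh-closure}, invokes linear convergence from Theorem~\ref{theorem:linear} to dominate the geometric tail, and rearranges to isolate $\eta_{u,\ell}\eta_{z,\ell}$. The resulting constant $\c{final2}$ absorbs the extra factor $\c{min2}$ relative to $\c{final1}$, and Corollary~\ref{corollary:optimal} follows as before. I do not expect any substantive obstacle: the enlargement step is harmless because it only increases the marked set by a bounded factor, which is exactly the slack that the mesh-closure estimate tolerates, and the D\"orfler marking needed for linear convergence is inherited from $\widetilde\MM_\ell\subseteq\MM_\ell$.
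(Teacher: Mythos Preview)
Your proposal is correct and follows essentially the same approach as the paper: the paper's proof also observes that Lemma~\ref{step1:optimal} is unaffected by the marking strategy, and then verifies that~\eqref{eq:minmark} remains valid with the extra factor $\c{min2}$ (yielding $\#\MM_j\le\c{min2}\c{min}\max\{\#\RR_u(\TT_j,\TT_\star),\#\RR_z(\TT_j,\TT_\star)\}$), after which the conclusion follows verbatim from the proof of Theorem~\ref{theorem:optimal}. Your explicit detour through $\widetilde\MM_\ell$ is just a slightly more detailed unpacking of the same bound.
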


\begin{proof}
We note that Lemma~\ref{step1:optimal} is not affected by the marking strategy and hence remains valid. To conclude the proof, we only need to show that estimate~\eqref{eq:minmark} also remains true.
Since $\MM_j$ in Algorithm~\ref{algorithm:mod} is a set of minimal cardinality up to the factor $\c{min2}\c{min}$ which satisfies either~\eqref{doerfler:u} or~\eqref{doerfler:z}, estimate~\eqref{eq:minmark} holds
with different constants, i.e.,
\begin{align*}
 \#\MM_j\leq \c{min2}\c{min}\max\{\#\RR_u(\TT_j,\TT_\star)\,,\,\#\RR_z(\TT_j,\TT_\star)\}.
\end{align*}
Therefore, the claim follows with
$\c{final1} =\max\{ \c{rlinear}\c{stevenson2},\c{nvb}\c{min2}\c{min}\c{stevenson3}\}$.
\end{proof}

\subsection{Optimal convergence rates for Algorithm~\ref{algorithm:bet}}\label{section:optC}

\begin{theorem}\label{theorem:optimal:bet}
Let $\theta_\star:=(1+\c{stable}\c{reliable})^{-1}$. 
For any $0<\theta<\theta_\star/2$, Algorithm~\ref{algorithm:bet} guarantees 
optimal algebraic convergence rates in the sense of Theorem~\ref{theorem:optimal}
and Corollary~\ref{corollary:optimal}.
\end{theorem}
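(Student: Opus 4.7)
The plan is to reduce to Lemma~\ref{step1:optimal} and then repeat, essentially verbatim, the argument that yielded Theorem~\ref{theorem:optimal}. The only genuinely new ingredient is the translation between the separate D\"orfler property produced by Lemma~\ref{step1:optimal} and the combined D\"orfler property~\eqref{eq:doerfler:bet} used by Algorithm~\ref{algorithm:bet}; this is also where the factor $1/2$ in the threshold $\theta<\theta_\star/2$ enters.

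For fixed $\ell\in\N_0$, I would apply Lemma~\ref{step1:optimal} with marking parameter $2\theta$ (this is admissible since $2\theta<\theta_\star$) to obtain a refinement $\TT_\star\in\refine(\TT_\ell)$ and sets $\RR_u:=\RR_u(\TT_\ell,\TT_\star)$, $\RR_z:=\RR_z(\TT_\ell,\TT_\star)$ satisfying the cardinality bound~\eqref{eq:new1} and
\begin{align*}
 2\theta\,\eta_{u,\ell}^2 \le \eta_{u,\ell}(\RR_u)^2
 \quad\text{or}\quad
 2\theta\,\eta_{z,\ell}^2 \le \eta_{z,\ell}(\RR_z)^2.
\end{align*}
Let $\RR:=\RR_u\cup\RR_z\subseteq\TT_\ell$. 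Then $\#\RR\le\#\RR_u+\#\RR_z\le 2\max\{\#\RR_u,\#\RR_z\}$, so~\eqref{eq:new1} yields a bound of the same form for $\#\RR$, with the constant multiplied by $2$.

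The key step is to show that $\RR$ satisfies the combined D\"orfler marking for $\rho_\ell$ with parameter $\theta$. Recall from the proof of Theorem~\ref{theorem:linear} that $\rho_\ell^2=2\,\eta_{u,\ell}^2\eta_{z,\ell}^2$. Since $\eta_{u,\ell}(\RR_u)\le\eta_{u,\ell}(\RR)$ and $\eta_{z,\ell}(\RR_z)\le\eta_{z,\ell}(\RR)$, the above alternative gives in either case
\begin{align*}
 \rho_\ell(\RR)^2
 = \eta_{u,\ell}(\RR)^2\eta_{z,\ell}^2 + \eta_{u,\ell}^2\eta_{z,\ell}(\RR)^2
 \ge 2\theta\,\eta_{u,\ell}^2\eta_{z,\ell}^2
 = \theta\,\rho_\ell^2,
\end{align*}
which is precisely~\eqref{eq:doerfler:bet} for $\RR$. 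This is the place where the factor $1/2$ in the assumption $\theta<\theta_\star/2$ is unavoidable: the sum defining $\rho_\ell^2$ is distributed over two terms, only one of which is known to carry a positive fraction of the total.

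With this in hand, the rest is a copy of the proof of Theorem~\ref{theorem:optimal}. By the (near-)minimality of $\MM_\ell$ among all sets satisfying~\eqref{eq:doerfler:bet}, one has $\#\MM_\ell\le\c{min}\,\#\RR$, so that the bound~\eqref{eq:new1} applied to $\RR$ yields
\begin{align*}
 \#\MM_j \lesssim (\eta_{u,j}\eta_{z,j})^{-1/(s+t)}\,(\norm{u}{\A_s}\norm{z}{\A_t})^{1/(s+t)}
 \quad\text{for all }j\in\N_0.
\end{align*}
Summing over $j=0,\dots,\ell-1$, applying the mesh-closure estimate~\eqref{eq:mesh-closure}, inserting the linear convergence $\eta_{u,\ell}\eta_{z,\ell}\le\c{rlinear}\q{rlinear}^{\ell-j}\eta_{u,j}\eta_{z,j}$ from Theorem~\ref{theorem:linear} (which is already established for Algorithm~\ref{algorithm:bet}), and summing the resulting geometric series with quotient $\q{rlinear}^{1/(s+t)}<1$, one obtains exactly~\eqref{eq:thm:optimal} with an adjusted constant. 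Corollary~\ref{corollary:optimal} then follows as before. The only subtle point is the factor $2$ coming from passing from $\RR_u,\RR_z$ to their union, which simply rescales $\c{stevenson3}$ and hence $\c{final1}$.
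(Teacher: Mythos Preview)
Your proof is correct and follows essentially the same route as the paper: apply Lemma~\ref{step1:optimal} with parameter $2\theta<\theta_\star$, convert the resulting separate D\"orfler property into the combined D\"orfler marking~\eqref{eq:doerfler:bet}, and then invoke the remainder of the argument for Theorem~\ref{theorem:optimal} verbatim.

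The one place where the paper is slightly sharper is that you take the union $\RR=\RR_u\cup\RR_z$, whereas the paper observes that already \emph{one} of the two sets suffices. Indeed, if, say, $2\theta\,\eta_{u,\ell}^2\le\eta_{u,\ell}(\RR_u)^2$, then with $\RR_\ell:=\RR_u$ alone one has
\[
 \rho_\ell(\RR_\ell)^2
 = \eta_{u,\ell}(\RR_\ell)^2\eta_{z,\ell}^2 + \eta_{u,\ell}^2\eta_{z,\ell}(\RR_\ell)^2
 \ge \eta_{u,\ell}(\RR_\ell)^2\eta_{z,\ell}^2
 \ge 2\theta\,\eta_{u,\ell}^2\eta_{z,\ell}^2
 = \theta\,\rho_\ell^2,
\]
so $\#\MM_\ell\le\c{min}\,\#\RR_\ell\le\c{min}\max\{\#\RR_u,\#\RR_z\}$, which is exactly~\eqref{eq:minmark} without the extra factor~$2$ you pick up from the union. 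This is only a cosmetic improvement of the constant $\c{final1}$; your argument is otherwise complete.
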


\begin{proof}
We only need to show that~\eqref{eq:minmark} remains valid.
Note that $0<2\theta<\theta_\star$. Therefore, estimate~\eqref{eq:new2} of Lemma~\ref{step1:optimal} yields
\begin{align*}
 2\theta\,\eta_{u,j}^2 \le \eta_{u,j}\big(\RR_u(\TT_j,\TT_\star)\big)^2
 \quad\text{or}\quad
 2\theta\,\eta_{z,j}^2 \le \eta_{z,j}\big(\RR_z(\TT_j,\TT_\star)\big)^2.
\end{align*}
Either for $\RR_j := \RR_u(\TT_j,\TT_\star)$ or for 
$\RR_j := \RR_z(\TT_j,\TT_\star)$ 
this implies
\begin{align*}
 \theta\,\rho_j^2 
 = 2\theta\,\eta_{u,j}^2\eta_{z,j}^2
 \le \eta_{u,j}(\RR_j)^2\,\eta_{z,j}^2
 + \eta_{u,j}^2\,\eta_{z,j}(\RR_j)^2
 = \rho_j(\RR_j)^2.
\end{align*}
According to the marking strategy in Algorithm~\ref{algorithm:bet}, we obtain
\begin{align*}
 \#\MM_j 
 &\le \c{min}\#\RR_j
 \le \c{min}\, \max\{\#\RR_u(\TT_j,\TT_\star)\,,\,\#\RR_z(\TT_j,\TT_\star)\}
\end{align*}
which is~\eqref{eq:minmark}. 
Therefore, the claim follows with
$\c{final1} =\max\{ \c{rlinear}\c{stevenson2},\c{nvb}\c{min}\c{stevenson3}\}$.
\end{proof}
\color{black}

\section{Goal-Oriented Adaptive FEM for Second-Order Linear Elliptic PDEs}\label{section:example}

\noindent
In this section, we extend the ideas from~\cite{ffp} and prove that our abstract frame of 
convergence and optimality of goal-oriented AFEM applies, in particular, to general
second-order linear elliptic PDEs.
\def\bform#1#2{a(#1,#2)}%
\def\operator#1{\mathcal{#1}}%
\def\matrix#1{\boldsymbol{#1}}%

\subsection{Model problem}\label{afem:model}
On the bounded Lipschitz domain $\Omega\subset \R^d$ and for given $f_1,g_1\in L^2(\Omega)$ and $\boldsymbol{f}_2,\boldsymbol{g}_2\in L^2(\Omega)^d$, we aim to compute
\begin{align*}
g(u):=\int_\Omega g_1u - \boldsymbol{g}_2\cdot\nabla u\,dx,
\end{align*}
where $u\in H^1_0(\Omega)$ is the weak solution to
\begin{align}\label{ex:nonsymm}
 \operator{L}u
 := -\text{div}(\matrix{A}\nabla u) + \matrix{b}\cdot\nabla u + c u 
 = f_1+{\rm div}\,\boldsymbol{f}_2\quad\text{in }\Omega
\quad\text{and}\quad
 u=0\quad\text{on }\Gamma:=\partial\Omega.
\end{align}
For all $x\in\Omega$, $\matrix{A}(x)\in \R^{d\times d}_{\rm sym}$ is a symmetric matrix with $\matrix{A}\in W^{1,\infty}(\Omega; \R^{d\times d}_{\rm sym})$. Moreover, $\matrix{b}(x)\in\R^d$ is a vector with $\matrix{b}\in W^{1,\infty}(\Omega;\R^d)$, and $c(x)\in \R$ is a scalar with $c\in L^\infty(\Omega)$.
To formulate the residual error estimators in~\eqref{ex:nonsymm:estimator:primal}--\eqref{ex:nonsymm:estimator:dual} below, we additionally require that ${\rm div}\,\boldsymbol{f}_2,{\rm div}\,\boldsymbol{g}_2$ exist in $L^2(\Omega)$ elementwise on the initial mesh $\TT_0$ and that the edge jumps satisfy $[\boldsymbol{f}_2\cdot n],[\boldsymbol{g}_2\cdot n]\in L^2(\partial T)$ for all $T\in\TT_0$. (These assumptions are for instance satisfied if $\boldsymbol{f}_2,\boldsymbol{g}_2$ are $\TT_0$-piecewise constant.)
Note that $\operator{L}$ is non-symmetric as
\begin{align}\label{eq:opnonsymm}
\operator{L}w\neq \operator{L}^Tw=-\text{div}\matrix{A}\nabla w -\matrix{b}\cdot\nabla w + (c-\text{div}\matrix{b}) w.
\end{align}
We suppose that the induced bilinear form 
\begin{align*}
 \bform{u}{v}:=\dual{\operator{L}u}{v}
 = \int_\Omega \matrix{A}\nabla u \cdot \nabla v + \matrix{b}\cdot\nabla u v + cu v\,dx
 \quad\text{for }u,v\in\XX:=H^1_0(\Omega)
\end{align*}
is continuous and $H^1_0(\Omega)$-elliptic and hence fits in the frame of Section~\ref{section:motivation}. The right-hand side of~\eqref{eq:primal} reads 
$f(v):=\int_\Omega f_1v-\boldsymbol{f}_2\cdot\nabla v\,dx$.

\subsection{Discretization}\label{afem:discretization}
For a given regular triangulation $\TT_\star$ of $\Omega$
and a polynomial degree $p\ge1$, define 
$\PP^p(\TT_\star):=\set{V\in L^2(\Omega)}{V|_T\text{ is polynomial of degree }\leq p\text{ for all }T\in\TT_\star}.$
We consider $\XX_\star := \SS^p_0(\TT_\star) := \PP^p(\TT_\star)\cap H^1_0(\Omega)$
and let $U_\star,Z_\star\in\XX_\star$ be the unique FEM solutions of~\eqref{eq:primal:discrete} resp.~\eqref{eq:dual:discrete}, i.e.,
\begin{subequations} \label{eq:afem:discrete}
\begin{align}
 &U_\star\in\SS^p_0(\TT_\star)
 \quad\text{such that}\quad
 a(U_\star,V_\star) = f(V_\star)\quad\text{for all }V_\star\in \SS^p_0(\TT_\star),\\
 &Z_\star\in\SS^p_0(\TT_\star)
 \quad\text{such that}\quad
 a(V_\star,Z_\star) = g(V_\star)\quad\text{for all }V_\star\in \SS^p_0(\TT_\star).
\end{align}
\end{subequations}

\subsection{Residual error estimator}\label{afem:estimator}
For $T\in\TT_\star$, let
$h_T:=|T|^{1/d}$ and $\operator{L}|_T V:=-\text{div}|_T \matrix{A}(\nabla V )+ \matrix{b}\cdot \nabla V + c V$. Then, the residual error-estimator for the discrete primal problem~\eqref{eq:primal:discrete} reads
\begin{align}\label{ex:nonsymm:estimator:primal}
\eta_{u,\star}(T)^2:=h_T^2\norm{\operator{L}|_T U_\star - f_1-{\rm div}\,\boldsymbol{f}_2}{L^2(T)}^2 
+ h_T\norm{[(\matrix{A}\nabla U_\star+\boldsymbol{f}_2) \cdot n]}{L^2(\partial T\cap\Omega)}^2.
\end{align}
The residual error-estimator for the discrete dual problem~\eqref{eq:dual:discrete} reads
\begin{align}\label{ex:nonsymm:estimator:dual}
\eta_{z,\star}(T)^2:=h_T^2\norm{\operator{L}^T|_T Z_\star - g_1-{\rm div}\,\boldsymbol{g}_2}{L^2(T)}^2 
+ h_T\norm{[(\matrix{A}\nabla Z_\star+\boldsymbol{g}_2)\cdot n]}{L^2(\partial T\cap\Omega)}^2,
\end{align}
where $\operator{L}^T|_T V:=-\text{div}|_T \matrix{A}(\nabla V )- \matrix{b}\cdot \nabla V + (c-\text{div}\matrix{b}) V$. 

The error estimators satisfy reliability~\eqref{eq:reliable:laxmilgram}; see, e.g.,~\cite{ao00,v96}.
The abstract analysis of Section~\ref{section:motivation} thus results in
\begin{align}\label{eq:afem:estimator}
 |g(u)-g(U_\star)| \lesssim \eta_{u,\star}\eta_{z,\star},
\end{align}
and we aim for optimal convergence of the right-hand side.
Moreover, efficiency and the C\'ea lemma prove that the estimator based approximation class $\A_s$ from Section~\ref{section:main} coincides with the approximation class based on the total error used, e.g., in~\cite{bet,ckns,ms}. The following result is proved in~\cite[Lemma~5.1]{ffp} for $\boldsymbol{f}_2=0=\boldsymbol{g}_2$, but holds verbatim in the present case.

\begin{lemma}\label{lemma:approximationclass}
Let $w\in\{u,z\}$. There holds $w\in\A_s$ if and only if
 \begin{align*}
 \sup_{N\in\N_0}\Big( (N+1)^s \min_{\TT_\star\in\T_N}\big(\min_{V_\star\in\XX_\star}\norm{w-V_\star}{\XX} + {\rm osc}_{w,\star}(V_\star)\big)\Big)<\infty,
 \end{align*}
where ${\rm osc}_{w,\star}(V_\star)^2=\sum_{T\in\TT_\star}{\rm osc}_{w,\star}(T,V_\star)^2$ and
\begin{align*}
 {\rm osc}_{u,\star}^2(T,V_\star)&:=h_T^2\norm{(1-\Pi_T^{2p-2})(\operator{L}|_T V_\star-f_1-{\rm div}\,\boldsymbol{f}_2)}{L^2(T)}^2\\&\qquad
 + h_T\norm{(1-\Pi_{\partial T}^{2p-1})[(\matrix{A}\nabla V_\star+\boldsymbol{f}_2)\cdot n]}{L^2(\partial T\cap\Omega)}^2,\\
 {\rm osc}_{z,\star}^2(T,V_\star)&:=h_T^2\norm{(1-\Pi_T^{2p-2})(\operator{L}^T|_T V_\star-g_1-{\rm div}\,\boldsymbol{g}_2)}{L^2(T)}^2\\
 &\quad+ h_T\norm{(1-\Pi_{\partial T}^{2p-1})[(\matrix{A}\nabla V_\star+\boldsymbol{g}_2)\cdot n]}{L^2(\partial T\cap\Omega)}^2.
\end{align*}
Here, $\Pi_T^q:L^2(T)\to \PP^q(T)$ denotes the $L^2$-orthogonal projection onto polynomials of degree $q$ and $\Pi_{\partial T}^q:L^2(\partial T)\to \PP^q(\SS_{\partial T})$ denotes the $L^2$-orthogonal projection onto
(discontinuous) piecewise polynomials of degree $q$ on the faces of $T$.\hfill\qed
\end{lemma}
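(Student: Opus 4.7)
\medskip
\noindent\textbf{Proof proposal for Lemma~\ref{lemma:approximationclass}.}
The plan is to show that, for each fixed admissible triangulation $\TT_\star \in \T$, the residual estimator $\eta_{w,\star}$ is equivalent to the total error $\min_{V_\star \in \XX_\star}(\norm{w - V_\star}\XX + {\rm osc}_{w,\star}(V_\star))$ up to multiplicative constants depending only on $\TT_0$, the data, and the polynomial degree $p$. Once such an elementwise equivalence is established, the definition of $\A_s$ carries over verbatim to the total-error formulation, since the infimum in $\min_{\TT_\star\in\T_N}$ is the same set in either case.

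\smallskip
\emph{First I would prove the upper bound} $\min_{V_\star \in \XX_\star}(\norm{w - V_\star}\XX + {\rm osc}_{w,\star}(V_\star)) \lesssim \eta_{w,\star}$. To this end, test with $V_\star = W_\star \in \{U_\star,Z_\star\}$, the Galerkin approximation. Classical residual-estimator theory (as in~\cite{ao00,v96}) gives reliability $\norm{w - W_\star}\XX \lesssim \eta_{w,\star}$. For the oscillation term, observe that the $L^2$-orthogonal projections $\Pi_T^{2p-2}$ and $\Pi_{\partial T}^{2p-1}$ satisfy $\norm{(1-\Pi)\varphi}{L^2}\le\norm{\varphi}{L^2}$, so a term-by-term comparison of the definitions of ${\rm osc}_{w,\star}(T,W_\star)$ and $\eta_{w,\star}(T)$ yields ${\rm osc}_{w,\star}(W_\star) \le \eta_{w,\star}$. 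Summing both estimates gives the desired bound.

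\smallskip
\emph{Second I would prove the lower bound} $\eta_{w,\star} \lesssim \min_{V_\star \in \XX_\star}(\norm{w - V_\star}\XX + {\rm osc}_{w,\star}(V_\star))$. The standard bubble-function argument of Verf\"urth gives efficiency of the residual estimator at the Galerkin solution:
\begin{align*}
\eta_{w,\star} \lesssim \norm{w - W_\star}\XX + {\rm osc}_{w,\star}(W_\star).
\end{align*}
Here the non-symmetry of $\operator{L}$ (and the appearance of $\operator{L}^T$ in the dual estimator, cf.~\eqref{eq:opnonsymm}) plays no essential role, because $\matrix{A}\in W^{1,\infty}$, $\matrix{b}\in W^{1,\infty}$ and $c\in L^\infty$ allow the usual local scaling bounds. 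To pass from $W_\star$ to an arbitrary $V_\star \in \XX_\star$, I would use the triangle inequality on each elementwise and edgewise contribution of ${\rm osc}_{w,\star}$ combined with standard inverse inequalities ($h_T\norm{\nabla \cdot}{L^2(T)}\lesssim \norm{\cdot}{L^2(T)}$ and the trace inverse $h_T^{1/2}\norm{\cdot}{L^2(\partial T)}\lesssim\norm{\cdot}{L^2(T)}$), yielding
\begin{align*}
 {\rm osc}_{w,\star}(W_\star) \lesssim {\rm osc}_{w,\star}(V_\star) + \norm{W_\star - V_\star}\XX.
\end{align*}
Combining this with the C\'ea estimate $\norm{w - W_\star}\XX \lesssim \norm{w - V_\star}\XX$ and $\norm{W_\star - V_\star}\XX \le \norm{w-W_\star}\XX + \norm{w-V_\star}\XX \lesssim \norm{w-V_\star}\XX$ concludes the argument after taking the infimum over $V_\star$.

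\smallskip
\emph{The main obstacle} I expect is the inverse-estimate step bounding ${\rm osc}_{w,\star}(W_\star) - {\rm osc}_{w,\star}(V_\star)$ by $\norm{W_\star - V_\star}\XX$, since each oscillation term contains elementwise applications of $\operator{L}$ and $\operator{L}^T$ to discrete functions plus jump terms, and one must carefully track the weights $h_T, h_T^{1/2}$, the $L^\infty$-bounds on $\matrix{A}, \matrix{b}, c$, and (for the edge terms) a scaled trace inequality. This is purely local and routine once one reproduces the arguments of~\cite[Lemma~5.1]{ffp} with the additional volume term $\boldsymbol{f}_2,\boldsymbol{g}_2$ and its jump contributions; conceptually nothing new is needed beyond the $W^{1,\infty}$-smoothness of the coefficients.
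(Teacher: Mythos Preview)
Your proposal is correct and follows precisely the route the paper implicitly takes: the paper does not give an independent proof but states that the result ``is proved in~\cite[Lemma~5.1]{ffp} for $\boldsymbol{f}_2=0=\boldsymbol{g}_2$, but holds verbatim in the present case,'' and your sketch is exactly the standard reliability/efficiency/C\'ea argument underlying that reference, with the necessary adaptations for the additional terms $\boldsymbol{f}_2,\boldsymbol{g}_2$ spelled out.
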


\subsection{Verification of axioms}
With newest vertex bisection from~\cite{stevenson:nvb} as mesh-refinement strategy, 
the assumptions of Section~\ref{section:finemesh} are satisfied. It remains to verify 
the axioms~\eqref{ass:stable}--\eqref{ass:reliable},
where $\dist{w}{\TT_\ell}{\TT_\star}:=a(W_\ell-W_\star,W_\ell-W_\star)^{1/2}\simeq \norm{W_\ell-W_\star}{H^1(\Omega)}$ and $W_\ell$ resp. $W_\star$ are the corresponding FEM approximations of $w\in\{u,z\}$.

\begin{theorem}\label{thm:ex:nonsymm}
Consider the model problem of Section~\ref{afem:model}. Then, 
the conforming discretization~\eqref{eq:afem:discrete} with the
residual error estimators~\eqref{ex:nonsymm:estimator:primal}--\eqref{ex:nonsymm:estimator:dual} 
satisfies stability~\eqref{ass:stable}, reduction~\eqref{ass:reduction} with $\q{reduction}=2^{-1/d}$,
quasi-orthogonality~\eqref{ass:orthogonal}, and discrete reliability~\eqref{ass:reliable} with $\RR_w(\TT_\ell,\TT_\star) = \TT_\ell\backslash\TT_\star$ and $w\in\{u,z\}$. In particular, the Algorithms~\ref{algorithm}--\ref{algorithm:bet} are linearly convergent with optimal rates in the sense of Theorem~\ref{theorem:linear}, \ref{theorem:optimal}, \ref{theorem:optimal:mod}, and~\ref{theorem:optimal:bet} for the upper bound in~\eqref{eq:afem:estimator}.
\end{theorem}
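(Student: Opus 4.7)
The plan is to verify the four axioms~\eqref{ass:stable}--\eqref{ass:reliable} separately for $w \in \{u, z\}$. Since the dual estimator $\eta_{z,\star}$ has the same structural form as $\eta_{u,\star}$ (with $\operator{L}^T$ in place of $\operator{L}$ and data $(g_1, \boldsymbol{g}_2)$ in place of $(f_1, \boldsymbol{f}_2)$), and the bilinear form associated with $\operator{L}^T$ is again continuous and $H^1_0$-elliptic by~\eqref{eq:opnonsymm} together with the ellipticity of $a(\cdot,\cdot)$, the arguments for $\eta_u$ transfer verbatim to $\eta_z$. Once the four axioms are verified, the linear convergence and optimality assertions are immediate consequences of Theorem~\ref{theorem:linear}, \ref{theorem:optimal}, \ref{theorem:optimal:mod}, and~\ref{theorem:optimal:bet}. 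The overall strategy adapts~\cite{ffp}, where the axioms are established for the energy error of the very same PDE class.

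First, for stability~\eqref{ass:stable}, I would invoke the standard residual-estimator toolbox. The reverse triangle inequality reduces the bound on $\TT_\ell \cap \TT_\star$ to the difference of local contributions evaluated at $W_\star - W_\ell$, which by elementwise inverse estimates and a scaled trace inequality is dominated by a local $H^1$-seminorm of $W_\star - W_\ell$; summing and using $\|W_\star - W_\ell\|_{H^1(\Omega)} \simeq \dist{w}{\TT_\ell}{\TT_\star}$ (which holds by coercivity and continuity of $a$) concludes stability. For reduction~\eqref{ass:reduction}, newest vertex bisection enforces $|T| \le |T'|/2$ for the father $T' \in \TT_\ell$ of any refined $T \in \TT_\star \backslash \TT_\ell$, hence $h_T \le 2^{-1/d} h_{T'}$; a $\delta$-Young inequality then splits $\eta_{w,\star}(T)^2$ into a factor-$2^{-2/d}$ contribution from $\eta_{w,\ell}(T')^2$ plus a difference term again controlled by $\dist{w}{\TT_\ell}{\TT_\star}$, yielding $q_{\rm red} = 2^{-1/d}$ after taking $\delta \to 0$.

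For discrete reliability~\eqref{ass:reliable}, I would use a Scott-Zhang quasi-interpolation $J_\ell: \SS_0^p(\TT_\star) \to \SS_0^p(\TT_\ell)$ that reproduces $V_\star$ on elements of $\TT_\ell \cap \TT_\star$ whose full patch lies in the unrefined region. Testing the Galerkin orthogonality $a(U_\star - U_\ell, J_\ell(U_\star - U_\ell)) = 0$ rewrites $\|U_\star - U_\ell\|_a^2 = a(U_\star - U_\ell, (1 - J_\ell)(U_\star - U_\ell))$; elementwise integration by parts localizes the right-hand side to $\TT_\ell \backslash \TT_\star$, and the standard local $H^1$-stability and approximation estimates for $J_\ell$ yield $\|U_\star - U_\ell\|_{H^1} \le C\,\eta_{u,\ell}(\TT_\ell \backslash \TT_\star)$, i.e.,~\eqref{ass:reliable} with $\RR_u(\TT_\ell, \TT_\star) = \TT_\ell \backslash \TT_\star$. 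The symmetric argument using the transposed Galerkin orthogonality handles $w = z$.

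The hard part will be quasi-orthogonality~\eqref{ass:orthogonal}, since the non-symmetry of $a(\cdot, \cdot)$ rules out the Pythagorean identity exploited in Remark~\ref{remark:laxmilgram}. My plan is to follow the strategy of~\cite{ffp}: decompose $a(\cdot, \cdot) = a_{\rm sym}(\cdot, \cdot) + K(\cdot, \cdot)$ into its symmetric, coercive principal part and a non-symmetric lower-order remainder. The Galerkin orthogonality for the full form $a$ then yields a near-Pythagorean identity for $a_{\rm sym}$ on each nested increment $\SS_0^p(\TT_{\ell_j}) \subseteq \SS_0^p(\TT_{\ell_{j+1}})$, up to cross terms involving $K$ that scale with $\|W_{\ell_{j+1}} - W_{\ell_j}\|_{L^2}$ times $\|W_\infty - W_{\ell_j}\|_{H^1}$, where $W_\infty$ is the $H^1$-limit of the nested Galerkin sequence (available as a Cauchy limit in the nested subspaces, or via Proposition~\ref{prop:linear} once~\eqref{ass:stable}, \eqref{ass:reduction}, \eqref{ass:reliable} are in place). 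Using compactness of $H^1 \hookrightarrow L^2$ together with reliability $\|W_\infty - W_{\ell_j}\|_{H^1} \lesssim \eta_{w,\ell_j}$, a Young inequality absorbs these perturbation contributions into the admissible $\eps\,\eta_{w,\ell_j}^2$ slack of~\eqref{ass:orthogonal}. The telescoping symmetric sum is then bounded by $\eta_{w,\ell_n}^2$, and the resulting constant $\c{orth}(\eps)$ depends only on $\eps$ and on the continuity/coercivity constants of $a$. Assembling the four axioms, the linear convergence and optimal rates assertion of the theorem is a direct invocation of the abstract results.
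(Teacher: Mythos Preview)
Your treatment of stability~\eqref{ass:stable}, reduction~\eqref{ass:reduction}, and discrete reliability~\eqref{ass:reliable} is correct and matches the paper, which simply cites~\cite{ckns} and notes that the arguments carry over verbatim to the non-symmetric operator and general $\boldsymbol{f}_2,\boldsymbol{g}_2$.

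For quasi-orthogonality~\eqref{ass:orthogonal}, however, your sketch has a genuine gap. You write that the cross terms scale like $\|W_{\ell_{j+1}}-W_{\ell_j}\|_{L^2}\,\|W_\infty-W_{\ell_j}\|_{H^1}$ and that ``compactness of $H^1\hookrightarrow L^2$ together with reliability'' plus a Young inequality absorbs them into the $\eps\,\eta_{w,\ell_j}^2$ slack. But Young splits this product into $\gamma\,\|W_\infty-W_{\ell_j}\|_{H^1}^2$ (handled by reliability) and $\gamma^{-1}\|W_{\ell_{j+1}}-W_{\ell_j}\|_{L^2}^2$, and compactness alone gives no uniform bound $\|v\|_{L^2}\le\delta\|v\|_{H^1}$ with $\delta$ small. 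Obtaining such a bound from an Aubin--Nitsche duality argument is exactly the route that forces a ``sufficiently fine initial mesh'' assumption, which the paper explicitly avoids. Accordingly, your claim that $\c{orth}(\eps)$ depends only on $\eps$ and the continuity/coercivity constants cannot be right in this framework.

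The paper's argument, following~\cite{ffp}, is more delicate: from the \emph{a~priori} convergence $W_\ell\to W_\infty$ (Lemma~\ref{lem:ex:nonsymm:conv}) one shows that the \emph{normalized} increments $(W_\infty-W_{\ell_n})/\|W_\infty-W_{\ell_n}\|_{H^1}$ and $(W_{\ell_{n+1}}-W_{\ell_n})/\|W_{\ell_{n+1}}-W_{\ell_n}\|_{H^1}$ converge weakly to zero in $H^1_0(\Omega)$. Only then does the compact embedding yield that the cross term is $\le\delta$ times the main terms \emph{for $j\ge j_\delta$}, giving the perturbed Pythagoras estimate
\[
\dist{w}{\TT_{\ell_{j+1}}}{\TT_{\ell_j}}^2\le\tfrac{1}{1-\delta}\,\dist{w}{\TT_\infty}{\TT_{\ell_j}}^2-\dist{w}{\TT_\infty}{\TT_{\ell_{j+1}}}^2
\quad\text{for all }j\ge j_\delta.
\]
The finitely many terms with $j<j_\delta$ are bounded individually via discrete reliability and quasi-monotonicity, so that $\c{orth}(\eps)=(1+j_\delta)\c{reliable}^2\c{mon}$ depends on the non-constructive index $j_\delta$. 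A second point you glossed over is the reliability $\|W_\infty-W_{\ell_j}\|_{H^1}\lesssim\eta_{w,\ell_j}$: since $W_\infty$ need not equal $w$, this is not classical reliability but is obtained as the limit $k\to\infty$ of the discrete reliability~\eqref{ass:reliable}.
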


\begin{proof}[Proof of Theorem~\ref{thm:ex:nonsymm}, ~\eqref{ass:stable}--\eqref{ass:reduction} and~\eqref{ass:reliable}]
The work~\cite{ckns} considers some symmetric model problem with 
$\matrix{b}=0$ and $c\geq 0$  as well as $\boldsymbol{f}_2=0=\boldsymbol{g}_2$. Stability~\eqref{ass:stable} and reduction~\eqref{ass:reduction} are essentially part of the proof of~\cite[Corollary~3.4]{ckns}. The discrete reliability~\eqref{ass:reliable} 
is  found in~\cite[Lemma~3.6]{ckns}. Both proofs transfer verbatim to the present situation with a non-symmetric differential operator and general $\boldsymbol{f}_2,\boldsymbol{g}_2$.
\end{proof}

\begin{lemma}\label{lem:ex:nonsymm:conv}
In the setting of Theorem~\ref{thm:ex:nonsymm} and for Algorithm~\ref{algorithm}--\ref{algorithm:bet},
the Galerkin approximations $U_\ell$ and $Z_\ell$ converge in the sense of
 \begin{align}\label{eq:aprioriconv}
  \lim_{\ell\to\infty}\norm{U_\infty-U_\ell}{H^1(\Omega)}=0=\lim_{\ell\to\infty}\norm{Z_\infty-Z_\ell}{H^1(\Omega)},
 \end{align}
 for certain $U_\infty,Z_\infty\in H^1_0(\Omega)$. Moreover, there holds at least $U_\infty=u$ or $Z_\infty=z$.
\end{lemma}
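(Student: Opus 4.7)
The plan has two independent pieces: first, establish \emph{a priori} convergence of the Galerkin sequences $U_\ell$ and $Z_\ell$ in $H^1_0(\Omega)$; second, identify at least one of the limits with the true solution by exploiting the linear convergence of the estimator product from Theorem~\ref{theorem:linear}.

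\textbf{Nested limit space and a priori convergence.} Since the mesh-refinement yields $\TT_{\ell+1}\in\refine(\TT_\ell)$, the discrete spaces satisfy $\SS^p_0(\TT_\ell)\subseteq \SS^p_0(\TT_{\ell+1})$ for all $\ell\in\N_0$. I define $\XX_\infty:=\overline{\bigcup_{\ell\in\N_0}\SS^p_0(\TT_\ell)}\subseteq H^1_0(\Omega)$ with the closure taken in $H^1_0(\Omega)$. Continuity and ellipticity of $a(\cdot,\cdot)$ are inherited by the closed subspace $\XX_\infty$, and the Lax--Milgram lemma provides unique $U_\infty,Z_\infty\in\XX_\infty$ solving $a(U_\infty,V)=f(V)$ and $a(V,Z_\infty)=g(V)$ for all $V\in\XX_\infty$. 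Since $\SS^p_0(\TT_\ell)\subseteq\XX_\infty$, Galerkin orthogonality together with C\'ea's lemma gives
\begin{align*}
\norm{U_\infty-U_\ell}{H^1(\Omega)} \lesssim \inf_{V_\ell\in \SS^p_0(\TT_\ell)}\norm{U_\infty-V_\ell}{H^1(\Omega)}.
\end{align*}
For any $\eps>0$, I pick $\ell_0\in\N_0$ and $V\in \SS^p_0(\TT_{\ell_0})$ with $\norm{U_\infty-V}{H^1(\Omega)}<\eps$; nestedness then ensures $V\in \SS^p_0(\TT_\ell)$ for all $\ell\ge \ell_0$, so the right-hand side is $<\eps$ for all such $\ell$. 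Letting $\eps\to 0$ proves $U_\ell\to U_\infty$, and the same argument applied to the dual problem gives $Z_\ell\to Z_\infty$, establishing~\eqref{eq:aprioriconv}.

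\textbf{Identification of the limit.} By Theorem~\ref{theorem:linear}, for each of Algorithms~\ref{algorithm}--\ref{algorithm:bet}, linear convergence of the product yields $\eta_{u,\ell}\eta_{z,\ell}\to 0$. If both $\liminf_\ell \eta_{u,\ell}$ and $\liminf_\ell \eta_{z,\ell}$ were strictly positive, their product would be bounded below for large $\ell$, contradicting $\eta_{u,\ell}\eta_{z,\ell}\to 0$. Hence at least one of the $\liminf$'s vanishes; say $\eta_{u,\ell_k}\to 0$ along a subsequence. Classical reliability~\eqref{eq:reliable:laxmilgram} for the residual estimator~\eqref{ex:nonsymm:estimator:primal} yields $\norm{u-U_{\ell_k}}{H^1(\Omega)}\lesssim \eta_{u,\ell_k}\to 0$, i.e., $U_{\ell_k}\to u$. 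Combined with the full-sequence convergence $U_\ell\to U_\infty$ from the previous step, uniqueness of limits forces $U_\infty=u$; the symmetric case yields $Z_\infty=z$.

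The main subtlety is the nested density step for~\eqref{eq:aprioriconv}: it is precisely the inclusion $\SS^p_0(\TT_{\ell_0})\subseteq \SS^p_0(\TT_\ell)$ for all $\ell\ge \ell_0$ that allows a fixed approximation $V$ to remain admissible for all later $\ell$, so that the C\'ea infimum can be driven to zero uniformly. Identification of the limit is then essentially immediate from linear convergence of the product and reliability of the residual estimators.
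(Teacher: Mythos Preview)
Your argument for the a~priori convergence~\eqref{eq:aprioriconv} is correct and essentially identical to the paper's.

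The identification step, however, is circular in the paper's logical structure. You invoke Theorem~\ref{theorem:linear} to obtain $\eta_{u,\ell}\eta_{z,\ell}\to 0$, but Theorem~\ref{theorem:linear} rests on Proposition~\ref{prop:linear}, which in turn requires the quasi-orthogonality axiom~\eqref{ass:orthogonal}. For the non-symmetric model problem of Section~\ref{section:example}, axiom~\eqref{ass:orthogonal} is \emph{not} available a~priori: it is established only in the second part of the proof of Theorem~\ref{thm:ex:nonsymm}, and that proof explicitly relies on Lemma~\ref{lem:ex:nonsymm:conv}. So at the point where this lemma is being proved, linear convergence of the product is not yet at your disposal.

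The paper sidesteps this by arguing directly from the algorithm and from tools that need only~\eqref{ass:stable}--\eqref{ass:reduction}. One observes that for at least one $w\in\{u,z\}$ the D\"orfler marking~\eqref{eq1:estred} is satisfied infinitely often (trivial for Algorithms~\ref{algorithm}--\ref{algorithm:mod}; for Algorithm~\ref{algorithm:bet} this is read off from the proof of Theorem~\ref{theorem:linear}, which at that point is a purely combinatorial observation about~\eqref{eq:doerfler:bet} and does not use~\eqref{ass:orthogonal}). Along that subsequence, the estimator reduction of Lemma~\ref{lemma:estred} together with the a~priori convergence $\dist{w}{\TT_{\ell_{n+1}}}{\TT_{\ell_n}}\to 0$ forces $\eta_{w,\ell_n}\to 0$ by an elementary limsup argument, and reliability then gives $W_\infty=w$. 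This route uses only~\eqref{ass:stable}--\eqref{ass:reduction} and classical reliability, so no circularity arises.
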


\begin{proof}
Note that adaptive mesh-refinement guarantees nestedness $\XX_\ell\subseteq\XX_\star$ for all $\TT_\ell\in\T$ and $\TT_\star\in\refine(\TT_\ell)$.
As in~\cite[Section~3.6]{axioms} or~\cite[Lemma~6.1]{bv}, the C\'ea lemma thus implies {\sl a~priori} convergence in the sense that
there exist $U_\infty,Z_\infty\in \XX_\infty:=\overline{\bigcup_{\ell\in\N_0}\XX_\ell}\subseteq H^1_0(\Omega)$ such that
\begin{align*}
\lim_{\ell\to\infty}\norm{U_\infty-U_\ell}{H^1(\Omega)}=0=\lim_{\ell\to\infty}\norm{Z_\infty-Z_\ell}{H^1(\Omega)}. 
\end{align*}
For $w\in\{u,z\}$, let $\ell_{w,n}$ denote the subsequences which satisfy 
\begin{align*}
\theta\eta_{w,\ell_{w,n}}^2\leq \eta_{w,\ell_{w,n}}(\MM_{w,\ell_{w,n}})^2 \quad\text{for all }n\in\N.
\end{align*}
There holds $\#\set{\ell_{w,n}}{n\in\N}=\infty$ for at least one $w\in\{u,z\}$. While this is obvious for Algorithm~\ref{algorithm} and Algorithm~\ref{algorithm:mod}, it follows for Algorithm~\ref{algorithm:bet} from the proof of Theorem~\ref{theorem:linear}.
For this particular $w$, the estimator reduction 
from Lemma~\ref{lemma:estred} reads
\begin{align*}
 \eta_{w,\ell_{w,n+1}}^2\leq \q{estred}\eta_{w,\ell_{w,n}}^2 + \c{estred}\dist{w}{\TT_{\ell_{w,n+1}}}{\TT_{\ell_{w,n}}}^2\quad\text{for all }n\in\N.
\end{align*}
The {\sl a~priori} convergence~\eqref{eq:aprioriconv} implies $\dist{w}{\TT_{\ell_{w,n+1}}}{\TT_{\ell_{w,n}}}^2\to 0$ as $n\to\infty$. Elementary calculus thus yields $\lim_{n\to\infty}\eta_{w,\ell_{w,n}}=0$; see, e.g.,~\cite[Corollary~4.8]{axioms} resp.~\cite[Lemma~2.3]{estconv}.
Reliability~\eqref{eq:reliable:laxmilgram} of 
$\eta_{w,\ell}$ concludes $\lim_{n\to\infty}\norm{w-W_{\ell_{w,n}}}{H^1(\Omega)}=0$, i.e. $w=W_\infty$.
\end{proof}

\begin{proof}[Proof of Theorem~\ref{thm:ex:nonsymm}, \eqref{ass:orthogonal}]
Recall the sequence $\TT_{\ell_n}$ from~\eqref{ass:orthogonal}. With the {\sl a~priori} convergence of Lemma~\ref{lem:ex:nonsymm:conv}, the proof of~\cite[Lemma~3.5]{ffp} applies and shows the weak convergence in $H^1_0(\Omega)$ for $W_\infty\in\{U_\infty,Z_\infty\}$
\begin{align*}
 \frac{W_\infty-W_{\ell_n}}{\norm{W_\infty-W_{\ell_n}}{H^1(\Omega)}}\rightharpoonup 0 \quad\text{and}\quad  \frac{W_{\ell_{n+1}}-W_{\ell_n}}{\norm{W_{\ell_{n+1}}-W_{\ell_n}}{H^1(\Omega)}}\rightharpoonup 0 \quad\text{as }\ell\to\infty.
\end{align*}
Define $\dist{w}{\TT_\infty}{\cdot}:=a(W_\infty-(\cdot),W_\infty-(\cdot))^{1/2}$.
With this,~\cite[Proposition~3.6]{ffp} applies for the primal as well as the dual problem and shows that given any $\delta>0$, there exists $j_\delta\in\N$ such that all $j\geq j_\delta$ satisfy
\begin{align}\label{eq:qo}
\begin{split}
\dist{w}{\TT_{\ell_{j+1}}}{\TT_{\ell_j}}^2&\leq \frac{1}{1-\delta}\dist{w}{\TT_\infty}{\TT_{\ell_j}}^2 -\dist{w}{\TT_\infty}{\TT_{\ell_{j+1}}}^2.
\end{split}
\end{align}
The discrete reliability~\eqref{ass:reliable} and the convergence~\eqref{eq:aprioriconv} yield
\begin{align}\label{eq:ex:rel}
 \dist{w}{\TT_\infty}{\TT_{\ell_j}} =\lim_{k\to\infty}\dist{w}{\TT_{\ell_k}}{\TT_{\ell_j}}\leq \c{rel}\eta_{w,\ell_j}.
\end{align}
With~\eqref{eq:qo}--\eqref{eq:ex:rel}, the quasi-monotonicity from Lemma~\ref{lemma:monotone} (since~\eqref{ass:stable},~\eqref{ass:reduction},~\eqref{ass:reliable} have already been verified)  implies for $\delta=1-1/(1+\eps \c{reliable}^{-2})$ and hence $1/(1-\delta)=1+\eps \c{reliable}^{-2}$ that
\begin{align}\label{eq:qohelp}
\nonumber
 \sum_{j=n}^N\big(&\dist{w}{\TT_{\ell_{j+1}}}{\TT_{\ell_j}}^2-\eps \c{reliable}^{-2} \dist{w}{\TT_\infty}{\TT_{\ell_j}}^2\big)\\
\begin{split}
 &\overset{\eqref{eq:qo}}{\leq}\sum_{j=j_\delta}^N\big( (\frac{1}{1-\delta}-\eps \c{reliable}^{-2})\dist{w}{\TT_\infty}{\TT_{\ell_j}}^2 -\dist{w}{\TT_\infty}{\TT_{\ell_{j+1}}}^2\big)+\sum_{j=n}^{j_\delta-1}\dist{w}{\TT_{\ell_{j+1}}}{\TT_{\ell_j}}^2\\
 &\leq\dist{w}{\TT_\infty}{\TT_{\ell_{j_\delta}}}^2 
+\c{reliable}^2\sum_{j=n}^{j_\delta-1}\eta_{w,\ell_j}^2\overset{\eqref{eq:ex:rel}}{\leq} (1+j_\delta)\c{reliable}^2\c{mon}\eta_{w,\ell_n}^2.
\end{split}
\end{align}
Another application of the reliability~\eqref{eq:ex:rel} shows
\begin{align*}
 \sum_{j=n}^N\big(\dist{w}{\TT_{\ell_{j+1}}}{\TT_{\ell_j}}^2-\eps \eta_{w,\ell_j}^2\big)&\overset{\eqref{eq:ex:rel}}{\leq}\sum_{j=n}^N\big(\dist{w}{\TT_{\ell_{j+1}}}{\TT_{\ell_j}}^2-\eps \c{reliable}^{-2} \dist{w}{\TT_\infty}{\TT_{\ell_j}}^2\big)\\
 &\overset{\eqref{eq:qohelp}}{\leq} (1+j_\delta)\c{reliable}^2\c{mon}\eta_{w,\ell_n}^2.
\end{align*}
This proves~\eqref{ass:orthogonal} with $\c{orth}(\eps):= (1+j_\delta)\c{reliable}^2\c{mon}$.
\end{proof}

\begin{figure}
\includegraphics[scale=0.5]{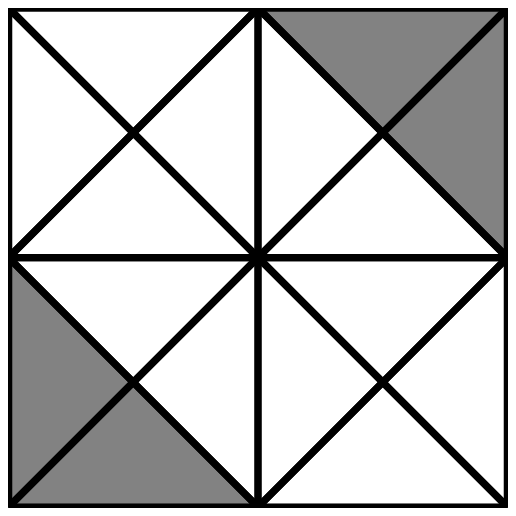}
\includegraphics[scale=0.4]{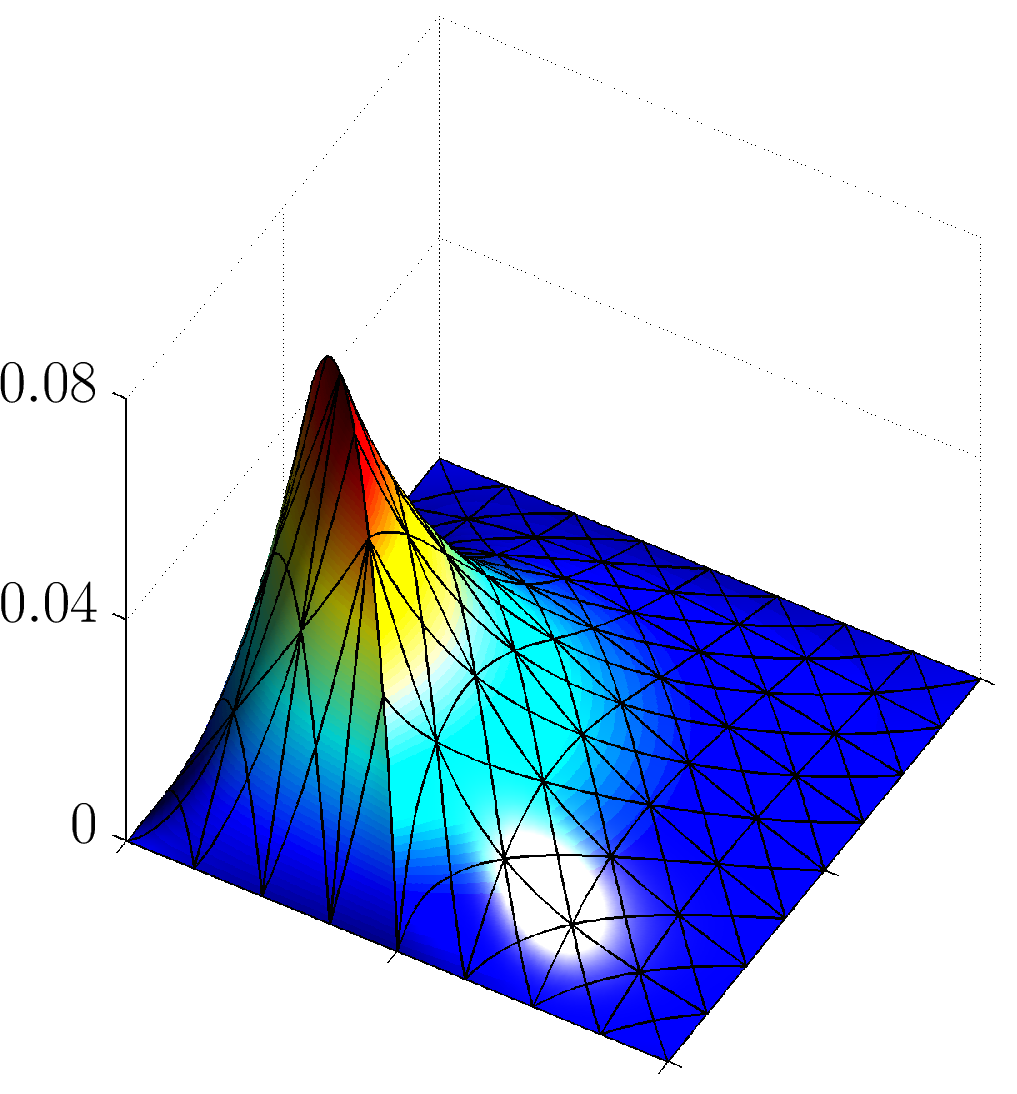}
\includegraphics[scale=0.4]{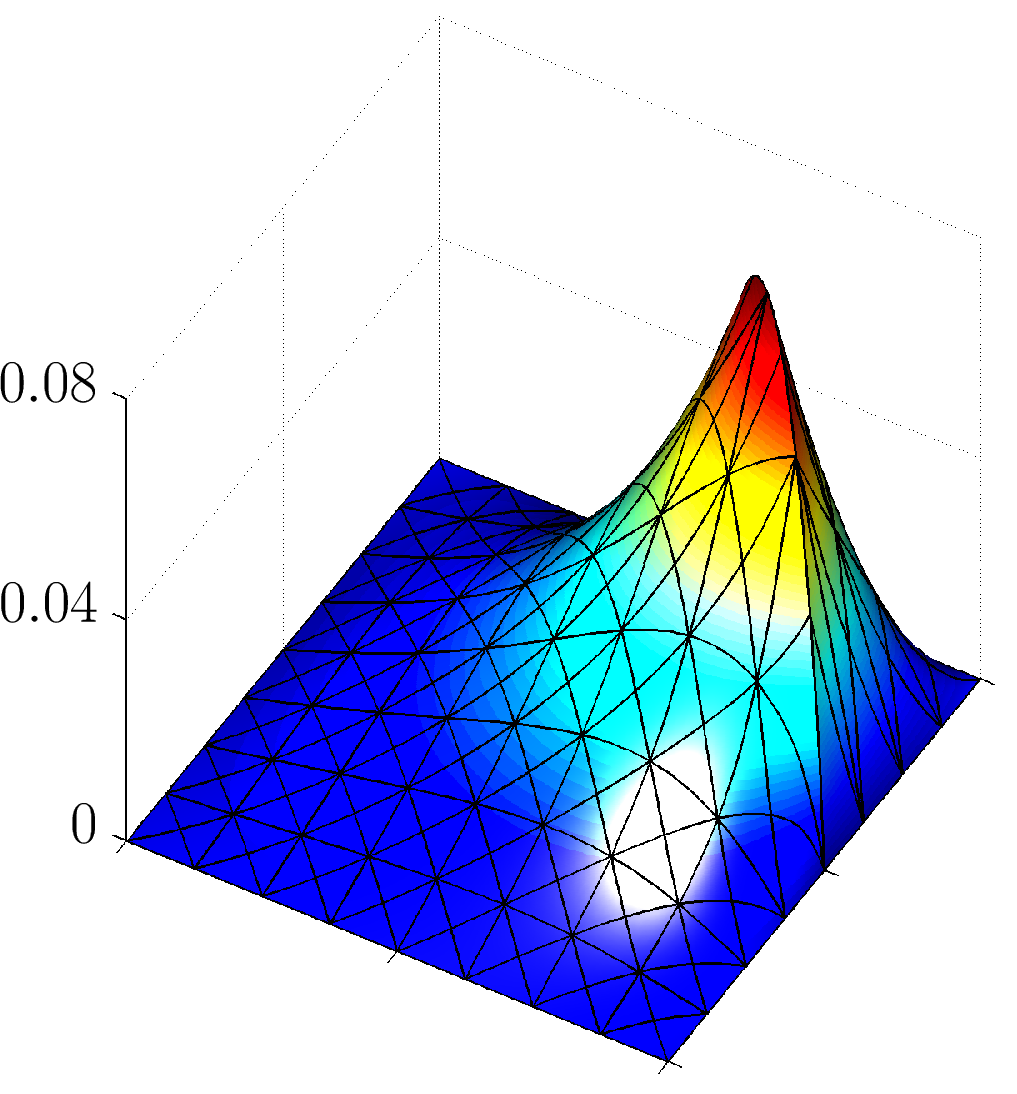}
\caption{Example from Section~\ref{example1:afem}: The initial mesh~$\TT_0$ (left) and the triangles $T_f$ (bottom left) and $T_g$ (top right) indicated in gray. An approximation to the primal solution (middle) and dual solution (right) on a uniform mesh with $256$ elements, where the singularities of both are clearly visible.} 
\label{fig:MStestcase}
\end{figure}

\begin{figure}
\psfrag{0.1}{\scalebox{.5}{$0.1$}}
\psfrag{0.2}{\scalebox{.5}{$0.2$}}
\psfrag{0.3}{\scalebox{.5}{$0.3$}}
\psfrag{0.4}{\scalebox{.5}{$0.4$}}
\psfrag{0.5}{\scalebox{.5}{$0.5$}}
\psfrag{0.6}{\scalebox{.5}{$0.6$}}
\psfrag{0.7}{\scalebox{.5}{$0.7$}}
\psfrag{0.8}{\scalebox{.5}{$0.8$}}
\psfrag{0.9}{\scalebox{.5}{$0.9$}}
\psfrag{1.0}{\scalebox{.5}{$1.0$}}
\psfrag{estu}{\tiny$\eta_u$}
\psfrag{estz}{\tiny$\eta_z$}
\psfrag{estz*estu}{\tiny$\eta_u\eta_z$}
\psfrag{est}[c][c]{\tiny estimators}
\psfrag{error}[c][c]{\tiny error resp. estimators}
\psfrag{err}[c][c]{\tiny error}
\psfrag{Algorithm A}[c][c]{\tiny Algorithm A}
\psfrag{Algorithm B}[c][c]{\tiny Algorithm B}
\psfrag{Algorithm C}[c][c]{\tiny Algorithm C}
\psfrag{o3}{\tiny $\mathcal{O}(N^{-3})$}
\psfrag{o32}{\tiny $\mathcal{O}(N^{-3/2})$}
\psfrag{o1}{\tiny $\mathcal{O}(N^{-1})$}
\psfrag{nE}[c][c]{\tiny number of elements $N=\#\TT_\ell$}
\includegraphics[scale=0.4]{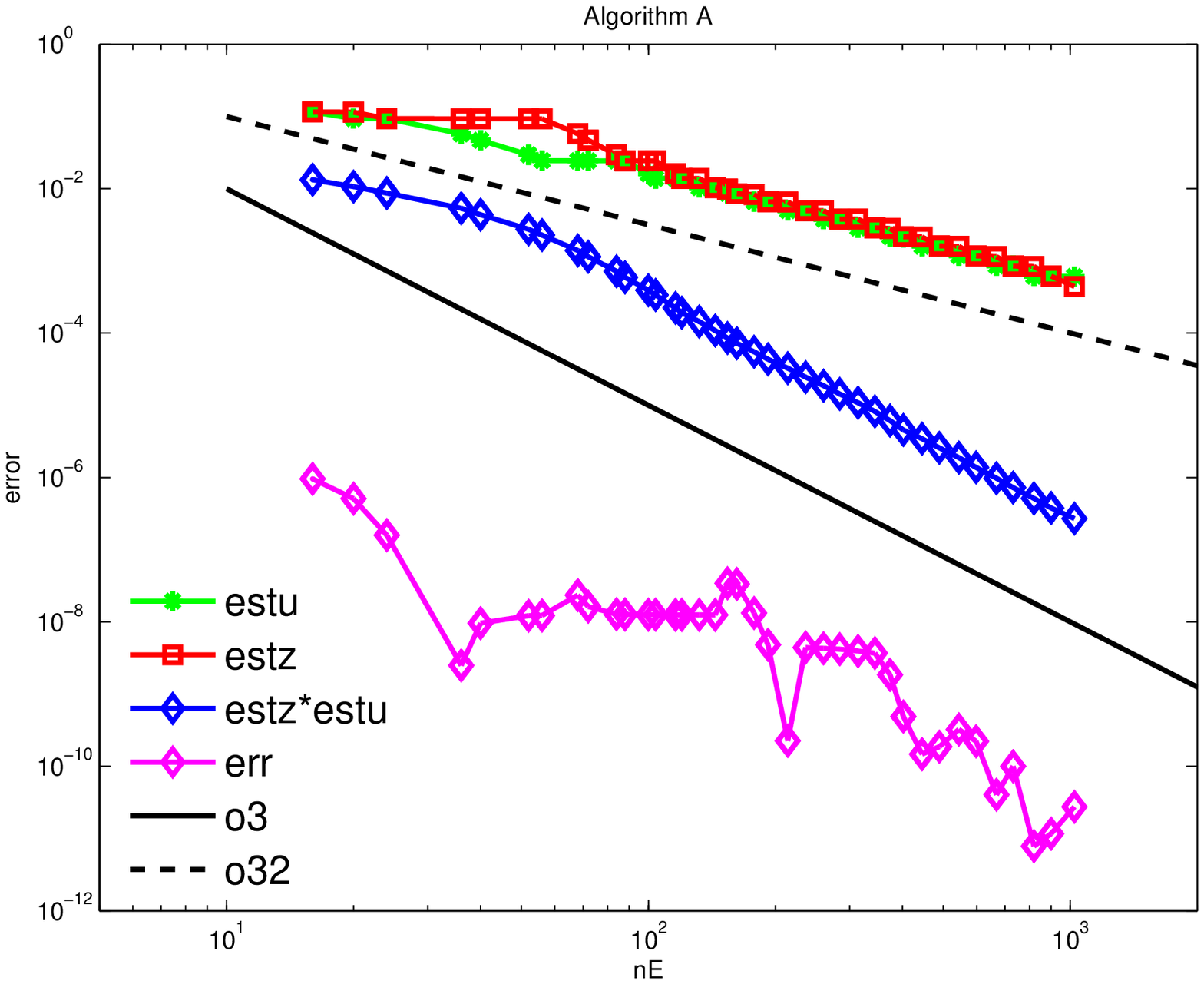}
\qquad
\includegraphics[scale=0.4]{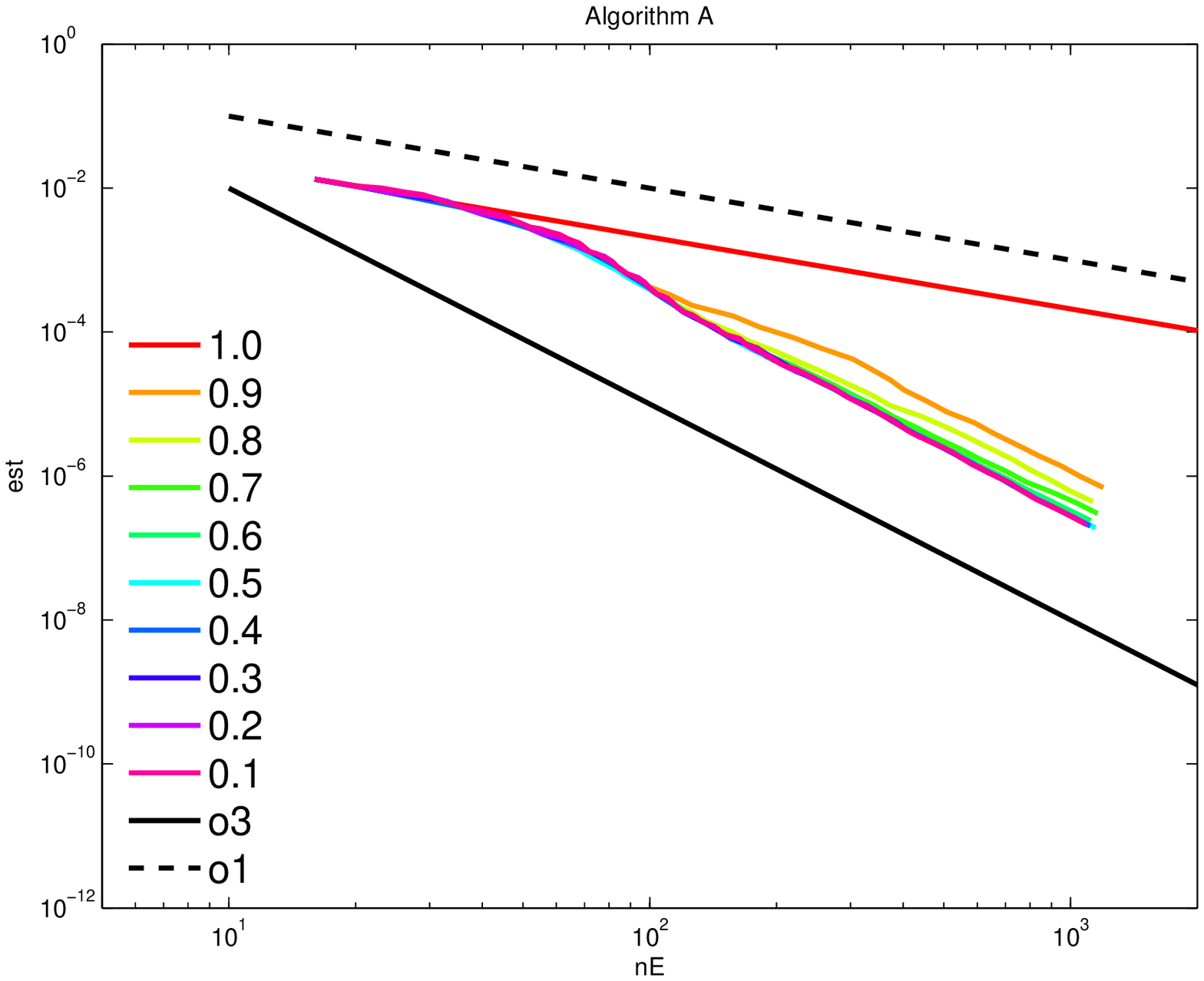}
\\[1ex]
\includegraphics[scale=0.4]{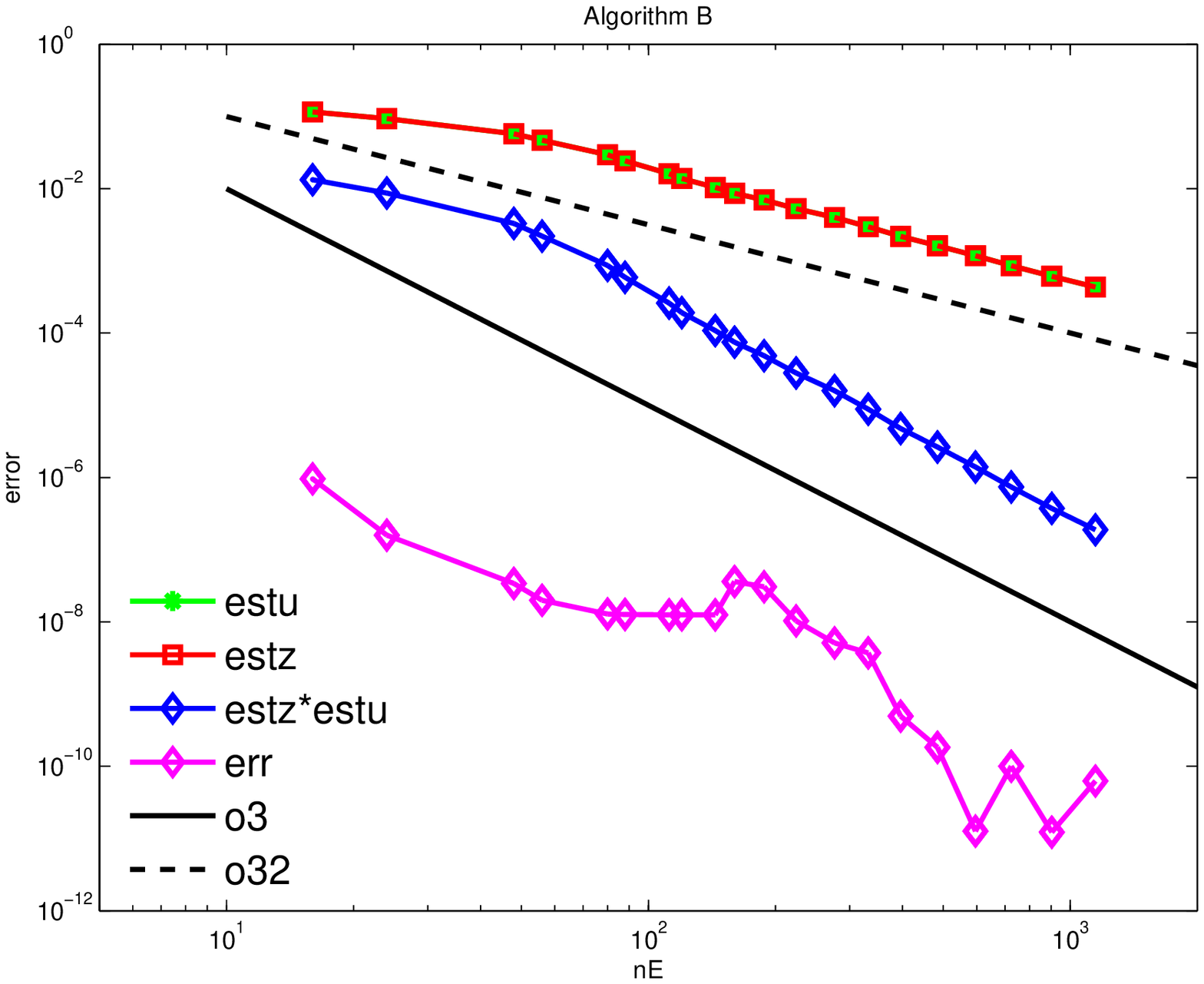}
\qquad
\includegraphics[scale=0.4]{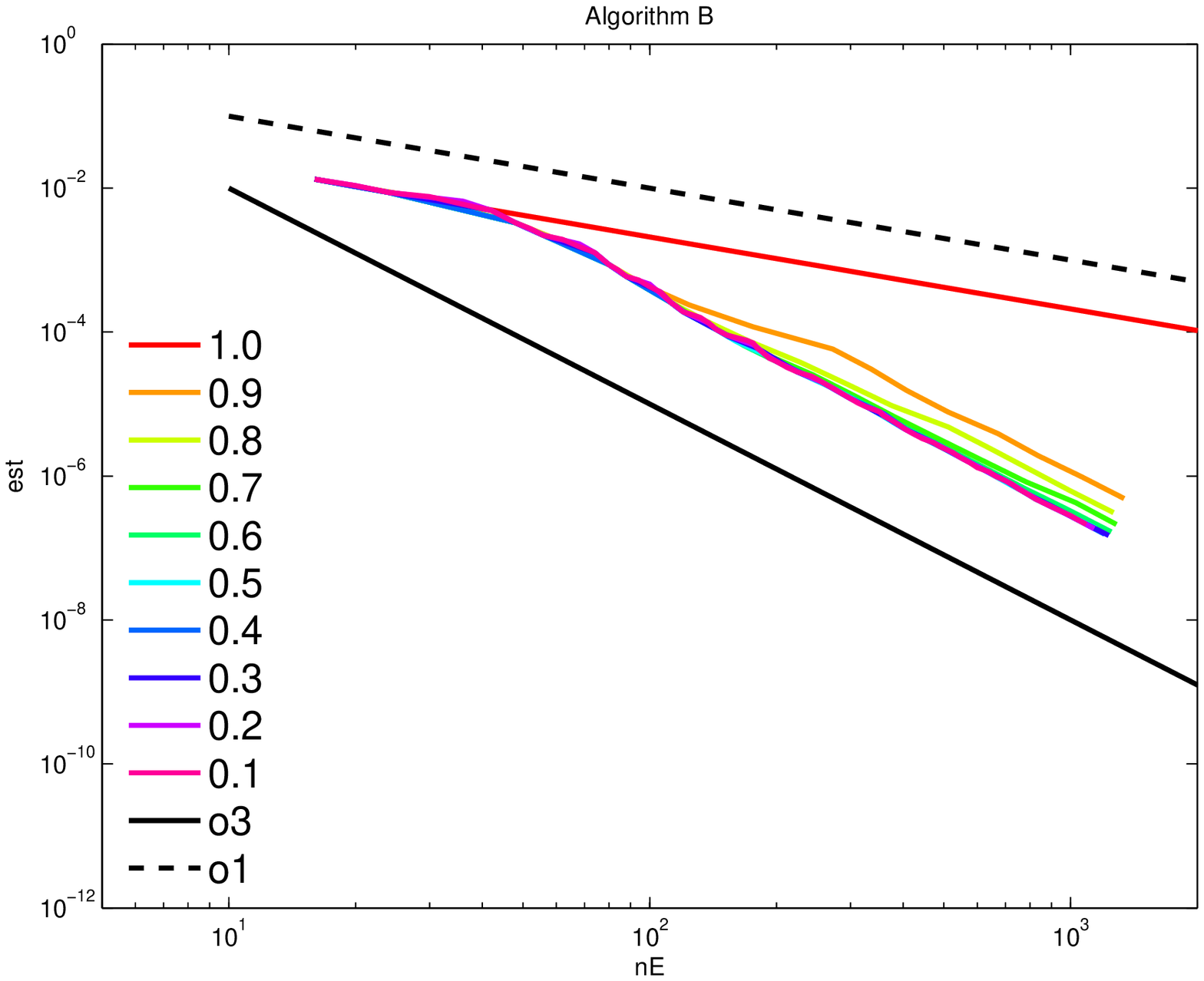}
\\[1ex]
\includegraphics[scale=0.4]{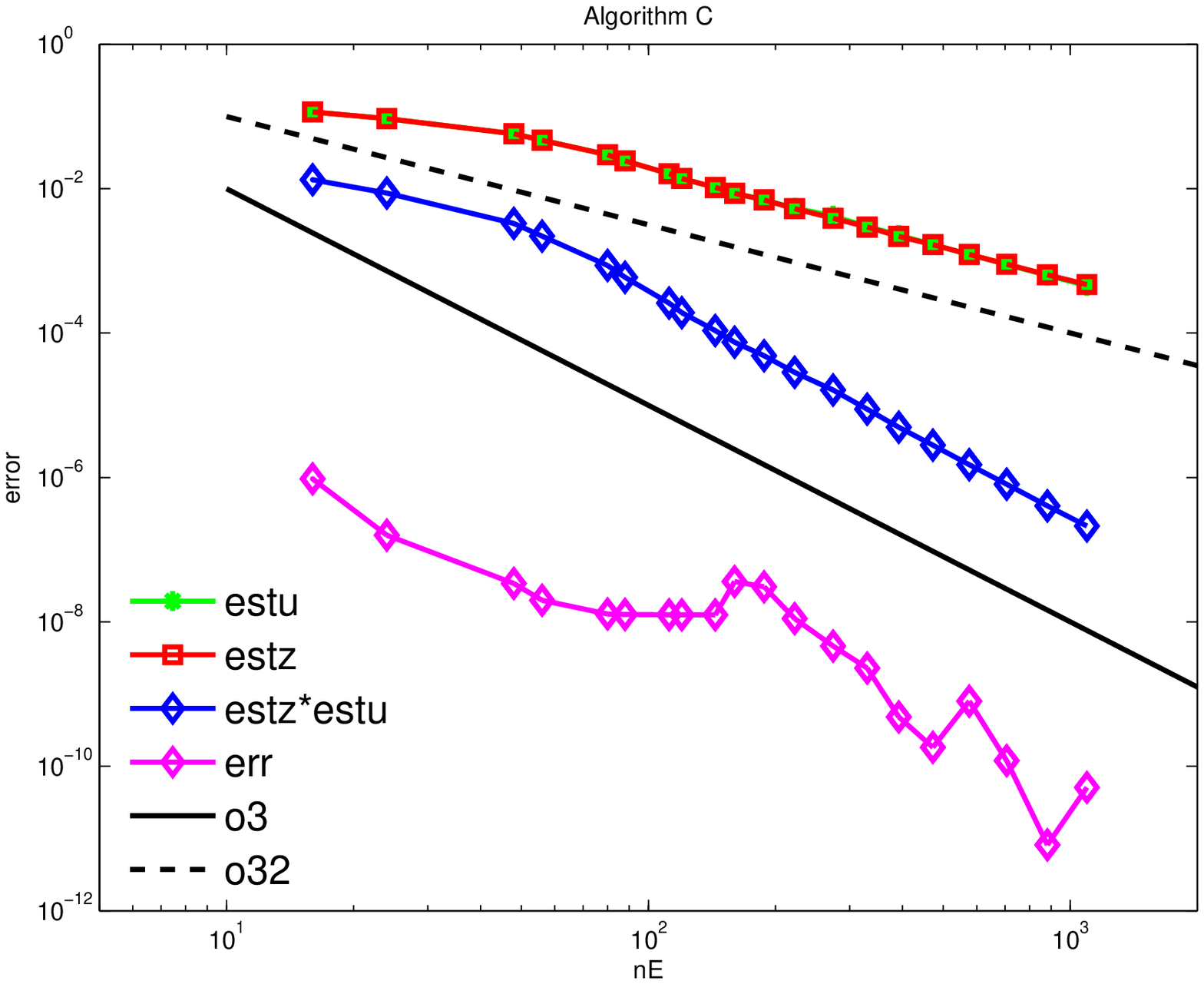}
\qquad
\includegraphics[scale=0.4]{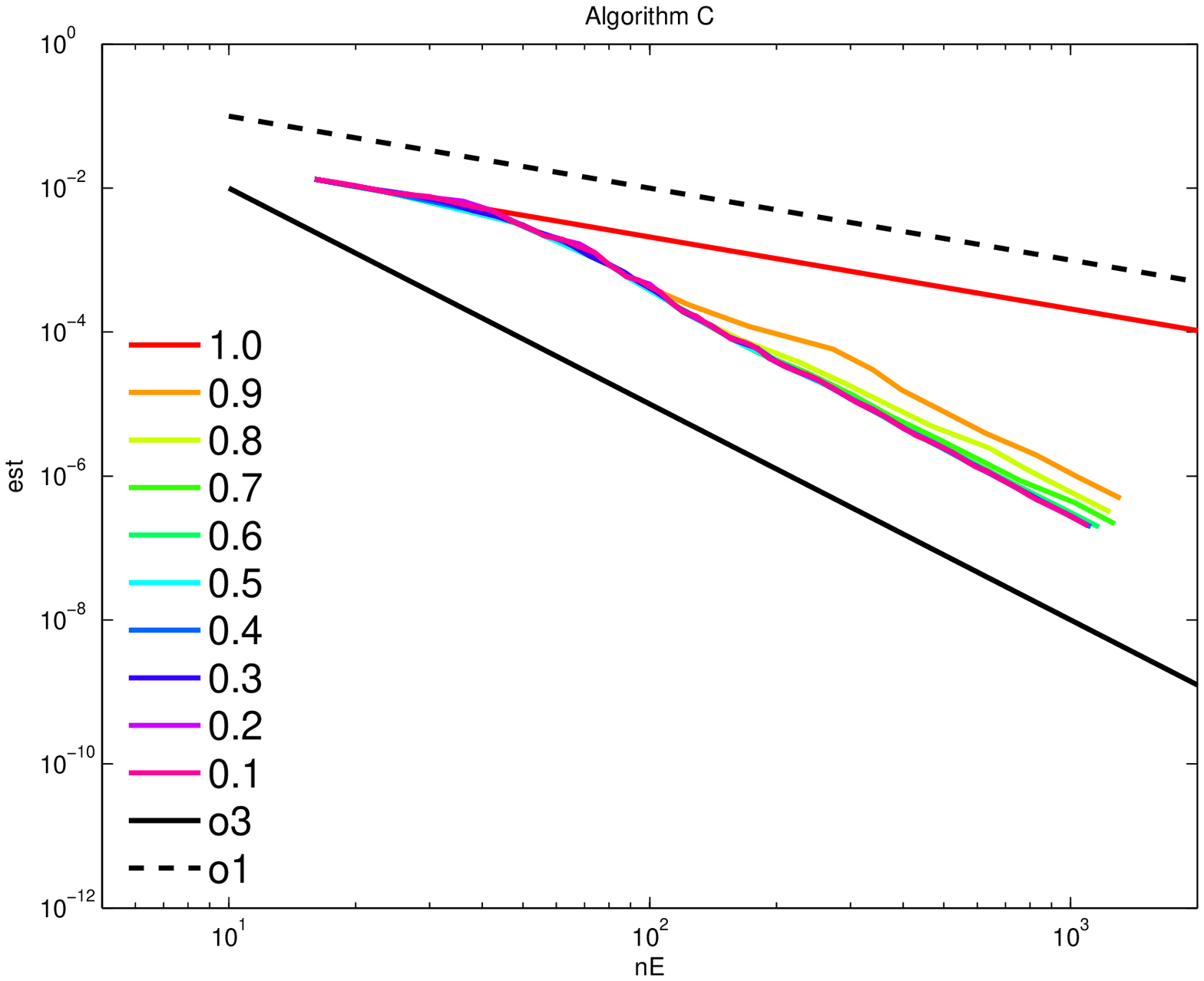}
\caption{Example from Section~\ref{example1:afem}: 
Over the numbers of elements $\#\TT_\ell$, we plot the estimators $\eta_{u,\ell}$ and $\eta_{z,\ell}$, the estimator product $\eta_{u,\ell}\eta_{z,\ell}$, as well as the goal error $|g(u) - g(U_\ell)|$ as output of Algorithm~\ref{algorithm}--\ref{algorithm:bet} with $\theta=0.5$ (left) resp.\ the estimator product for various $\theta\in\{0.1,\dots,0.9\}$ as well as for $\theta=1.0$ which corresponds to uniform refinement. }
 \label{fig:fem1:MSconv}
\end{figure}

\begin{figure}
\psfrag{0.1}{\scalebox{.5}{$0.1$}}
\psfrag{0.2}{\scalebox{.5}{$0.2$}}
\psfrag{0.3}{\scalebox{.5}{$0.3$}}
\psfrag{0.4}{\scalebox{.5}{$0.4$}}
\psfrag{0.5}{\scalebox{.5}{$0.5$}}
\psfrag{0.6}{\scalebox{.5}{$0.6$}}
\psfrag{0.7}{\scalebox{.5}{$0.7$}}
\psfrag{0.8}{\scalebox{.5}{$0.8$}}
\psfrag{0.9}{\scalebox{.5}{$0.9$}}
\psfrag{1.0}{\scalebox{.5}{$1.0$}}
\psfrag{estu}{\tiny$\eta_u$}
\psfrag{estz}{\tiny$\eta_z$}
\psfrag{estz*estu}{\tiny$\eta_u\eta_z$}
\psfrag{est}[c][c]{\tiny estimators}
\psfrag{error}[c][c]{\tiny error resp. estimators}
\psfrag{err}[c][c]{\tiny error}
\psfrag{Algorithm AFEM (P)}[c][c]{\tiny Standard AFEM (primal)}
\psfrag{Algorithm AFEM (D)}[c][c]{\tiny {\centering Standard AFEM (dual)}}
\psfrag{o2}{\tiny $\mathcal{O}(N^{-2})$}
\psfrag{o32}{\tiny $\mathcal{O}(N^{-3/2})$}
\psfrag{o1}{\tiny $\mathcal{O}(N^{-1})$}
\psfrag{nE}[c][c]{\tiny number of elements $N=\#\TT_\ell$}
\includegraphics[scale=0.4]{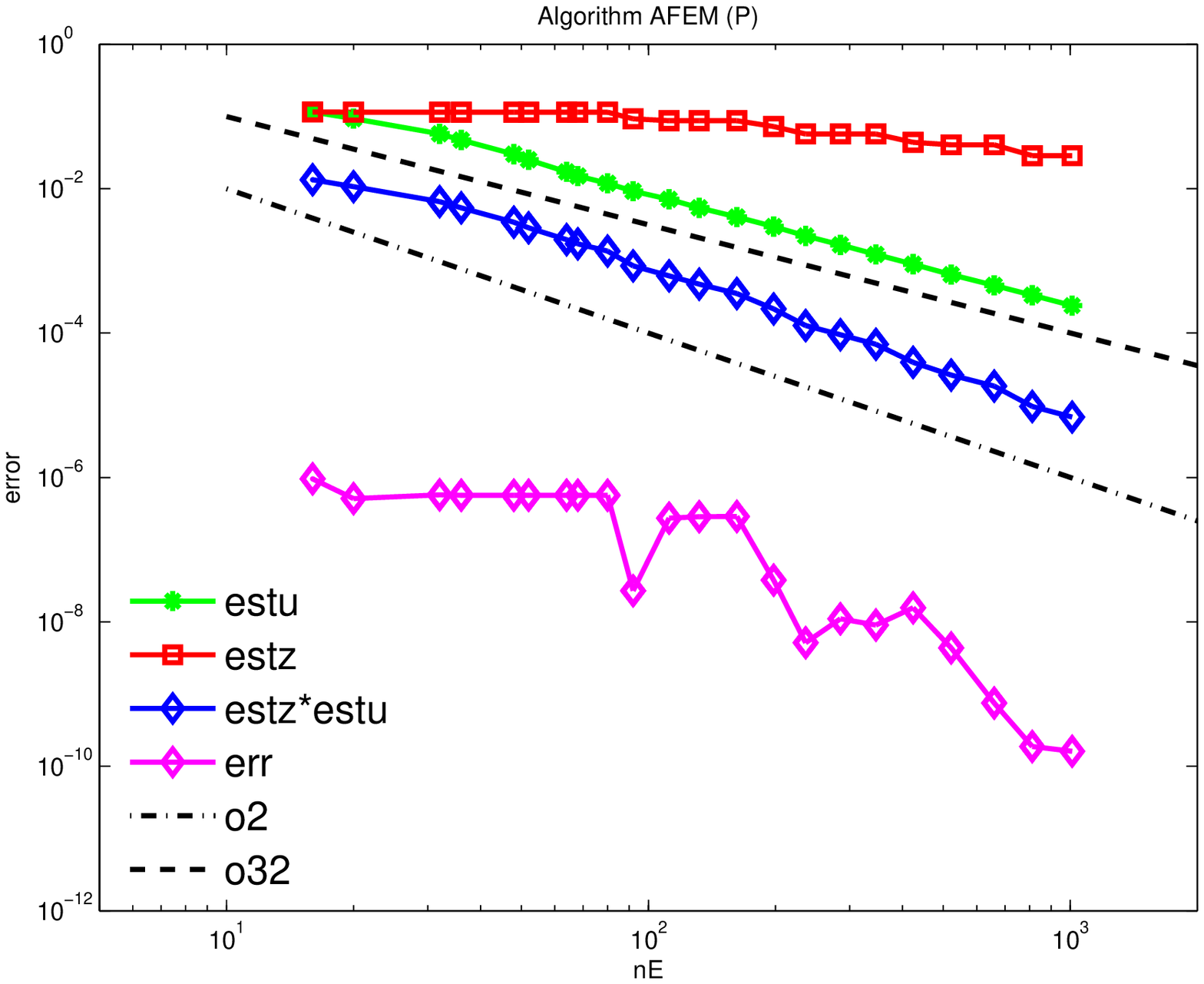}
\qquad
\includegraphics[scale=0.4]{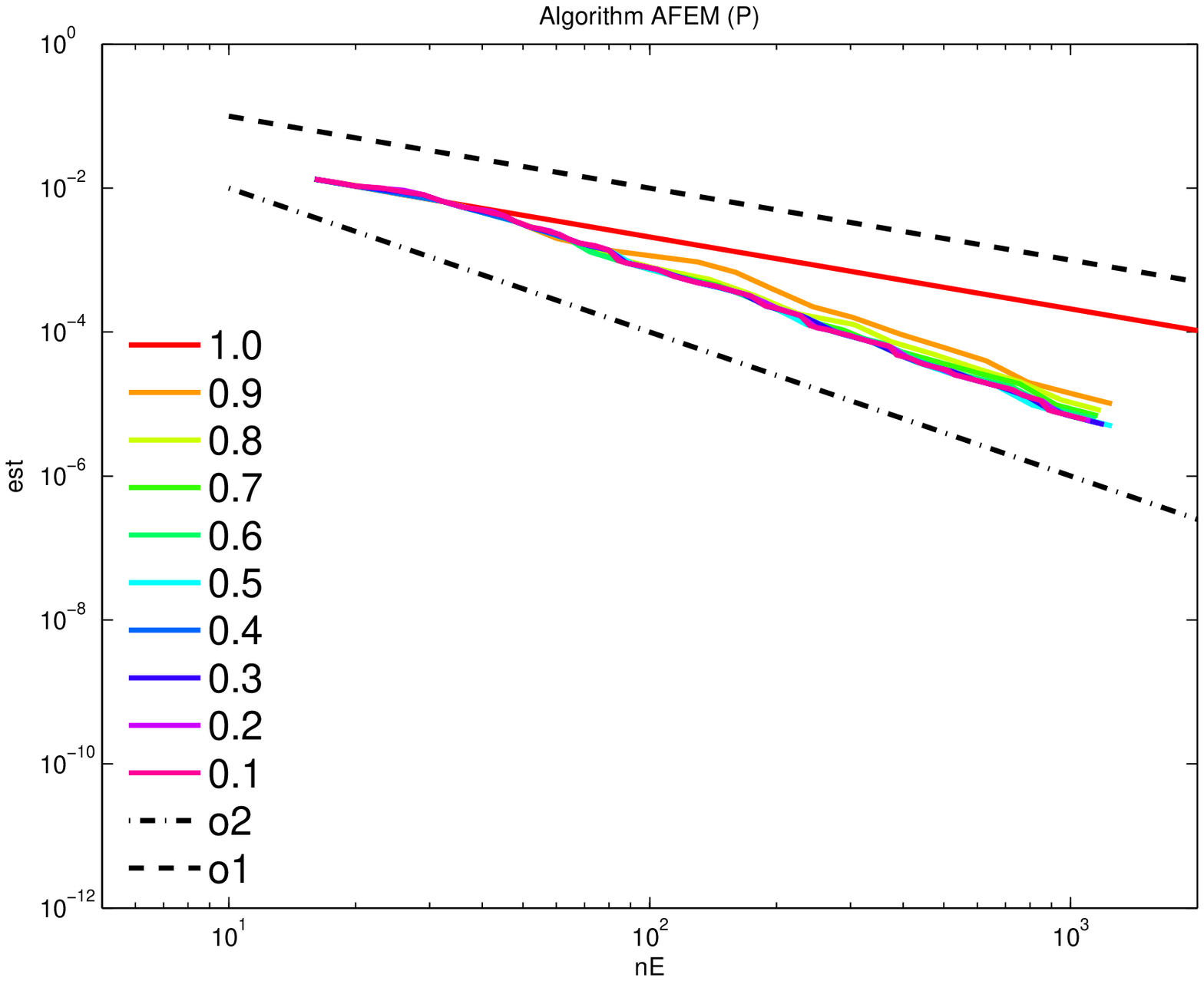}
\\[1ex]
\includegraphics[scale=0.4]{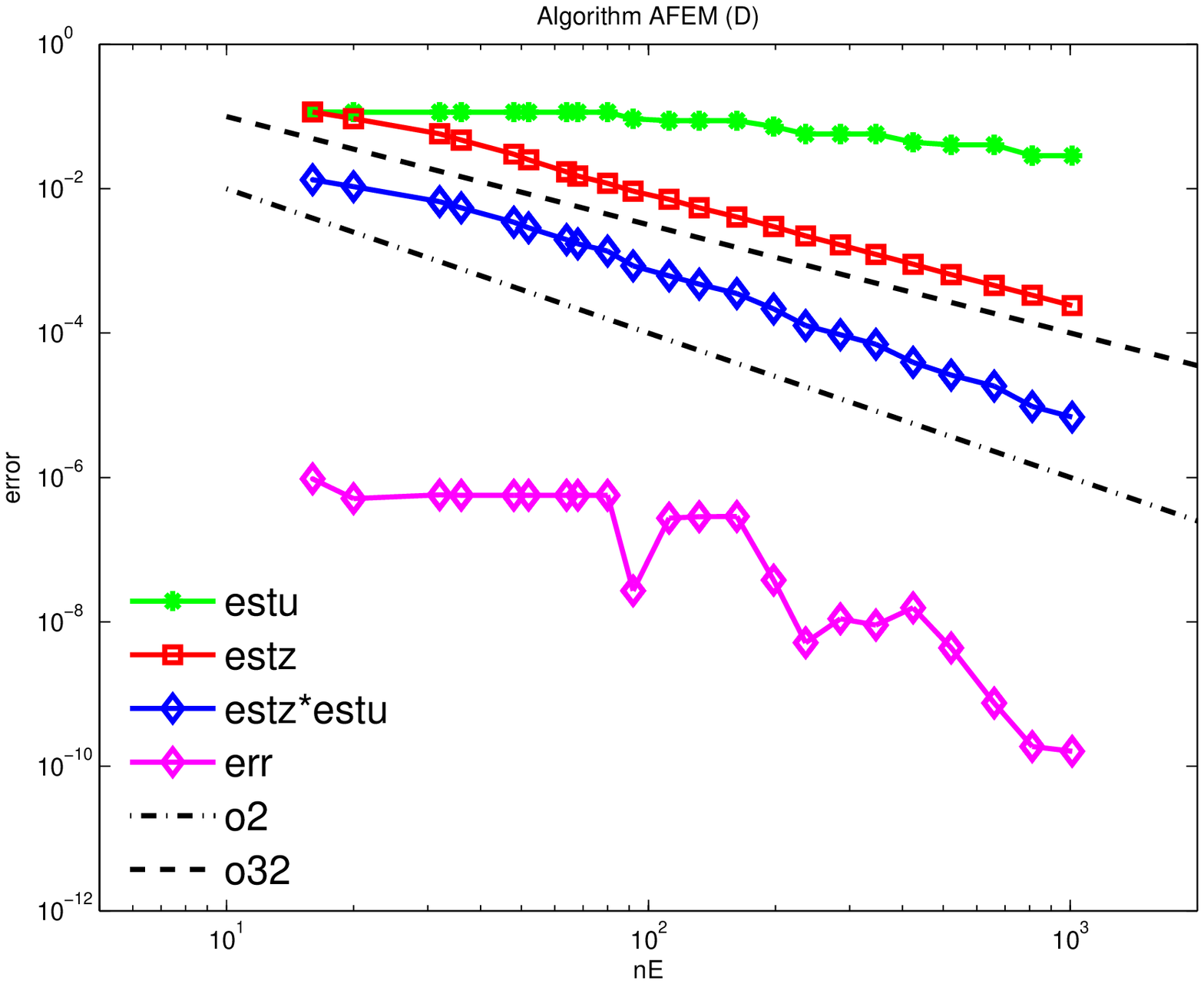}
\qquad
\includegraphics[scale=0.4]{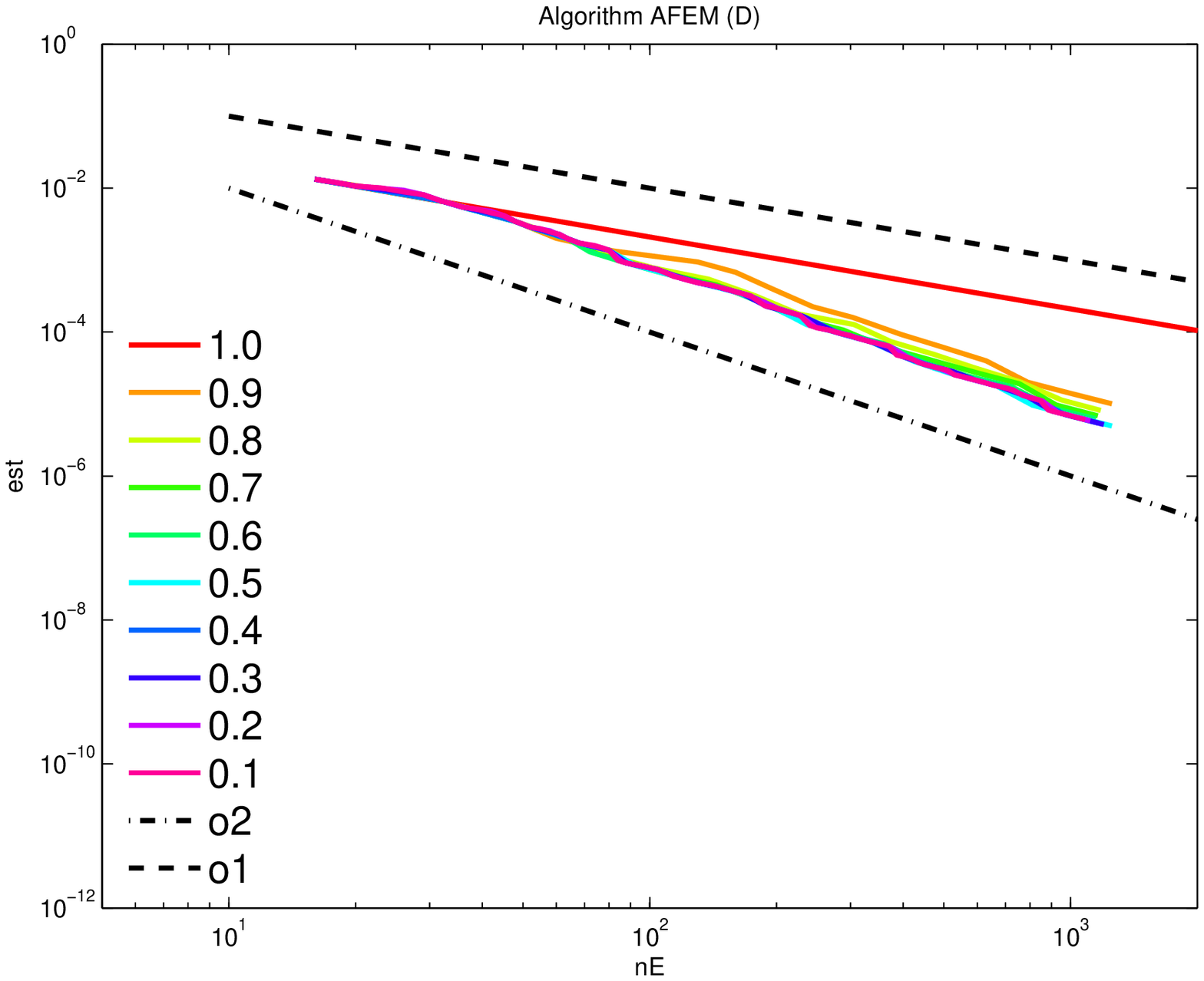}
\caption{Example from Section~\ref{example1:afem}: 
Output of standard (non-goal-oriented) AFEM algorithms with $\theta=0.5$, where adaptive mesh-refinement is steered only by the primal error estimator $\eta_{u,\ell}$ (top) resp.\ the dual error estimator $\eta_{z,\ell}$ (bottom). Over the numbers of elements $\#\TT_\ell$, we plot the estimators $\eta_{u,\ell}$ and $\eta_{z,\ell}$, the estimator product $\eta_{u,\ell}\eta_{z,\ell}$, as well as the goal error $|g(u) - g(U_\ell)|$ (left) resp.\ the estimator product for various $\theta\in\{0.1,\dots,0.9\}$ as well as for $\theta=1.0$ which corresponds to uniform refinement.}
\label{fig:fem1:QconvPD}
\end{figure}

\begin{figure}
{\tiny 
\begin{tabular}{c@{\quad}c@{\quad}c@{\quad}c@{\quad}c}
{\tiny Algorithm A}
&
{\tiny Algorithm B}
&
{\tiny Algorithm C}
&
{\tiny AFEM (primal)}
&
{\tiny AFEM (dual)}
\\
\includegraphics[scale=0.125]{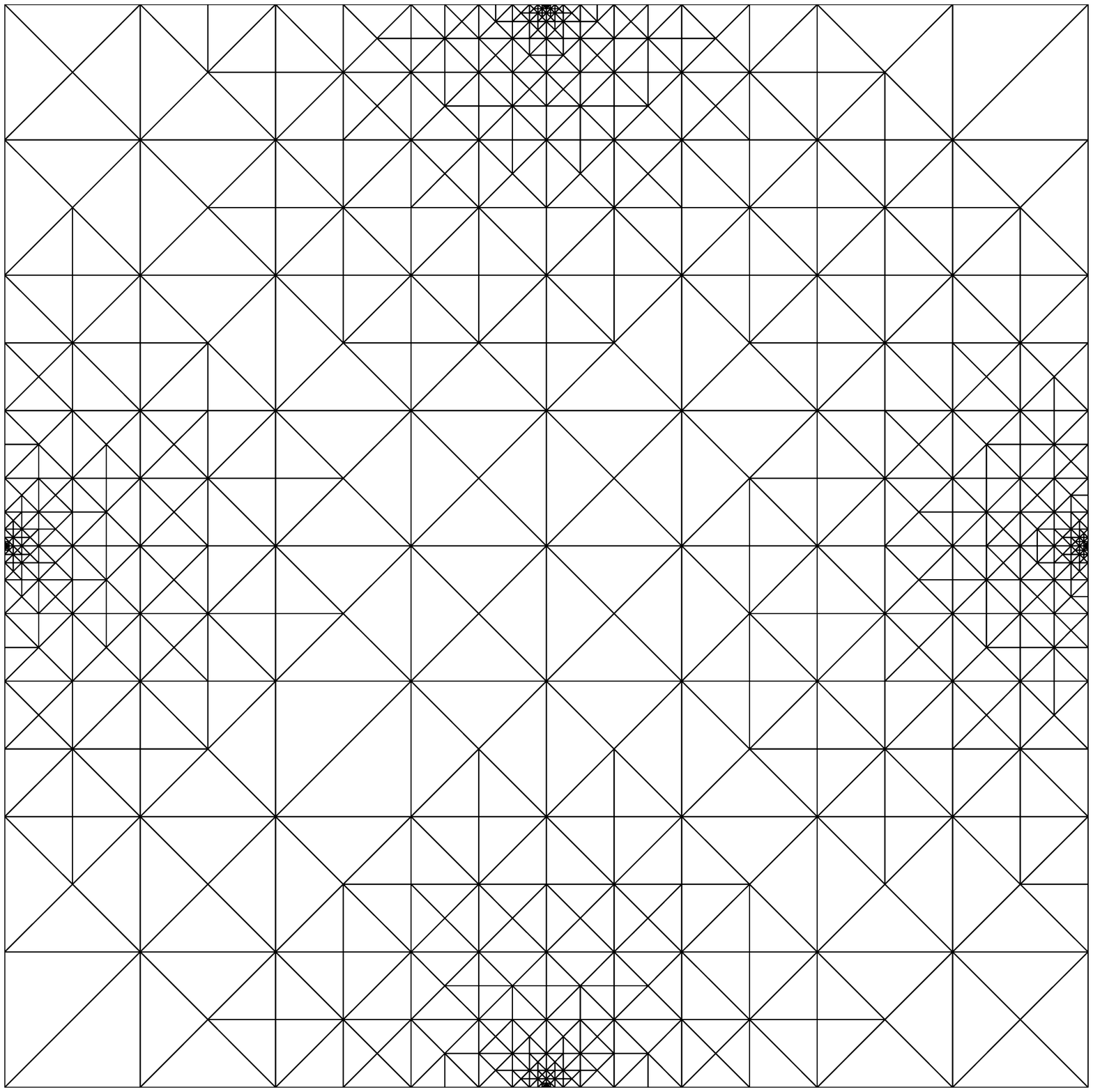}
&
\includegraphics[scale=0.125]{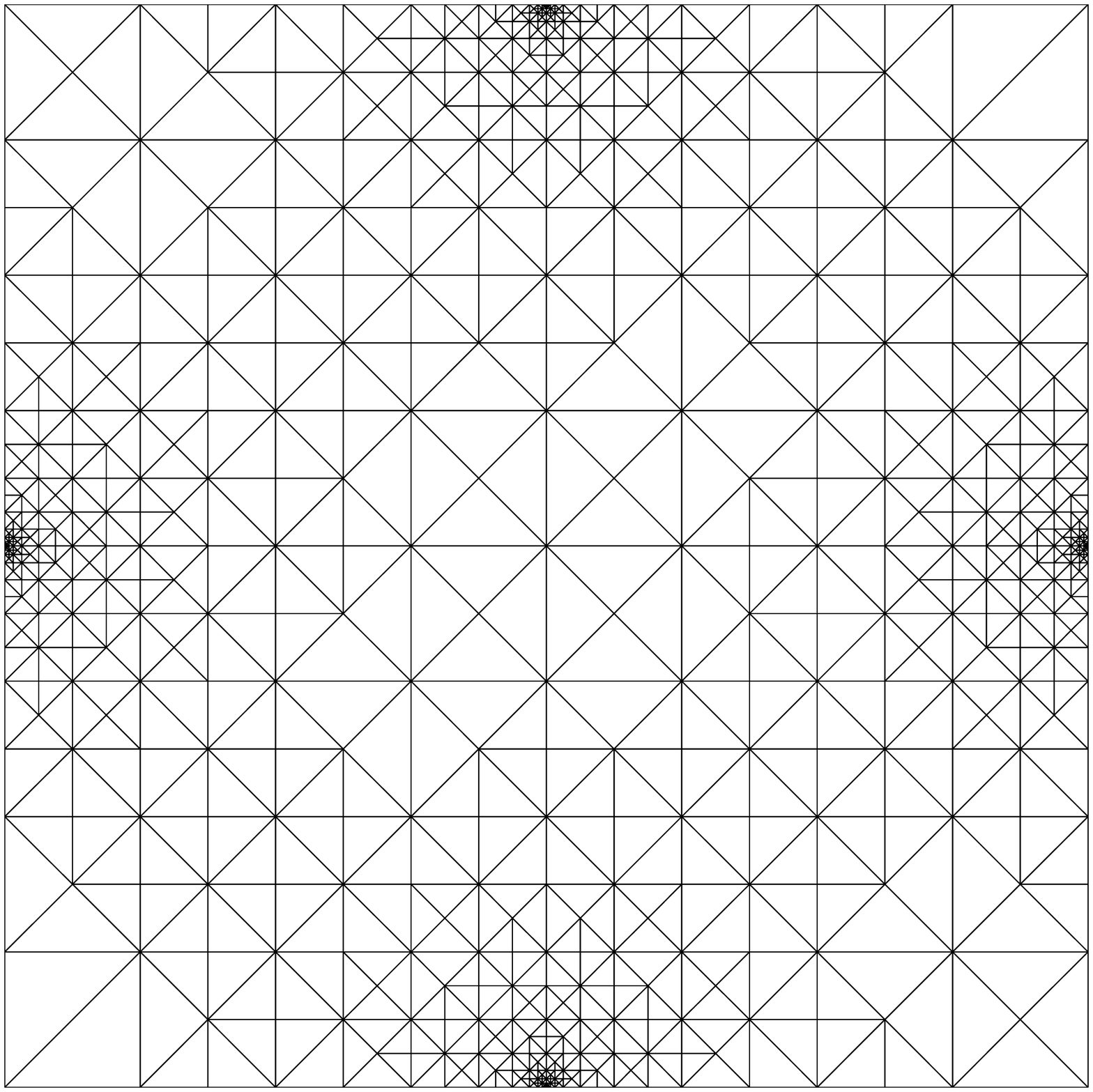}
&
\includegraphics[scale=0.125]{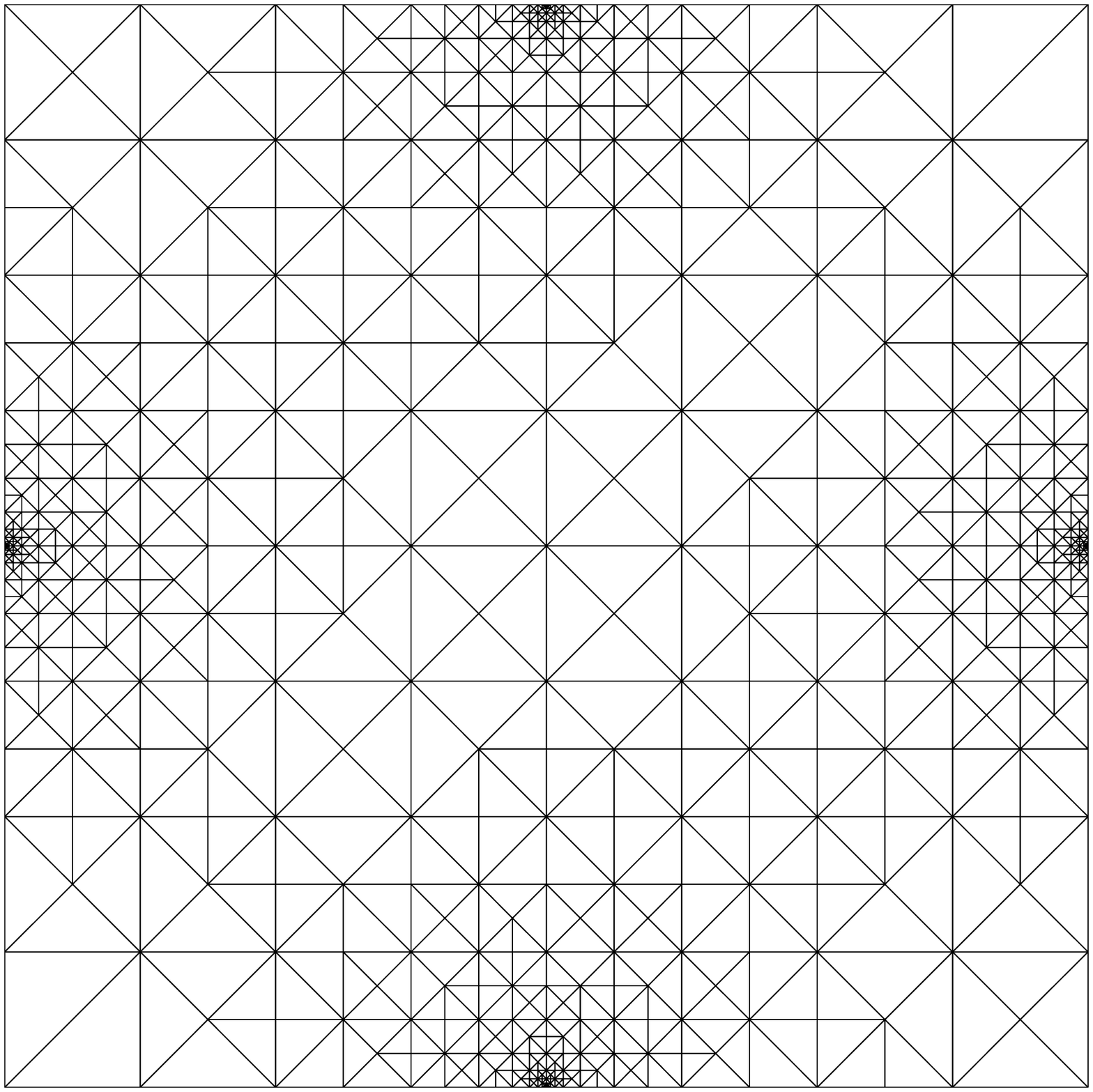}
&
\includegraphics[scale=0.125]{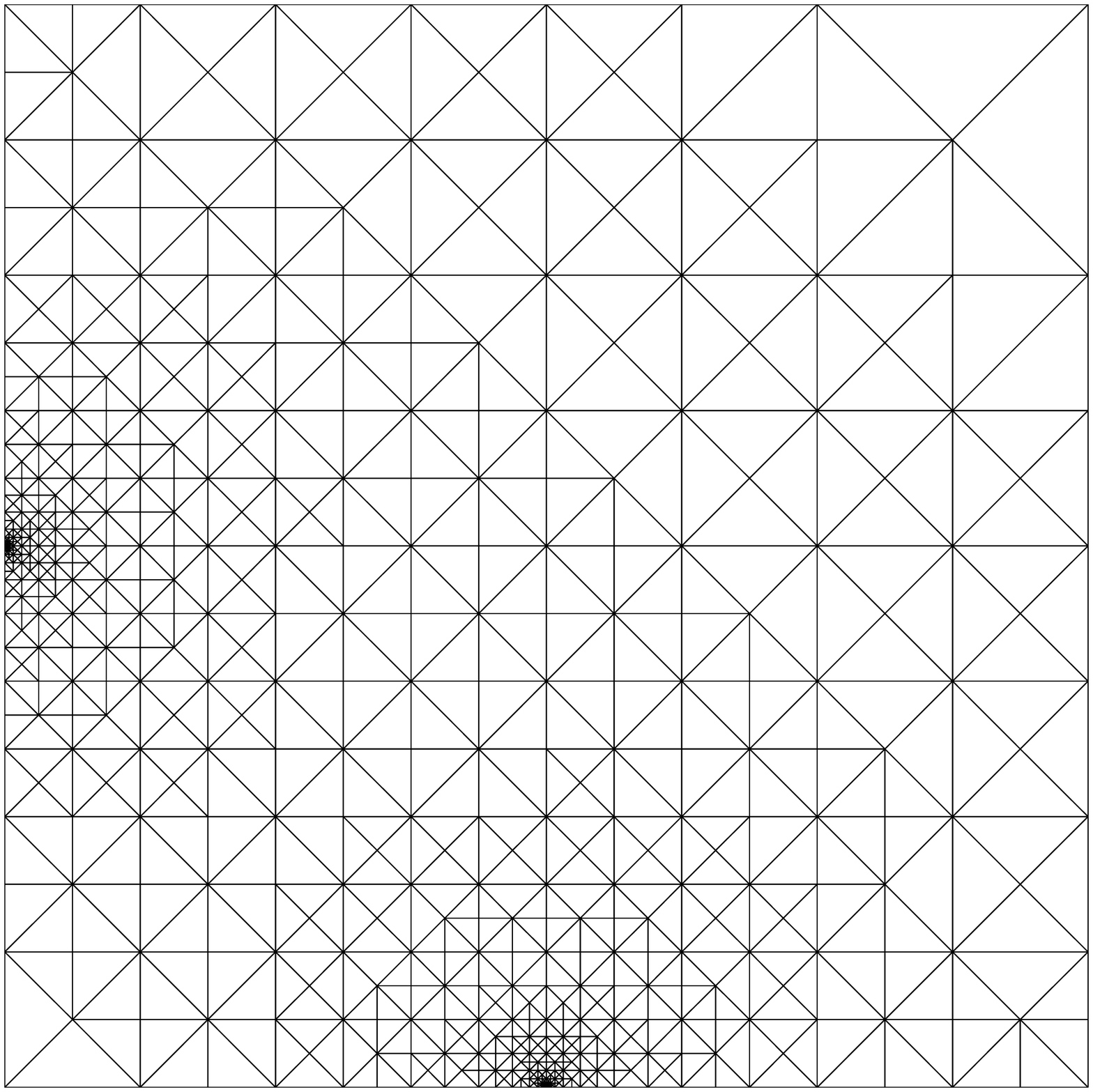}
&
\includegraphics[scale=0.125]{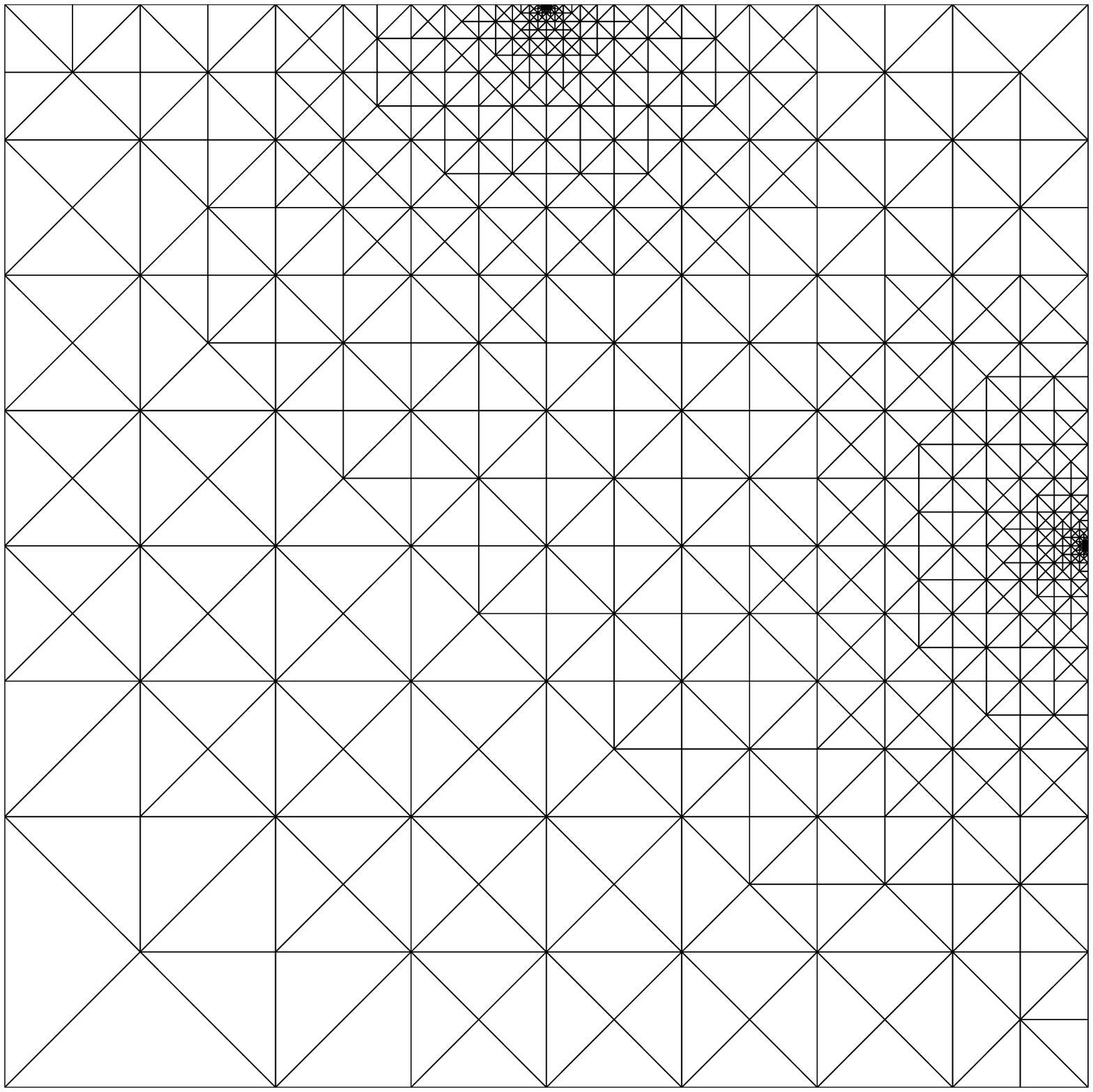}
\\
{\tiny $\#\TT_{38} = 1{,}022$}
&
{\tiny $\#\TT_{20} = 1{,}146$}
&
{\tiny $\#\TT_{20} = 1{,}094$}
&
{\tiny $\#\TT_{22} = 1{,}010$}
&
{\tiny $\#\TT_{22} = 1{,}010$}
\end{tabular}
}
\caption{Example from Section~\ref{example1:afem}: Meshes generated by Algorithm~\ref{algorithm}, \ref{algorithm:mod}, and~\ref{algorithm:bet} as well as standard (non-goal-oriented) AFEM driven by the primal error estimator resp.\ the dual error estimator (from left to right) for $\theta=0.5$.}
\label{fig:fem1:meshes}
\end{figure}

\begin{figure}
\psfrag{A}{\scalebox{.5}{Algorithm A}}
\psfrag{B}{\scalebox{.5}{Algorithm B}}
\psfrag{C}{\scalebox{.5}{Algorithm C}}
\psfrag{P}{\scalebox{.5}{adaptive algorithm for primal problem}}
\psfrag{D}{\scalebox{.5}{adaptive algorithm for dual problem}}
\psfrag{theta}[t]{\tiny{}parameter $\theta$}
\psfrag{ncum}{\tiny{}$N_{\mathrm{cum}}$}
\includegraphics[scale=0.4]{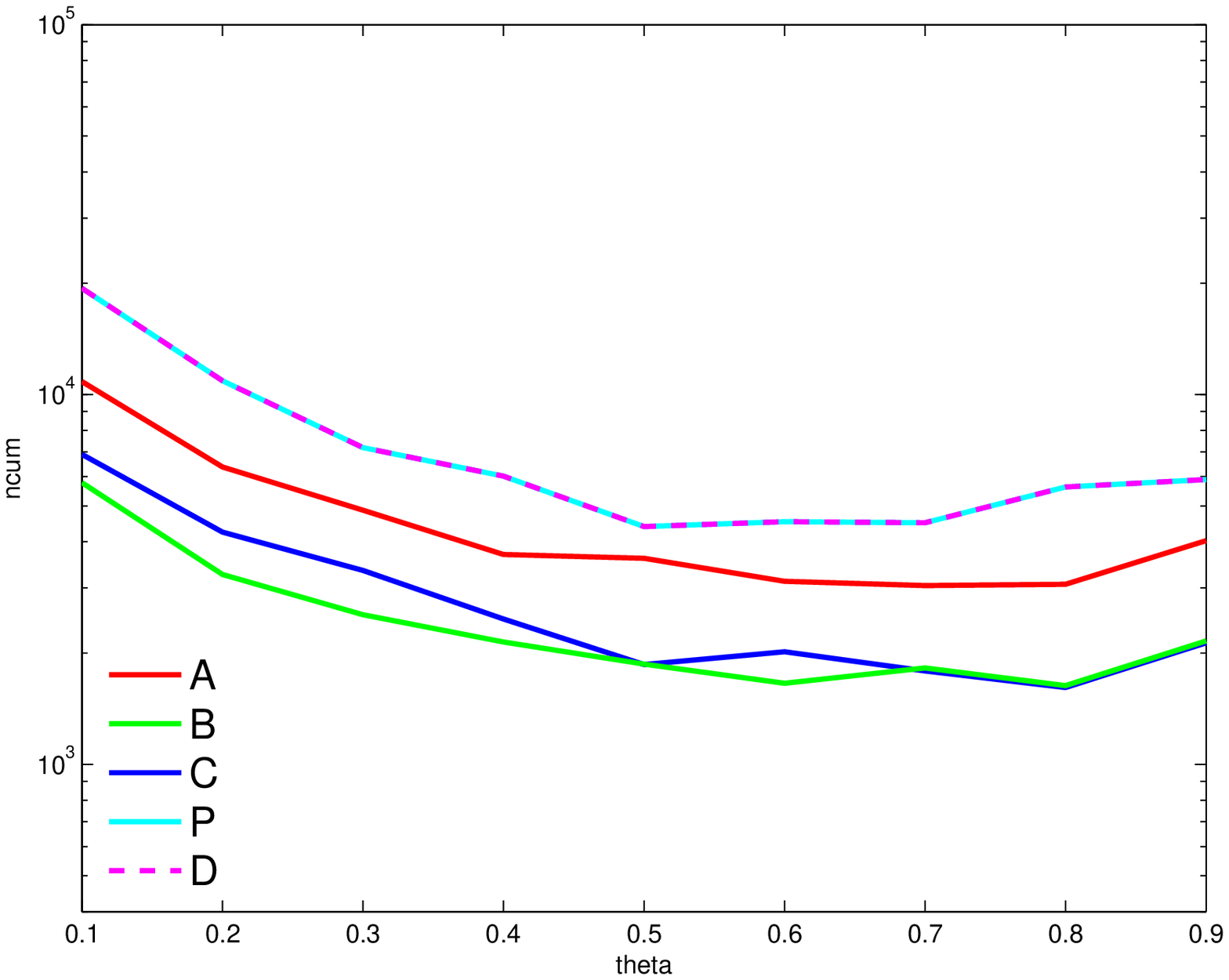}
\qquad
\includegraphics[scale=0.4]{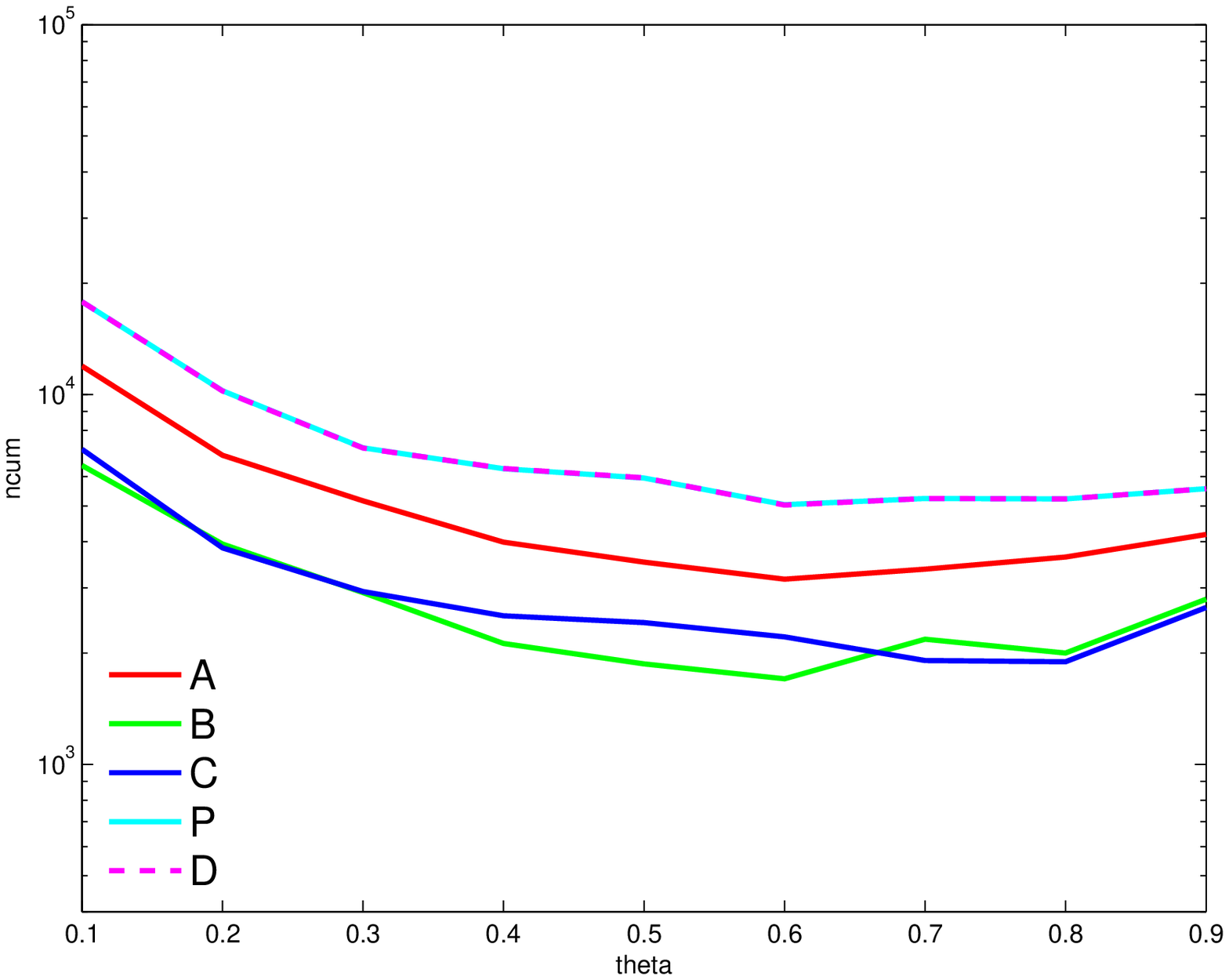}
\caption{Example from Section~\ref{example1:afem}: For Algorithm~\ref{algorithm}, \ref{algorithm:mod}, and~\ref{algorithm:bet} as well as standard (non-goal-oriented) AFEM driven by the primal error estimator resp.\ the dual error estimator, we plot the cumulative number of elements~$N_{\mathrm{cum}} := \sum_{j=0}^\ell \# \TT_j$ necessary to reach a prescribed accuracy~$\eta_{u,\ell} \eta_{z,\ell} \le \mathsf{tol}$ over $\theta \in \{0.1,\dots,0.9\}$ for $p=3$ and $\mathsf{tol} = 10^{-5}$ (left) resp.\ $p=2$ and $\mathsf{tol} = 10^{-4}$ (right).}
\label{fig:fem1:comparison}
\end{figure}

\subsection{Numerical experiment I: Goal oriented FEM for the Poisson equation}
\label{example1:afem}
We consider a numerical example proposed in~\cite[Example 7.3]{ms} for the Laplace operator in 2D, while a nonsymmetric second-order elliptic operator is considered in Section~\ref{section:example:afem2}. The goal of this first experiment is to verify the optimal convergence of Algorithm~\ref{algorithm}--\ref{algorithm:bet} as predicted by theory, and to compare the various algorithms as well as standard AFEM (i.e., non-goal-oriented adaptive FEM, where $\MM_\ell := \MM_{u,\ell}$ resp.\ $\MM_\ell :=\MM_{z,\ell}$ in Algorithm~\ref{algorithm}); see, e.g.,~\cite{axioms,ckns,ffp,stevenson}).

We consider the Poisson model problem (i.e., $\matrix{A} = \matrix{I}$, $\matrix{b} = \matrix{0}$, and $c=0$) on the unit cube $\Omega =(0,1)^2 \subset \mathbb{R}^2$. Unlike~\cite{ms} which considers quadratic elements $p=2$, we apply cubic elements $p=3$ (unless stated otherwise).
The initial mesh $\TT_0$ is shown in Figure~\ref{fig:MStestcase} (left), where also the triangles 
$T_f:={\rm conv}\{(0,0),(\frac{1}{2},0),(0,\frac{1}{2}) \}$ and $T_g:={\rm conv}\{(1,1),(\frac{1}{2},1),(1,\frac{1}{2}) \}$ are visualized.
The right-hand sides of the primal~\eqref{eq:primal} and dual problem~\eqref{eq:dual} are 
\begin{alignat*}{2}
 f(v) = - \int_{T_f} \frac{\partial v}{\partial x_1} \, dx
 \quad\text{resp.}\quad
 g(u) &= - \int_{T_g} \frac{\partial v}{\partial x_1} \, dx .
\end{alignat*}
This corresponds to $f_1 = 0$, $\matrix{f}_2 = (\chi_{T_f} , 0)$, $g_1 = 0$, $\matrix{g}_2 = (\chi_{T_g} , 0)$, where $\chi_{\omega}$ for $\omega\subset\R^2$ denotes the characteristic function, i.e., $\chi_\omega(x)=1$ for $x\in\omega$ and $\chi_\omega(x)=0$ for $x\in\R^2\backslash\omega$. Figure~\ref{fig:MStestcase} also shows some approximations of the primal and dual solution.
The primal solution $u$ has a line singularity along ${\rm conv}\{(\frac{1}{2},0),(0,\frac{1}{2}) \}$, while the dual solution $z$ has a line singularity along ${\rm conv}\{(\frac{1}{2},1),(1,\frac{1}{2}) \}$. At the intersection of the lines with $\partial\Omega$, there are point singularities.

Figure~\ref{fig:fem1:MSconv} (left) shows the typical convergence behavior for the estimators~$\eta_{u,\ell}$ and $\eta_{z,\ell}$, the estimator product~$\eta_{u,\ell} \eta_{z,\ell}$, and the goal error~$|g(u) - g(U_{\ell})|$, where we used Algorithm~\ref{algorithm}--\ref{algorithm:bet} with $\theta = 0.5$. Similar results are obtained for other choices of $0<\theta<1$ (not displayed). The estimator product $\eta_{u,\ell} \eta_{z,\ell}$ shows the optimal convergence rate of $\OO(N^{-3})$ as predicted by theory for~$p=3$ in 2D.

Figure~\ref{fig:fem1:MSconv} (right) 
shows that all Algorithms~\ref{algorithm}--\ref{algorithm:bet} yield the optimal rate of convergence $\OO(N^{-3})$, for a large range of values of $\theta$ including~$\theta = 0.9$. Uniform refinement corresponds to~$\theta = 1.0$ and shows a suboptimal rate of~$\OO(N^{-1})$

Figure~\ref{fig:fem1:QconvPD} shows the numerical results for standard AFEM, which are based on adaptivity for either the primal or the dual problem. In both cases, theory predicts optimal convergence behavior $\OO(N^{-3/2})$ for the related error estimator, at least if the adaptivity parameter $\theta$ is sufficiently small; see, e.g.,~\cite{ckns,ffp,axioms}. For all $\theta\in\{0.1,\dots,0.9\}$, we observe the optimal rate $\OO(N^{-3/2})$ for the error estimator which drives the adaptive process. 
However, for the estimator product $\eta_{u,\star} \eta_{z,\star}$ these strategies result in a suboptimal convergence rate $\OO(N^{-2})$.

In adaptive computations, the overall runtime depends on the entire history of adaptively generated meshes. To better compare the various algorithms, Figure~\ref{fig:fem1:comparison} shows the cumulative number of elements~
\begin{align}\label{eq:Ncum}
 N_{\mathrm{cum}} := \sum_{j=0}^\ell \# \TT_j,
\end{align}
which is necessary to reach a prescribed accuracy of~$\eta_{u,\ell} \eta_{z,\ell} \le \mathsf{tol}$, versus~$\theta\in\{0.1,0.9\}$. The definition of $N_{\mathrm{cum}}$ reflects the total amount of work in the complete adaptive process. Altogether, we compare five adaptive strategies: Besides Algorithm~\ref{algorithm}--\ref{algorithm:bet}, we consider standard AFEM based on the primal error estimator and standard AFEM based on the dual error estimator. For example, for a tolerance $\mathsf{tol} = 10^{-5}$ and~$p=3$, Figure~\ref{fig:fem1:comparison} (left) shows that $N_{\mathrm{cum}}$ is smallest for Algorithm~\ref{algorithm:mod}--\ref{algorithm:bet} for $\theta = 0.8$. Furthermore, we see that the goal-oriented algorithms~\ref{algorithm}--\ref{algorithm:bet} are superior to standard AFEM. Amongst the goal-oriented algorithms, because of having \emph{combined} primal and dual refinement, Algorithm~\ref{algorithm:mod}--\ref{algorithm:bet} are superior to Algorithm~\ref{algorithm}, which only does one-sided refinement per iteration step. Furthermore, Algorithm~\ref{algorithm:mod} is at least competitive and sometimes even superior to Algorithm~\ref{algorithm:bet}. As visible in Figure~\ref{fig:fem1:comparison} (right), for $\mathsf{tol} = 10^{-4}$ and~$p=2$, $N_{\mathrm{cum}}$ is smallest for Algorithm~\ref{algorithm:mod} and~$\theta = 0.6$.

\section{Goal-Oriented Adaptive FEM for Flux Evaluation}\label{section:flux}
\subsection{Model problem}\label{flux:model}
On a bounded Lipschitz domain $\Omega\subseteq \R^d$ with boundary $\Gamma:=\partial\Omega$ and for given $\Lambda$, $f_1\in L^2(\Omega)$,
and $\boldsymbol{f}_2\in L^2(\Omega)^d$, we aim to compute
the weighted boundary flux
\begin{subequations}\label{dpr:flux}
\begin{align}
g(u):=\int_\Gamma (\matrix{A}\nabla u)\cdot n\,\Lambda\,ds,
\end{align}
where $u$ is the solution to~\eqref{ex:nonsymm}.
For smooth $u$, $g(u)$ can be rewritten as
\begin{align}
g(u)= \int_\Omega {\rm div}(\matrix{A}\nabla u) z\,dx+\int_\Omega\matrix{A}\nabla u\cdot \nabla z= \bform{u}{z}-f(z)=:N_z(u)
\end{align}
\end{subequations}
for all $z\in H^1(\Omega)$ with $z|_\Gamma=\Lambda$.
Since the right-hand side is well-defined for $u\in H^1_0(\Omega)$, this is a valid generalization of the flux~\cite[Section~7]{MR2009374}.
Let $z$ be the unique solution of the following inhomogeneous Dirichlet problem:
\begin{align*}
 z\in H^1(\Omega)\text{ with }z|_\Gamma=\Lambda
 \quad\text{such that}\quad\bform{v}{z}=0\quad\text{for all }v\in H^1_0(\Omega).
\end{align*}
Then, it holds
\begin{align*}
 N_z(u)=-f(z).
\end{align*}

\subsection{Discretization}\label{flux:discretization}
For a given regular triangulation $\TT_\star$ of $\Omega$ and a polynomial degree $p\ge1$, 
let $\PP^p(\TT_\star)$ be defined as in Section~\ref{afem:discretization}. Consider
$\SS^p(\TT_\star):=\PP^p(\TT_\star)\cap H^1(\Omega)$ and $\SS^p_0(\TT_\star):=\PP^p(\TT_\star)\cap H^1_0(\Omega)$. Let $U_\star$ be the unique FEM solution of the homogeneous Dirichlet problem
\begin{subequations}\label{eq:flux:discrete}
\begin{align}\label{eq:flux:discrete:primal}
 U_\star\in\SS^p_0(\TT_\star)
 \quad\text{such that}\quad
 a(U_\star,V_\star) = f(V_\star)
 \quad\text{for all }V_\star\in\SS^p_0(\TT_\star).
\end{align}
Suppose that $\Lambda\in \SS^p(\TT_0|_\Gamma):=\set{V_0|_\Gamma}{V_0\in\SS^p(\TT_0)}$ belongs to the discrete trace space on the initial triangulation $\TT_0$. To approximate $N_z(u)$ from~\eqref{dpr:flux}, we let $Z_\star$ be the unique FEM solution of
\begin{align}
 Z_\star\in\SS^p(\TT_\star)\text{ with }Z_\star|_\Gamma=\Lambda
 \quad\text{such that}\quad
 \bform{V_\star}{Z_\star}=0\quad\text{for all }V_\star\in\SS^p_0(\TT_\star)
\end{align}
\end{subequations}
and define
\begin{align}\label{dpr:flux:discrete}
N_{z,\star}(U_\star)=-f(Z_\star).
\end{align}

\begin{lemma}\label{lem:flux}
There holds
 \begin{align*}
   |N_z(u)-N_{z,\star}(U_\star)|\leq C_{\rm flux} \norm{u-U_\star}{H^1(\Omega)}\norm{z-Z_\star}{H^1(\Omega)},
 \end{align*}
 where $U_\star$ denotes the FEM approximation of $u$ from~\eqref{eq:primal:discrete} and $C_{\rm flux}>0$ depends only on $\bform{\cdot}{\cdot}$.
\end{lemma}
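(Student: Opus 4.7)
The plan is to combine the Galerkin orthogonalities of both the primal and the (inhomogeneous) dual problem to rewrite the error $N_z(u)-N_{z,\star}(U_\star)$ as a single bilinear pairing $a(u-U_\star,z-Z_\star)$, and then appeal to continuity of $a(\cdot,\cdot)$.

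First, I would observe that $U_\star\in\SS^p_0(\TT_\star)$ is an admissible test function for the discrete dual problem, so the Galerkin relation $a(V_\star,Z_\star)=0$ for all $V_\star\in\SS^p_0(\TT_\star)$ gives $a(U_\star,Z_\star)=0$. Thus the computable quantity from~\eqref{dpr:flux:discrete} admits the equivalent representation
\begin{align*}
N_{z,\star}(U_\star) \;=\; a(U_\star,Z_\star)-f(Z_\star),
\end{align*}
mirroring the definition $N_z(u)=a(u,z)-f(z)$. Subtracting gives
\begin{align*}
N_z(u)-N_{z,\star}(U_\star) \;=\; a(u,z)-a(U_\star,Z_\star)-f(z-Z_\star).
\end{align*}

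Next, since $z|_\Gamma=\Lambda=Z_\star|_\Gamma$, we have $z-Z_\star\in H^1_0(\Omega)$, so the continuous primal problem~\eqref{eq:primal} yields $f(z-Z_\star)=a(u,z-Z_\star)$. Substituting this into the previous identity and collapsing terms gives
\begin{align*}
N_z(u)-N_{z,\star}(U_\star) \;=\; a(u,Z_\star)-a(U_\star,Z_\star) \;=\; a(u-U_\star,Z_\star).
\end{align*}
At this point I would bring in the continuous dual orthogonality: $a(v,z)=0$ for all $v\in H^1_0(\Omega)$. Since $u-U_\star\in H^1_0(\Omega)$, this gives $a(u-U_\star,z)=0$, and subtracting this zero term yields the symmetric expression
\begin{align*}
N_z(u)-N_{z,\star}(U_\star) \;=\; -\,a(u-U_\star,z-Z_\star).
\end{align*}

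Finally, continuity of the bilinear form $a(\cdot,\cdot)$ on $H^1(\Omega)\times H^1(\Omega)$ (which follows from the assumptions on $\matrix{A},\matrix{b},c$ in Section~\ref{afem:model}) concludes the proof with $C_{\rm flux}$ being the continuity constant of $a(\cdot,\cdot)$. I do not anticipate any serious obstacle: the only subtlety is to notice that although $Z_\star$ is not in $H^1_0(\Omega)$, the differences $z-Z_\star$ and $u-U_\star$ are, which is exactly what permits the two orthogonality reductions above.
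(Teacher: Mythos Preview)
Your proof is correct and follows essentially the same approach as the paper: both reduce the goal error to the single pairing $a(u-U_\star,z-Z_\star)$ via the Galerkin orthogonalities and then invoke continuity of $a(\cdot,\cdot)$. The only difference is organizational: the paper starts directly from the already established identities $N_z(u)=-f(z)$ and $N_{z,\star}(U_\star)=-f(Z_\star)$, writes $|N_z(u)-N_{z,\star}(U_\star)|=|f(z-Z_\star)|=|a(u,z-Z_\star)|$, and then subtracts $a(U_\star,z-Z_\star)=0$ (using both the continuous and discrete dual orthogonalities at once), whereas you return to the definition $N_z(u)=a(u,z)-f(z)$ and split the orthogonality steps.
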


\begin{proof}
Since $z-Z_\star\in H^1_0(\Omega)$, there holds
\begin{align*}
|N_z(u)-N_{z,\star}(U_\star)|&=|f(z)-f(Z_\star)|=|f(z-Z_\star)|=|a(u,z-Z_\star)|\\
&=|a(u-U_\star,z-Z_\star)|\lesssim \norm{u-U_\star}{H^1(\Omega)}\norm{z-Z_\star}{H^1(\Omega)},
\end{align*}
where we used the definition of $z$ and $Z_\star$.
\end{proof}

\subsection{Residual error estimator}\label{flux:estimator}
The residual error estimator for the primal problem remains the same as in~\eqref{ex:nonsymm:estimator:primal}, i.e.,
\begin{align}\label{flux:nonsymm:estimator:primal}
\eta_{u,\star}(T)^2:=h_T^2\norm{\operator{L}|_T U_\star - f_1-{\rm div}\,\boldsymbol{f}_2}{L^2(T)}^2 
+ h_T\norm{[(\matrix{A}\nabla U_\star+\boldsymbol{f}_2) \cdot n]}{L^2(\partial T\cap\Omega)}^2.
\end{align}
Since the inhomogeneous boundary data satisfies $\Lambda\in\SS^p(\TT_0|_\Gamma)$ also
the dual estimator $\eta_{z,\star}$ remains the same as in~\eqref{ex:nonsymm:estimator:dual} with $g_1=0$ and $\boldsymbol{g}_2=0$, i.e.,
\begin{align}\label{flux:nonsymm:estimator:dual}
 \eta_{z,\star}(T)^2:=h_T^2\norm{\operator{L}^T|_T Z_\star}{L^2(T)}^2 
 + h_T\norm{[\matrix{A}\nabla Z_\star\cdot n]}{L^2(\partial T\cap\Omega)}^2.
\end{align}
Lemma~\ref{lem:flux} together with the reliability of $\eta_{w,\star}$ for $w\in\{u,z\}$ (see, e.g.,~\cite[Proposition~3]{dirichlet3d} for the inhomogeneous Dirichlet problem for $z$) implies
\begin{align}\label{eq:flux:estimator}
 |N_z(u)-N_{z,\star}(U_\star)|\lesssim \eta_{u,\star}\eta_{z,\star}.
\end{align}
Hence, the problem fits into the abstract framework of Section~\ref{section:abstract}.
We aim for optimal convergence of the right-hand side of~\eqref{eq:flux:estimator}.

\subsection{Verification of axioms}
With newest vertex bisection from~\cite{stevenson:nvb} as mesh-refinement strategy, 
the assumptions of Section~\ref{section:finemesh} are satisfied. It remains to verify 
the axioms~\eqref{ass:stable}--\eqref{ass:reliable},
where $\dist{w}{\TT_\ell}{\TT_\star}:=a(W_\ell-W_\star,W_\ell-W_\star)^{1/2}\simeq \norm{W_\ell-W_\star}{H^1(\Omega)}$.

\begin{theorem}\label{theorem:flux}
Consider the model problem of Section~\ref{flux:model}. Then, the conforming discretization~\eqref{eq:flux:discrete} of Section~\ref{flux:discretization} with the residual error estimators~\eqref{flux:nonsymm:estimator:primal}--\eqref{flux:nonsymm:estimator:dual} from Section~\ref{flux:estimator} satisfies stability~\eqref{ass:stable}, reduction~\eqref{ass:reduction} with $\q{reduction}=2^{-1/d}$,
quasi-orthogonality~\eqref{ass:orthogonal}, and discrete reliability~\eqref{ass:reliable} with $\RR_u(\TT_\ell,\TT_\star) =\RR_z(\TT_\ell,\TT_\star)= \TT_\ell\backslash\TT_\star$.
In particular, the Algorithms~\ref{algorithm}--\ref{algorithm:bet} are linearly convergent with optimal rates in the sense of Theorem~\ref{theorem:linear}, \ref{theorem:optimal}, \ref{theorem:optimal:mod}, and~\ref{theorem:optimal:bet} for the upper bound in~\eqref{eq:flux:estimator}.
\end{theorem}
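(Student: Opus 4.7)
The plan is to verify the axioms~\eqref{ass:stable}--\eqref{ass:reliable} for both $\eta_{u,\ell}$ and $\eta_{z,\ell}$ and then invoke the abstract results of Section~\ref{section:optimal}. Since the primal discretization~\eqref{eq:flux:discrete:primal} and the residual estimator~\eqref{flux:nonsymm:estimator:primal} literally coincide with those of Section~\ref{section:example}, the axioms for $\eta_{u,\ell}$ follow at once from Theorem~\ref{thm:ex:nonsymm}. All the actual work therefore concerns the dual estimator $\eta_{z,\ell}$.

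The structural key for the dual problem is that the standing assumption $\Lambda\in\SS^p(\TT_0|_\Gamma)$ makes the Dirichlet datum \emph{exactly} representable on every admissible mesh. Consequently, the affine trial spaces $\set{V\in\SS^p(\TT_\star)}{V|_\Gamma=\Lambda}$ are nested along refinements, any difference $Z_\star-Z_\ell$ of dual Galerkin approximations lies in the homogeneous space $\SS^p_0(\TT_\star)$, and testing the discrete dual equations with $V_\ell\in\SS^p_0(\TT_\ell)\subseteq\SS^p_0(\TT_\star)$ yields the Galerkin orthogonality $a(V_\ell,Z_\star-Z_\ell)=0$. With this in place, I would derive~\eqref{ass:stable} and \eqref{ass:reduction} by the standard inverse- and trace-inequality computations for the residual of the formal adjoint $\operator{L}^T$ in~\eqref{eq:opnonsymm} exactly as in~\cite[Corollary~3.4]{ckns}; since $\operator{L}^T$ has the same principal part as $\operator{L}$ and lower-order coefficients in $L^\infty$ resp.\ $W^{1,\infty}$, that calculation is unchanged, and $\q{reduction}=2^{-1/d}$ is inherited from newest vertex bisection. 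Discrete reliability~\eqref{ass:reliable} with $\RR_z(\TT_\ell,\TT_\star)=\TT_\ell\backslash\TT_\star$ follows identically to the primal case: ellipticity and the Galerkin orthogonality above reduce the estimate to an integration-by-parts argument on $\TT_\ell\cap\TT_\star$ after subtracting a Scott--Zhang quasi-interpolant into $\SS^p_0(\TT_\ell)$, exactly as in~\cite[Lemma~3.6]{ckns} and the proof of Theorem~\ref{thm:ex:nonsymm}.

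The main obstacle is quasi-orthogonality~\eqref{ass:orthogonal}. I would follow the two-step strategy of Lemma~\ref{lem:ex:nonsymm:conv} and the corresponding part of the proof of Theorem~\ref{thm:ex:nonsymm}. First, nestedness of the affine spaces and Cea-type quasi-optimality for the non-symmetric problem give a~priori convergence $\norm{Z_\infty-Z_\ell}{H^1(\Omega)}\to0$ to some $Z_\infty\in H^1(\Omega)$ with $Z_\infty|_\Gamma=\Lambda$; along the subsequence satisfying the D\"orfler marking on refined elements, the generalized estimator reduction of Lemma~\ref{lemma:estred} combined with a~priori convergence forces $\eta_{z,\ell_{z,n}}\to0$ and, by reliability, $Z_\infty=z$. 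Second, since $Z_\infty-Z_{\ell_n}$ and $Z_{\ell_{n+1}}-Z_{\ell_n}$ both lie in $H^1_0(\Omega)$, the weak-convergence compactness argument of~\cite[Proposition~3.6]{ffp} applies verbatim, and the telescoping computation~\eqref{eq:qo}--\eqref{eq:qohelp} from the proof of Theorem~\ref{thm:ex:nonsymm} then delivers~\eqref{ass:orthogonal}.

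Once~\eqref{ass:stable}--\eqref{ass:reliable} are verified for both estimators, the stated convergence and optimality follow immediately from Theorems~\ref{theorem:linear}, \ref{theorem:optimal}, \ref{theorem:optimal:mod}, and~\ref{theorem:optimal:bet}. In summary, the only genuinely new ingredient relative to Section~\ref{section:example} is the identification of the correct affine/homogeneous splitting for the dual problem; the hypothesis $\Lambda\in\SS^p(\TT_0|_\Gamma)$ then reduces every remaining step to the homogeneous Dirichlet setting already handled by Theorem~\ref{thm:ex:nonsymm}.
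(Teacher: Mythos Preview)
Your proposal is correct and follows essentially the same route as the paper: the primal axioms are inherited verbatim from Theorem~\ref{thm:ex:nonsymm}, and for the dual problem the crucial observation is that $\Lambda\in\SS^p(\TT_0|_\Gamma)$ forces all differences $Z_\star-Z_\ell$ into $H^1_0(\Omega)$, which reduces~\eqref{ass:stable}--\eqref{ass:reliable} to the homogeneous Dirichlet case. The paper packages this reduction slightly more explicitly by fixing one discrete extension $\widehat W\in\SS^p(\TT_0)$ with $\widehat W|_\Gamma=\Lambda$ and writing $Z_\star=Z_\star^0+\widehat W$ with $Z_\star^0\in\SS^p_0(\TT_\star)$ solving a homogeneous problem, so that~\eqref{ass:orthogonal} is literally the statement already proved in Section~\ref{section:example}; your additional step of arguing $Z_\infty=z$ along the marked subsequence is unnecessary (the quasi-orthogonality proof only uses the limit $Z_\infty$, not its identification with $z$), but it does no harm.
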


\begin{proof}
 For the primal problem,~\eqref{ass:stable}--\eqref{ass:reliable} follow as in Theorem~\ref{thm:ex:nonsymm}. For the dual problem, the axioms~\eqref{ass:stable}--\eqref{ass:reduction} follow from Theorem~\ref{thm:ex:nonsymm}
 since the estimator did not change. The discrete reliability~\eqref{ass:reliable} is proved in~\cite{dirichlet3d} for general $W\in H^1(\Gamma)$. In our particular situation, the proof simplifies vastly and shows even $\RR_z(\TT_\ell,\TT_\star)= \TT_\ell\backslash\TT_\star$. To see the quasi-orthogonality~\eqref{ass:orthogonal}, choose a discrete extension $\widehat W\in\SS^1(\TT_0)$ with $\widehat W|_\Gamma=W$.
Consider the solution $Z_\star^0\in\SS^p_0(\TT_\star)$ of
 \begin{align*}
  \bform{V_\star}{Z_\star^0}=-\bform{V_\star}{\widehat W}\quad\text{for all }V_\star\in\SS^p_0(\TT_\star).
 \end{align*}
Then, there holds $Z_\star=Z_\star^0+\widehat W$. Consequently, there holds $\dist{z}{\TT_{\ell_{j+1}}}{\TT_{\ell_j}}\simeq \norm{Z_{\ell_{j+1}}-Z_{\ell_{j}}}{H^1(\Omega)}=\norm{Z_{\ell_{j+1}}^0-Z_{\ell_{j}}^0}{H^1(\Omega)}$.
Since $Z_\star^0$ is the solution to a homogeneous Dirichlet problem, the proof of~\eqref{ass:orthogonal} follows analogously to that of Section~\ref{section:example}.
\end{proof}

\begin{figure}[t]
{
\psfrag{0}{$\!0$}
\psfrag{ 0 }{$0$}
\psfrag{1/6}{\,$\tfrac{1}{6}$}
\psfrag{1/3}{\,$\tfrac{1}{3}$}
\psfrag{1/2}{\,$\tfrac{1}{2}$}
\psfrag{0.5}{\,\,$\tfrac{1}{2}$}
\psfrag{2/3}{\,$\tfrac{2}{3}$}
\psfrag{5/6}{\,$\tfrac{5}{6}$}
\psfrag{ 1 }{$1$}
\psfrag{1}{$\!1$}
\includegraphics[width=0.2\textwidth]{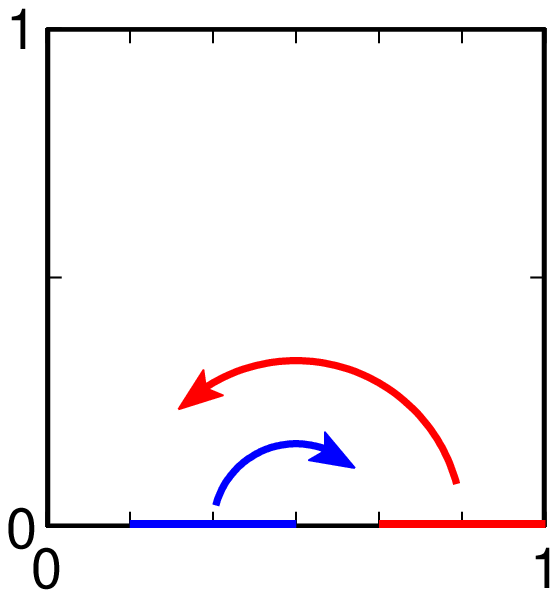}
}
\quad
{
\psfrag{0}{}
\psfrag{ 0 }{}
\psfrag{ 1 }{}
\psfrag{1}{}
\includegraphics[width=0.2\textwidth]{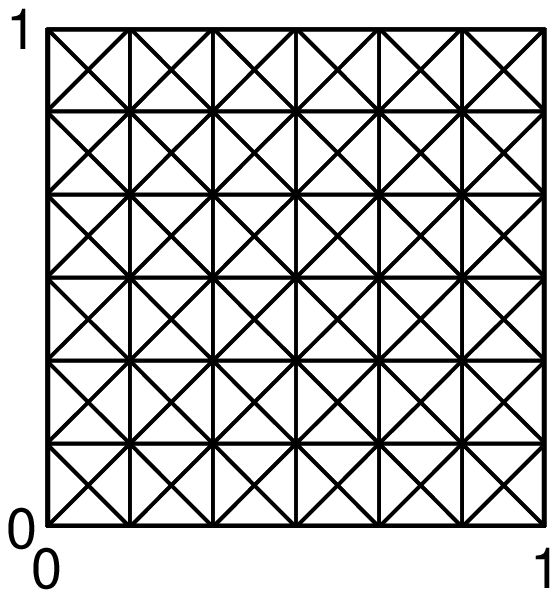}
}
\caption{Example from Section~\ref{section:example:afem2}: 
The left figure shows the geometry of the domain~$\Omega$, the support of the primal Dirichlet data (blue), the direction of the primal convective field (blue arrow), the support of the dual Dirichlet data (red), and the direction of the dual convective field (red arrow). The right figure shows the initial triangulation~$\mathcal{\TT}_0$ so that the inhomogeneous Dirichlet data belong to the discrete trace space $\SS^1(\TT_0|_\Gamma)$.}
\label{fig:fem2:MNtestcase}
\end{figure}

\begin{figure}[t]
\psfrag{0.1}{\scalebox{.5}{$0.1$}}
\psfrag{0.2}{\scalebox{.5}{$0.2$}}
\psfrag{0.3}{\scalebox{.5}{$0.3$}}
\psfrag{0.4}{\scalebox{.5}{$0.4$}}
\psfrag{0.5}{\scalebox{.5}{$0.5$}}
\psfrag{0.6}{\scalebox{.5}{$0.6$}}
\psfrag{0.7}{\scalebox{.5}{$0.7$}}
\psfrag{0.8}{\scalebox{.5}{$0.8$}}
\psfrag{0.9}{\scalebox{.5}{$0.9$}}
\psfrag{1.0}{\scalebox{.5}{$1.0$}}
\psfrag{estu}{\tiny$\eta_u$}
\psfrag{estz}{\tiny$\eta_z$}
\psfrag{estz*estu}{\tiny$\eta_u\eta_z$}
\psfrag{est}[c][c]{\tiny estimators}
\psfrag{error}[c][c]{\tiny error resp. estimators}
\psfrag{err}[c][c]{\tiny error}
\psfrag{Algorithm A}[c][c]{\tiny Algorithm A}
\psfrag{Algorithm B}[c][c]{\tiny Algorithm B}
\psfrag{Algorithm C}[c][c]{\tiny Algorithm C}
\psfrag{o2}{\tiny $\mathcal{O}(N^{-2})$}
\psfrag{o32}{\tiny $\mathcal{O}(N^{-3/2})$}
\psfrag{o12}{\tiny $\mathcal{O}(N^{-1/2})$}
\psfrag{o1}{\tiny $\mathcal{O}(N^{-1})$}
\psfrag{nE}[c][c]{\tiny number of elements $N=\#\TT_\ell$}
\includegraphics[scale=0.4]{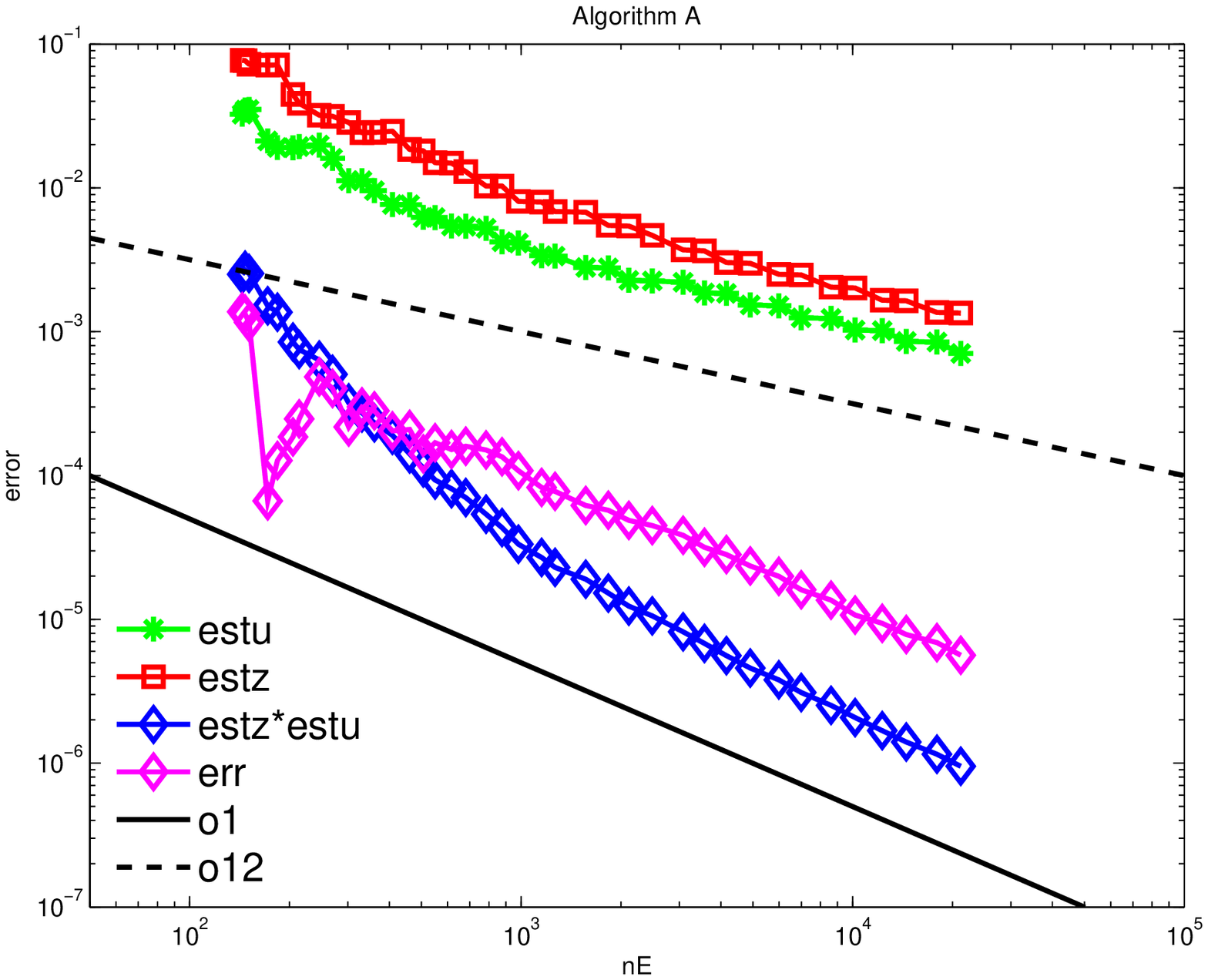}
\qquad
\includegraphics[scale=0.4]{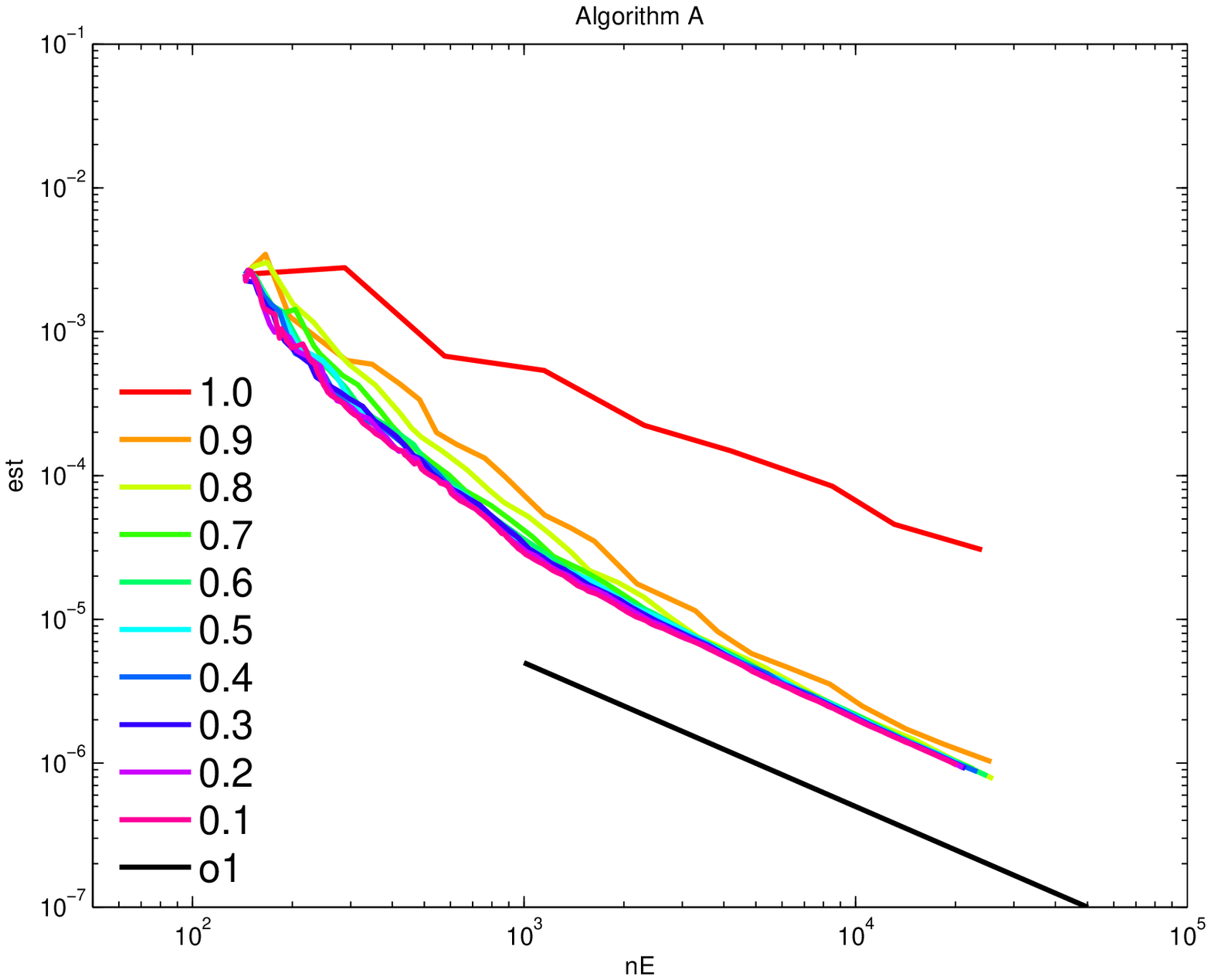}
\\[1ex]
\includegraphics[scale=0.4]{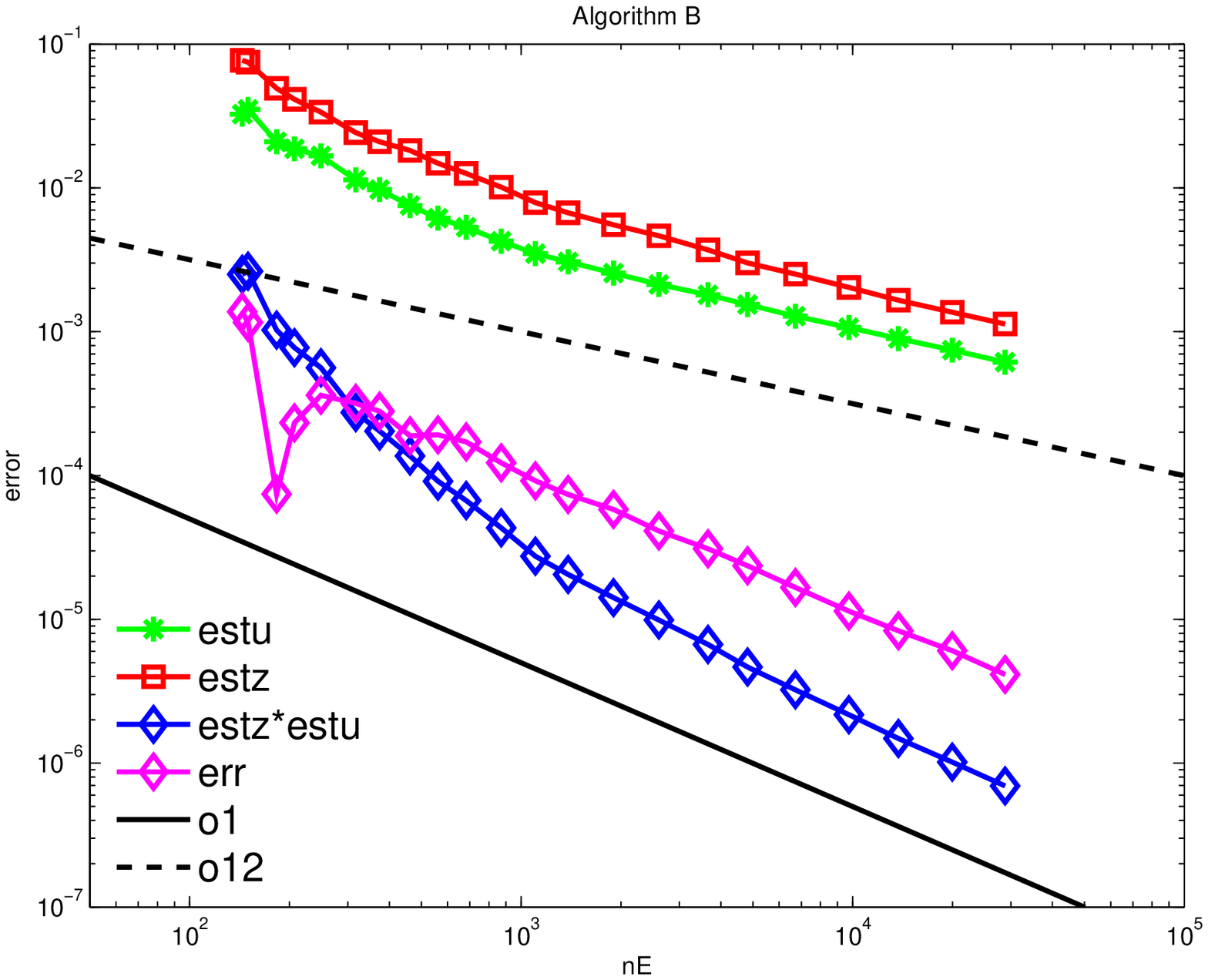}
\qquad
\includegraphics[scale=0.4]{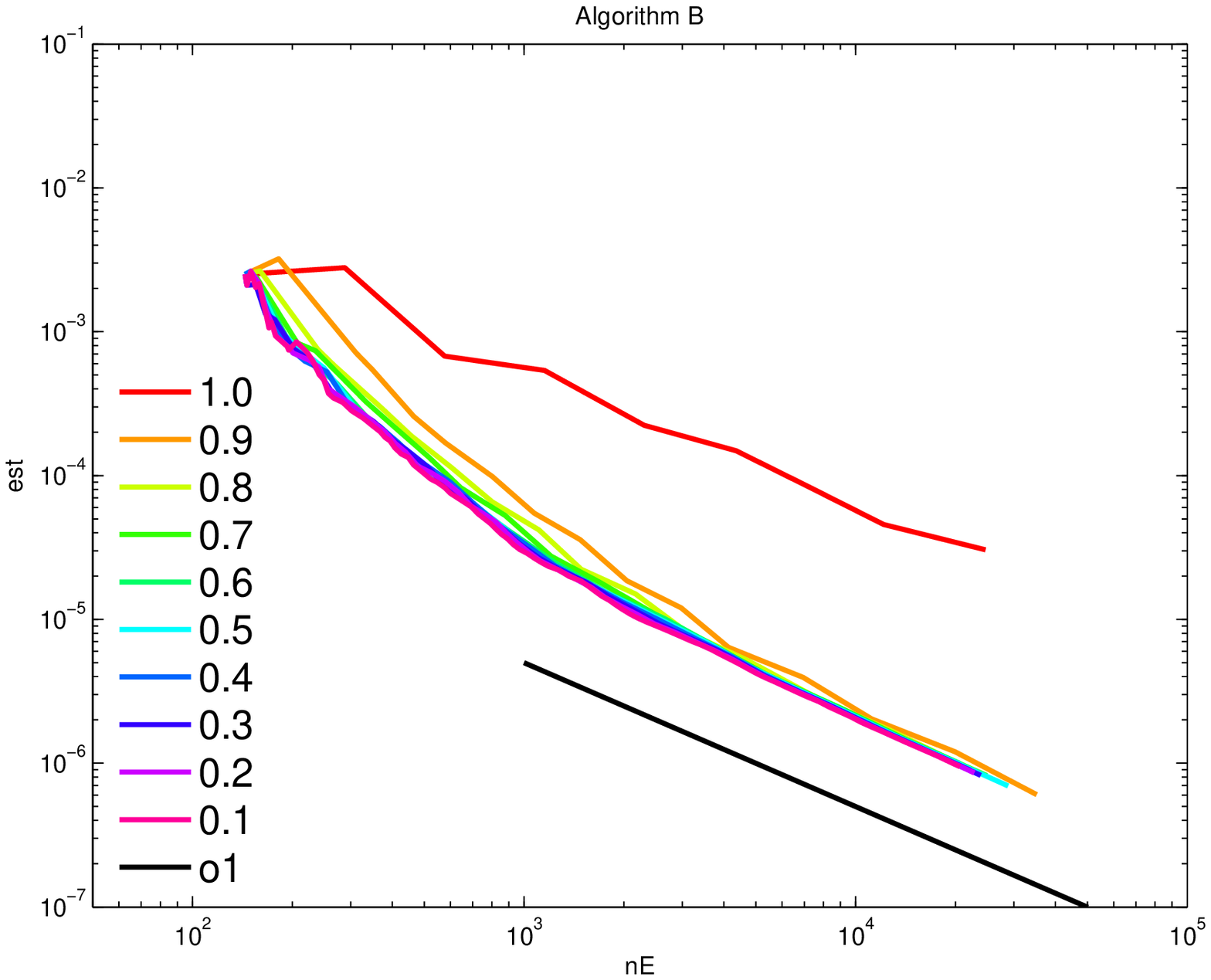}
\\[1ex]
\includegraphics[scale=0.4]{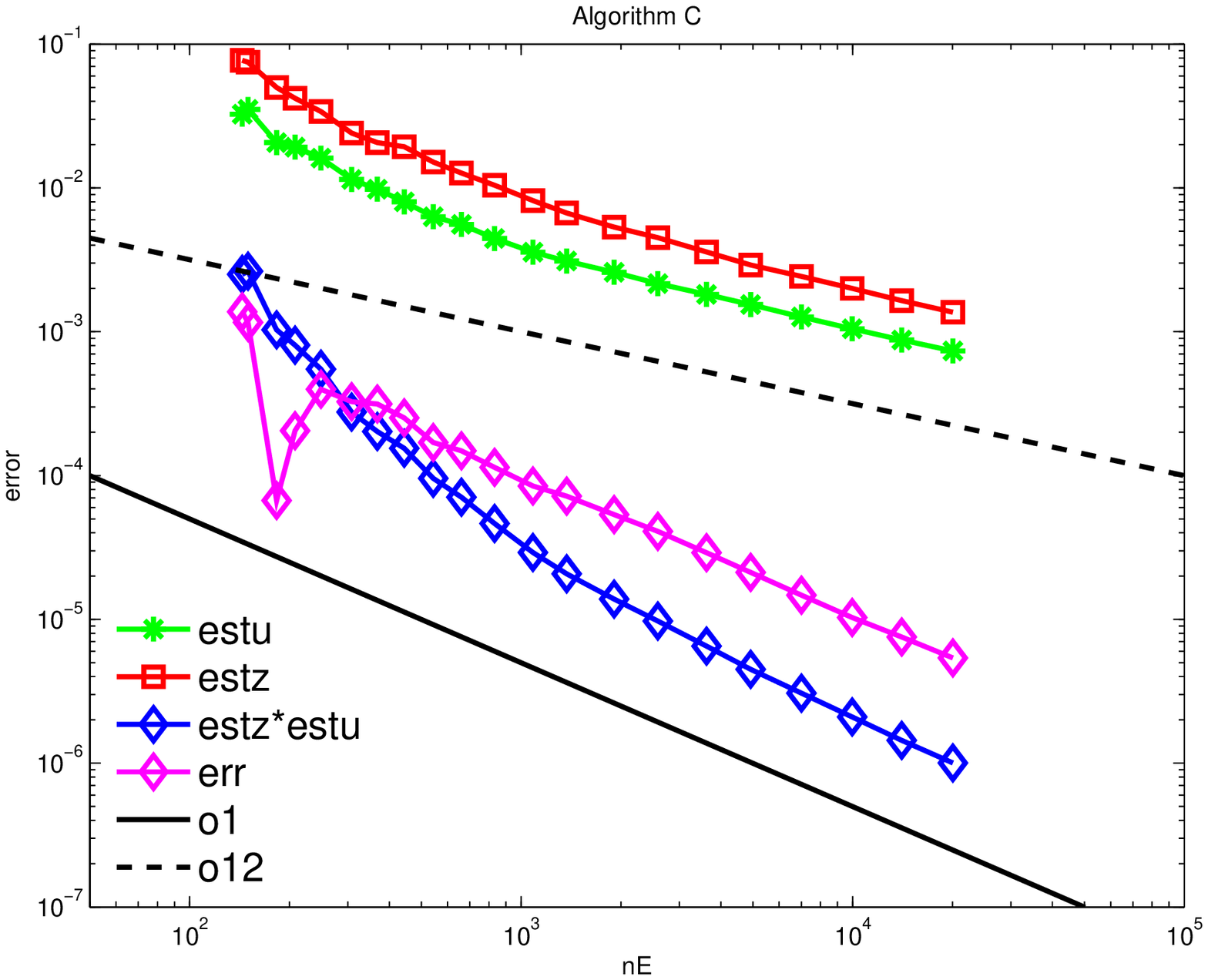}
\qquad
\includegraphics[scale=0.4]{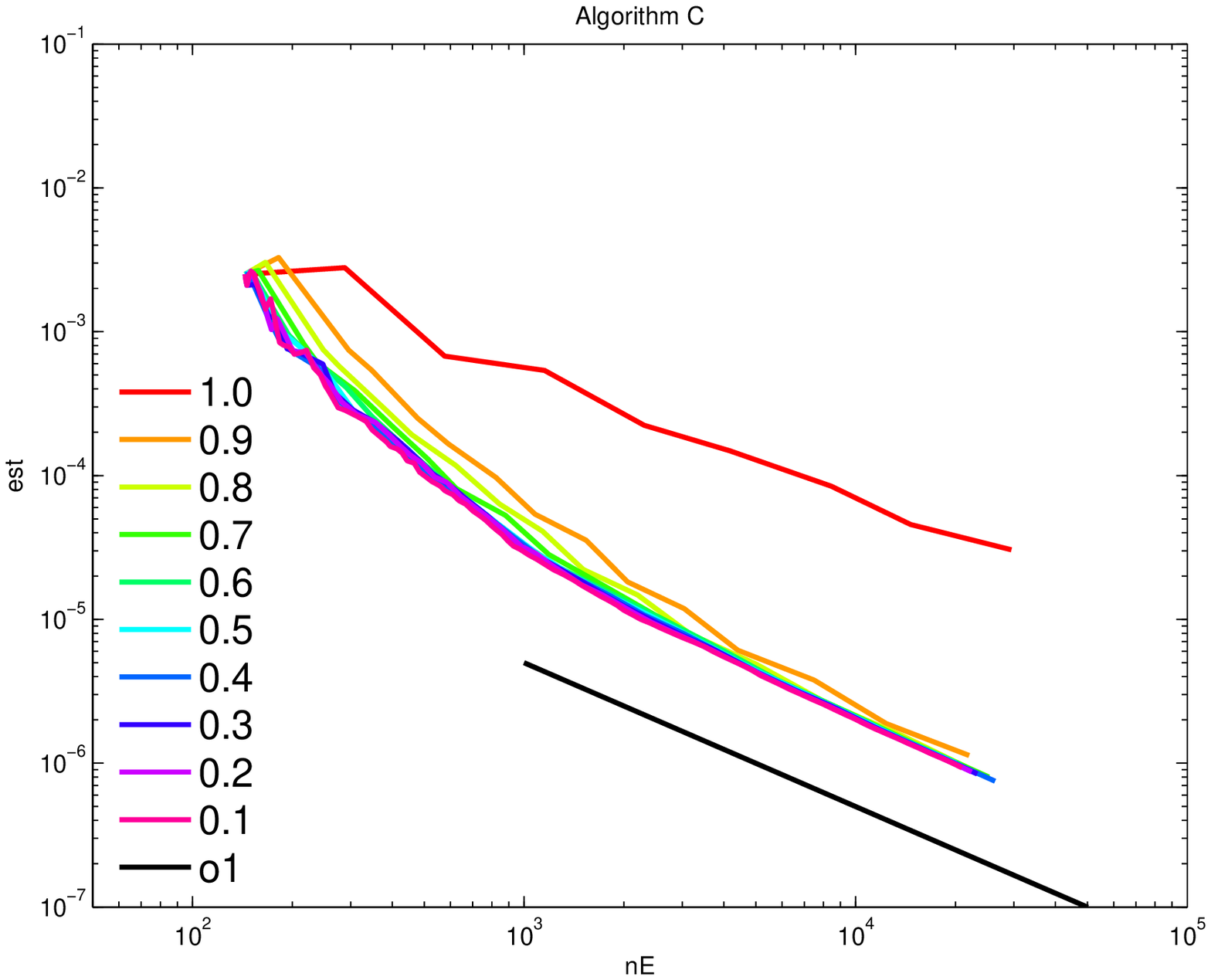}
\caption{Example from Section~\ref{section:example:afem2}: 
Over the numbers of elements $\#\TT_\ell$, we plot the estimators $\eta_{u,\ell}$ and $\eta_{z,\ell}$, the estimator product $\eta_{u,\ell}\eta_{z,\ell}$, as well as the goal error $|N_z(u) - N_{z,\ell}(U_\ell)|$ as output of  Algorithm~\ref{algorithm}--\ref{algorithm:bet} with $\theta=0.5$ (left) resp.\ the estimator product for various $\theta\in\{0.1,\dots,0.9\}$ as well as for $\theta=1.0$ which corresponds to uniform refinement. We consider $p=1$ and $\nu = 10^{-3}$ (right).}
\label{fig:fem2:QconvABC}
\end{figure}

\begin{figure}[t]
\psfrag{A}{\scalebox{.5}{Algorithm A}}
\psfrag{B}{\scalebox{.5}{Algorithm B}}
\psfrag{C}{\scalebox{.5}{Algorithm C}}
\psfrag{P}{\scalebox{.5}{adaptive algorithm for primal problem}}
\psfrag{D}{\scalebox{.5}{adaptive algorithm for dual problem}}
\psfrag{theta}[t]{\tiny{}parameter $\theta$}
\psfrag{ncum}{\tiny{}$N_{\mathrm{cum}}$}
\includegraphics[scale=0.4]{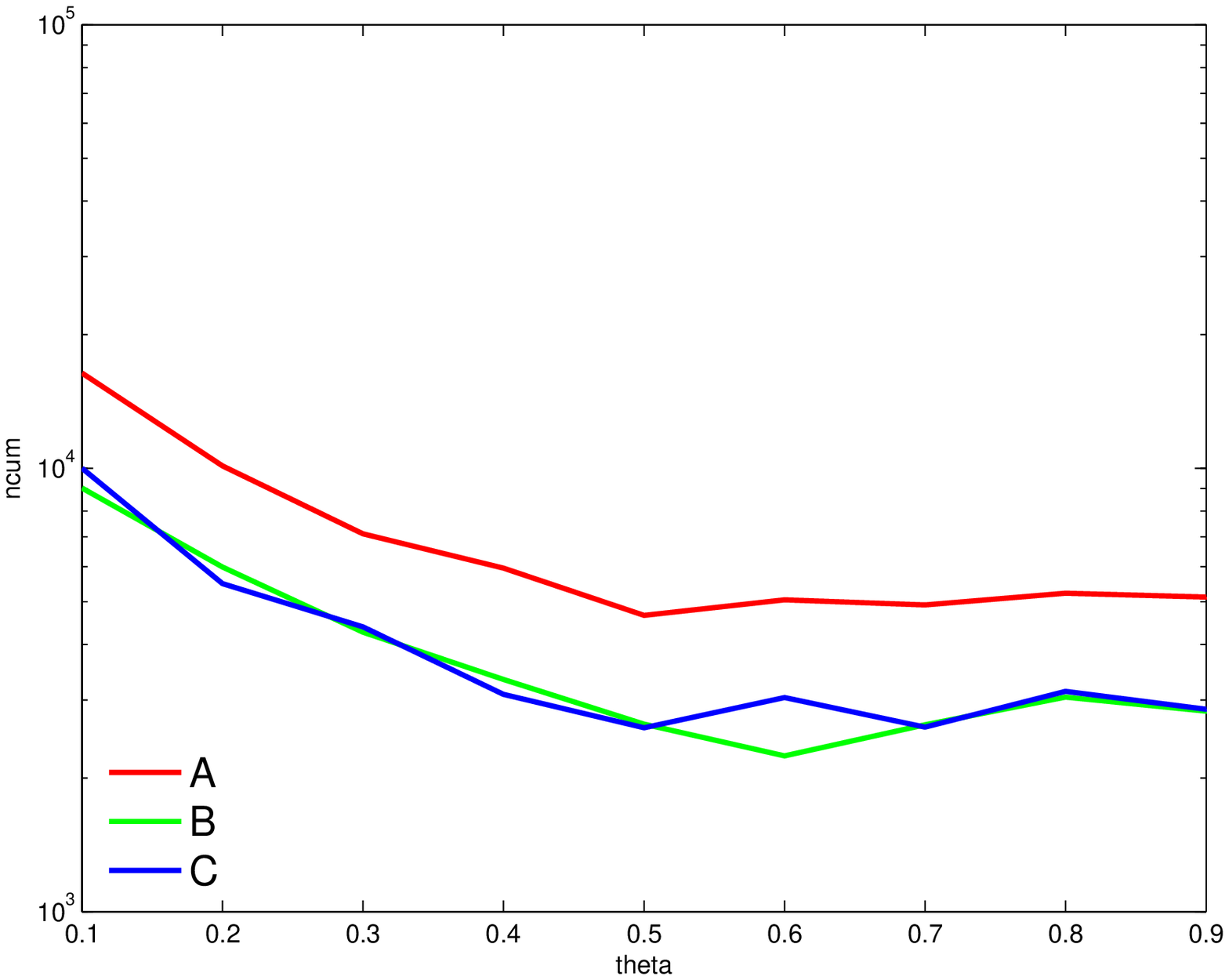}
\caption{Example from Section~\ref{section:example:afem2}: 
For Algorithm~\ref{algorithm}, \ref{algorithm:mod}, and~\ref{algorithm:bet} as well as standard (non-goal-oriented) AFEM driven by the primal error estimator resp.\ the dual error estimator, we plot the cumulative number of elements~$N_{\mathrm{cum}} := \sum_{j=0}^\ell \# \TT_j$ necessary to reach a prescribed accuracy~$\eta_{u,\ell} \eta_{z,\ell} \le 10^{-4}$ over $\theta \in \{0.1,\dots,0.9\}$ for $p=1$ and $\nu = 10^{-3}$.}
\label{fig:fem2:comparison} 
\end{figure}

\begin{figure}[t]
\newcommand{\clippedGraph}[1]{{\includegraphics[width=.33\textwidth,viewport=80 0 650 340,clip]{#1}}}
\newcommand{\clippedGraphh}[1]{{\includegraphics[width=.33\textwidth,viewport=105 0 653 265,clip]{#1}}}
{\tiny 
\begin{tabular}{c@{\quad}c@{\quad}c@{\quad}c@{\quad}c}
\clippedGraph{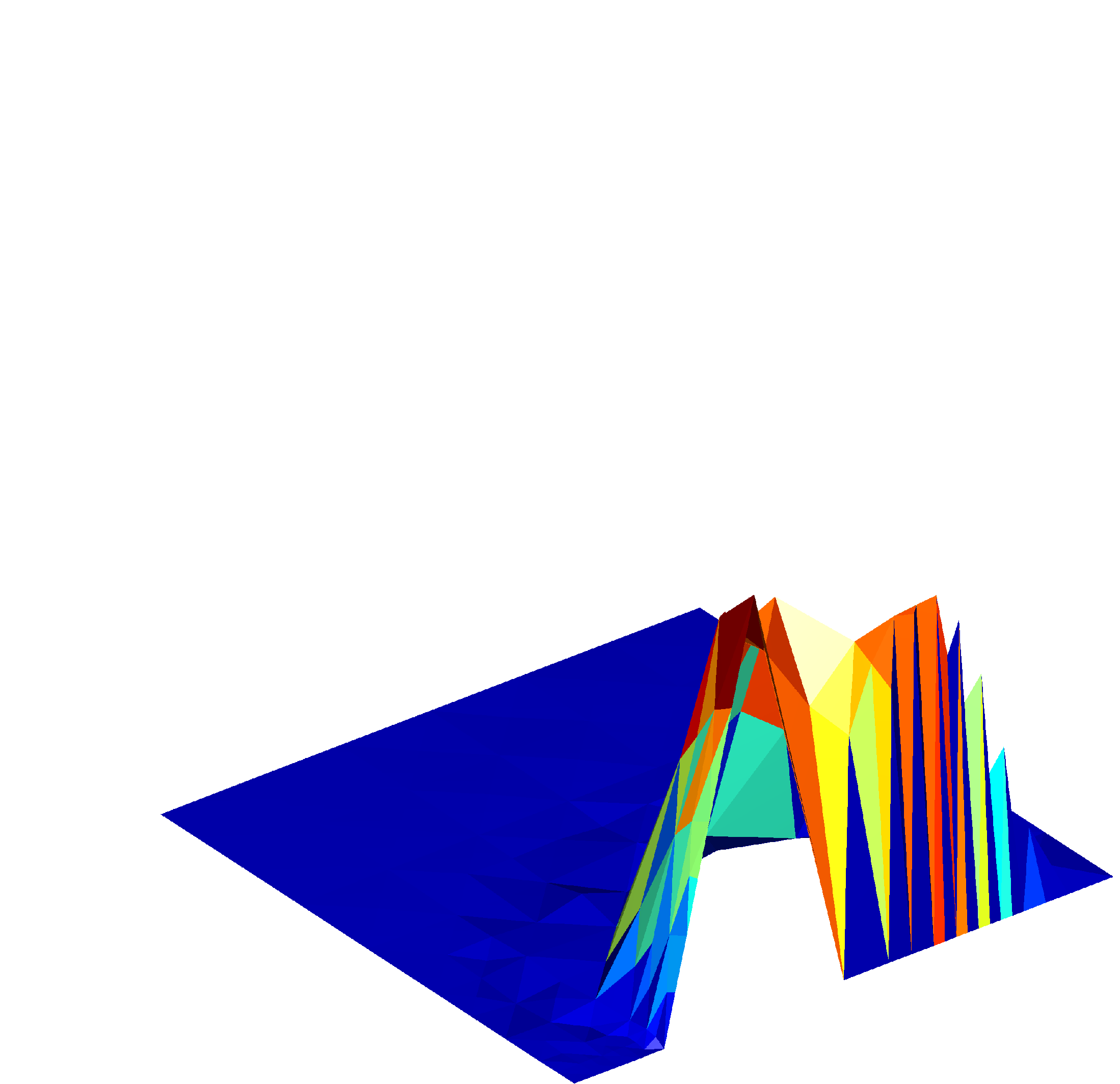}%
&
\clippedGraph{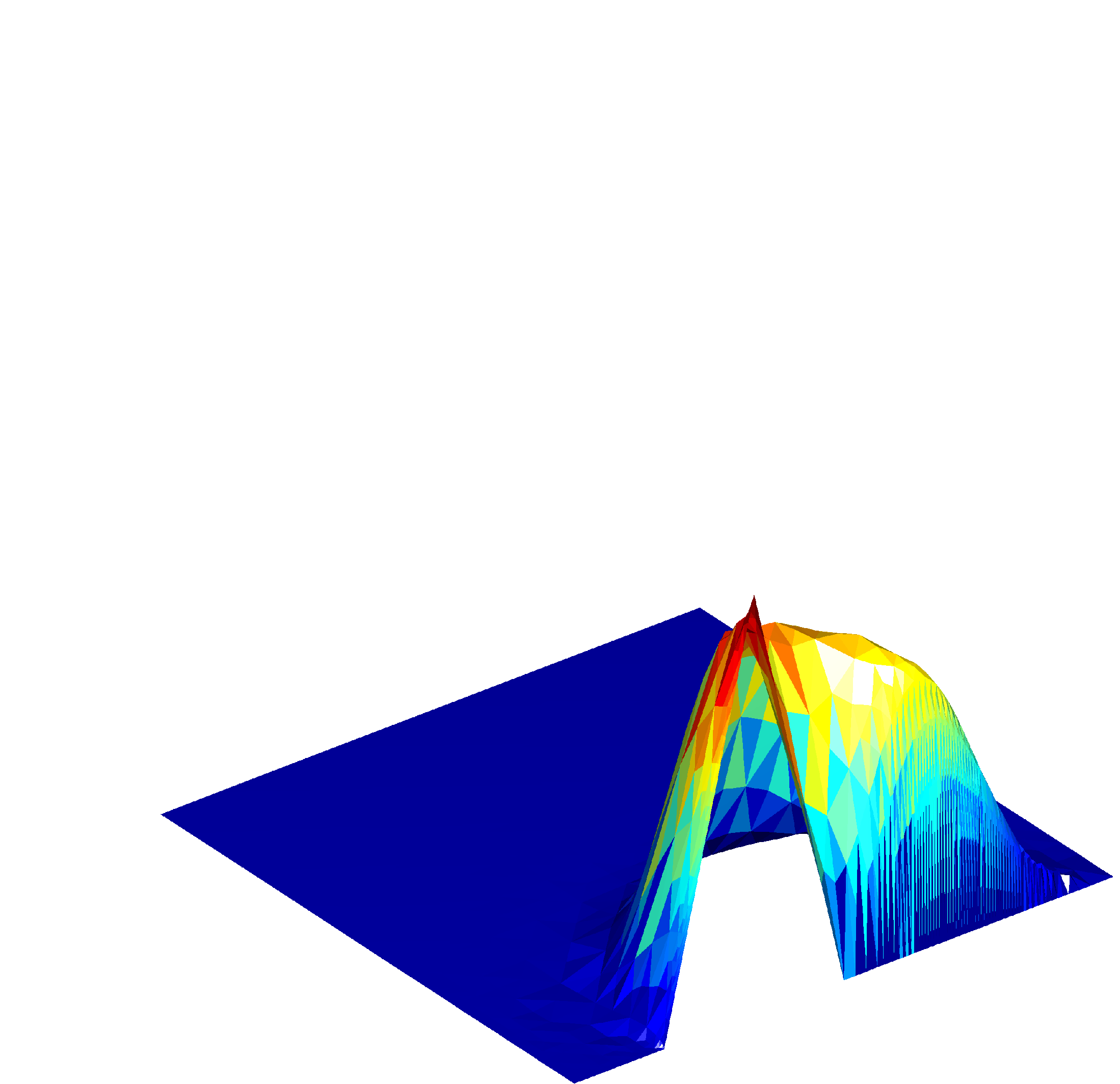}%
&
\clippedGraph{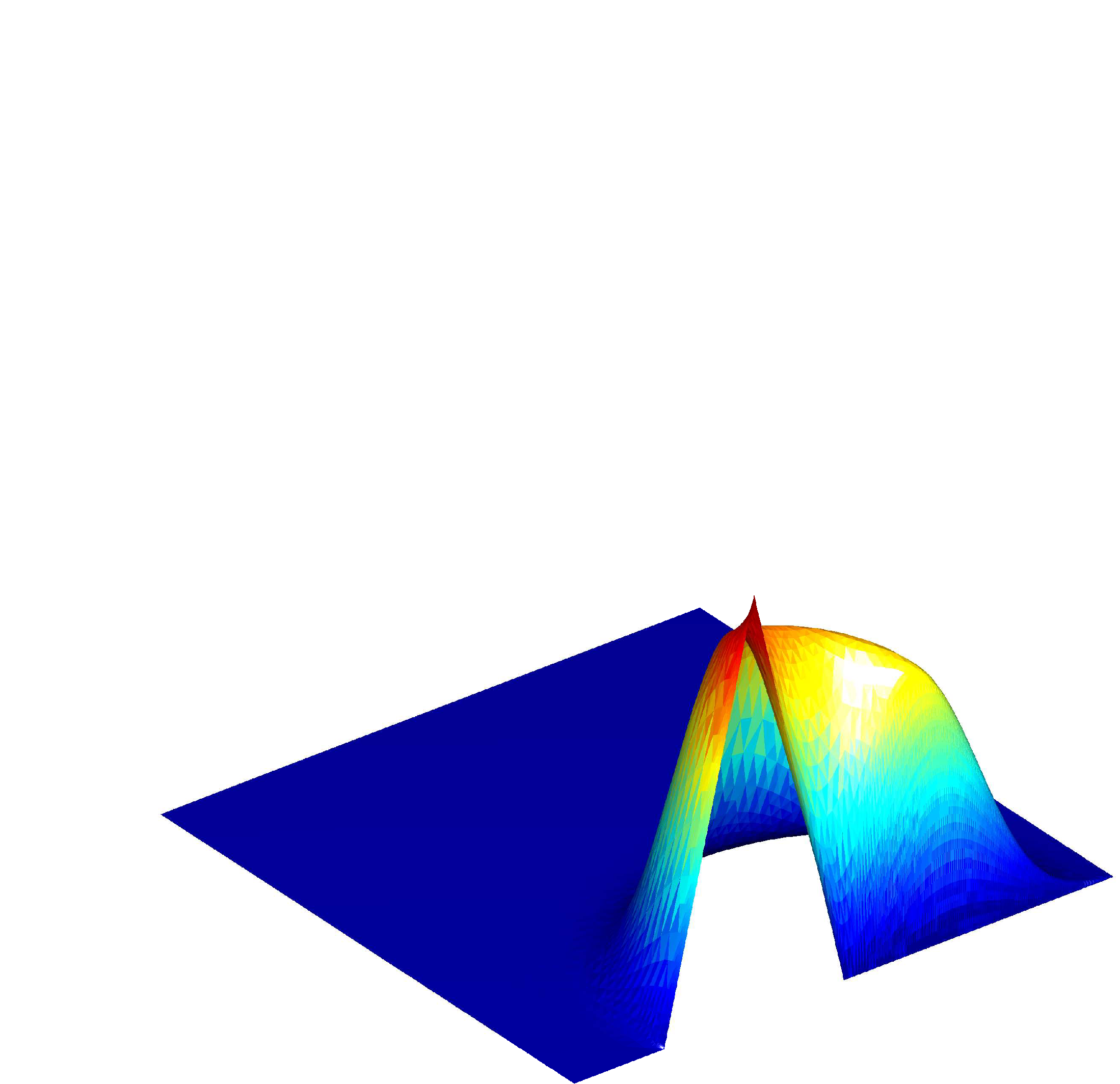}%
\\
\clippedGraph{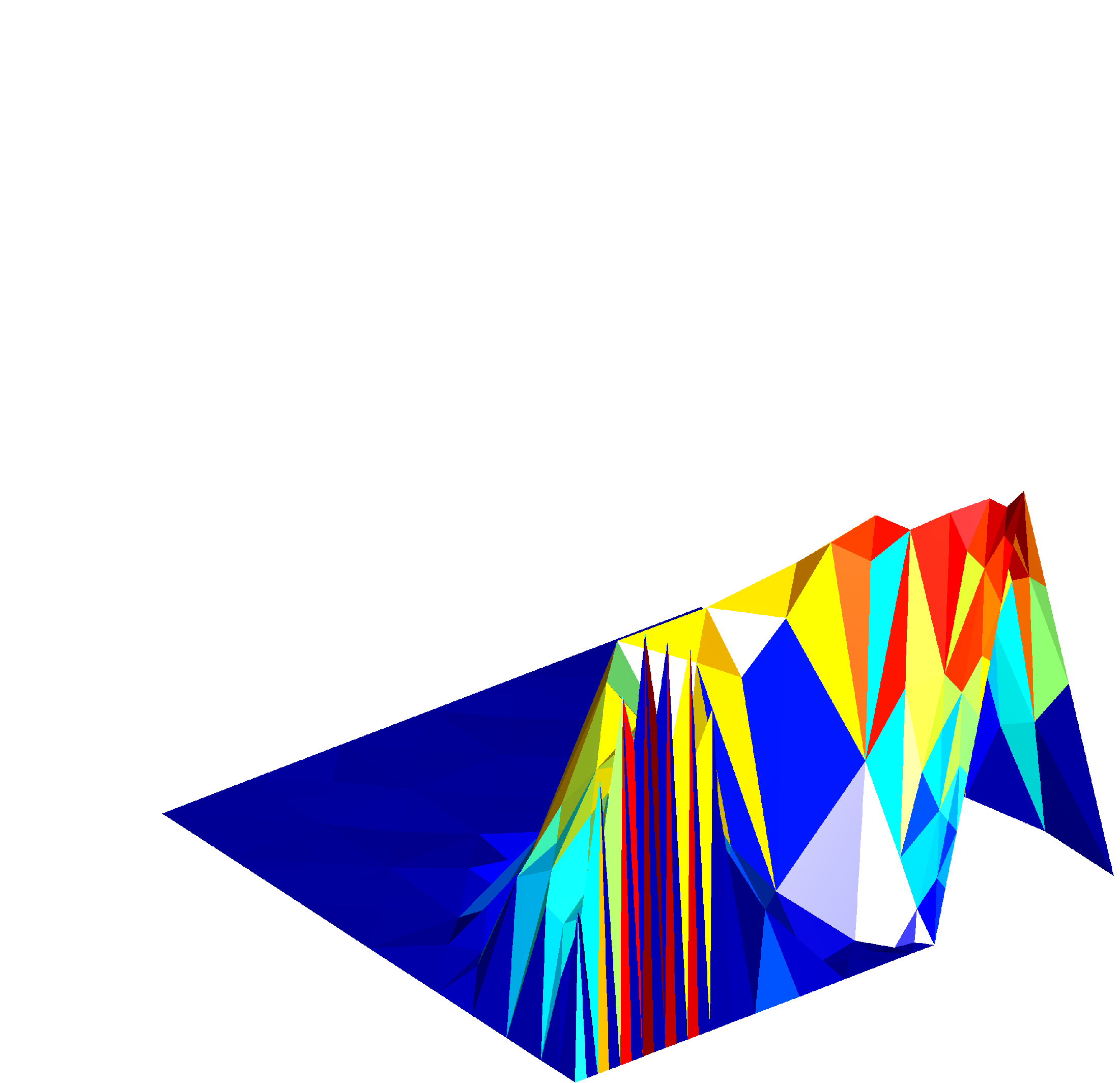}%
&
\clippedGraph{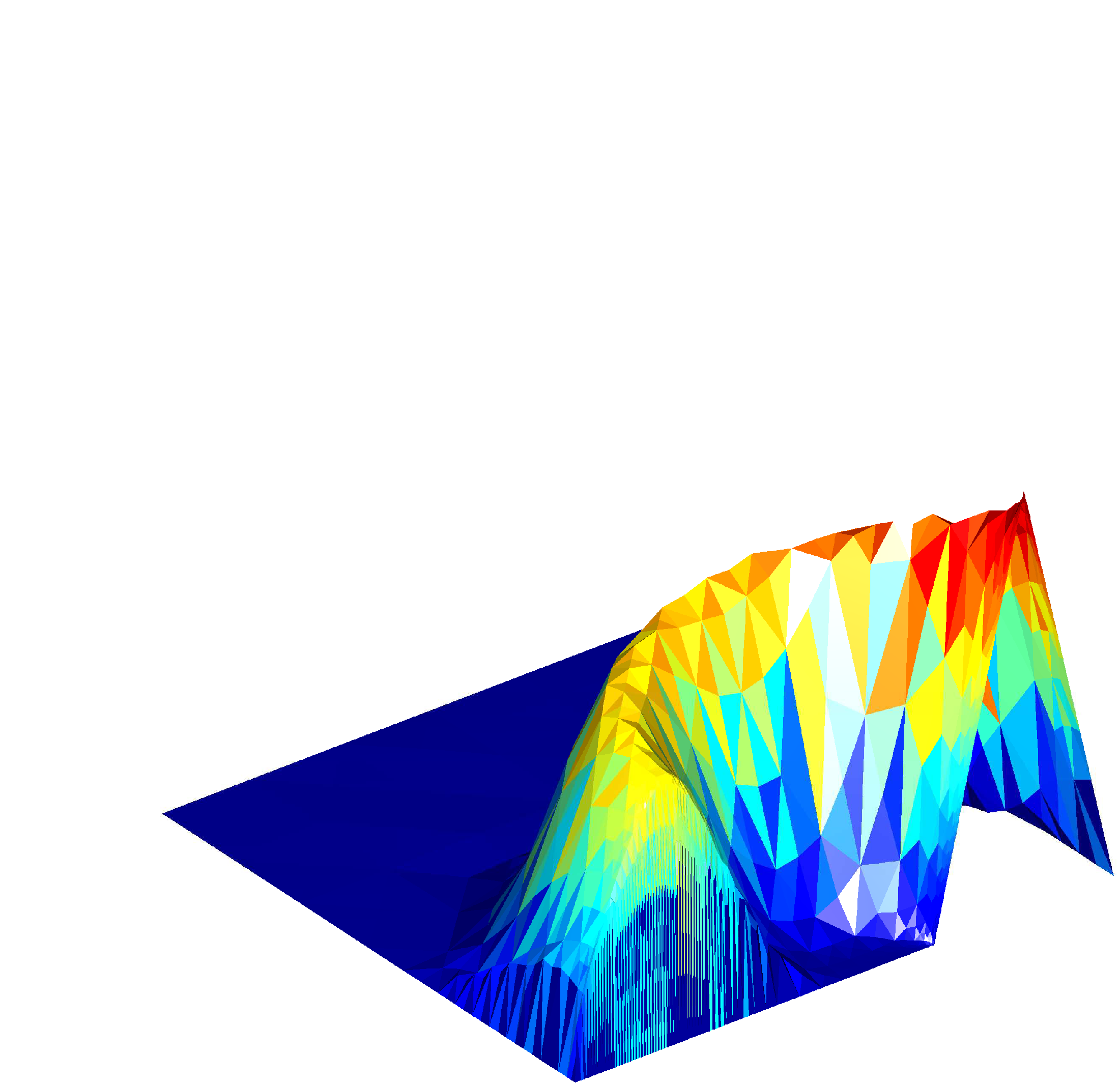}%
&
\clippedGraph{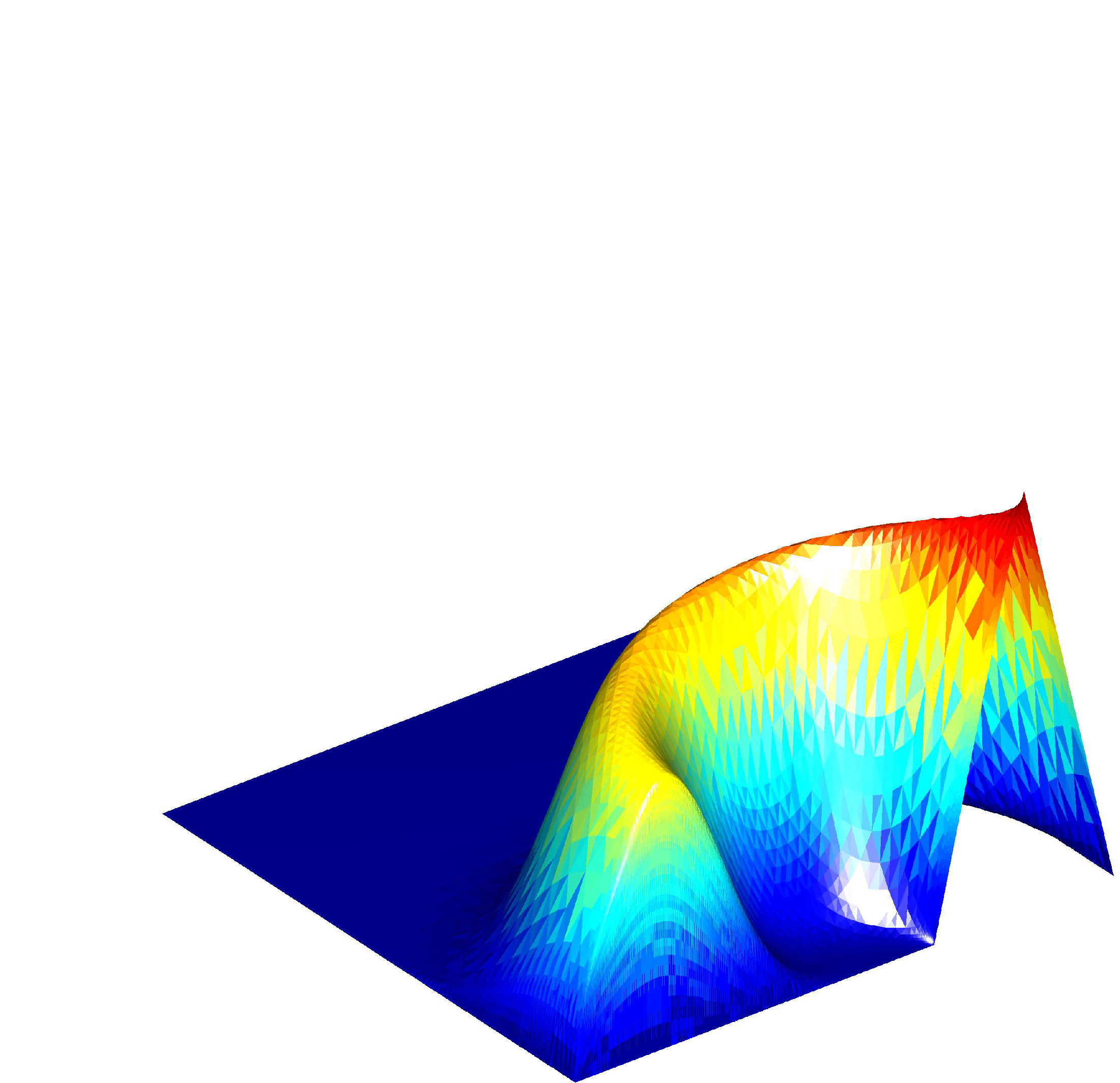}%
\\
\clippedGraphh{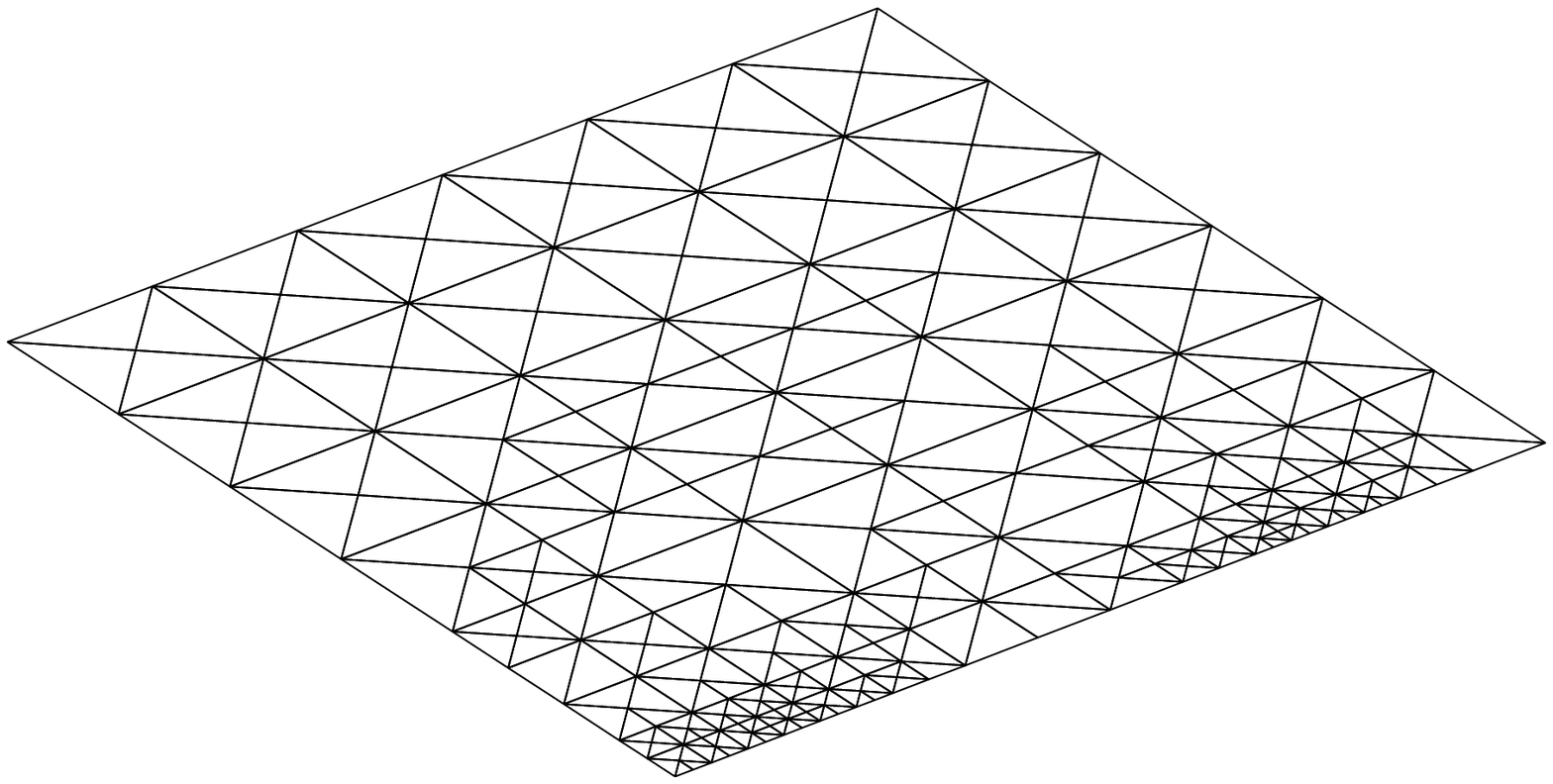}%
&
\clippedGraphh{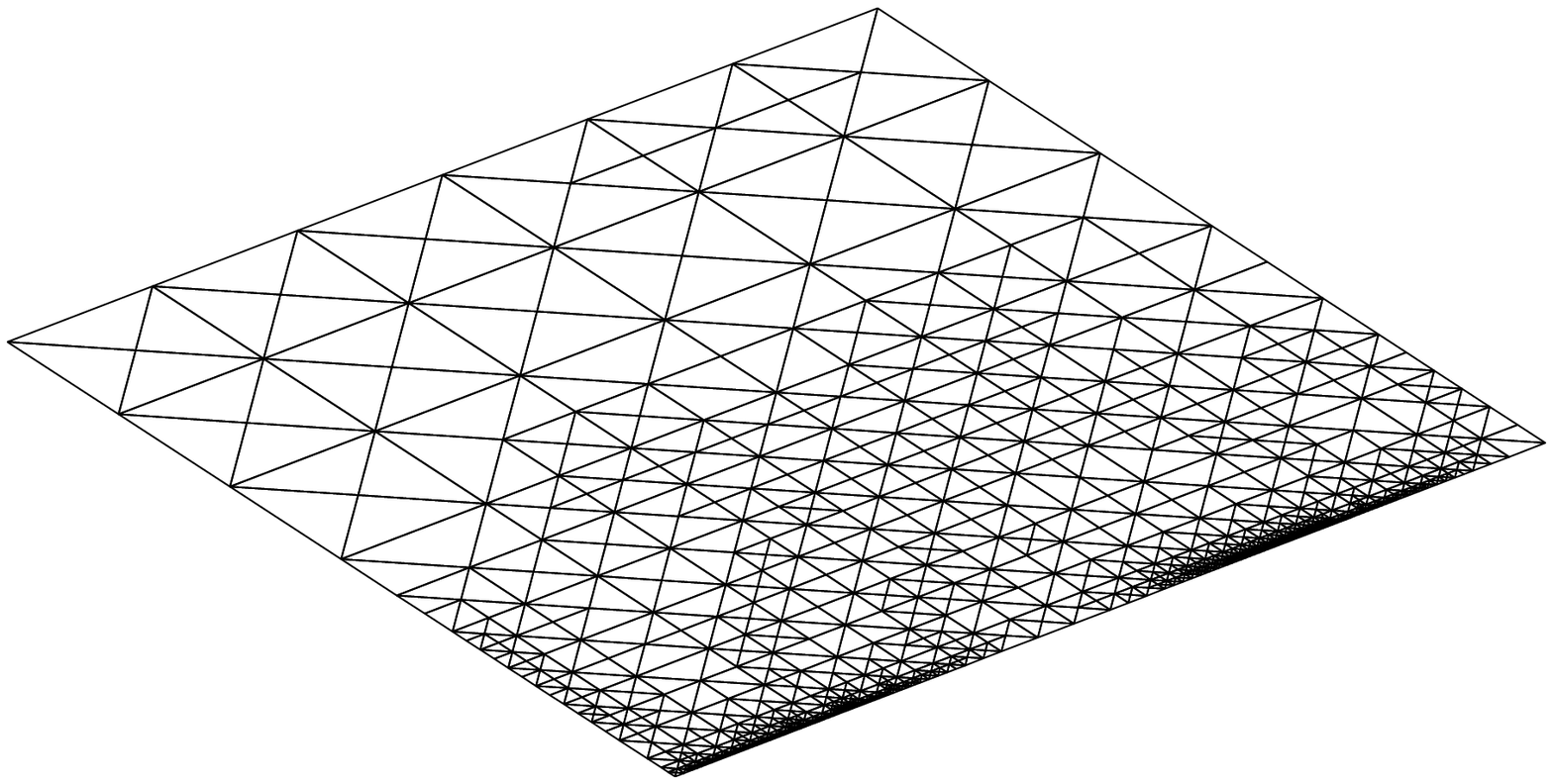}%
&
\clippedGraphh{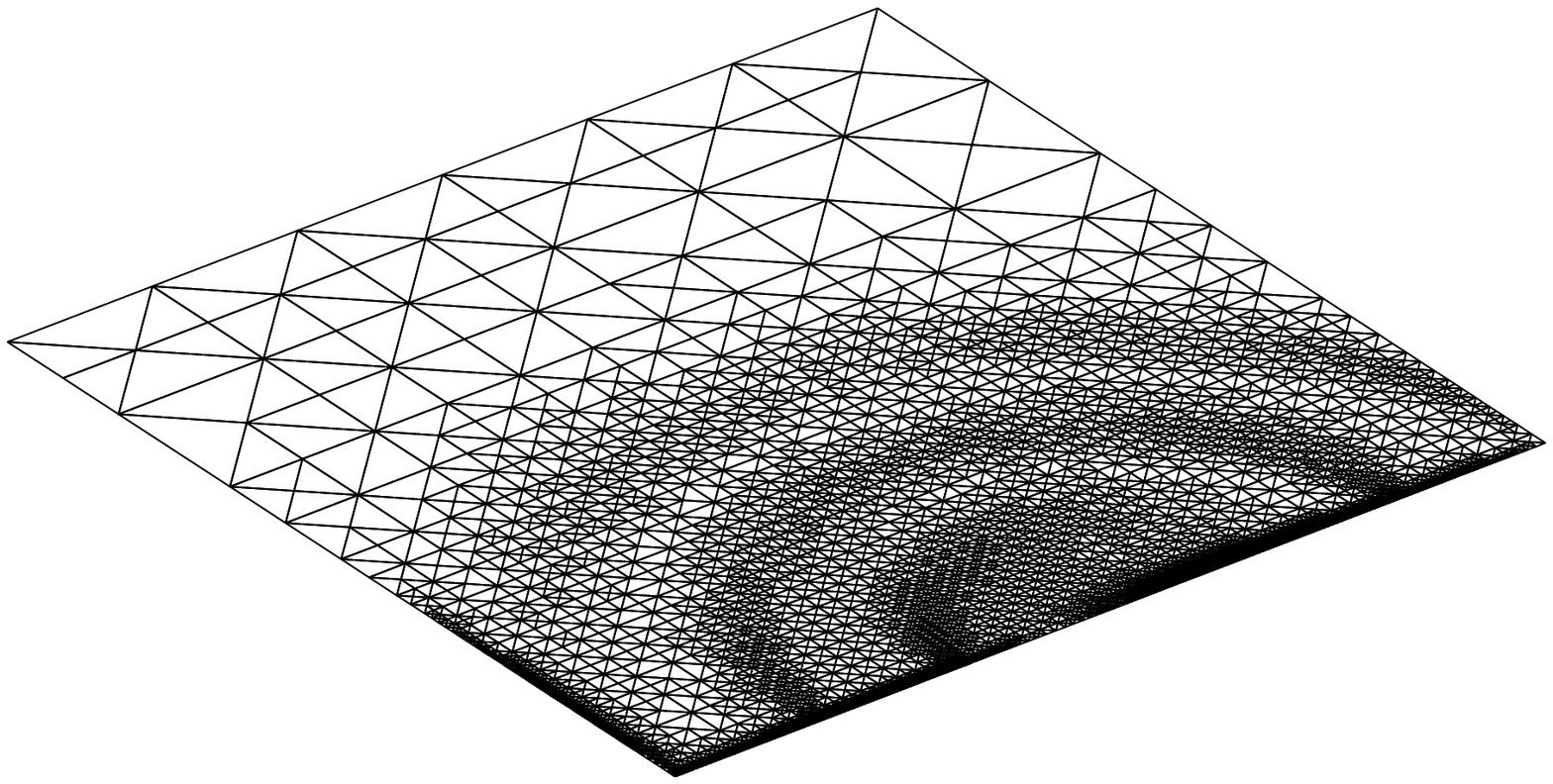}%
\\
{\tiny $\#\TT_{6} = 337$}
&
{\tiny $\#\TT_{12} = 1{,}798$}
&
{\tiny $\#\TT_{18} = 20{,}634$}
\end{tabular}
}
\caption{Example from Section~\ref{section:example:afem2}: 
Primal approximations $U_\ell$ (top), dual approximations $Z_\ell$ (middle) and adaptively generated meshes $\TT_\ell$ (bottom) for $\ell\in\{6,12,18\}$ (from left to right) as output of Algorithm~\ref{algorithm:mod} for $\theta=0.6$ and $\nu = 10^{-3}$. Although we use a non-stabilized Galerkin scheme, initial oscillations in unresolved boundary layers are picked up  immediately by the adaptive algorithm for both, the primal as the dual solution.}
\label{fig:fem2:approximations}
\end{figure}

\begin{figure}[t]
\psfrag{estu}{\tiny$\eta_u$}
\psfrag{estz}{\tiny$\eta_z$}
\psfrag{estz*estu}{\tiny$\eta_u\eta_z$}
\psfrag{est}[c][c]{\tiny estimators}
\psfrag{error}[c][c]{\tiny error resp. estimators}
\psfrag{err}[c][c]{\tiny error}
\psfrag{o12}{\tiny $\mathcal{O}(N^{-1/2})$}
\psfrag{o1}{\tiny $\mathcal{O}(N^{-1})$}
\psfrag{nE}[c][c]{\tiny number of elements $N=\#\TT_\ell$}
{%
\psfrag{Algorithm B (nu=1e-03)}[c][c]{\tiny Algorithm B ($\nu = 10^{-3}$)}
\psfrag{Algorithm B}{}
\includegraphics[scale=0.4]{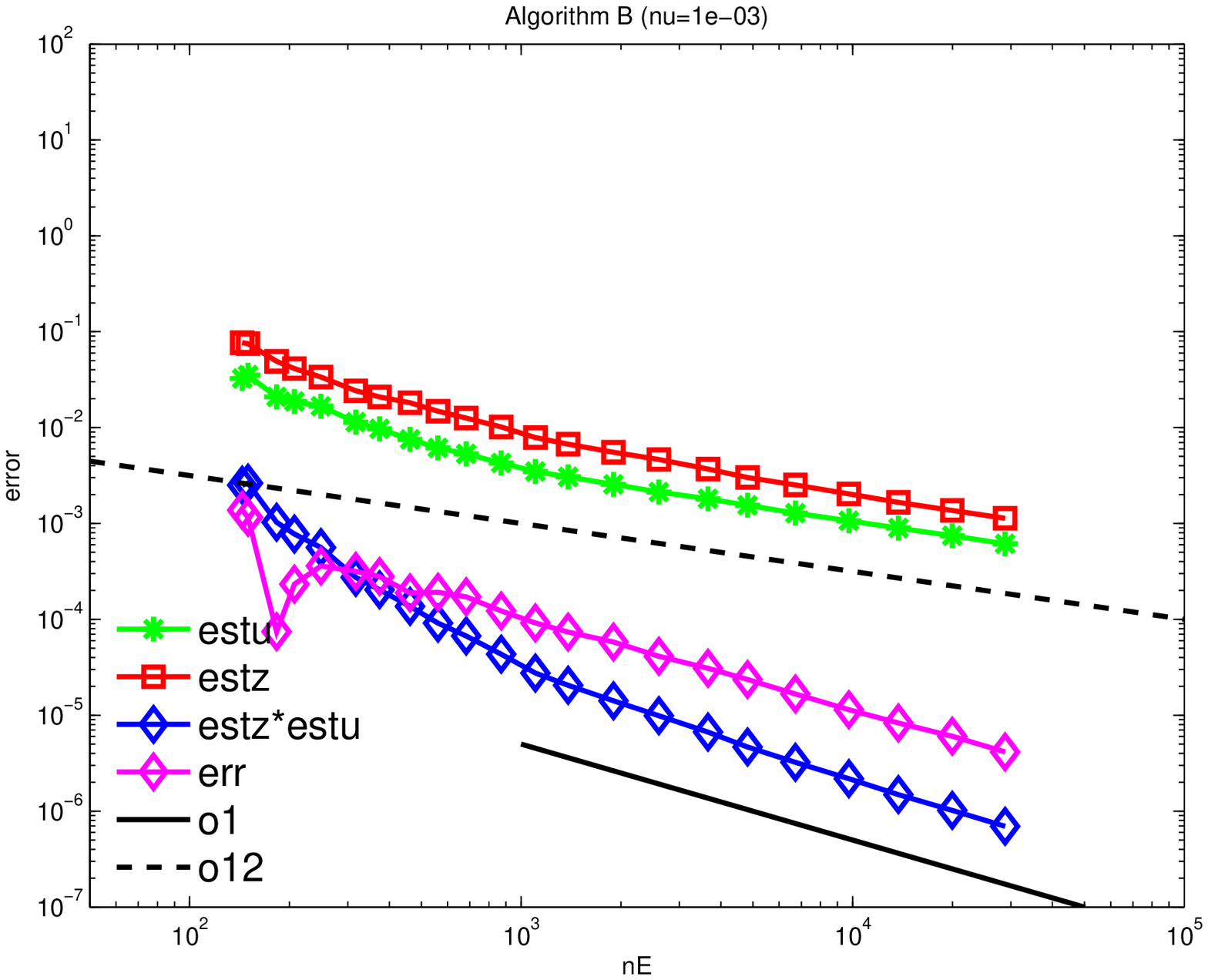}
\qquad
\begin{tabular}[b]{c}
\includegraphics[scale=0.24]{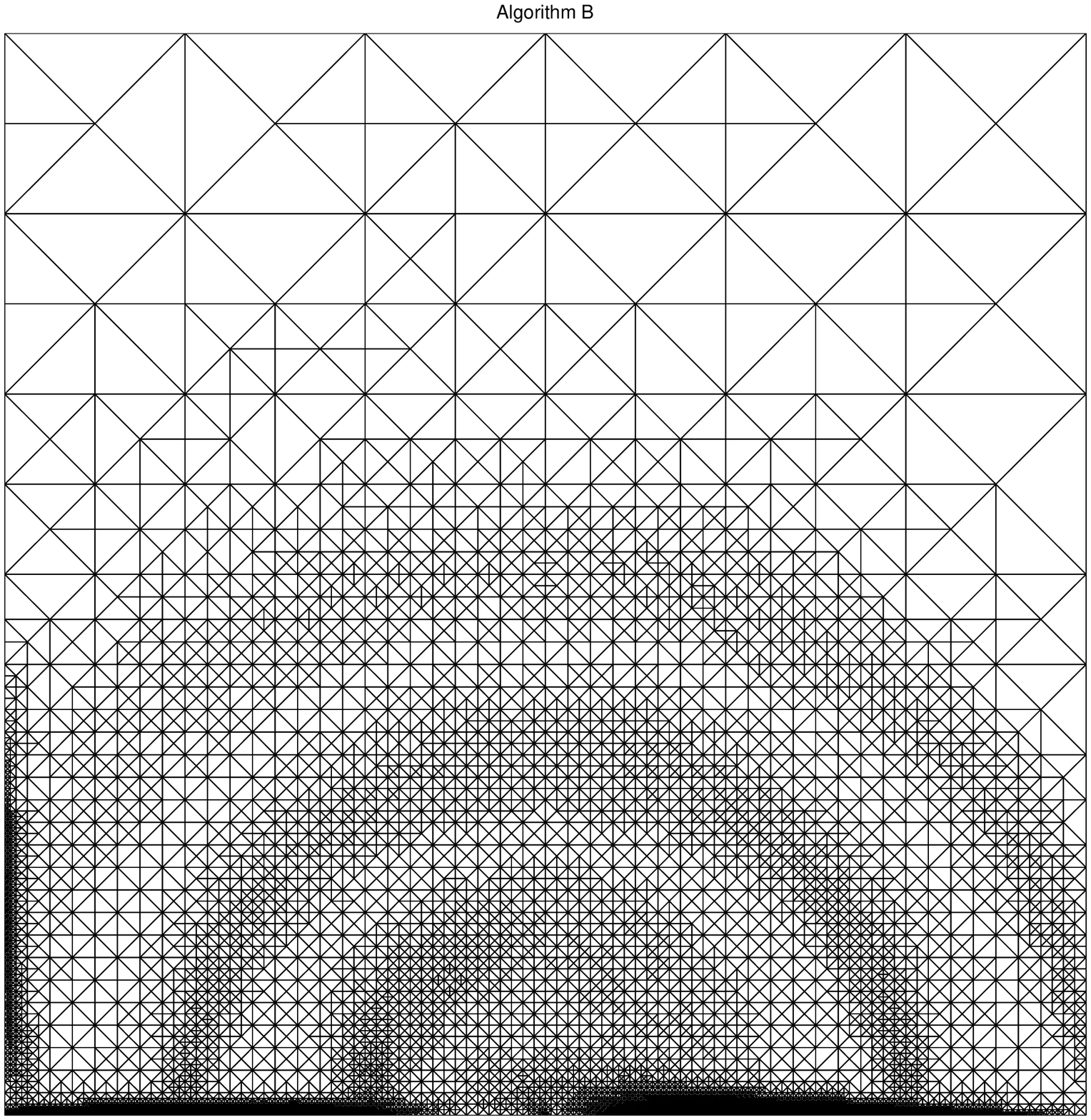}
\\
{\tiny $\#\TT_{22} = 28{,}839$} 
\end{tabular}
}
\\
{%
\psfrag{Algorithm B (nu=1e-04)}[c][c]{\tiny Algorithm B ($\nu = 10^{-4}$)}
\psfrag{Algorithm B}{}
\includegraphics[scale=0.4]{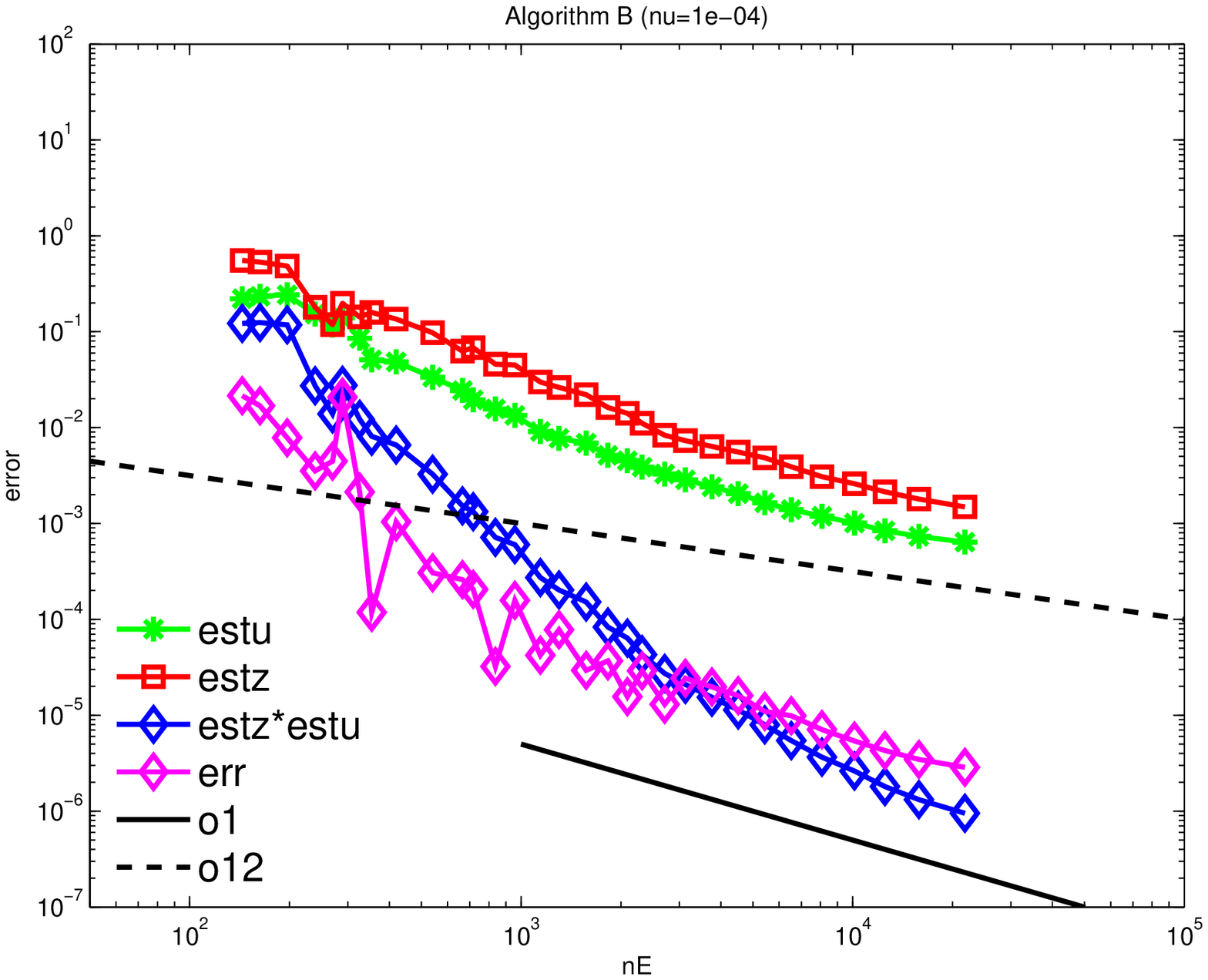}
\qquad
\begin{tabular}[b]{c}
\includegraphics[scale=0.24]{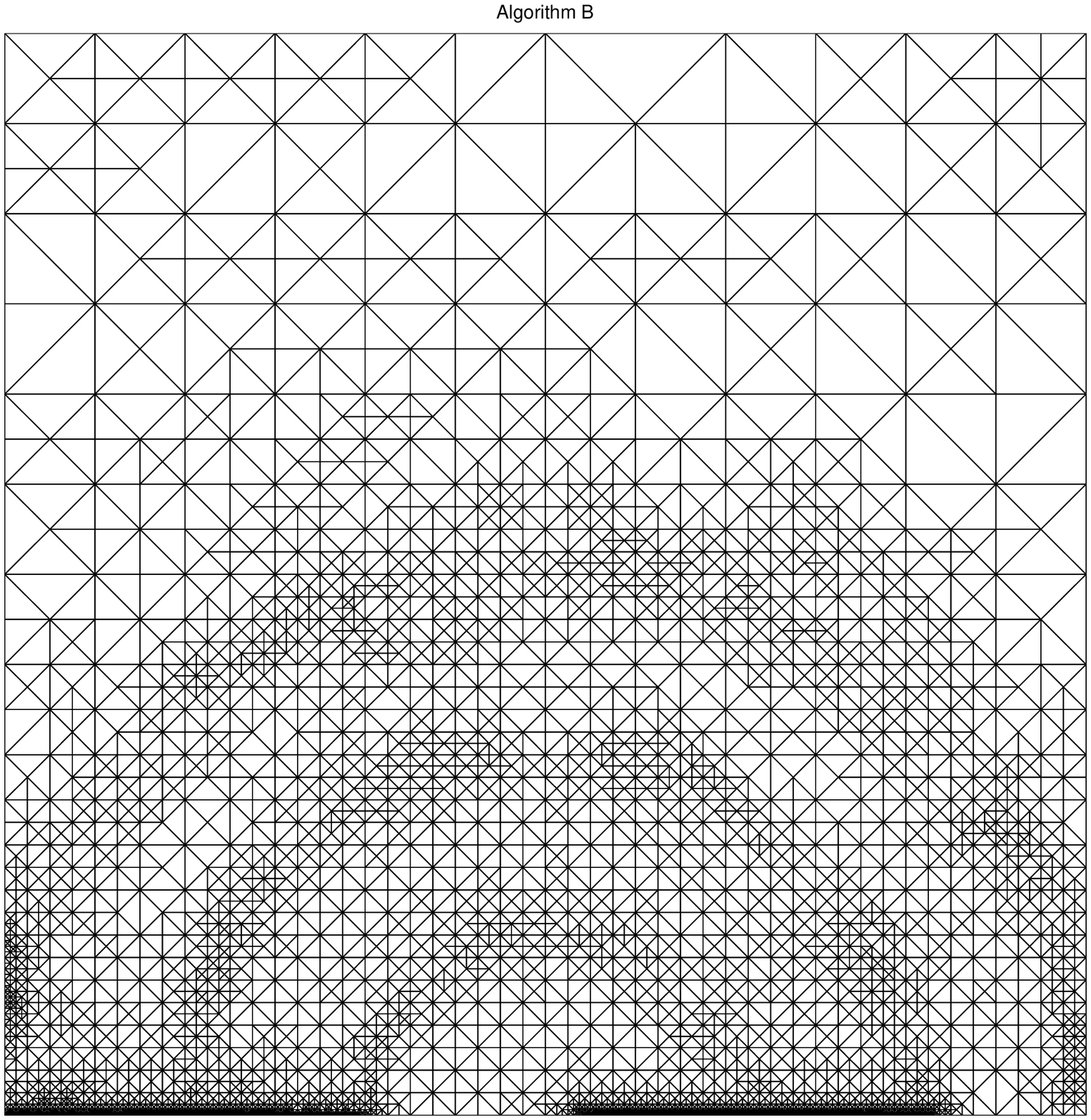}
\\
{\tiny $\#\TT_{31} = 21{,}815$} 
\end{tabular}
}
\\
{%
\psfrag{Algorithm B (nu=1e-05)}[c][c]{\tiny Algorithm B ($\nu = 10^{-5}$)}
\psfrag{Algorithm B}{}
\includegraphics[scale=0.4]{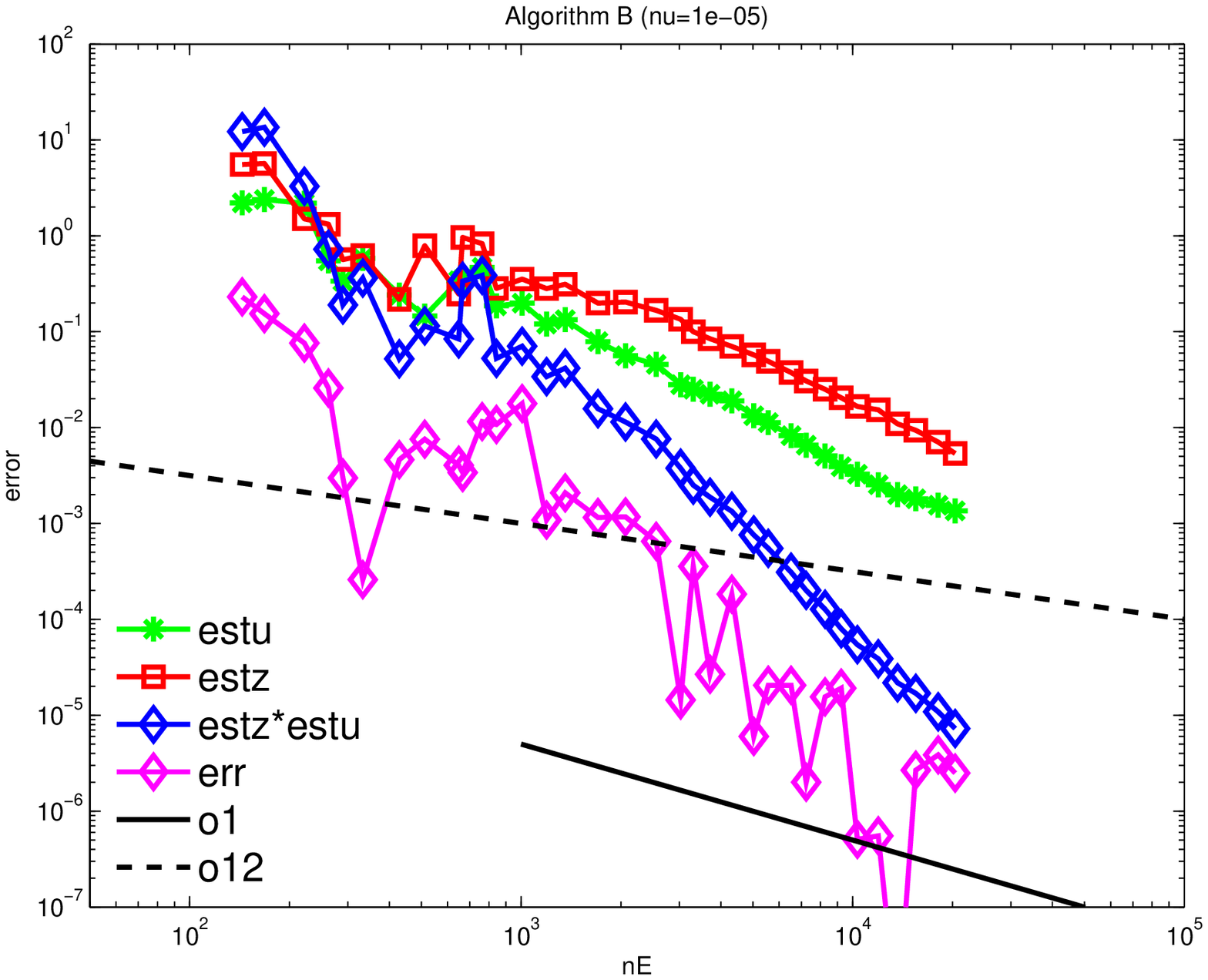}
\qquad
\begin{tabular}[b]{c}
\includegraphics[scale=0.24]{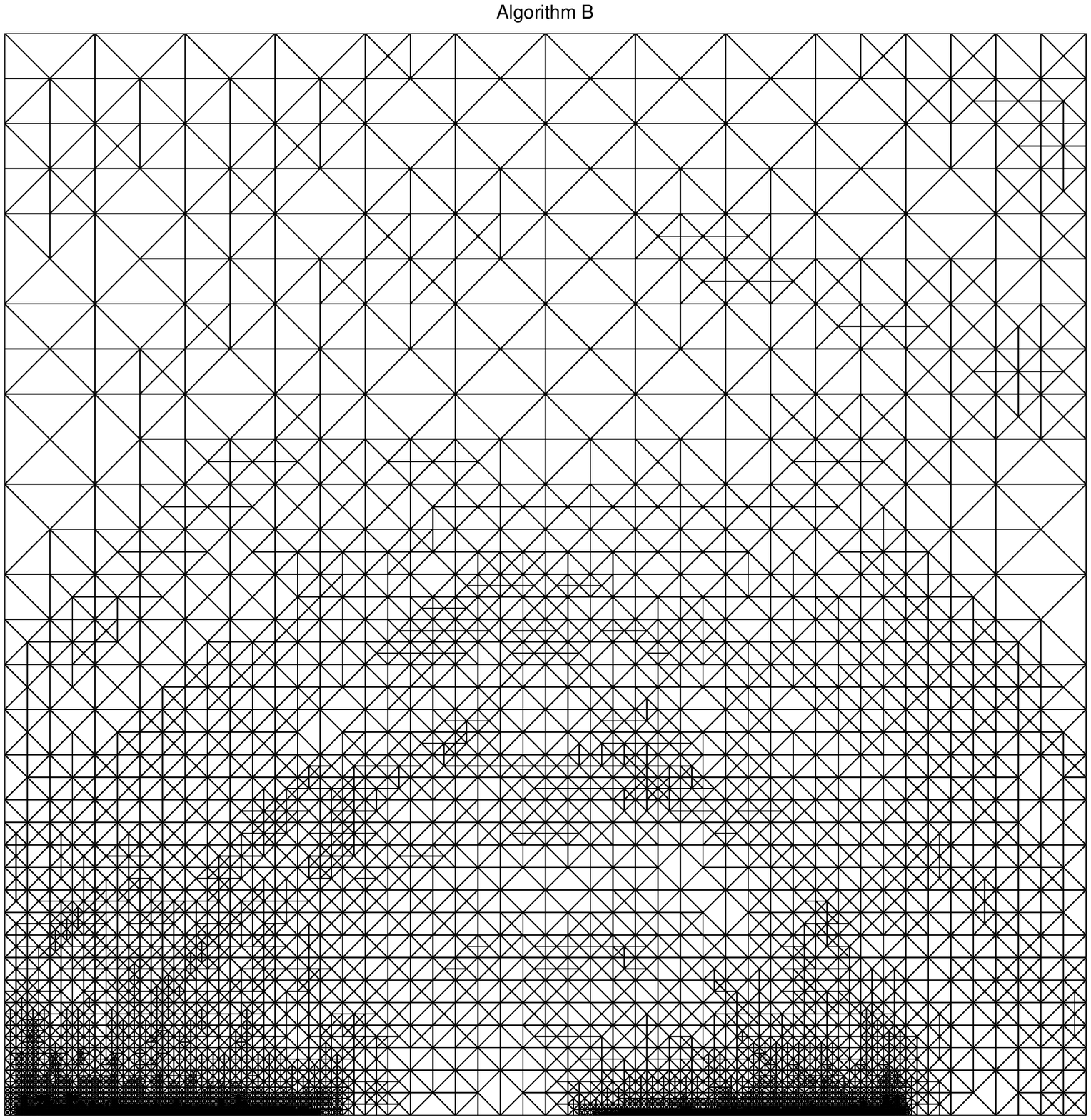}
\\
{\tiny $\#\TT_{34} = 20{,}361$} 
\end{tabular}
}
\caption{Example from Section~\ref{section:example:afem2}: 
To study the robustness of the goal-oriented algorithm with respect to the diffusion coefficient $\nu\in\{10^{-3},10^{-4},10^{-5}\}$ (left, from top to bottom), we plot the estimators $\eta_{u,\ell}$ and $\eta_{z,\ell}$, the estimator product $\eta_{u,\ell}\eta_{z,\ell}$, as well as the goal error $|N_z(u) - N_{z,\ell}(U_\ell)|$ as output of Algorithm~\ref{algorithm:mod} with $\theta=0.6$ over the numbers of elements $\#\TT_\ell$ (left). We show some related discrete meshes with \mbox{$> 20{,}000$} elements (right).}
\label{fig:fem2:robustness}
\end{figure}

\subsection{Numerical experiment II: Flux-oriented adaptive FEM for convection--diffusion}
\label{section:example:afem2}
We consider a numerical experiment similar to~\cite[Section~5.3]{mn} for some convection-diffusion problem in 2D. The goal of this experiment is to verify the optimal convergence of Algorithm~\ref{algorithm}--\ref{algorithm:bet} for the flux quantity of interest~\eqref{dpr:flux} and, moreover, to illustrate this for a \emph{nonsymmetric} second-order elliptic operator, which is covered by our theory. 

Let $\Omega = (0,1)^2 \subset \mathbb{R}^2$ be the unit square. We set~$\matrix{A} = \nu\matrix{I}$, with~$\nu>0$ the diffusion coefficient, $\matrix{b} = (y,\tfrac{1}{2}-x)$, which is a rotating convective field around $(\tfrac{1}{2},0)$, and $c=0$. According to $\operatorname{div} \matrix{b} = 0$, it holds
\begin{alignat*}{2}
 \mathcal{L} &= -\nu \Delta  + \matrix{b} \cdot \nabla 
 \qquad \text{and}
 \qquad 
 \mathcal{L}^T &= -\nu \Delta  - \matrix{b} \cdot \nabla \,.
\end{alignat*}

We set~$f(v) = 0$ and consider non-homogeneous Dirichlet data on~$\partial \Omega$ for the primal problem, a pulse, defined by the continuous piecewise linear function
\begin{alignat*}{2}
  u_{\rm Dir}(x,y) &= \begin{cases}
    6 ( x-\tfrac{1}{6} ) \quad & \tfrac{1}{6} \le x < \tfrac{1}{3} 
    \,,\, y=0
\\
    6( \tfrac{1}{2} - x ) \quad & \tfrac{1}{3} \le x < \tfrac{1}{2} 
    \,,\, y=0
\\ 
    0 & \text{otherwise}\,.
  \end{cases}
\end{alignat*}
Note that $u_{\rm Dir}$ trivially extends to some discrete function $u_{\rm Dir}\in \SS^1(\TT_0)$ if $\TT_0$ is chosen appropriately. Therefore, we can rewrite the problem into a homogeneous Dirichlet problem. To that end, write $u=u_0 + u_{\rm Dir}$ with $u_0\in H^1_0(\Omega)$ and solve
\begin{align*}
a(u_0,v)= f(v) - a(u_{\rm Dir}, v)\quad\text{for all }v\in H^1_0(\Omega).
\end{align*}
Note that the additional term on the right-hand side is of the form ${\rm div}\boldsymbol{\lambda} + \lambda$ for some $\TT_0$-element wise constant $\boldsymbol{\lambda}$ and some $\lambda\in L^2(\Omega)$. A direct computation shows that the weighted-residual error estimator with respect to $u_0$ coincides with $\eta_{u,\ell}$. Arguing as in the proof of Theorem~\ref{theorem:flux}, we see that the estimator satisfies the axioms~\eqref{ass:stable}--\eqref{ass:reliable}. Altogether, the problem thus fits in the frame of our analysis.

The primal solution corresponds to the clockwise convection--diffusion of this pulse. 
We choose the boundary weight function $\Lambda:\partial\Omega \rightarrow \mathbb{R}$ as a shifted version of the above pulse: 
\begin{alignat*}{2}
  \Lambda(x,y) &= \begin{cases}
    6 ( x-\tfrac{2}{3} ) \quad & \tfrac{2}{3} \le x < \tfrac{5}{6} 
    \,,\, y=0
\\
    6( 1 - x ) \quad & \tfrac{5}{6} \le x < 1 
    \,,\, y=0
\\ 
    0 & \text{otherwise}\,.
  \end{cases}
\end{alignat*}
The dual solution corresponds to the counter-clockwise convection--diffusion of this pulse. For small~$\nu$, the (primal and dual) pulses are transported from~$\partial \Omega$ into $\Omega$ and eventually back to~$\partial\Omega$ where a boundary layer develops. 
See Figure~\ref{fig:fem2:MNtestcase} (left) for an illustration of the supports of the primal and dual Dirichlet data, and the primal and dual convective fields. 

All discrete approximations are computed with lowest-order finite elements of degree~$p=1$. The uniform initial triangulation $\TT_0$ is as shown in Figure~\ref{fig:fem2:MNtestcase} (right) 
ensures that the (primal and dual) Dirichlet data belong to the discrete trace space~$\SS^1(\TT_0|_\Gamma)$.

As shown in Figure~\ref{fig:fem2:QconvABC},
Algorithm~\ref{algorithm}--\ref{algorithm:bet} yield optimal convergence rates for the flux quantity of interest. For~$\nu=10^{-3}$ and a large range of values of~$\theta\in\{0.1,\dots,0.9\}$, we observe the optimal convergence rate $\OO(N^{-1})$, while uniform mesh-refinement appears to be slightly suboptimal. 

To compare the overall performance of the different algorithms, Figure~\ref{fig:fem2:comparison} visualizes over different marking parameters $\theta\in\{0.1,\dots,0.9\}$ the cumulative number of elements $N_{\rm cum}$ which is necessary to reach a prescribed accuracy of~$\eta_{u,\ell} \eta_{z,\ell} \le 10^{-4}$ vs.\ the marking parameter $\theta\in\{0.1,\dots,0.9\}$; see~\eqref{eq:Ncum} for the definition and interpretation of $N_{\rm cum}$. For Algorithms~\ref{algorithm}--\ref{algorithm:bet}, we observe that $N_{\mathrm{cum}}$ is smallest for relatively large values $\theta\ge 0.5$, with Algorithm~\ref{algorithm} being less efficient than Algorithm~\ref{algorithm:mod} and~\ref{algorithm:bet}. Overall, Algorithm~\ref{algorithm:mod} with~$\theta=0.6$ seems to be the best choice. 

Figure~\ref{fig:fem2:approximations} shows several approximations and meshes obtained with Algorithm~\ref{algorithm:mod}. Because~$\nu=10^{-3}$ is relatively small, both the primal and the dual solution have significant boundary layers. These layers as well as the weak singularities coming from the kinks in the Dirichlet data are well captured by the adaptive algorithm. 

Figure~\ref{fig:fem2:robustness} illustrates the effect of varying~$\nu\in\{10^{-3},10^{-4},10^{-5}\}$. The optimal convergence rate of the estimator product is observed for the indicated values of~$\nu$, however, the pre-asymptotic regime is longer for smaller values of~$\nu$. This is to be expected, as the hidden constant in~\eqref{eq:flux:estimator} depends on the reliability constant for the estimators, which in turn depends on~$\nu$.

\section{Goal oriented BEM}\label{section:bem}

\noindent
In this section, we extend ideas from~\cite{fkmp} and prove that our abstract frame of 
convergence and optimality of goal-oriented adaptivity applies, in particular, to the BEM.

\subsection{Model problem}\label{abem:model}
Let $\Gamma\subseteq \partial\Omega$ denote some relatively open 
boundary part of the Lipschitz domain $\Omega\subset \R^d$, $d=2,3$. Given $F,\Lambda\in H^1(\Gamma)$, we aim to compute the weighted boundary flux
\begin{align}\label{eq:bem:goal}
g(u):=\int_\Gamma \Lambda u\,ds,
\end{align}
where $u$ solves the weakly-singular integral equation
\begin{align}\label{ex:bem}
 \slp u(x):=\int_\Gamma G(x,y)u(y)\,dy = F(x)\quad\text{almost everywhere on }\Gamma.
\end{align}
Here, $G:\R^2\setminus\{0\}\to\R$ denotes the Newton kernel
\begin{align*}
 G(x,y):=\begin{cases}
          -\frac{1}{2\pi}\log|x-y| &\text{for }d=2,\\
          \frac{1}{4\pi|x-y|}&\text{for }d=3.
         \end{cases}
\end{align*}
The simple-layer operator extends to a linear and continuous operator $\slp:\,\widetilde H^{-1/2}(\Gamma)\to H^{1/2}(\Gamma)$, where $H^{1/2}(\Gamma):=\set{\widehat v|_\Gamma}{\widehat v\in H^1(\Omega)}$ is the trace space of $H^1(\Omega)$ and $\widetilde H^{-1/2}(\Gamma)$ denotes its dual space with respect to the extended $L^2$-scalar product; see, e.g.,~\cite{mclean,hw,ss} for the mapping properties of $\slp$ and the functional analytic setting.
For $d=3$ as well as supposed that ${\rm diam}(\Omega)<1$ for $d=2$, the induced bilinear form 
\begin{align*}
 \bform{u}{v}:=\dual{\slp u}{v}
 := \int_\Gamma(\slp u)(x)v(x)\,dx\quad\text{for }u,v\in\XX:=\widetilde{H}^{-1/2}(\Gamma)
\end{align*}
is continuous, symmetric,  and $\widetilde{H}^{-1/2}(\Gamma)$-elliptic.
In particular, $\enorm{v}^2:=\bform{v}{v}$ defines an equivalent norm on $\widetilde{H}^{-1/2}(\Gamma)$. Moreover, the problem fits in the frame of Section~\ref{section:motivation}. 
More precisely and according to the Hahn-Banach theorem,~\eqref{ex:bem} is equivalent to~\eqref{eq:primal}, where the
right-hand side of~\eqref{eq:primal} reads $f(v):=\int_\Gamma Fv\,dx$. Moreover, the goal functional from~\eqref{eq:bem:goal} satisfies $g\in H^{-1/2}(\Gamma)^*=H^{1/2}(\Gamma)$, where the integral is understood as the duality pairing between $H^{-1/2}(\Gamma)$ and its dual $H^{1/2}(\Gamma)$.

\subsection{Discretization}\label{abem:discretization}
Let $\TT_\star$ be a regular triangulation of $\Gamma$ into affine line segments for $d=2$ resp.\ flat surface triangles for $d=3$. For each element $T\in\TT_\star$, let $\gamma_T:T_{\rm ref}\to T$ be an affine bijection, where the reference element is $T_{\rm ref} = [0,1]$ for $d=2$ resp.\ $T_{\rm ref} = {\rm conv}\{(0,0),(0,1),(1,0)\}$ for $d=3$. For some polynomial degree $p\ge1$, define
\begin{align*}
 \XX_\star := \PP^p(\TT_\star) 
 := \set{V_\star:\Gamma\to\R}{V_\star\circ\gamma_T\in\PP^p(T_{\rm ref}) \text{ for all $T\in\TT_\star$}},
\end{align*}
where $\PP^p(T_{\rm ref}) := \set{q\in L^2(T_{\rm ref})}{q \text{ is polynomial of degree $\le p$ on $T_{\rm ref}$}}$. Let $U_\star,Z_\star\in\XX_\star$ be the unique BEM solutions of~\eqref{eq:primal:discrete} resp.~\eqref{eq:dual:discrete}, i.e.,
\begin{subequations} \label{eq:abem:discrete}
\begin{align}
 &U_\star\in\PP^p(\TT_\star)
 \quad\text{such that}\quad
 a(U_\star,V_\star) = f(V_\star)\quad\text{for all }V_\star\in \PP^p(\TT_\star),\\
 &Z_\star\in\PP^p(\TT_\star)
 \quad\text{such that}\quad
 a(V_\star,Z_\star) = g(V_\star)\quad\text{for all }V_\star\in \PP^p(\TT_\star).
\end{align}
\end{subequations}

\subsection{Residual error estimator}
The residual error estimators from~\cite{cms} for the discrete primal problem~\eqref{eq:primal:discrete} and the discrete dual problem~\eqref{eq:dual:discrete} read
\begin{align}\label{ex:bem:estimator:primal}
\eta_{u,\star}(T)^2:=h_T\norm{\nabla (\slp U_\star-F)}{L^2(T)}^2 \quad\text{and}\quad 
\eta_{z,\star}(T)^2:=h_T\norm{\nabla (\slp Z_\star-\Lambda)}{L^2(T)}^2.
\end{align}
The error estimators satisfy reliability~\eqref{eq:reliable:laxmilgram}; see, e.g.,~\cite{cms}.
The abstract analysis of Section~\ref{section:motivation} thus results in
\begin{align}\label{eq:abem:estimator}
 |g(u)-g(U_\star)| \lesssim \eta_{u,\star}\eta_{z,\star},
\end{align}
and we aim for optimal convergence of the right-hand side.

\subsection{Verification of axioms}
With 2D newest vertex bisection from~\cite{stevenson:nvb} for $d=3$ resp. the extended 1D bisection from~\cite{eps65} for $d=2$ as mesh-refinement strategy, 
the assumptions of Section~\ref{section:finemesh} are satisfied. It remains to verify 
the axioms~\eqref{ass:stable}--\eqref{ass:reliable},
where $\dist{w}{\TT_\ell}{\TT_\star}:=\enorm{W_\ell-W_\star}\simeq \norm{W_\ell-W_\star}{\widetilde H^{-1/2}(\Gamma)}$.

\begin{theorem}
Consider the model problem of Section~\ref{abem:model}. Then, the conforming discretization~\eqref{eq:abem:discrete} of Section~\ref{abem:discretization} with the
residual error estimators~\eqref{ex:bem:estimator:primal} satisfies stability~\eqref{ass:stable}, reduction~\eqref{ass:reduction} with $\q{reduction}=2^{-1/(d-1)}$,
quasi-orthogonality~\eqref{ass:orthogonal}, and discrete reliability~\eqref{ass:reliable} with $\RR_w(\TT_\ell,\TT_\star) = \set{T\in\TT_\ell}{T\cap\bigcup(\TT_\ell\backslash\TT_\star)\neq \emptyset}$, i.e., $\R_w(\TT_\ell,\TT_\star)$ consists of the refined elements plus one additional layer.
In particular, the Algorithms~\ref{algorithm}--\ref{algorithm:bet} are linearly convergent with optimal rates in the sense of Theorem~\ref{theorem:linear}, \ref{theorem:optimal}, \ref{theorem:optimal:mod}, and~\ref{theorem:optimal:bet} for the upper bound in~\eqref{eq:abem:estimator}.
\end{theorem}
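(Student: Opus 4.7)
The plan is to verify the four axioms \eqref{ass:stable}--\eqref{ass:reliable} for each of the two estimators $\eta_{u,\star}$ and $\eta_{z,\star}$ separately, observing that both use the same simple-layer operator $\slp$ on the same ansatz space $\PP^p(\TT_\star)$, so it suffices to carry out the arguments for a generic $\eta_{w,\star}$ with $w\in\{u,z\}$ and right-hand side $F$ or $\Lambda\in H^1(\Gamma)$. Stability~\eqref{ass:stable} and reduction~\eqref{ass:reduction} follow from the arguments of~\cite{fkmp} (building on the novel inverse-type estimate of that work): the triangle inequality reduces the stability assertion to controlling $h_T^{1/2}\norm{\nabla\slp(W_\star-W_\ell)}{L^2(T)}$ on non-refined $T\in\TT_\ell\cap\TT_\star$, which is bounded by $\c{stable}\enorm{W_\star-W_\ell}\simeq\dist{w}{\TT_\ell}{\TT_\star}$ using the inverse estimate $\norm{h^{1/2}\nabla\slp v}{L^2(\Gamma)}\lesssim\norm{v}{\H^{-1/2}(\Gamma)}$ applied to $v:=W_\star-W_\ell$. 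For reduction, the same argument applies on $\TT_\star\backslash\TT_\ell$; together with the observation that each refined son $T'\subset T$ satisfies $h_{T'}\le 2^{-1/(d-1)}h_T$ (bisection in 2D, NVB in 3D), one obtains the factor $\q{reduction}=2^{-1/(d-1)}$ in the standard way.

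For the discrete reliability~\eqref{ass:reliable}, I would invoke the proof of~\cite{fkmp} (see also the prior partial results in~\cite{part1,part2}): the difference $W_\star-W_\ell\in\PP^p(\TT_\star)$ lies in the discrete space, and by testing with the Scott--Zhang type quasi-interpolation $J_\ell(W_\star-W_\ell)\in\PP^p(\TT_\ell)$ against the Galerkin equations, one localises the resulting estimate to a neighbourhood of the refined elements. This is where the need for one additional layer of elements enters and explains the form $\RR_w(\TT_\ell,\TT_\star)=\set{T\in\TT_\ell}{T\cap\bigcup(\TT_\ell\backslash\TT_\star)\neq\emptyset}$. The bound $\#\RR_w(\TT_\ell,\TT_\star)\lesssim\#(\TT_\ell\backslash\TT_\star)$ follows from the bounded local mesh ratio guaranteed by the refinement of~\cite{eps65} ($d=2$) resp.~\cite{stevenson:nvb} ($d=3$). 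This step I expect to be the technically most demanding, but it is precisely the content of the discrete reliability estimate already established in the cited work for standard adaptive BEM.

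For the quasi-orthogonality~\eqref{ass:orthogonal} I would exploit the symmetry and ellipticity of $\bform\cdot\cdot$, which makes $\enorm\cdot=\bform\cdot\cdot^{1/2}$ an equivalent norm on $\H^{-1/2}(\Gamma)$, and invoke part~(ii) of Remark~\ref{remark:laxmilgram}: nestedness $\PP^p(\TT_{\ell_n})\subseteq\PP^p(\TT_{\ell_{n+1}})\subseteq\cdots$ along the bisection sequence gives the Galerkin--Pythagoras identity
\begin{align*}
\sum_{j=n}^N\dist{w}{\TT_{\ell_{j+1}}}{\TT_{\ell_j}}^2
\le\enorm{W_{\ell_{N+1}}-W_{\ell_n}}^2
\lesssim\norm{W_{\ell_{N+1}}-W_{\ell_n}}{\H^{-1/2}(\Gamma)}^2
\lesssim\eta_{w,\ell_n}^2,
\end{align*}
where the last step uses discrete reliability (already verified above). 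This yields~\eqref{ass:orthogonal} even with $\eps=0$ and a constant $\c{orth}$ depending only on continuity/ellipticity of $a(\cdot,\cdot)$ and on $\c{reliable}$; no regularity of $u$ or $z$ and no smallness of $\TT_0$ is needed.

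With the four axioms at hand, the second assertion is immediate: the mesh-refinement strategies from~\cite{eps65,stevenson:nvb} satisfy the properties~\eqref{eq:mesh-sons}--\eqref{eq:mesh-overlay} of Section~\ref{section:finemesh}, so Theorem~\ref{theorem:linear} (linear convergence) and Theorems~\ref{theorem:optimal},~\ref{theorem:optimal:mod},~\ref{theorem:optimal:bet} (optimal rates for the estimator product) apply directly to $\eta_{u,\ell}\eta_{z,\ell}$, which by~\eqref{eq:abem:estimator} dominates $|g(u)-g(U_\ell)|$.
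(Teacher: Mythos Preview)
Your proposal is correct and follows essentially the same route as the paper: stability~\eqref{ass:stable}, reduction~\eqref{ass:reduction}, and discrete reliability~\eqref{ass:reliable} are taken from~\cite{fkmp} (with~\cite{part1} covering the general polynomial degree $p\ge0$), and quasi-orthogonality~\eqref{ass:orthogonal} is obtained from symmetry of $a(\cdot,\cdot)$ via Remark~\ref{remark:laxmilgram}(ii). Your write-up simply unpacks these citations in more detail; the only minor inaccuracy is the parenthetical ``prior partial results in~\cite{part1,part2}'', since in the paper's chronology~\cite{fkmp} treats the lowest-order case and~\cite{part1} extends to general $p$, not the other way around.
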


\begin{proof}
The assumptions~\eqref{ass:stable}--\eqref{ass:reduction} and~\eqref{ass:reliable} are proved in~\cite[Proposition~4.2, Proposition~5.3]{fkmp} for the lowest-order case. The general case is proved in~\cite{part1}.
The quasi-orthogonality~\eqref{ass:orthogonal} follows from symmetry of $a(\cdot,\cdot)$ and~\eqref{ass:reliable}; see Remark~\ref{remark:laxmilgram}.
\end{proof}

\begin{figure}
\psfrag{z0}{\tiny$z_0$}
\psfrag{x}{\tiny$x$}
\psfrag{y}{\tiny$y$}
\psfrag{dual}{\scalebox{.5}{dual}}
\psfrag{primal}{\scalebox{.5}{primal}}
\psfrag{parameter domain}[cc][cc]{\tiny parameter domain $0\leq s \leq 2$}
\psfrag{nE = 279}[cc][cc]{\tiny Algorithm~\ref{algorithm:mod}, $\#\TT_{20}=279$}
 \includegraphics[scale=0.4]{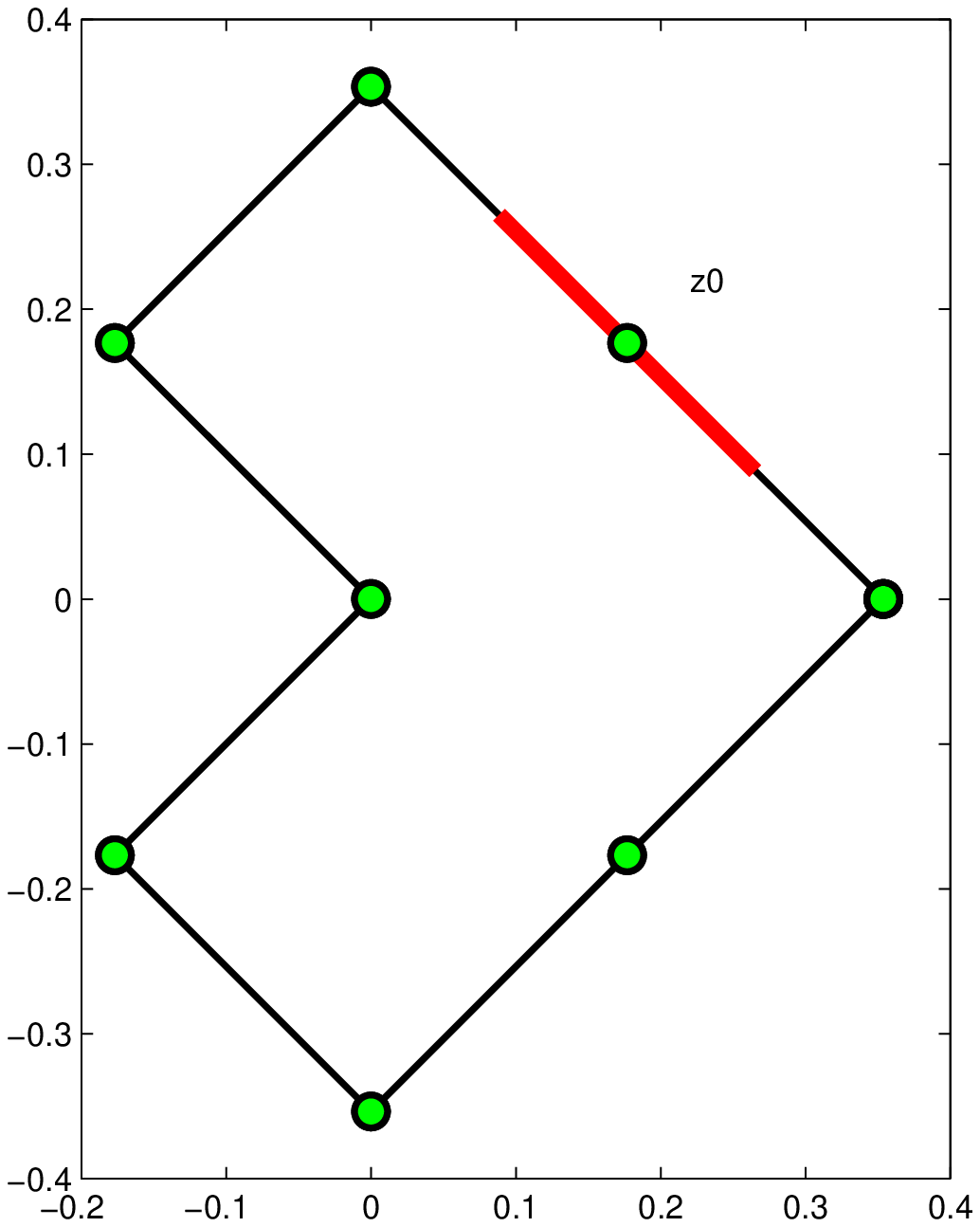}
  \includegraphics[scale=0.4]{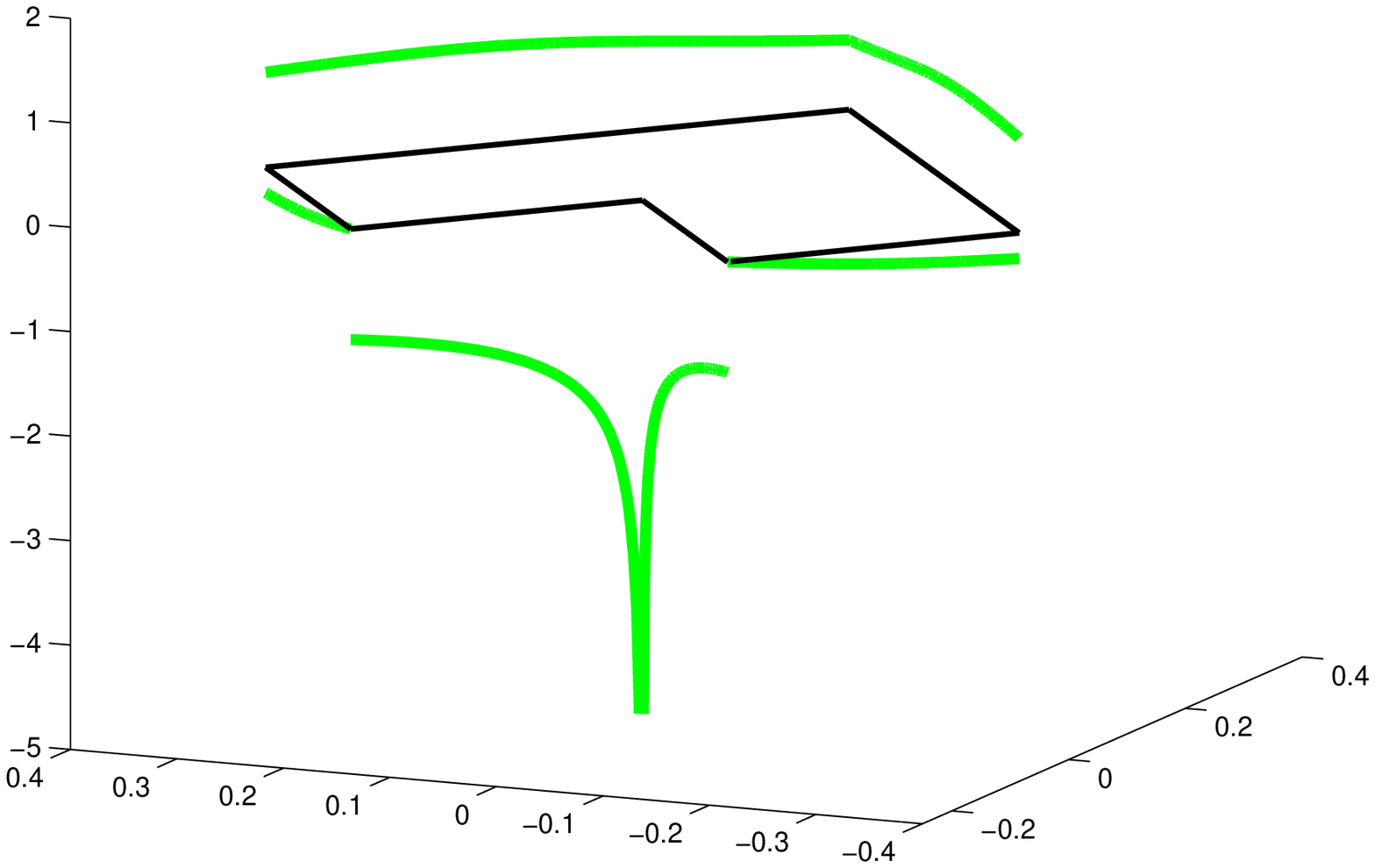}\vspace{2mm}
   \includegraphics[scale=0.4]{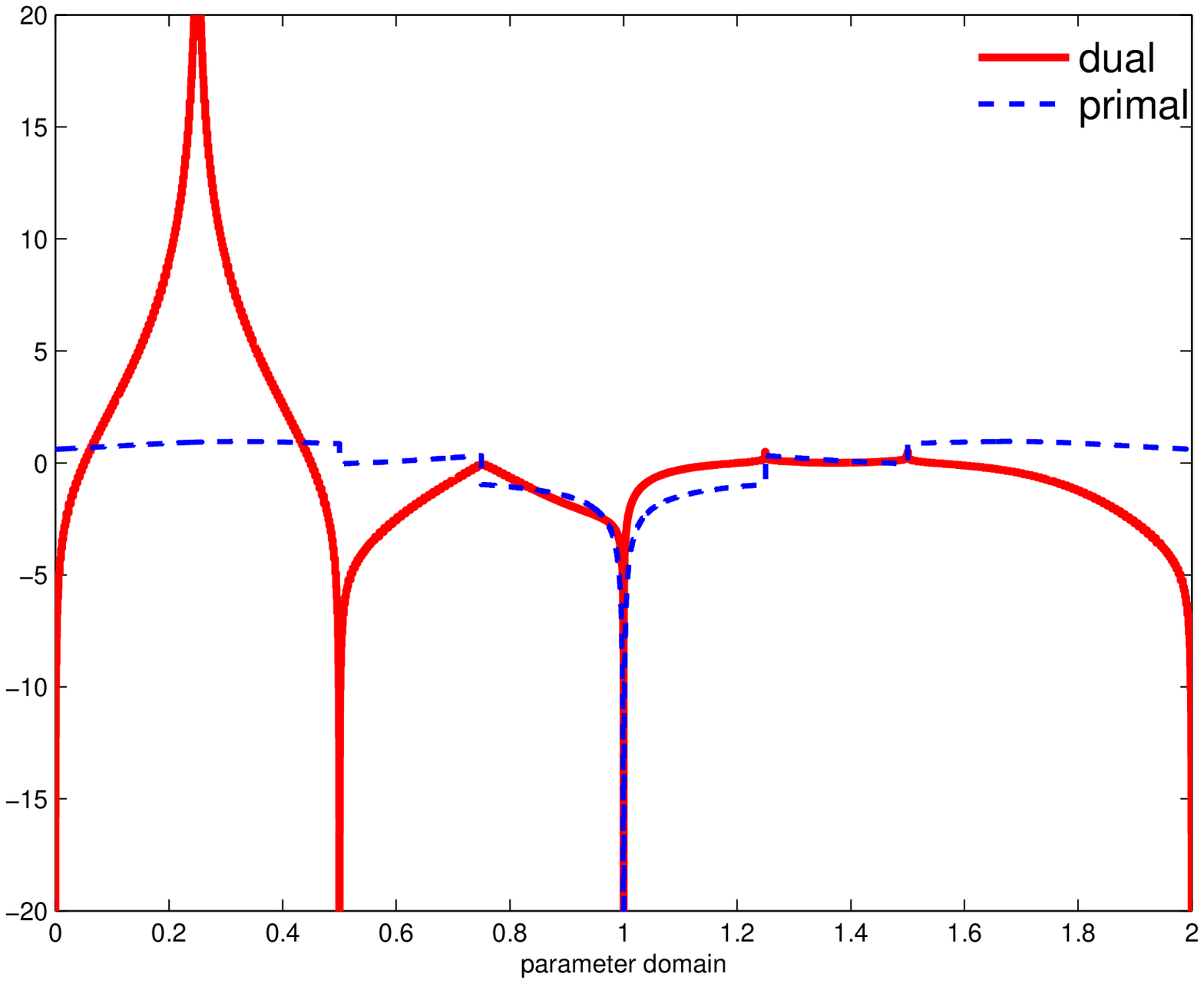}
    \includegraphics[scale=0.4]{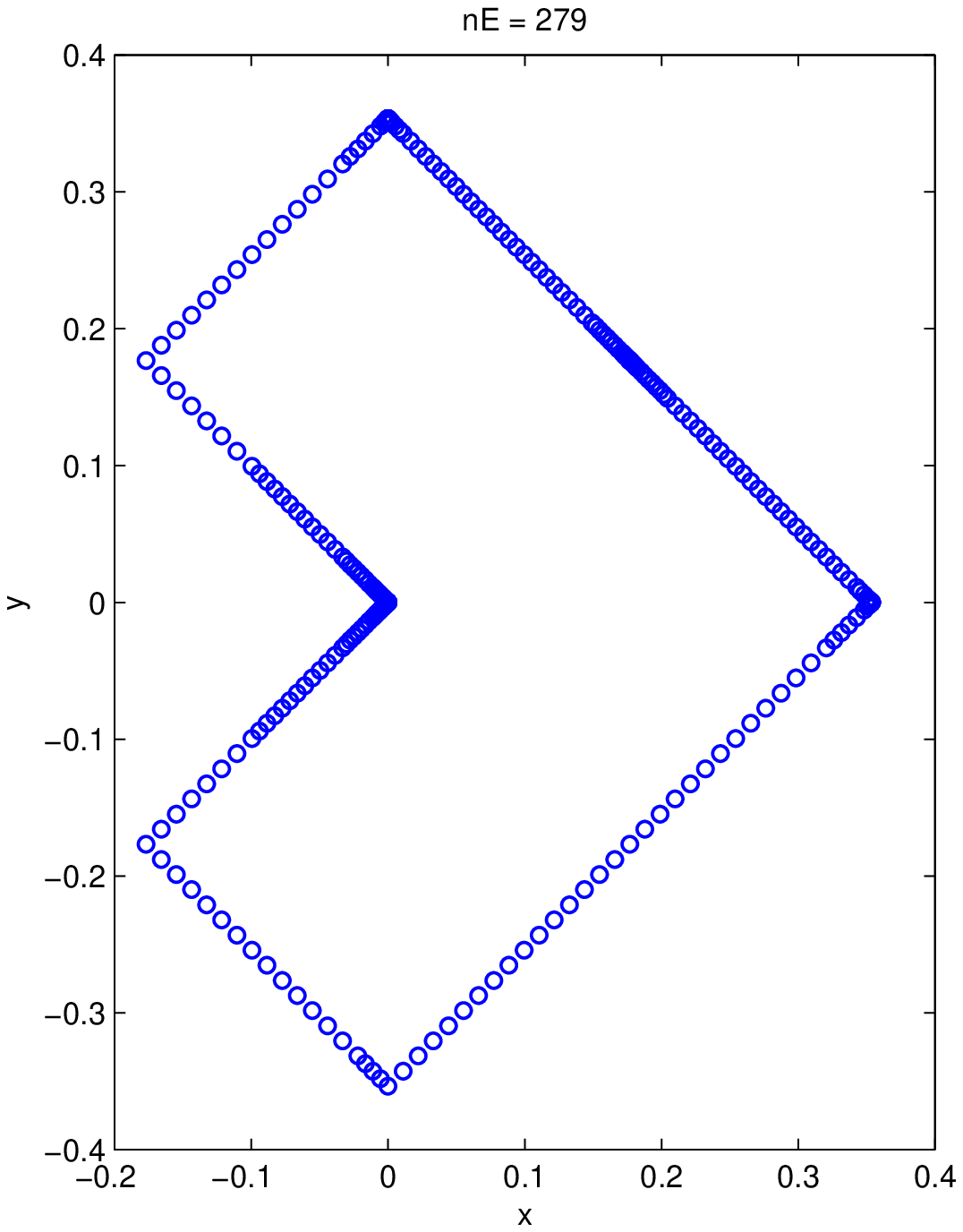}
 \caption{Example from Section~\ref{section:ex:conf} with conforming weight: Domain $\Omega$ with initial triangulation $\TT_0$ (upper left), exact solution $u$ plotted over the boundary (upper right), exact primal and dual solution plotted over the arc-length, where $s=1$ corresponds to the reentrant corner and $s=0.25$ corresponds to $z_0$ (lower left), and adaptive mesh with $\#\TT_{20} = 279$ elements generated by Algorithm~\ref{algorithm:mod} with $\theta=0.5$ (lower right).} 
 \label{fig:bemdom}
\end{figure}

\begin{figure}
\psfrag{0.1}{\scalebox{.5}{$0.1$}}
\psfrag{0.2}{\scalebox{.5}{$0.2$}}
\psfrag{0.3}{\scalebox{.5}{$0.3$}}
\psfrag{0.4}{\scalebox{.5}{$0.4$}}
\psfrag{0.5}{\scalebox{.5}{$0.5$}}
\psfrag{0.6}{\scalebox{.5}{$0.6$}}
\psfrag{0.7}{\scalebox{.5}{$0.7$}}
\psfrag{0.8}{\scalebox{.5}{$0.8$}}
\psfrag{0.9}{\scalebox{.5}{$0.9$}}
\psfrag{1}{\scalebox{.5}{$1.0$}}
\psfrag{Algorithm A}[cc][cc]{\tiny Algorithm A}
\psfrag{Algorithm B}[cc][cc]{\tiny Algorithm B}
\psfrag{Algorithm C}[cc][cc]{\tiny Algorithm C}
\psfrag{estu}{\scalebox{.5}{$\eta_u$}}
\psfrag{estz}{\scalebox{.5}{$\eta_z$}}
\psfrag{estz*estu}{\scalebox{.6}{$\eta_u\eta_z$}}
\psfrag{o3}{\scalebox{.5}{$\OO(N^{-3})$}}
\psfrag{o32}{\scalebox{.5}{$\OO(N^{-3/2})$}}
\psfrag{r3}{\scalebox{.5}{$\OO(N^{-3})$}}
\psfrag{r12}{\scalebox{.5}{$\OO(N^{-1/2})$}}
\psfrag{r32}{\scalebox{.5}{$\OO(N^{-3/2})$}}
\psfrag{r43}{\scalebox{.5}{$\OO(N^{-4/3})$}}
\psfrag{err}{\scalebox{.5}{error}}
\psfrag{error}[cc]{\tiny error resp.\ estimators}
\psfrag{nE}[cc]{\tiny number of elements $N=\#\TT_\ell$}
\psfrag{error}[cc]{\tiny error resp.\ estimators}%
 \includegraphics[scale=0.4]{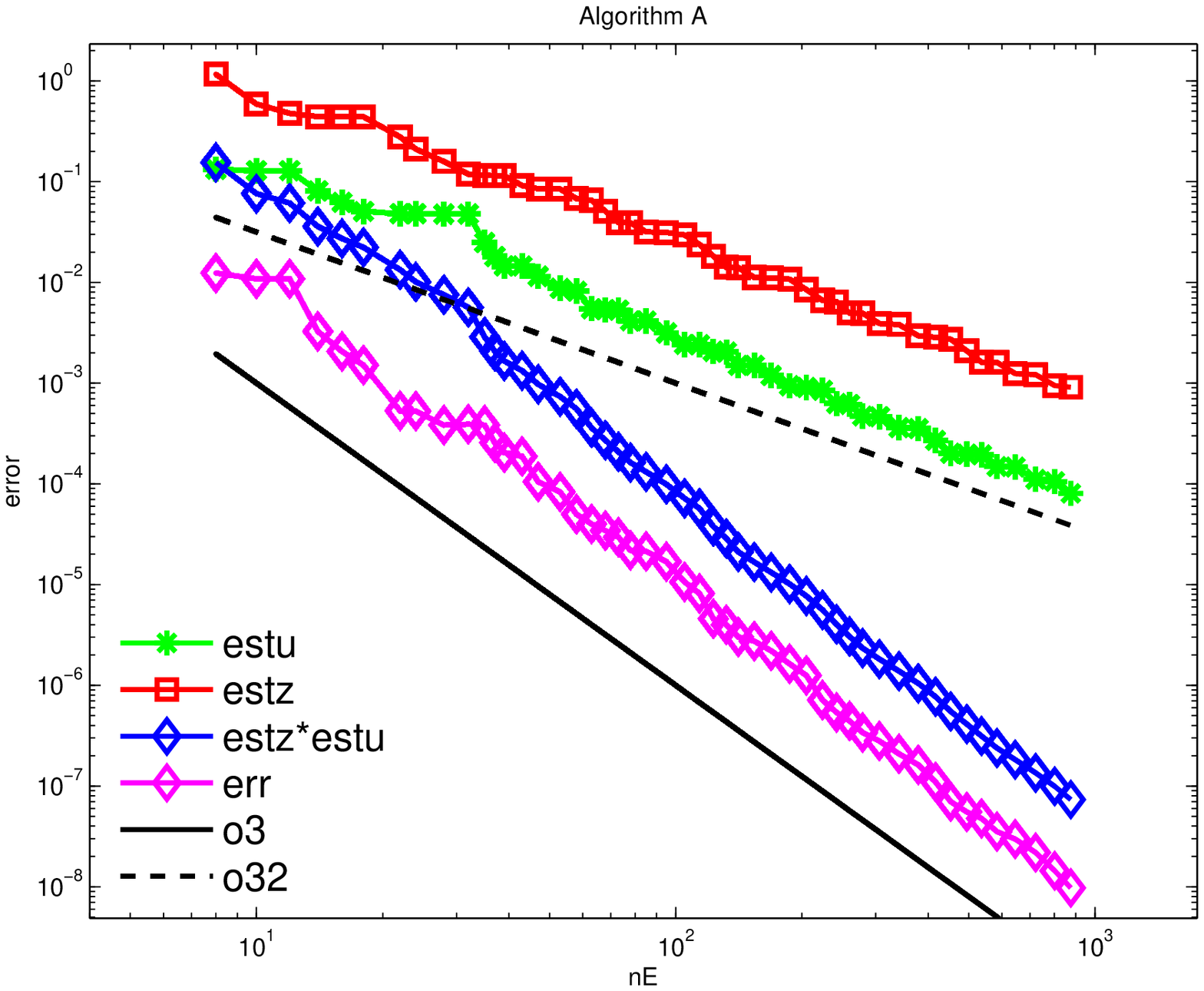} 
\psfrag{error}[cc]{\tiny error}%
 \includegraphics[scale=0.4]{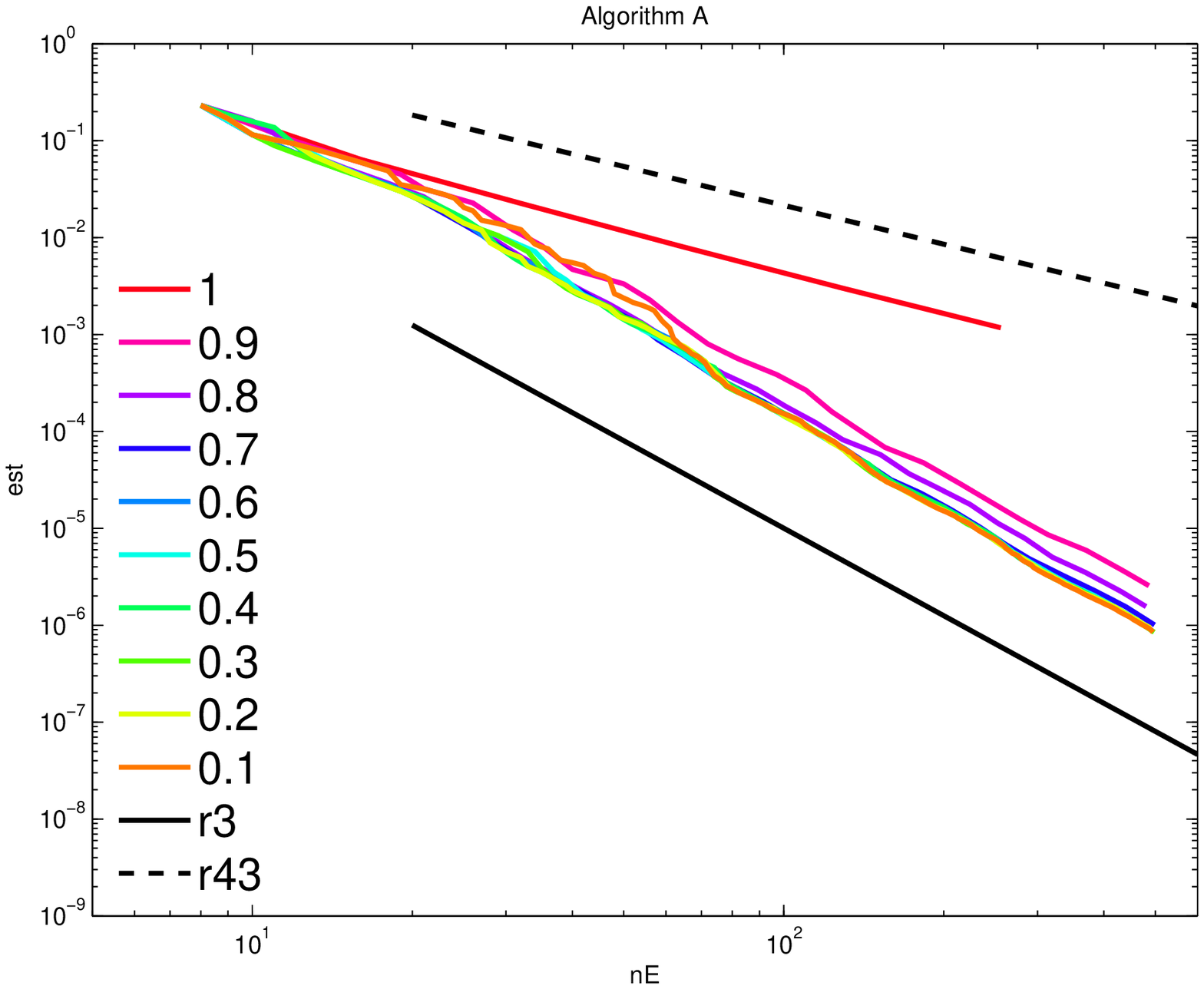}\vspace{2mm}
\psfrag{error}[cc]{\tiny error resp.\ estimators}%
  \includegraphics[scale=0.4]{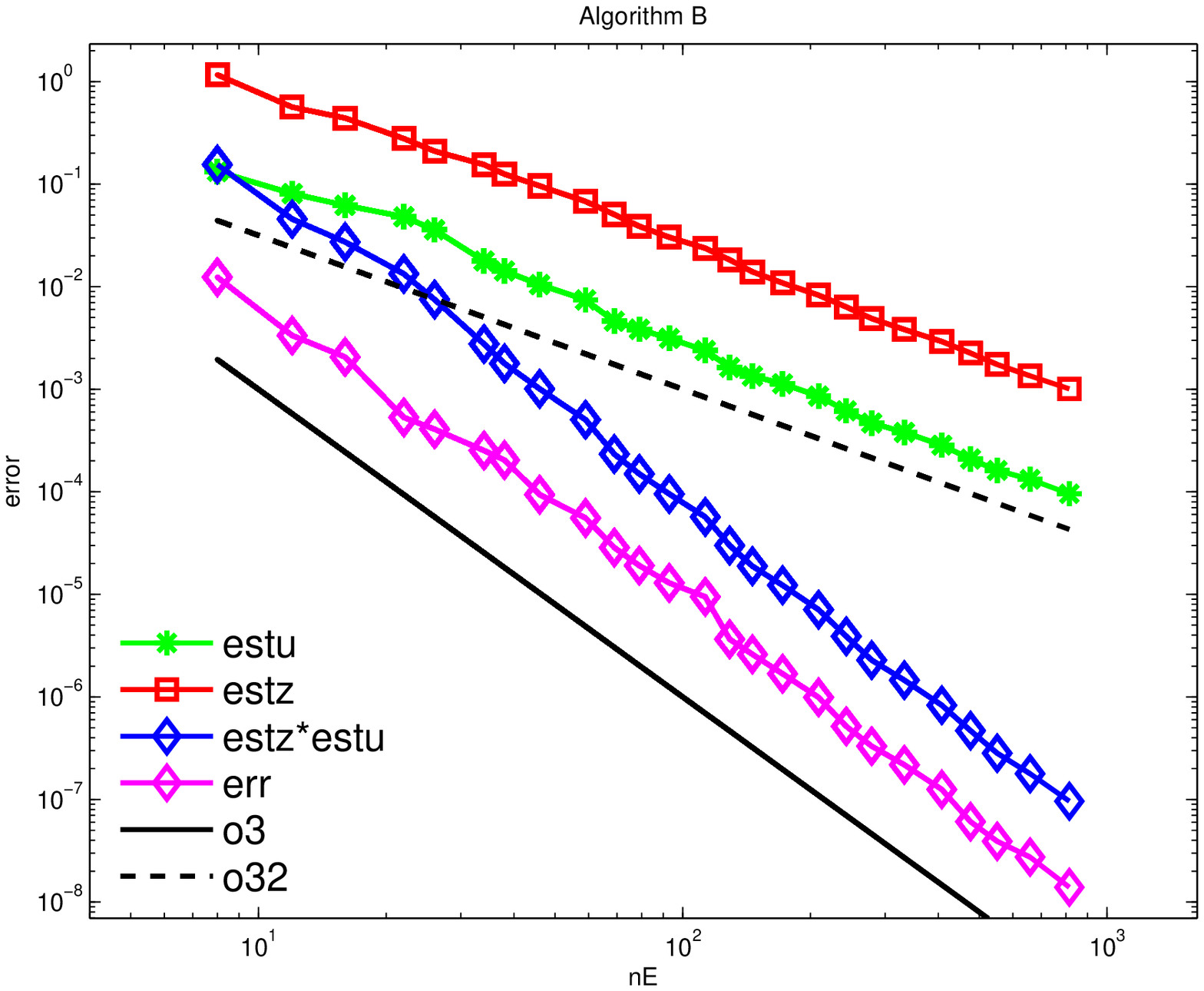} 
\psfrag{error}[cc]{\tiny error}%
  \includegraphics[scale=0.4]{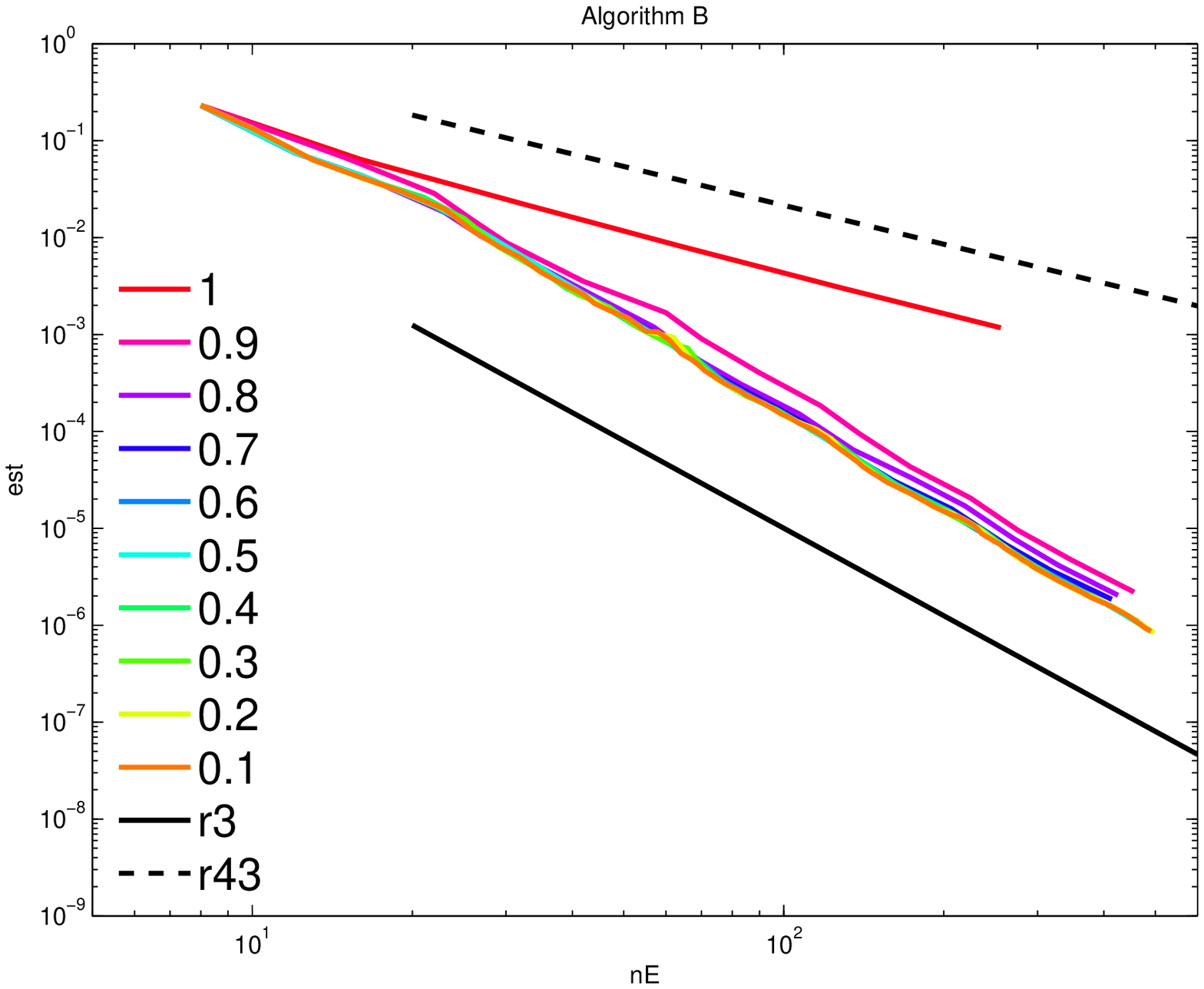}\vspace{2mm}
\psfrag{error}[cc]{\tiny error resp.\ estimators}%
  \includegraphics[scale=0.4]{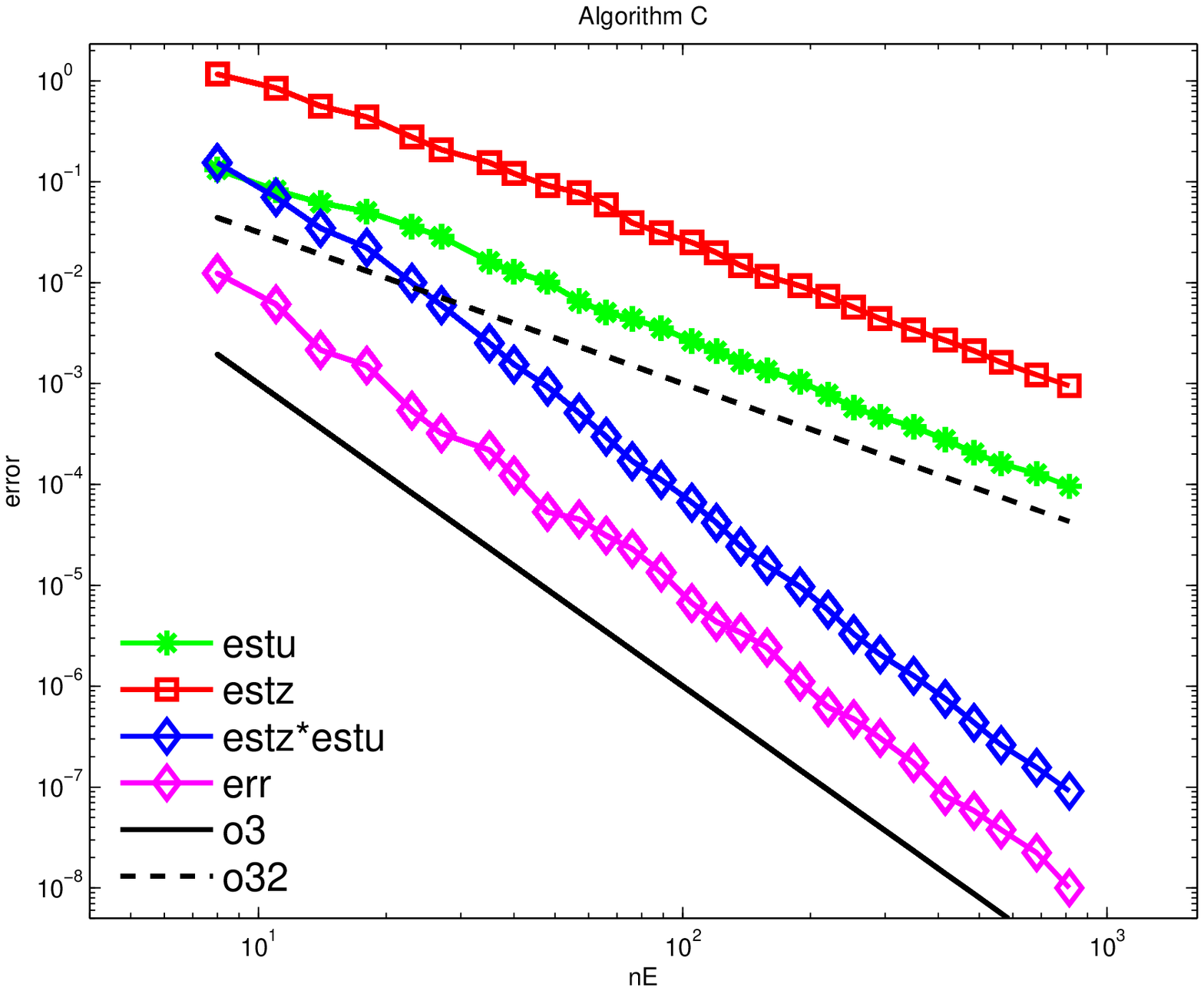} 
\psfrag{error}[cc]{\tiny error}%
  \includegraphics[scale=0.4]{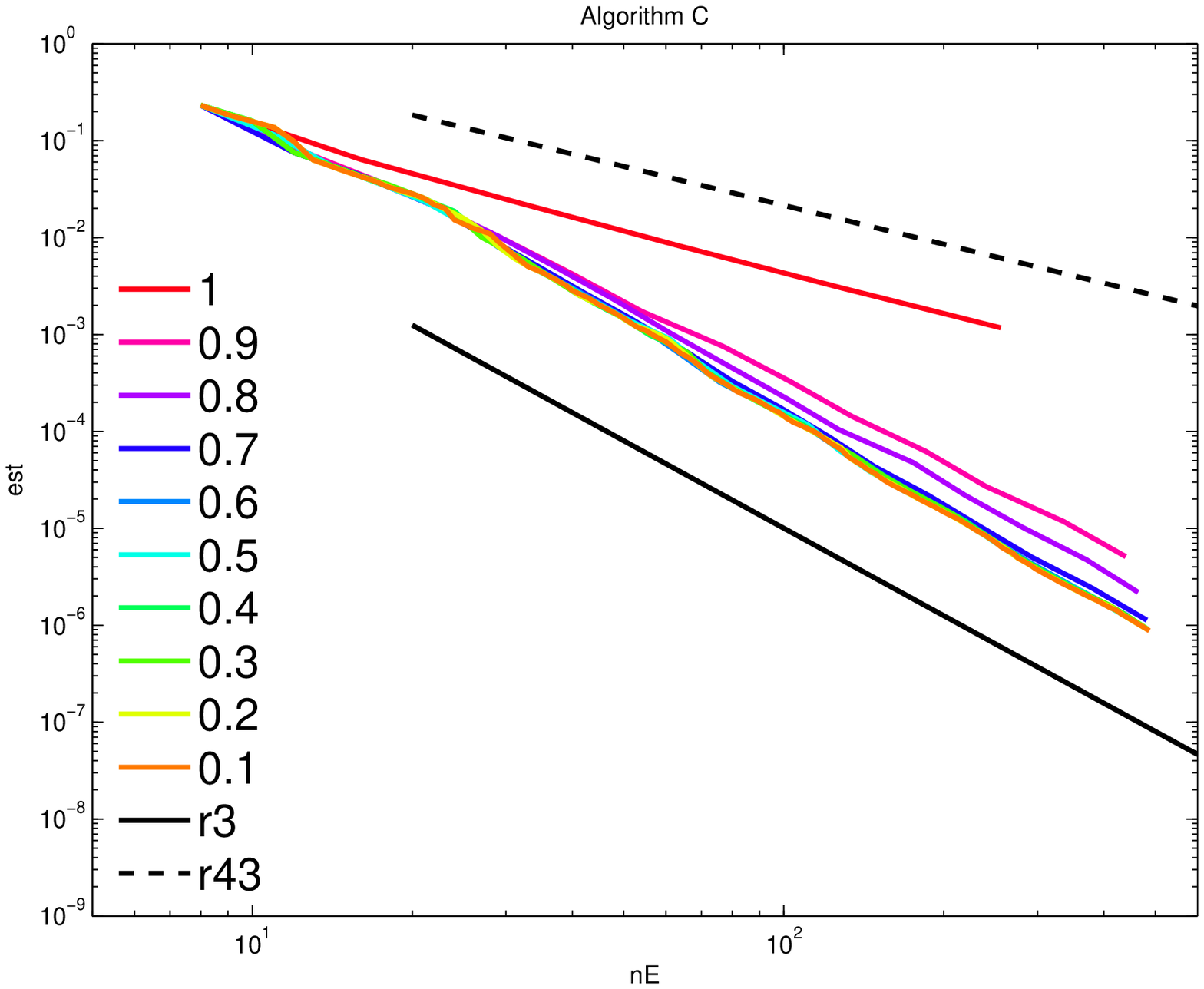}
 \caption{Example from Section~\ref{section:ex:conf} with conforming weight: 
 Over the number of elements $\#\TT_\ell$, we plot the estimators and the goal error $|g(u)-g(U_\ell)|$ as output of Algorithms~\ref{algorithm}--\ref{algorithm:bet} for $\theta=0.5$ (left) resp.\ the goal error $|g(u)-g(U_\ell)|$ for various $\theta\in\{0.1,\dots,0.9\}$ as well as for $\theta=1.0$ which corresponds to uniform refinement (right).}
 \label{fig:alg}
\end{figure}

\begin{figure}
\psfrag{A}{\scalebox{.5}{Algorithm A}}
\psfrag{B}{\scalebox{.5}{Algorithm B}}
\psfrag{C}{\scalebox{.5}{Algorithm C}}
\psfrag{error}[cc]{\tiny error}
\psfrag{theta}[cc]{\tiny parameter $\theta$}
\psfrag{ncum}[cc]{\tiny $N_{\rm cum}$}
 \includegraphics[scale=0.4]{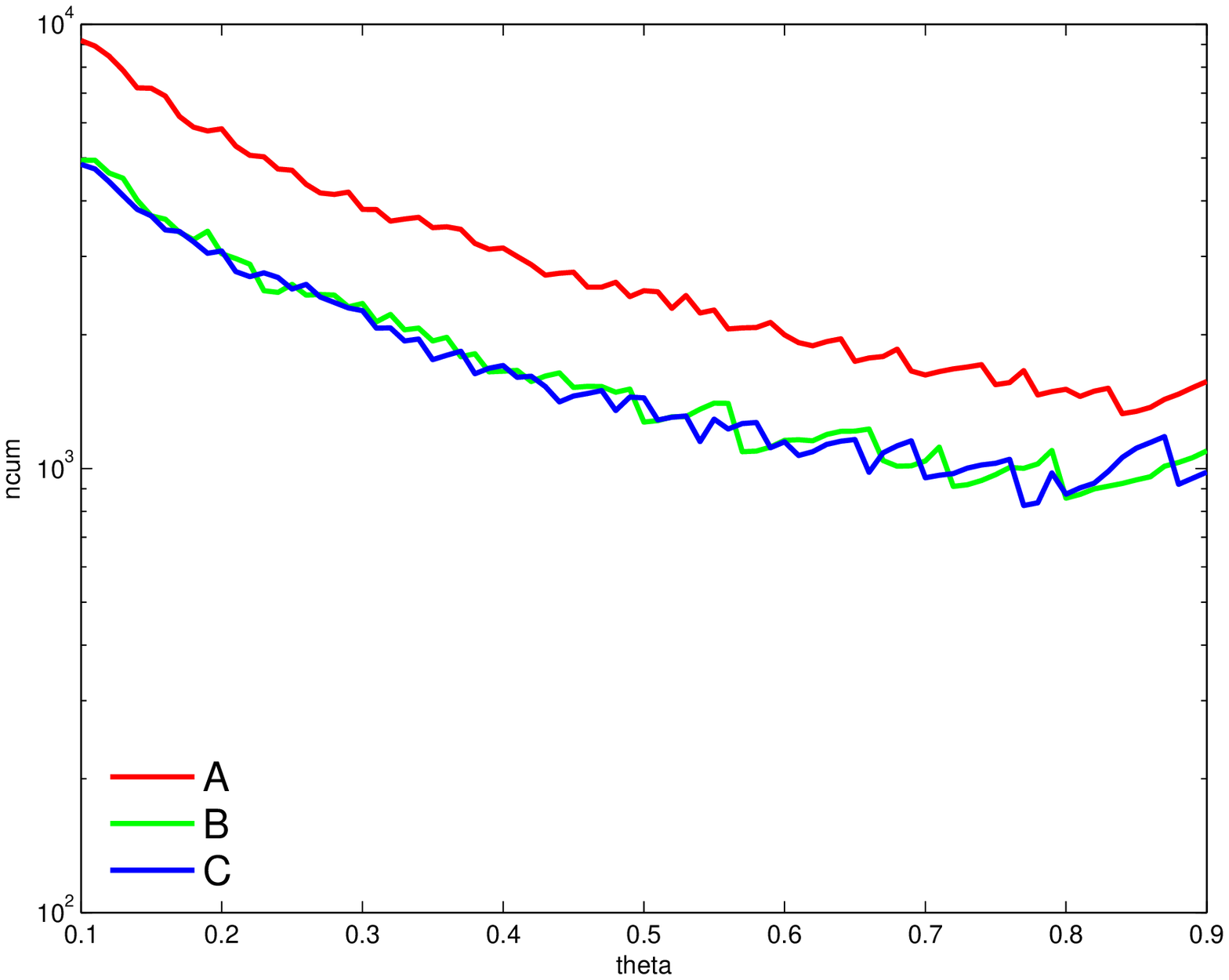}
 \caption{Example from Section~\ref{section:ex:conf} with conforming weight: Over different values of $\theta$, we plot the cumulative number of elements $N_{\rm cum}:=\sum_{k=0}^\ell\#\TT_k$ necessary for Algorithms~\ref{algorithm}--\ref{algorithm:bet} to achieve an error accuracy $|g(u)-g(U_\ell)|<10^{-6}$.}
 \label{fig:alg2}
\end{figure}

\subsection{Numerical experiment with conforming weight function}\label{section:ex:conf}
Let $\Omega\subset \R^2$ denote the $L$-shaped domain shown in Figure~\ref{fig:bemdom} which satisfies $\diam(\Omega)=1/\sqrt{2}$. On the boundary $\Gamma:=\partial\Omega$, consider the function
$\phi(x):=r^{2/3}\cos(2\alpha/3)$ for polar coordinates $r(x),\alpha(x)$ with origin $(0,0)$. Consider the model problem~\eqref{ex:bem} with
\begin{align*}
 F:=(\dlp + 1/2)\phi,
\end{align*}
where $\dlp: H^{1/2+s}(\Gamma)\to H^{1/2+s}(\Gamma)$, for all $-1/2\le s\le1/2$, denotes the double-layer potential, which is formally defined as ($n_y$ denotes the outer unit normal on $\Gamma$ at $y$)
\begin{align*}
 \dlp \phi(x):=\int_\Gamma \frac{(x-y)\cdot n_y}{|x-y|^2}\phi(y)\,dy.
\end{align*}
For these choices, it is known~\cite{hw,mclean,ss} that~\eqref{ex:bem} is equivalent to the Laplace-Dirichlet problem
\begin{align*}
 \Delta P = 0 \text{ in $\Omega$ subject to Dirichlet boundary conditions }P = \phi\text{ on }\Gamma,
\end{align*}
and the exact solution of~\eqref{ex:bem} is the normal derivative of $P$, 
\begin{align*}
 u(x)=\partial_{n_x}P(x)\quad\text{for all }x\in\Gamma.
\end{align*}

We define the initial mesh $\TT_0$ as shown in Figure~\ref{fig:bemdom}.
As weight function $\Lambda\in \SS^1(\TT_0)$, we consider the hat function defined by $\Lambda(z_0)=1$ and $\Lambda(z)=0$ for all
other nodes $z$ of $\TT_0$ (the node $z_0$ is indicated in Figure~\ref{fig:bemdom}).

For the lowest-order case $p=0$ and $\theta=0.5$ in Algorithm~\ref{algorithm}--\ref{algorithm:bet},
Figure~\ref{fig:alg} shows the convergence rates of the error estimators $\eta_u$, $\eta_z$, their product $\eta_u\eta_z$, and the error in the goal functional $|g(u)-g(U_\ell)|$. Moreover, 
we compare the convergence rate of the error in the goal functional for different values of $\theta\in\{0.1,\dots,0.9\}$. For either choice of $\theta$ and all adaptive algorithms, we observe the optimal convergence rate $(\#\TT_\ell)^{-3/2}$ for the respective error estimators as well as $(\#\TT_\ell)^{-3}$ for the error in the goal functional. 

For different values of $\theta\in\{0.1,\dots,0.9\}$, Figure~\ref{fig:alg2} plots the cumulative number of elements $N_{\rm cum}:=\sum_{k=0}^\ell\#\TT_k$ necessary to reach a given error tolerance $10^{-6}$. We observe that for all three algorithms a large $\theta\approx 0.8$ seems to be optimal. Moreover, Algorithms~\ref{algorithm:mod}--\ref{algorithm:bet} show comparable performance which is clearly superior to that of Algorithm~\ref{algorithm} in the whole range of $\theta$.

\begin{figure}
\psfrag{Algorithm A}[cc][cc]{\tiny Algorithm A}
\psfrag{Algorithm B}[cc][cc]{\tiny Algorithm B}
\psfrag{Algorithm C}[cc][cc]{\tiny Algorithm C}
\psfrag{estu}{\scalebox{.5}{$\eta_u$}}
\psfrag{estz}{\scalebox{.5}{$\eta_z$}}
\psfrag{estz*estu}{\scalebox{.6}{$\eta_u\eta_z$}}
\psfrag{o3}{\scalebox{.5}{$\OO(N^{-3})$}}
\psfrag{o32}{\scalebox{.5}{$\OO(N^{-3/2})$}}
\psfrag{err}{\scalebox{.5}{error}}
\psfrag{error}[cc]{\tiny error resp.\ estimators}
\psfrag{nE}[cc]{\tiny number of elements $N=\#\TT_\ell$}
 \includegraphics[scale=0.4]{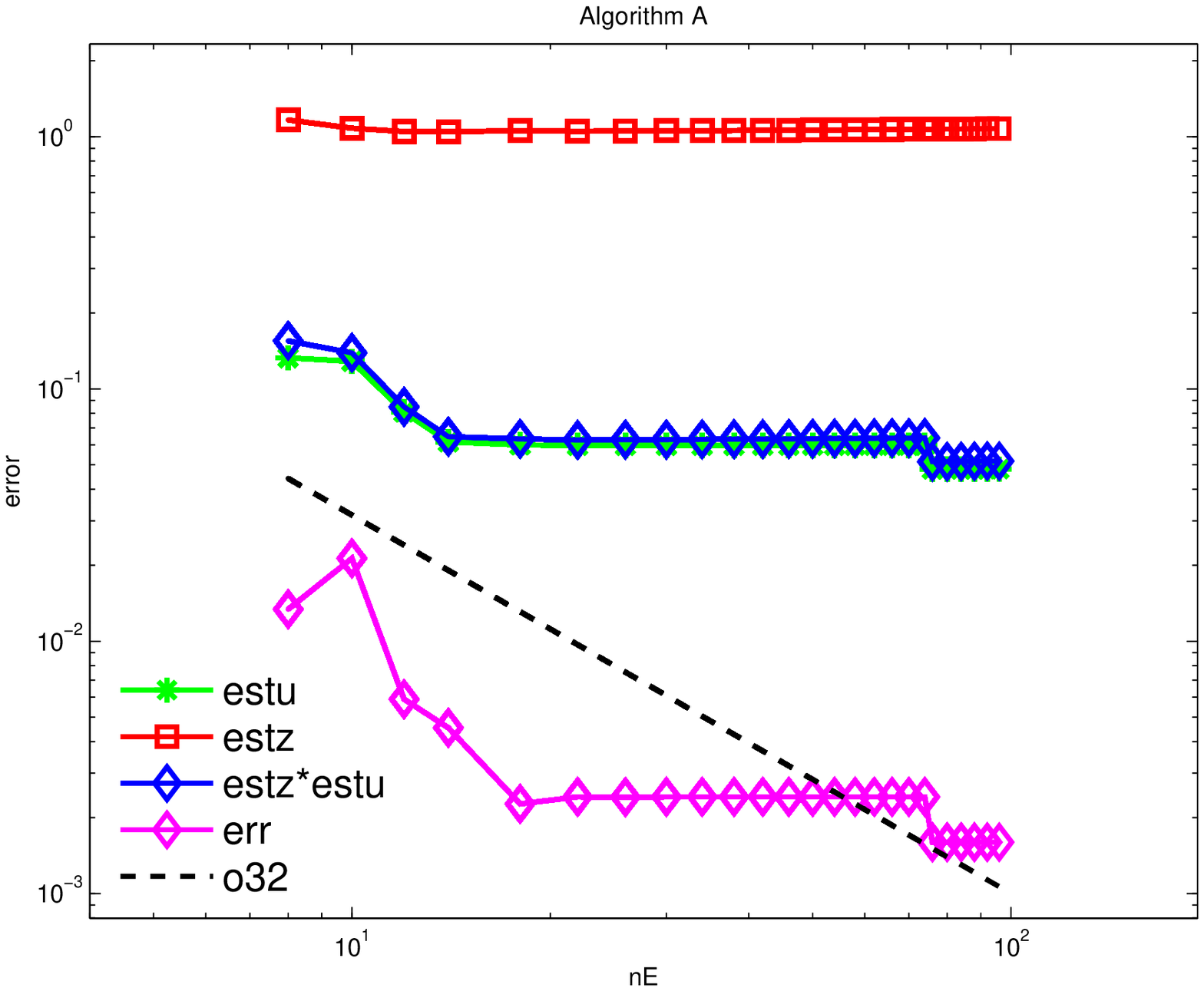}
 \caption{Example from Section~\ref{section:ex:nonconf} with non-conforming weight: Counterexample to show that rescaling is necessary. We plot the output of Algorithm~\ref{algorithm} for $\theta=0.5$ without rescaling of the estimators, i.e., $\eta_u=\eta_u^\eps$ and $\eta_z=\eta_u^\eps$ with $\eps=0$. We do not observe convergence at all.}
 \label{fig:counter}
\end{figure}

\begin{figure}
\psfrag{z}{\tiny$z_0$}
\psfrag{x}{\tiny$x$}
\psfrag{y}{\tiny$y$}
\psfrag{dual}{\scalebox{.5}{dual}}
\psfrag{primal}{\scalebox{.5}{primal}}
\psfrag{parameter domain}[cc][cc]{\tiny parameter domain}
\psfrag{nE = 299}[cc][cc]{\tiny Algorithm~\ref{algorithm:mod}, $\#\TT_{25}=299$}
   \includegraphics[scale=0.4]{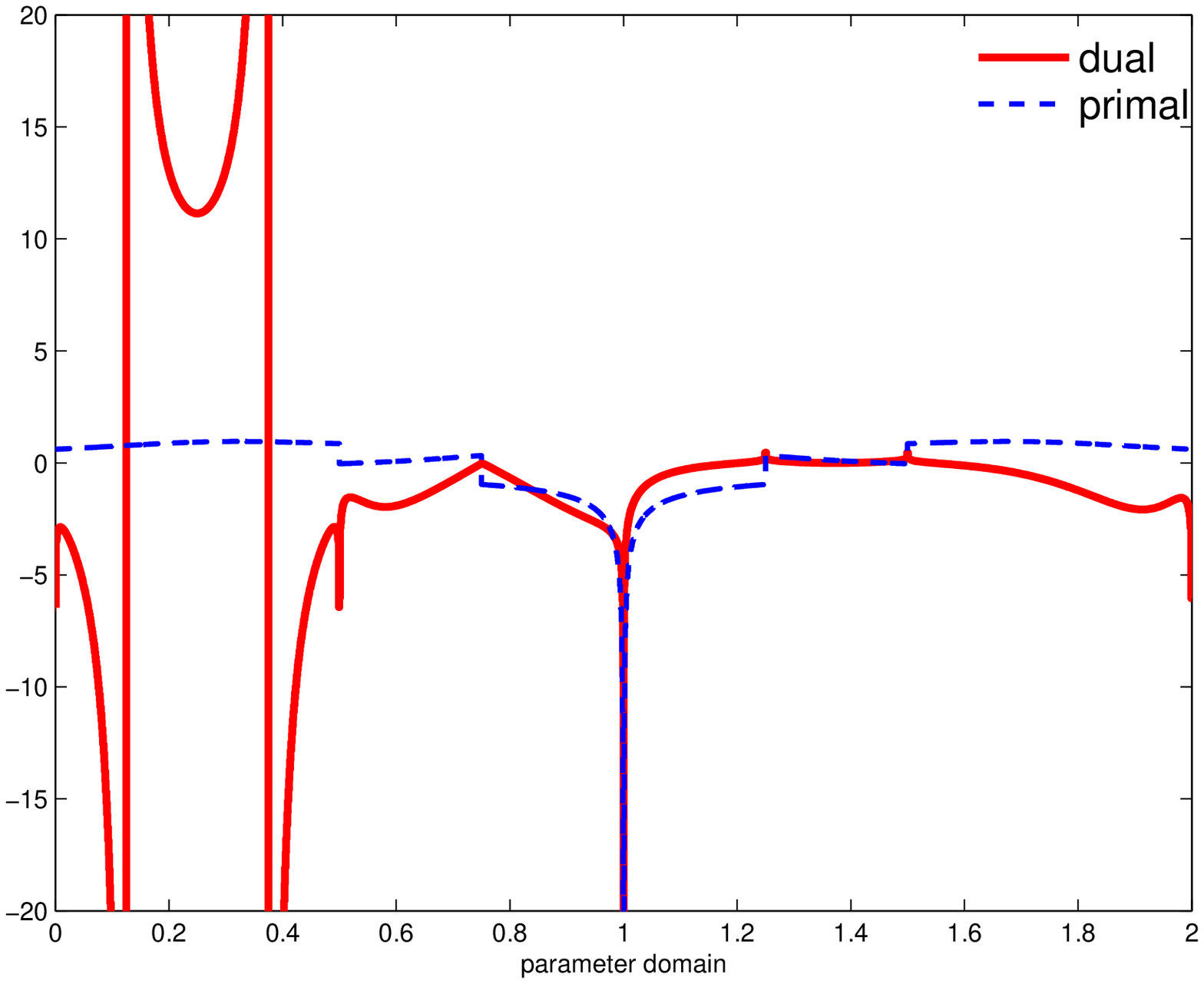}
    \includegraphics[scale=0.4]{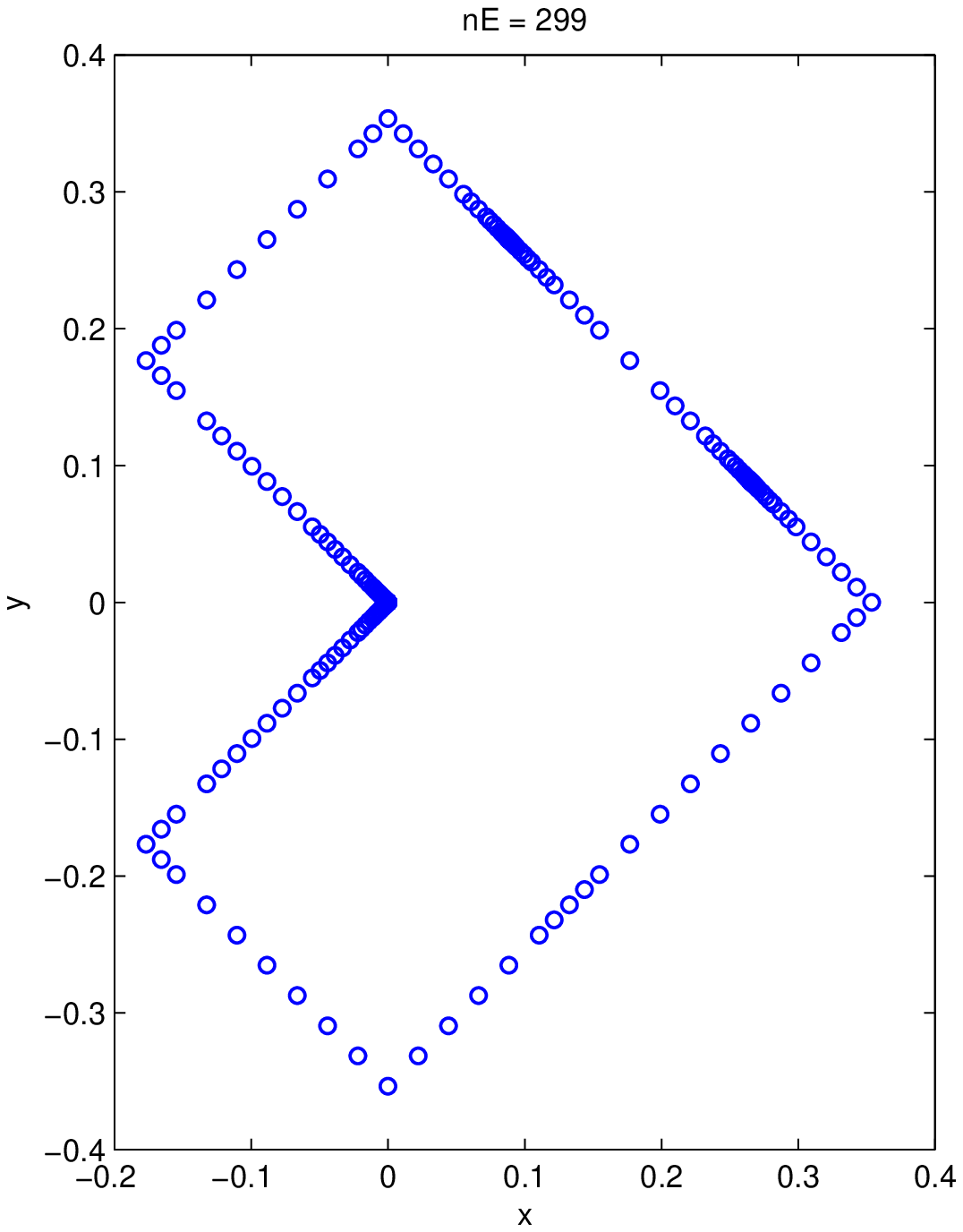}
 \caption{Example from Section~\ref{section:ex:nonconf} with non-conforming weight: Piecewise constant approximation of primal and dual solution plotted over the arc-length (left), and adaptive mesh with $\#\TT_{25} = 299$ elements generated by Algorithm~\ref{algorithm:mod} for $\theta=0.5$ and with the rescaled estimators~\eqref{ex:bem:estimator:primalrescaled} with $\eps=0.3$ (right).}
 \label{fig:bemdom2}
\end{figure}

\begin{figure}
\psfrag{0.1}{\scalebox{.5}{$0.1$}}
\psfrag{0.2}{\scalebox{.5}{$0.2$}}
\psfrag{0.3}{\scalebox{.5}{$0.3$}}
\psfrag{0.4}{\scalebox{.5}{$0.4$}}
\psfrag{0.5}{\scalebox{.5}{$0.5$}}
\psfrag{0.6}{\scalebox{.5}{$0.6$}}
\psfrag{0.7}{\scalebox{.5}{$0.7$}}
\psfrag{0.8}{\scalebox{.5}{$0.8$}}
\psfrag{0.9}{\scalebox{.5}{$0.9$}}
\psfrag{1}{\scalebox{.5}{$1.0$}}
\psfrag{Algorithm A}[c][c]{\tiny Algorithm A}
\psfrag{Algorithm B}[c][c]{\tiny Algorithm B}
\psfrag{Algorithm C}[c][c]{\tiny Algorithm C}
\psfrag{estu}{\scalebox{.5}{$\eta_u^\eps$}}
\psfrag{estz}{\scalebox{.5}{$\eta_z^\eps$}}
\psfrag{estz*estu}{\scalebox{.6}{$\eta_u^\eps\eta_z^\eps$}}
\psfrag{o3}{\scalebox{.5}{$\OO(N^{-3})$}}
\psfrag{o32}{\scalebox{.5}{$\OO(N^{-3/2})$}}
\psfrag{r3}{\scalebox{.5}{$\OO(N^{-3})$}}
\psfrag{r12}{\scalebox{.5}{$\OO(N^{-1/2})$}}
\psfrag{r32}{\scalebox{.5}{$\OO(N^{-3/2})$}}
\psfrag{r43}{\scalebox{.5}{$\OO(N^{-4/3})$}}
\psfrag{err}{\scalebox{.5}{error}}
\psfrag{nE}[cc]{\tiny number of elements $N=\#\TT_\ell$}
\psfrag{error}[cc]{\tiny error resp.\ estimators}
 \includegraphics[scale=0.4]{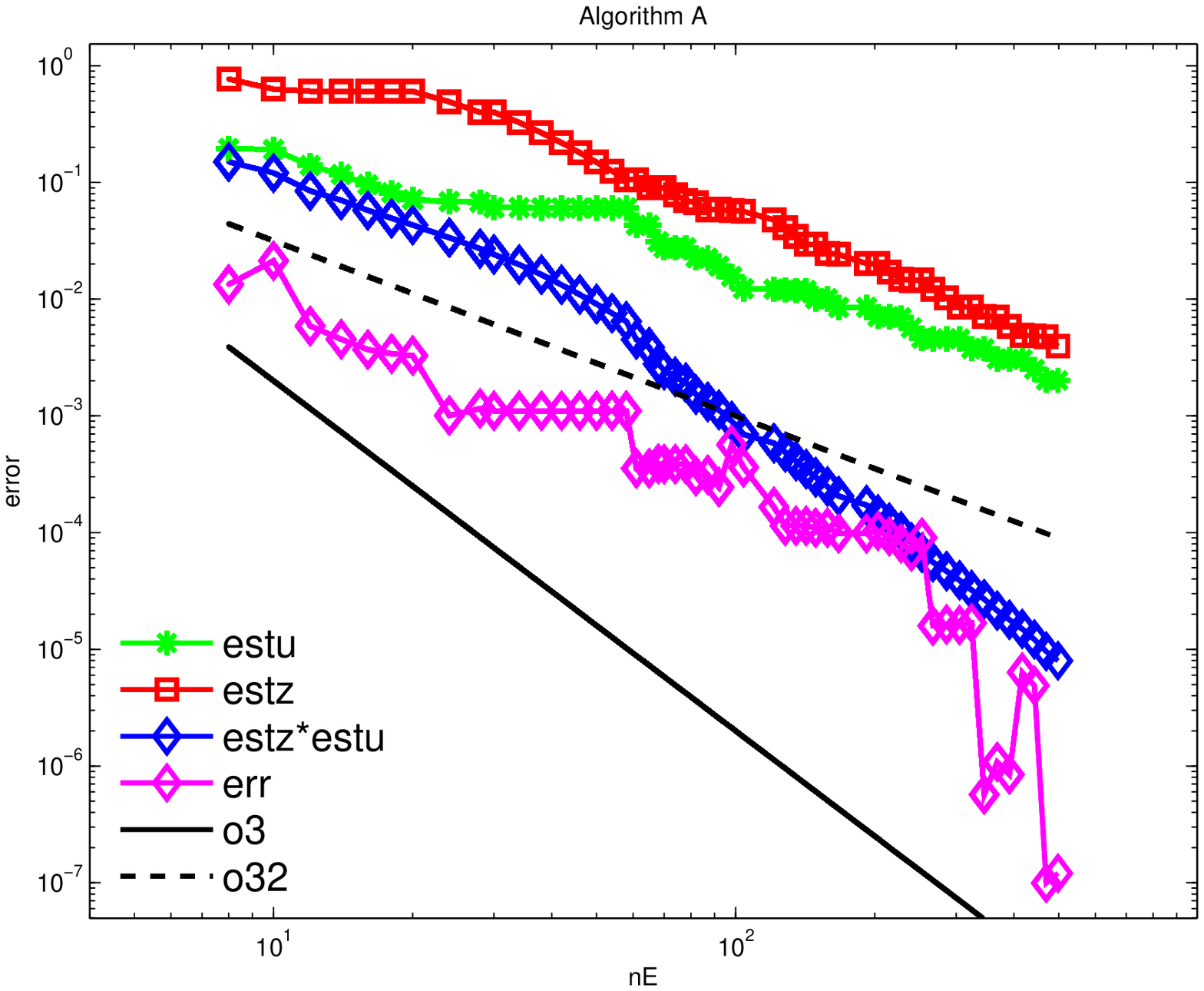} 
\psfrag{error}[cc]{\tiny error}
 \includegraphics[scale=0.4]{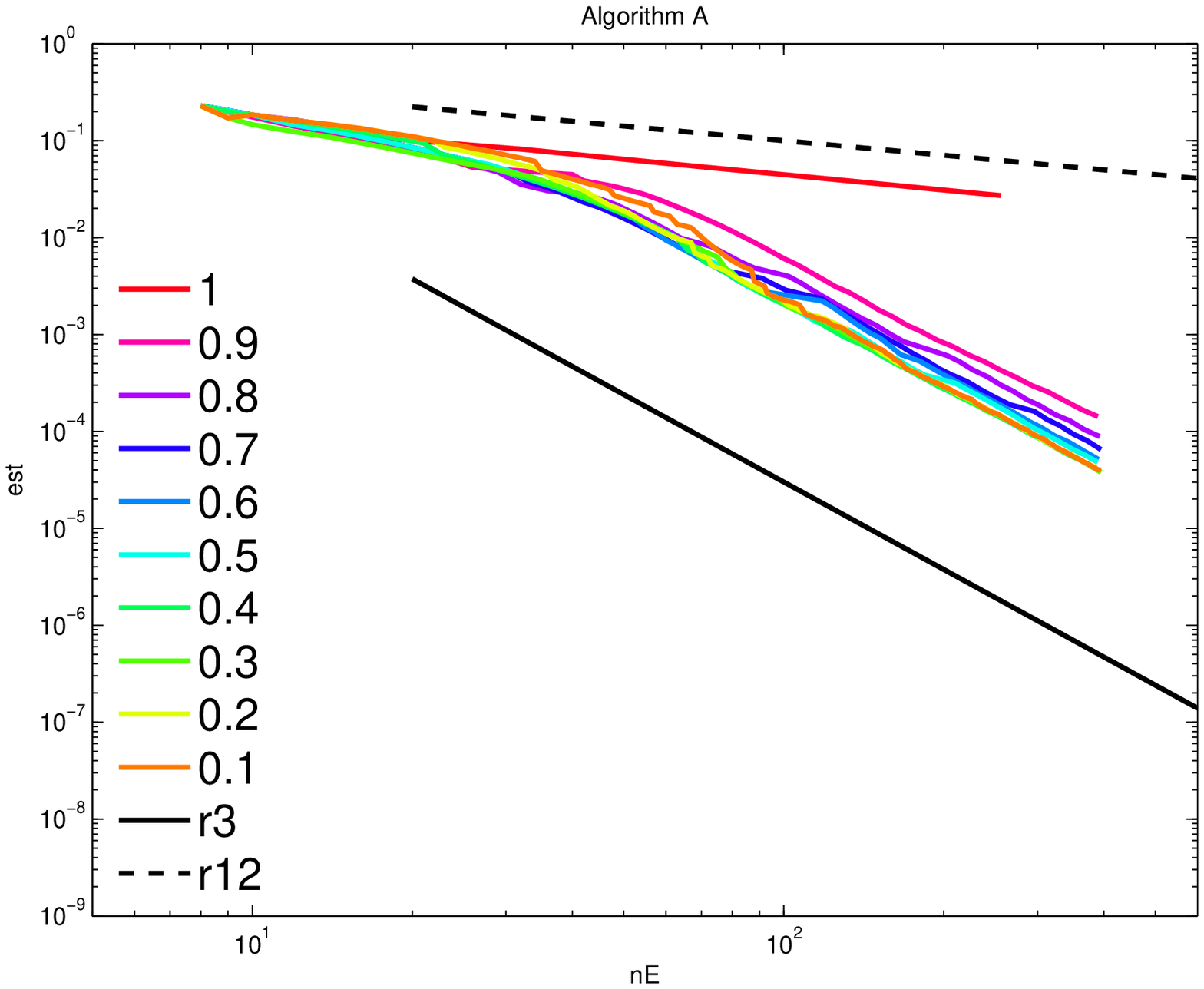}\vspace{2mm}
\psfrag{error}[cc]{\tiny error resp.\ estimators}
  \includegraphics[scale=0.4]{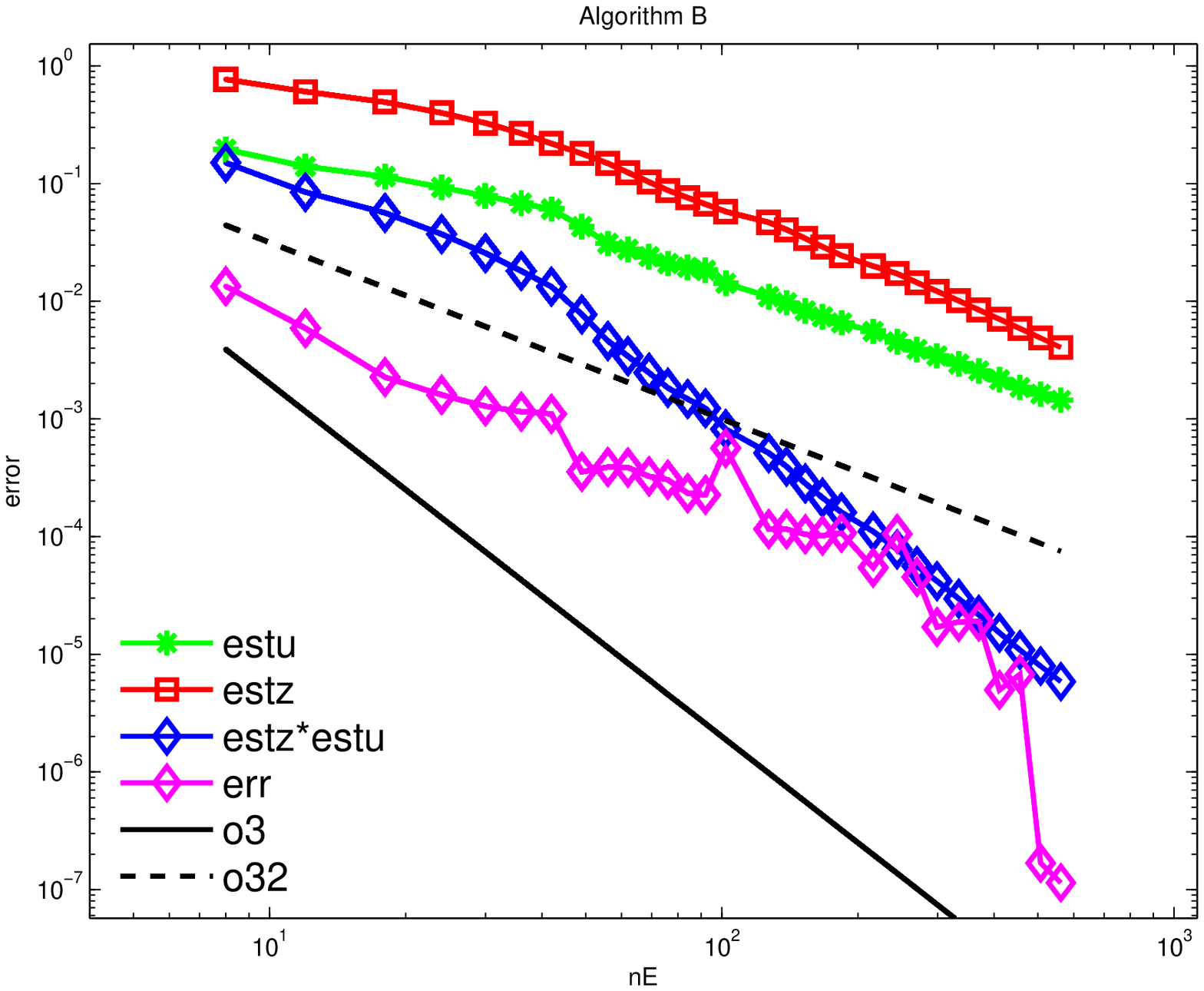} 
\psfrag{error}[cc]{\tiny error}
  \includegraphics[scale=0.4]{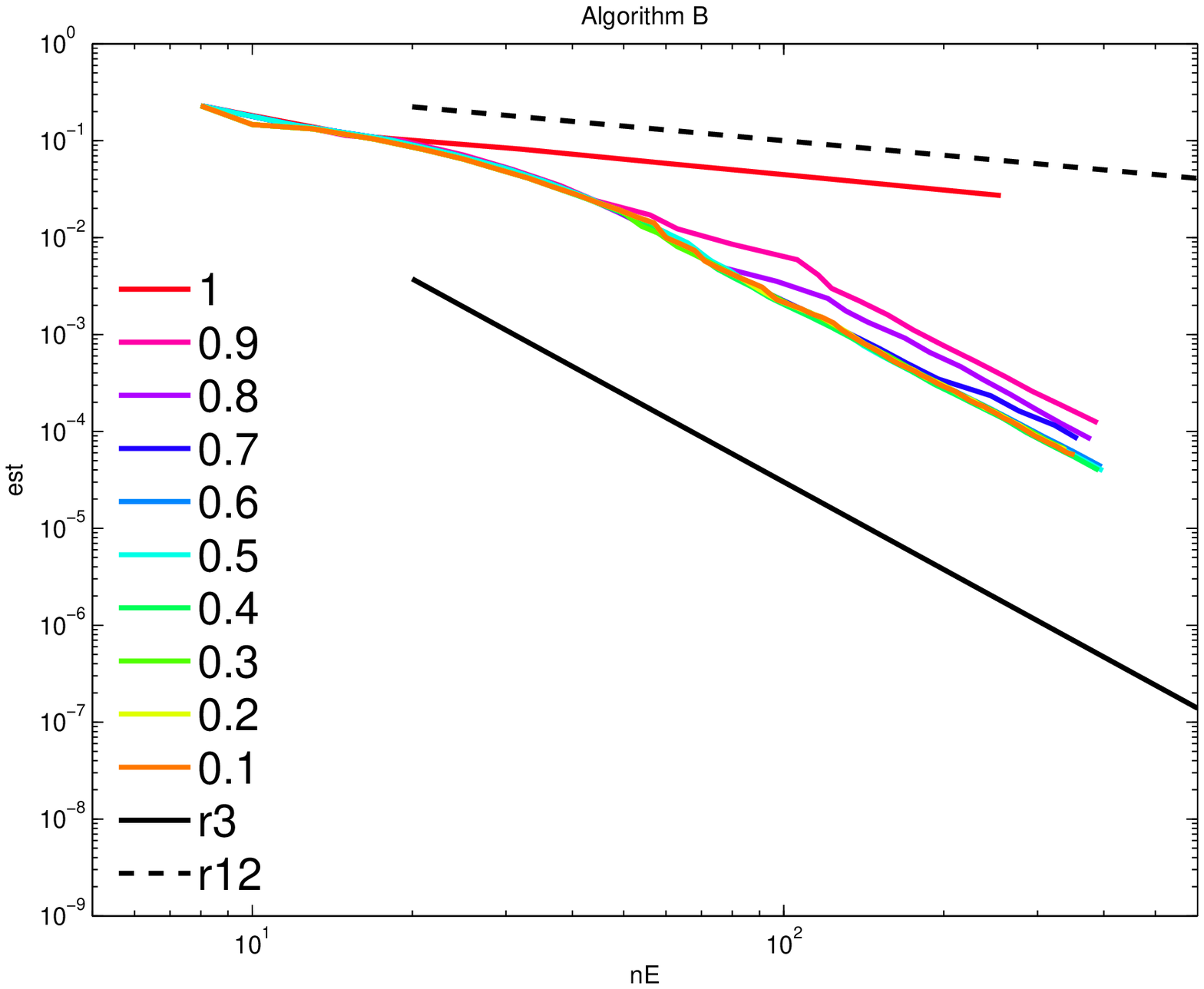}\vspace{2mm}
\psfrag{error}[cc]{\tiny error resp.\ estimators}
  \includegraphics[scale=0.4]{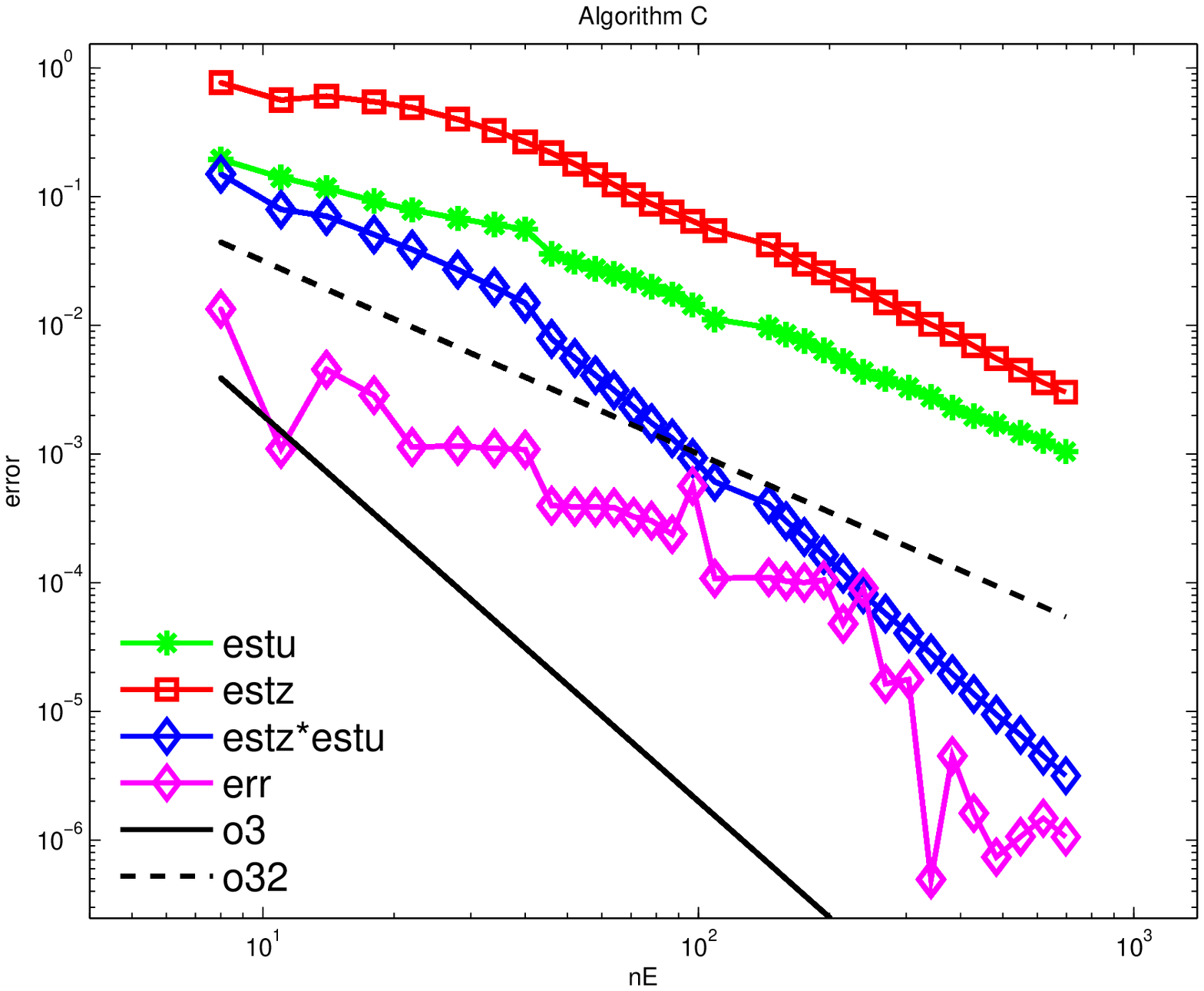} 
\psfrag{error}[cc]{\tiny error}
  \includegraphics[scale=0.4]{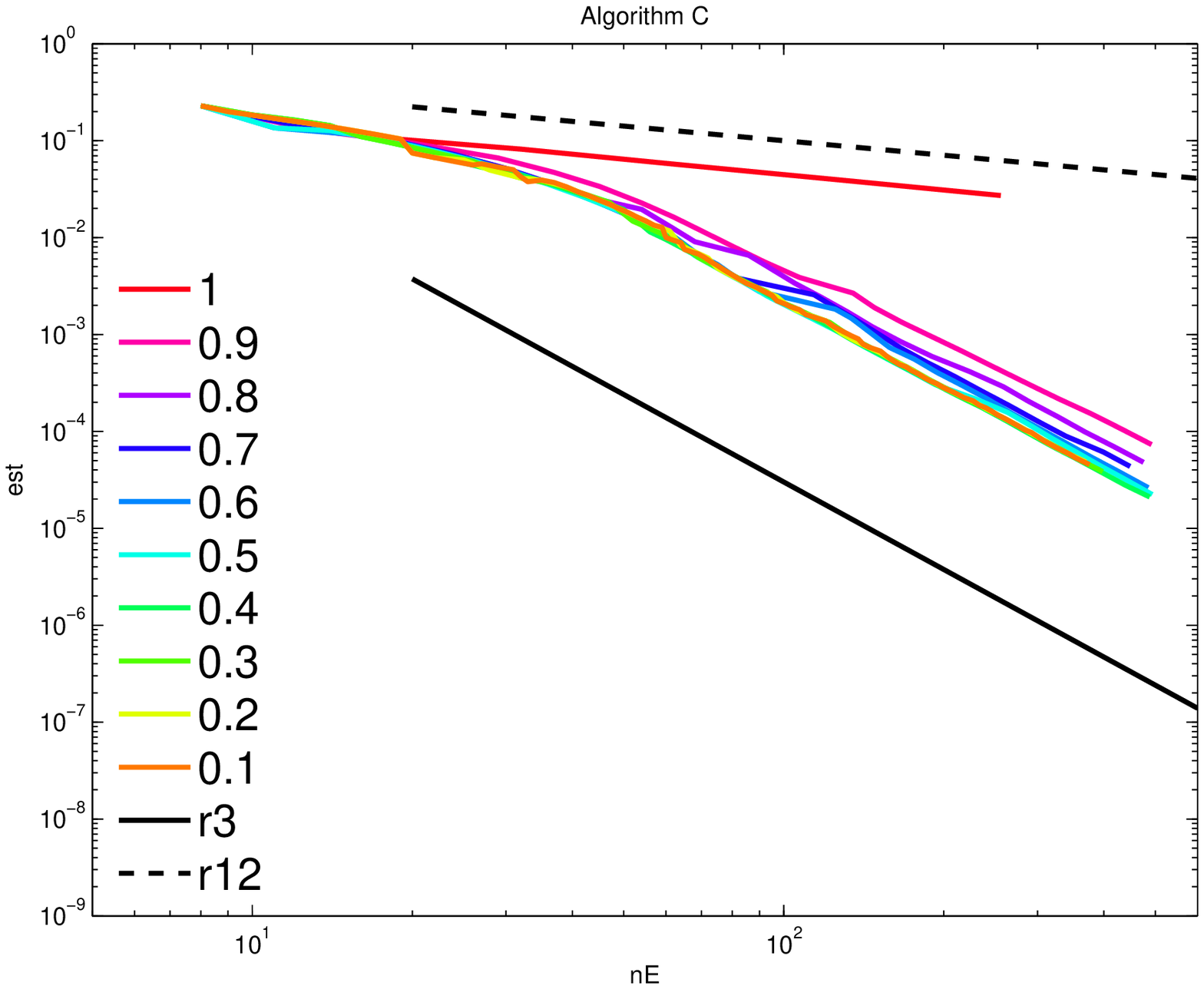}
 \caption{Example from Section~\ref{section:ex:nonconf} with non-conforming weight: 
  Over the number of elements $\#\TT_\ell$, we plot the estimators and the goal error $|g(u)-g(U_\ell)|$ as output of Algorithms~\ref{algorithm}--\ref{algorithm:bet} for $\theta=0.5$ (left) resp.\ the goal error $|g(u)-g(U_\ell)|$ for various $\theta\in\{0.1,\dots,0.9\}$ as well as for $\theta=1.0$ which corresponds to uniform refinement (right), where we use the rescaled estimators~\eqref{ex:bem:estimator:primalrescaled} with $\eps=0.3$.}
 \label{fig:algmod}
\end{figure}

\begin{figure}
\psfrag{A}{\scalebox{.5}{Algorithm A}}
\psfrag{B}{\scalebox{.5}{Algorithm B}}
\psfrag{C}{\scalebox{.5}{Algorithm C}}
\psfrag{error}[cc]{\tiny error}
\psfrag{theta}[cc]{\tiny parameter $\theta$}
\psfrag{ncum}[cc]{\tiny $N_{\rm cum}$}
 \includegraphics[scale=0.5]{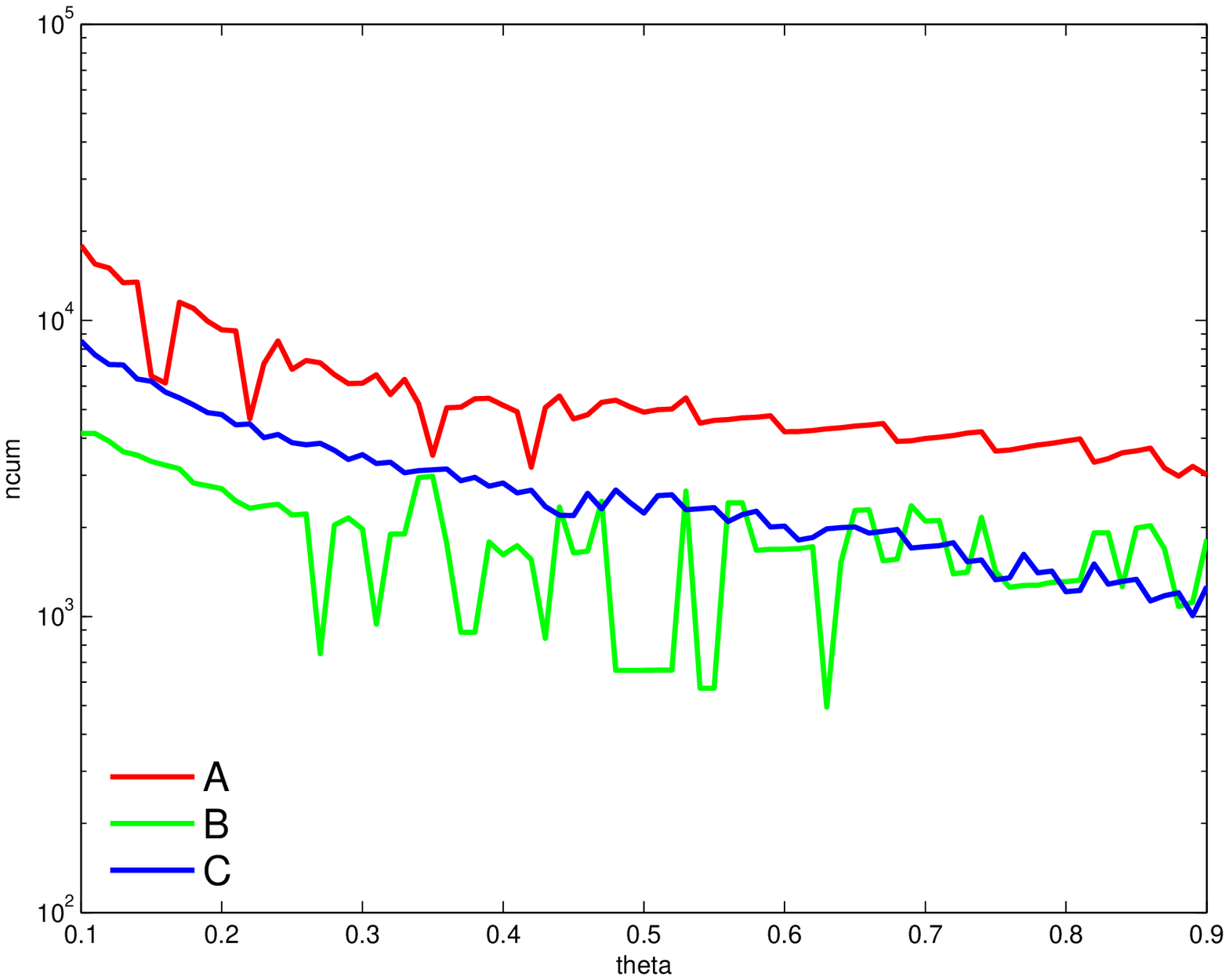}
 \caption{Example from Section~\ref{section:ex:nonconf} with non-conforming weight: Over different values of $\theta$, we plot the cumulative number of elements $N_{\rm cum}:=\sum_{k=0}^\ell\#\TT_k$ necessary for Algorithms~\ref{algorithm}--\ref{algorithm:bet} to achieve an error accuracy $|g(u)-g(U_\ell)|<10^{-6}$. We use the rescaled estimators~\eqref{ex:bem:estimator:primalrescaled} with $\eps=0.3$.}
 \label{fig:algmod2}
\end{figure}

\begin{figure}
\psfrag{nEcum}{\scalebox{.5}{$N_{\rm cum}$}}
\psfrag{theta}[bc][tc]{\scalebox{.5}{$\theta$}}
\psfrag{par}[bc][tc]{\scalebox{.5}{$\eps$}}
 \includegraphics[scale=0.5]{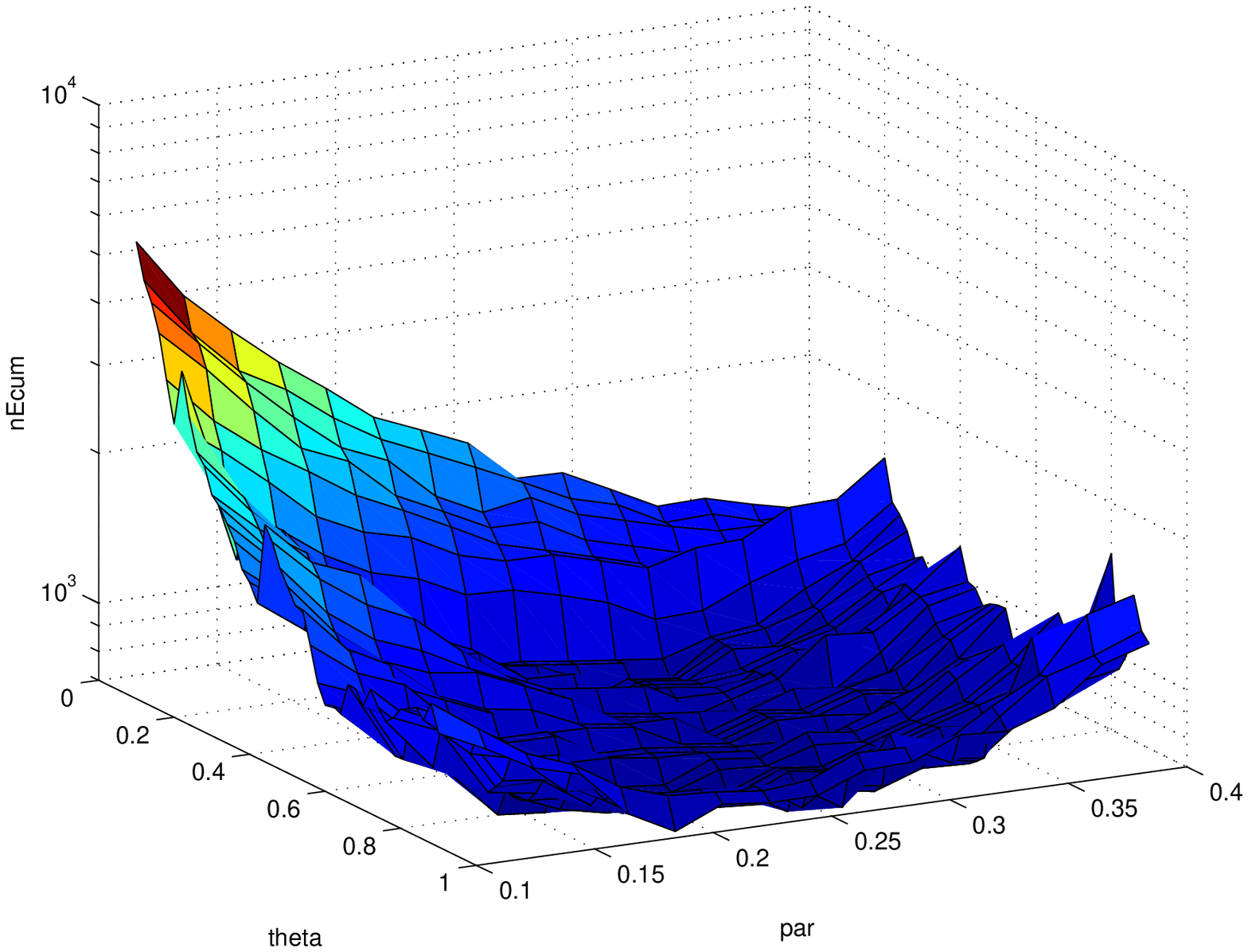}
 \caption{Example from Section~\ref{section:ex:nonconf} with non-conforming weight: We plot the cumulative number of elements $N_{\rm cum}$ necessary to reach the tolerance $\eta_{u,\ell}\eta_{z,\ell}\leq 10^{-2}$ for different values of $\theta\in\{0.1,\ldots,0.9\}$ and different scaling parameters $\eps\in\{0.1,\ldots,0.4\}$. We observe that $\eps\approx 0.3$ gives the best performance.}
 \label{fig:blabla}
\end{figure}

\subsection{Numerical experiment with non-conforming weight function}\label{section:ex:nonconf}
We consider the same setting as in Section~\ref{section:ex:conf}, with the only difference
that $\Lambda$ is the characteristic function of $\Gamma_\Lambda\subsetneqq \Gamma$, i.e., $\Lambda(x)=1$ on $\Gamma_\Lambda$ and $\Lambda(x) = 0$ on $\Gamma\setminus\Gamma_\Lambda$. We choose $\Gamma_\Lambda\subsetneqq \Gamma$ as the part of $\Gamma$ which is marked in \textcolor{red}{red} in Figure~\ref{fig:bemdom}. This implies that the goal functional takes the form
\begin{align*}
g(u):=\int_{\Gamma_\Lambda}u\,ds.
\end{align*}
Note that $\Lambda\notin H^{1/2}(\Gamma)\supset H^{1}(\Gamma)$, but only $\Lambda \in H^{1/2-\eps}(\Gamma)$ for all $\eps>0$. 
In particular, $g\notin H^{-1/2}(\Gamma)^*=H^{1/2}(\Gamma)$.
Consequently, this example is \emph{not covered} by the theory of the previous sections. This is also reflected by the numerical results, if the adaptive algorithms are naively employed; see Figure~\ref{fig:counter}, where we do not observe convergence at all.

To account for the fact that $\Lambda\notin H^1(\Gamma)$, we approximate $\Lambda$ in each adaptive step by the continuous function $\Lambda_\ell\in\SS^1(\TT_\ell)$ defined by $\Lambda_\ell(z)=1$ for all nodes $z$ of $\TT_\ell$ with $z\in \Gamma_\Lambda$ and $\Lambda_\ell(z)=0$ for all other nodes. Convergence $\Lambda_\ell\to\Lambda$ is assured by marking of the two elements where $\Lambda_\ell$ is not constant in each adaptive step. Since there clearly holds $\norm{\Lambda_\ell}{H^{1/2}(\Gamma)}\to \infty$ as $\ell\to\infty$, we need to rescale the error estimators for the primal and the dual problem, respectively. Given $\eps>0$, define
\begin{align}\label{ex:bem:estimator:primalrescaled}
\eta_{u,\ell}^\eps(T)^2:=h_T^{1-\eps}\norm{\nabla (\slp U_\ell-F)}{L^2(T)}^2 \quad\text{and}\quad 
\eta_{z,\ell}^\eps(T)^2:=h_T^{1+\eps}\norm{\nabla (\slp Z_\ell-\Lambda_\ell)}{L^2(T)}^2.
\end{align}
Since a thorough analysis is beyond the scope of this paper, 
we only provide a heuristic motiviation for this rescaling: With $\Lambda_\ell\approx\Lambda$, the error in the goal functional is estimated by
\begin{align*}
|g(u)-g(U_\ell)|\approx|\int_\Gamma (u-U_\ell)(\Lambda_\ell-\slp Z_\ell)\,ds|&\leq \norm{u-U_\ell}{H^{-1/2+\eps}(\Gamma)}\norm{\Lambda_\ell -\slp Z_\ell}{H^{1/2-\eps}(\Gamma)}\\
&\lesssim \eta_{u,\ell}^\eps\eta_{z,\ell}^\eps.
\end{align*}
Since $\sup_{\ell\in\N}\norm{\Lambda_\ell}{H^{1/2-\eps}(\Gamma)}<\infty$, the last estimate is even rigorous and follows from appropriate Poincar\'e inequalities; see, e.g.,~\cite{c97,cms}.

For $\eps=0.3$, $\theta=0.5$, and lowest order BEM $p=0$,
Figure~\ref{fig:algmod} shows the convergence rates of the error estimators $\eta_u$, $\eta_z$, their product $\eta_u\eta_z$, and the error in the goal functional $|g(u)-g(U_\ell)|$. Moreover, we compare the convergence rates of the error in the goal functional for different values of $\theta\in\{0.1,\dots,0.9\}$. Except for $\theta=0.9$ and Algorithm~\ref{algorithm:bet}, we observe for either choice of $\theta$ and all adaptive algorithms the optimal convergence rate $(\#\TT_\ell)^{-3}$ for the error in the goal functional as well as the estimator product. 

Figure~\ref{fig:algmod2} plots the cumulative number of elements $N_{\rm cum}=\sum_{k=0}^\ell\#\TT_k$ necessary to reach a given error tolerance $10^{-6}$ for different values of $\theta\in\{0.1,\dots,0.9\}$. We observe that for Algorithms~\ref{algorithm}-\ref{algorithm:bet} a large $\theta\approx 0.8$ seems to be optimal, whereas Algorithm~\ref{algorithm:mod} shows optimal behavior for $0.3\leq \theta\leq 0.7$. Overall, Algorithm~\ref{algorithm:mod} seems to be the best choice in this experiment.

\section{Conclusions \& Open Questions}
\label{section:conclusion}

\subsection{Analytical results}
We have derived an abstract framework to prove convergence with optimal algebraic rates for goal-oriented adaptivity for finite element methods and boundary element methods. While the analysis of prior works~\cite{bet,ms} was tailored to the Poisson model problem resp.\ symmetric boundary integral formulations~\cite{pointabem}, our approach which is inspired by~\cite{axioms}, is {\sl a~priori} independent of the model problems and covers general linear second-order elliptic PDEs and fixed order elements in the frame of the Lax-Milgram lemma. Following~\cite{ckns}, our argument avoids the discrete efficiency and hence the interior node property of the mesh-refinement required in~\cite{bet,ms}. Following~\cite{ffp}, our argument uses the concept of a general quasi-orthogonality which allows to work beyond symmetric problems and, unlike~\cite{mn,hp14}, to avoid any assumption on the initial mesh $\TT_0$. As firstly observed in~\cite{dirichlet3d} and later used in~\cite{ffp,axioms}, the convergence and quasi-optimality analysis relies essentially only on reliability of the error estimator (see axioms~\eqref{ass:stable}--\eqref{ass:reliable}), while efficiency is only used to characterize the estimator-based approximation classes in terms of the so-called total error, i.e., error plus data oscillations (see Lemma~\ref{lemma:approximationclass}). In addition to the algorithm from~\cite{ms} (Algorithm~\ref{algorithm}), we gave a thorough analysis for the algorithm from~\cite{bet} (Algorithm~\ref{algorithm:bet}) without additional assumptions on the given data. Moreover, we proposed a variant of the algorithms from~\cite{ms} and~\cite{hp14} (Algorithm~\ref{algorithm:mod}). All three algorithms are proved to be linearly convergent with optimal algebraic rates (see Theorem~\ref{theorem:linear}, \ref{theorem:optimal}, \ref{theorem:optimal:mod}, \ref{theorem:optimal:bet}), where theory guarantees linear convergence for all marking parameters $0<\theta\le1$, while optimal convergence rates are qualitatively guaranteed for $0<\theta<\theta_\star$ (Algorithm~\ref{algorithm}--\ref{algorithm:mod}) resp.\ $0<\theta<\theta_\star/2$ (Algorithm~\ref{algorithm:bet}) for some {\sl a~priori} bound $0<\theta_\star<1$ which depends on the given problem.

\subsection{Empirical results}
To underline our analysis, we considered three different problems: First (Section~\ref{example1:afem}), we computed an example from~\cite{ms} which considers finite elements for the Poisson model problem with some right-hand side $f = {\rm div}\, \boldsymbol{f}$ and goal function $g = {\rm div}\, \boldsymbol{g}$ for some piecewise constant vector fields $\boldsymbol{f},\boldsymbol{g}:\Omega\to\R^d$. Essentially for all choices of adaptivity parameters $0.1 \le \theta \le 0.9$, we observed optimal convergence behavior of the goal-oriented adaptive algorithms, while standard adaptivity leads to a reduced order of convergence. Second (Section~\ref{section:example:afem2}), we modified an example from~\cite{mn} with a non-symmetric operator, where the goal is the evaluation of the flux for some finite element computation. All goal-oriented adaptive algorithms are robust with respect to the choice of $0.1 \le \theta \le 0.9$. Finally (Section~\ref{section:ex:conf}), we considered an example in the frame of the boundary element method, where the goal was some local flux evaluation. Again, all goal-oriented adaptive algorithms are robust with respect to the choice of $0.1 \le \theta \le 0.9$ and lead to optimal convergence behavior. Throughout, our observation was that the new algorithm (Algorithm~\ref{algorithm:mod}) leads to the best results with respect to the cumulative sum of elements~\eqref{eq:Ncum} which seems to be an appropriate measure for the overall computational performance to reach a prescribed accuracy. Although we did not observe that Algorithm~\ref{algorithm:bet} leads to suboptimal convergence rates for large $\theta$, where Algorithm~\ref{algorithm} and~\ref{algorithm:mod} still are optimal, we note that this has been observed in~\cite{pointabem} for the point evaluation in boundary element computations which is a linear and continuous functional (and hence an advantage) of boundary integral formulations.

\subsection{Extensions \& open questions} 
First, following the work of Mommer \& Stevenson~\cite[Section~7.1]{ms}, it is possible to use the extraction framework to apply our convergence and quasi-optimality results to compute point values. 
Second, arguing along the lines of~\cite{ffp}, we think that it is possible to include (at least certain) nonlinear goal functional and nonlinear PDEs based on strongly monotone operators. As in~\cite{ffp}, we note that the proof of stability~\eqref{ass:stable} and reduction~\eqref{ass:reduction} might be challenging for higher-order elements $p\ge2$, since even optimality results for standard AFEM for nonlinear problems are restricted to the lowest-order case $p=1$; see, e.g.,~\cite{MR2911397,axioms,csw,ffp,MR2911871}. Finally and for the ease of presentation, we focussed on (homogeneous) Dirichlet conditions throughout our experiments. We note that the extension to mixed Dirichlet-Neumann-Robin boundary conditions is easily possible; see~\cite{dirichlet3d,axioms,dirichlet2d} in the frame of standard AFEM. However, we note that our analysis currently requires that the Dirichlet data belongs to the coarsest trace space $\SS^1(\TT_0|_\Gamma)$. The main reason is that our analysis uses that the difference of solution and FEM approximation, i.e., $u-U_\ell$ for the primal problem resp.\ $z-Z_\ell$ for the dual problem, is an admissible test function. The latter fails for general inhomogeneous Dirichlet conditions. We believe that the rigorous  analysis of this problem is beyond the current work and requires further ideas beyond those of standard AFEM~\cite{dirichlet3d,axioms,dirichlet2d}. 


\color{black}
\bibliographystyle{siam}
\bibliography{literature}

\end{document}